\DeclareMathOperator{\arccot}{arccot}
\newtheorem{theorem}{Theorem}[section]
\newtheorem{proposition}[theorem]{Proposition}
\newtheorem{lemma}[theorem]{Lemma}
\newtheorem{corollary}[theorem]{Corollary}
\theoremstyle{definition}
\newtheorem{definition}[theorem]{Definition}
\newtheorem{example}[theorem]{Example}
\newtheorem{remark}[theorem]{Remark}
\newtheorem{thm}{Theorem}
\DeclareMathOperator*{\Lip}{Lip}
\numberwithin{equation}{section}
\newcommand{\lb}{\label}
\newcommand{\beq}{\begin{equation}}
\newcommand{\eeq}{\end{equation}}
\newcommand{\bal}{\begin{align}}
\newcommand{\eal}{\end{align}}
\newcommand{\bals}{\begin{align*}}
\newcommand{\eals}{\end{align*}}
\newcommand{\bbN}{{\mathbb{N}}}
\newcommand{\bbR}{{\mathbb{R}}}
\newcommand{\R}{{\mathbb{R}}}
\newcommand{\bbT}{{\mathbb{T}}}
\newcommand{\bbS}{{\mathbb{S}}}
\newcommand{\calT}{{\mathcal T}}
\newcommand{\calL}{{\mathcal L}}
\newcommand{\calQ}{{\mathcal Q}}
\newcommand{\eps}{\varepsilon}
\newcommand{\al}{\alpha}
\def\e{\varepsilon}
\begin{document}
\title[Regularity of drift-Hele-Shaw Flow]
{Regularity of Hele-Shaw Flow with Source and Drift}

\author{Inwon Kim and Yuming Paul Zhang}

\address{\noindent Department of Mathematics \\ University of California Los Angeles\\ Los Angeles\\ CA \newline Email: \tt
ikim@math.ucla.edu}

\address{\noindent Department of Mathematics \& Statistics \\ Auburn University \\ Auburn\\ AL \newline Email: \tt
yzhangpaul@auburn.edu}

%\thanks{I. Kim was partially supported by NSF grant DMS-1900804.}

%\address{\noindent Department of Mathematics \\ University of
%California San Diego \\ La Jolla, CA 92093 \newline Email: \tt
%yzhangpaul@ucsd.edu}

%\thanks{The author acknowledges partial support by the NSF through the grant DMS-0632442}

\subjclass[2020]{35R35, 35B65, 76D27}

\begin{abstract} 
In this paper we study the regularity property of Hele-Shaw flow, where source and drift are present in the evolution. More specifically we consider H\"{o}lder continuous source and Lipschitz continuous drift. 
We show that if the free boundary of the solution is locally close to a Lipschitz graph, then it is indeed Lipschitz, given that the Lipschitz constant is small. When there is no drift, our result establishes $C^{1,\gamma}$ regularity of the free boundary by combining our result with the obstacle problem theory. In general, when the source and drift are both smooth, we prove that the solution is non-degenerate, indicating higher regularity of the free boundary.

\end{abstract}

\maketitle

\noindent{\small {\bf Keywords:}  Hele-Shaw flow, source and drift terms, free boundary regularity, Lipschitz continuity.}

\vspace{5pt}
\noindent{\small  {\bf 2010 Mathematics Subject Classification:} 35R35, 76D27.}
\vspace{5pt}

%%%%%%%%%%%%%%%%%%%%%%%%%%%%%%%%%%%%%%%%%%%%%%
\section{Introduction} \lb{S1}
%%%%%%%%%%%%%%%%%%%%%%%%%%%%%%%%%%%%%%%%%%%%%%

Let $\vec{b}:\bbR^d\to\bbR^d$ be a Lipschitz continuous vector field, and $f:\bbR^d\to [0,\infty)$ be a non-negative H\"{o}lder continuous function.  We consider $u=u(x,t) \geq 0$ solving the Hele-Shaw type problem:
\begin{equation}\lb{1.1}
    \left\{\begin{aligned}
        -\Delta u &=f \quad &&\text{ in }\{u>0\},\\
        u_t&=|\nabla u|^2+\vec{b}\cdot\nabla u\quad &&\text{ on } \partial\{u>0\}.
    \end{aligned}\right.
\end{equation}
We refer to $\partial\{u>0\}$ as the {\it free boundary } of $u$. The second equation is the level set formulation of the velocity law 
\begin{equation}\label{velocity}
V = (-\nabla u - \vec{b})\cdot \nu = |\nabla u| - \vec{b} \cdot \nu \hbox{ on }\partial\{u>0\},
\end{equation}
 where $V$ denotes the velocity of the set $\{u>0\}$ along the outward spatial normal $\nu = \frac{-\nabla u}{|\nabla u|}$ at the given free boundary point $(x,t)\in \partial\{u>0\}$. 

\medskip

{\eqref{1.1} corresponds to the classical {\it Hele-Shaw flow}  \cite{HS1898,richardson1972hele}  when both $f$ and $\vec{b}$ are zero, and with a fixed boundary where $u$ is prescribed a constant value. We will refer to \eqref{1.1} in this setting as the {\it injection problem}, since $u$ here denotes the pressure of the fluid being injected via the fixed boundary.} The general equation \eqref{1.1} can be also written as the continuity equation  
$$\rho_t - \nabla \cdot ((\nabla u+ \vec{b}) \rho) = \tilde{f}\rho,$$ with the density variable $\rho = \chi_{\{u>0\}}$ and growth term $\tilde{f}:= f - \nabla\cdot \vec{b}$. In other words, $\rho$ is transported by the velocity field $-(\nabla u + \vec{b})$ and with the growth term $\tilde{f}$. In this context, $u$ can be understood as the pressure variable, and is generated by the incompressibility constraint $\rho \leq 1$ to transport density that intends to move with drift $-\vec{b}$ and growth rate $\tilde{f}$. Due to this interpretation of the model, \eqref{1.1} has been actively studied in the recent literature, for instance in the context of tumor growth  where cells evolve with contact inhibition, \cite{PQV,David_S,jacobs2022tumor} and in the context of congested population dynamics \cite{maury2010,CKY}. The injection problem, with injection at infinity, can be also understood as the one-phase Muskat problem with gravity, see \cite{Wu23,Hongjie}.

%\textcolor{maybe more references}. 

\medskip

Understanding the regularity of free boundary problem is in general an important and challenging task, even in the elliptic setting, due to its potential singularities and degeneracy. For our problem, we expect that pressure gradient $\nabla u$ that drives the velocity law in \eqref{velocity} will regularize the flow, but we must ensure that the flow avoids both degeneracy of the pressure and topological singularities. 
If the Lipschitz constant of the initial free boundary is small so that the pressure is non-degenerate, both local and global-time regularity results are available, see \cite{CJK,CJK2,Hongjie}. General regularity results that involves characterization of topological singularity are available for the injection problem, by its connection to the obstacle problem: see \cite{figalli20} for instance.

\medskip

For our inhomogeneous problem, zooming in at a single point $(x_0,t_0)\in \partial\{u>0\}$ with the hyperbolic scale $\tilde{u}_r(x,t):= r^{-1}u(r(x-x_0), r(t-t_0))$, one formally sees that the source term tends to zero and the drift becomes a constant vector field as $r$ tends to zero. Thus it seems plausible that similar regularity theory as for the injection problem  holds. However, it is unclear how to quantify this heuristical argument, even in the setting of zero drift, as we will discuss in detail below.
\medskip

\medskip

  Our main result shows that {``free boundaries close to a Lipschitz graph are indeed Lipschitz, and the solution is non-degenerate, as long as the Lipschitz constant is small''. Let us give a brief summary of Theorem A and B below:} \medskip

{\bf Main theorem: }{\it Let $u$ solve \eqref{1.1}. Suppose that $u$ is close to a cone-monotone profile {at each time} in a local space-time neighborhood. If the angle of the cone is sufficiently large, then the solution is fully cone-monotone and  the free boundary is Lipschitz.  In addition, if $\vec{b}$ is zero, the free boundary is $C^{1,\gamma}$ for some $0<\gamma<1$. Lastly, if $f$ and $\vec{b}$ are at least $C^3$, the solution is also non-degenerate, namely it features faster-than-linear growth near the free boundary.}

\medskip

Our result extends the celebrated {free boundary regularity theory introduced by Caffarelli et. al. (see the book \cite{CafSal}) as well as its corresponding version for the injection problem \cite{CJK,CJK2}.   In particular our work serves as the first attempt to understand the effect of source and drift on the regularization mechanism of the free boundary evolution. As mentioned above, the presence of a nonzero $f$ alone  necessitates some significant changes in the standard arguments. Our proof relies on  spatially Lipschitz solutions of the injection problem that were constructed in \cite{CJK}, as well as the properties of superharmonic functions given in section \ref{S.4}. 

\medskip

In general,  the Lipschitz regularity of the free boundary and the non-degeneracy of the solutions are the two ingredients of further regularity analysis in aforementioned references. We thus suspect that  the free boundary in our statement is in fact  $C^{1,\gamma}$ in space and time, when $f$ and $\vec{b}$ are smooth. Given the technical nature of these arguments, we do not pursue this next step in full generality, to lay out the main arguments to achieve the basic regularity results as clearly as possible. Our higher regularity result with zero drift already has interesting implications in applications. Indeed there are examples of log-Lipschitz continuous function $f$ with $\vec{b}=0$ {that describes tumor growth with nutrients},  for which numerical experiments reveal immediate dendrite-like growth on the free boundary \cite{kitsunezaki1997interface,PTV, maury14}.  The dynamics behind the generation of such irregularities remain mysterious: our $C^{1,\gamma}$ result with zero drift implies that such fingers appear only at a large scale.

\medskip

 Some remarks on the assumption are in order. Our assumption considers solutions whose level sets are close to Lipschitz graphs up to small scale, which is a natural assumption when considering rough initial data. At least in the case of $\vec{b}=0$ and $f=1$, this assumption is satisfied for a small time for those who start from an initially Lipschitz graph with small Lipschitz constant: see Corollorary~\ref{cor:flat}.  This assumption is also motivated from the well-known {\it waiting time} phenomena, where the initial free boundary does not move for a finite amount of time, delaying its regularization by the pressure force. For the classical Hele-Shaw problem with $f=\vec{b}=0$, it is well-known that there is a waiting time phenomena with  initial free boundary in the shape of a sharp cone \cite{klv} . The same remains true in the presence of the source term $f\in L^{\infty}$: see Example~\ref{cone_sharp} where the free boundary keeps its profile as a sharp cone with a fixed vertex for a unit amount of time. The waiting time phenomena is the main obstacle for regularization of the Hele-Shaw flow for a short-time interval. We show that this phenomena does not occur when cone has sufficiently large angle.

\medskip

%\textcolor{}{In fact it may be enough that we are $\e$-close to a cone-monotone profile, because then we are close to the solution of $-\Delta u = f$ with Lipschitz domain and with nice dirichlet boundary data away from the boundary, which then is monotone near its free boundary. maybe we can discuss about this. This way the condition seems the same as Caffarelli's.}  

%\medskip

%\textcolor{}{i think it may be better to describe our assumption as "being close to a cone-monotone profile at each time". By Lemma 3.3, this will yield $(\alpha,\e)$ monotonicity. But I am worried about existence of such solutions...maybe just say cone-monotone profile for the rough version of statement? } 

\medskip

Let us also briefly discuss the optimality of assumptions on $f$ and $\vec{b}$.  It is not hard to see that the condition is optimal for the drift term: when $\vec{b}$ is not Lipschitz continuous, one can construct an example where the solution starting with a cone as its positive set  maintains the cone shape as its positive set, even developing a cusp at the vertex of the cone (see Example \ref{E.1}).  On the other hand it is less clear whether the regularity of $f$ is sharp for the theorem. The H\"{o}lder regularity of $f$ appears to be close to the optimal condition for the ``flat implies Lipschitz'' result. We will show an example  (see Example \ref{E.2}) where this result is false with merely bounded $f$. We also refer to a counterexample in \cite{blank2001sharp} for the obstacle problem, the time integrated version of our problem, with a continuous $f$ that is not Dini-continuous.  For the non-degeneracy, it remains unclear whether smoothness is required for $f$ and $\vec{b}$: see more discussions on this in Section \ref{S.7}.

\medskip

$\circ$ {\it Regularization mechanism, new challenges and ingredients:}  

\smallskip

 In \eqref{1.1} the support of  the pressure variable $u$ moves along the velocity field $-(\nabla u + \vec{b})$. Due to the elliptic equation $u$ solves in its support, $\nabla u$ acts as the regularizing force in the flow. We largely follow the outline of \cite{CafSal} and \cite{CJK} for our analysis, which involve analyzing the re-scaled function $\tilde{u}_r(x,t)= r^{-1}u(r(x-x_0), r(t-t_0))$ with decreasing scale $r$. The hope is to quantitatively estimate the regularization effect of the solution so that the rescaled solutions resemble the formal blow-up limit at $r\to 0$. There are significant challenges in the analysis that differs from the injection problem. First, the blow-up argument would require that $\tilde{u}_r$ has linear growth at the free boundary, which we do not know a priori and must be shown along the way. For this reason a careful estimate on the rate of growth is carried out in Section \ref{S.3} for superharmonic functions near almost-Lipschitz boundaries.  Second, the inhomogeneous source term affects the directional monotonicity of $\tilde{u}_r$ in small scales: for instance $\e$-monotonicity does not improve to full monotonicity for superharmonic functions away from the free boundary, in contrast to harmonic functions. This makes the preservation of small-scale monotonicity significantly more difficult  as we zoom in.  We go around this by tracking a stronger notion of $\e$-monotonicity and its improvement away from the free boundary along the flow, in Section \ref{S.4} and Section \ref{S.6}. Lastly with non-constant drift, the cone-monotonicity of $\tilde{u}_r$ is further affected by the streamline generated by the drift.  It is not easy to deal with the full effect of the drift: this can be seen by the fact that a direct transformation of $u$ by the streamline generates error terms involving the Hessian of $u$, a potentially very singular term near the free boundary. Hence we instead proceed carefully with our blow-up argument with an approximate streamline, making sure that the regularization created by the pressure is sufficient for the iteration to preserve the  flow's geometry. 
 
 %\textcolor{magenta}{Paul: do this paragraphhave all the different elements that needs to be highlighted?} 

\subsection{Statement of results and Outline of the paper}

For $r>0$, we denote
$
\calQ_r:=  B_r\times (-r,r). 
$ 
%{We use the convention that $\eps^{\infty}=0$ for $\eps\in (0,1)$.}\textcolor{}{where do you use this notion? it is a bit strange, even if you use it, to state it here.} 
Let us state first the ``flat to Lipschitz'' result. 

\begin{thm}\lb{T.2.1}
Let $\vec{b}$ be a Lipschitz continuous vector field, and $f$ be a non-negative $\bar\gamma$-H\"{o}lder continuous function with $\bar\gamma\in(0,1)$, and for some $\eps\in(0,1)$, let $a_\eps\equiv 0$ if $f$ is constant and $a_\eps:=\eps^\alpha$ for some small $\al>0$ otherwise. Suppose that $u$ is a continuous viscosity solution to \eqref{1.1} in $\calQ_2$ satisfying
\begin{itemize}
    \item $u$ is $(\eps,a_\eps)$-monotone with respect to $W_{\theta,\mu}$ for some  $\theta\in (0,\frac{\pi}{2})$ and $\mu\in\bbS^{d-1}$,
    \smallskip
    
    \item $m:=\inf_{t\in(-2,2)}u(-\mu,t)>0$.
\end{itemize}
If $\frac{\pi}{2}-\theta$ and $\eps$ are small enough, then $u$ is non-decreasing along all directions of $W_{\theta',\mu}$ for some $\theta'\in (0,\theta)$ in $\calQ_1$. In particular, the free boundary $\Gamma_u(t)\cap B_1$ for each $t\in(-1,1)$ is a Lipschitz continuous graph. Here $\al_0$ only depends on $\bar\gamma$, and $\theta$ and $\theta'$ only depend on  $\bar\gamma$ and the dimension, and $\eps$ also on $m$, $\|u\|_{L^\infty(\calQ_2)}$, $\|\vec{b}\|_{C^1}$ and $\|f\|_{C^{\bar\gamma}}$. {In addition, when $\vec{b}$ is zero, and when $u$ solves \eqref{1.1} in $\R^d\times (-2,2)$, then the free boundary is $C^{1,\gamma}$ in $\calQ_1$ for some $\gamma\in (0,1)$. }
\end{thm}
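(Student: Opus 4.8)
The plan is to run a Caffarelli-type iteration on the hyperbolic rescalings $\tilde u_r(x,t):=r^{-1}u(rx,rt)$: at each of a sequence of dyadically shrinking scales $r=2^{-k}$ one improves the opening half-angle $\theta_k$ of the monotonicity cone by a fixed fraction of its deficit $\frac\pi2-\theta_k$, so that $\frac\pi2-\theta_k$ decays geometrically and $\theta_k\downarrow\theta'>0$. Each step combines two ingredients. First, an \emph{interior improvement}: inside $\{u>0\}$ and away from the free boundary, the cone of directions along which $u$ is non-decreasing opens strictly. Second, a \emph{propagation} that transports this interior gain to the near-boundary region via the comparison principle (Lemma~\ref{L.cp}), at the cost of descending one scale. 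The source and drift generate error terms that must be kept below the per-step angular gain; this is precisely the role of the admissible defect $a_\eps=\eps^\alpha$, and the reason it can be taken to be $0$ when $f$ is constant (no differenced-source error).

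For the interior improvement I would, at a fixed time slice, examine the spatial difference quotients $u(x+\lambda\nu,t)-u(x,t)$ as $\nu$ runs over $W_{\theta_k,\mu}$ and $\lambda$ is of the order of the current scale. Since $f\ge 0$, $u$ is superharmonic in $\{u>0\}$, and comparing $u$ with its spatial translate $u(\cdot+\lambda\nu,t)$ produces, up to the H\"{o}lder source error $f(\cdot+\lambda\nu)-f=O(\lambda^{\bar\gamma})$, a pair to which interior and boundary Harnack inequalities apply. Proposition~\ref{L.2.61}, the boundary Harnack for superharmonic functions on a long Lipschitz strip, then lets us discard the influence of $u$ far from the interior point under consideration and compare $u$ at a point $x$ with $\dist(x,\Gamma_u(t))\sim\rho$ with a localized harmonic profile; feeding in the quantitative lower bound $m>0$ together with the near-boundary growth estimates and applying Harnack forces the monotonicity cone at $x$ to open by at least $c(\frac\pi2-\theta_k)$. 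This is the substitute for Dahlberg's lemma in the inhomogeneous setting.

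The main obstacle is the propagation step, since near the free boundary the inhomogeneities compete directly with the regularizing pressure gradient. Here the growth-rate estimates of Lemmas~\ref{L.2.10} and \ref{C.2.8} are decisive: starting from the lower bound $m>0$ along $-\mu$, they yield a genuine lower bound on $|\nabla u|$ in a one-sided neighbourhood of $\Gamma_u$, hence a quantitative outward normal speed in the law $V=|\nabla u|-\vec b\cdot\nu$. Using this one constructs a subsolution barrier whose positive set is a slightly rotated and fattened cone and whose pressure carries the interior monotonicity gain, and compares it with $u$ over a time interval of one scale via Lemma~\ref{L.cp}; the source and drift corrections enter the barrier as lower-order terms absorbed by the growth rate, exactly as in the perturbation arguments of Lemma~\ref{L.5.2} and Proposition~\ref{L.2.12}. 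The delicate point is the uniform bookkeeping over scales: the cumulative error must stay $o(\frac\pi2-\theta_k)$ for each $k$, which is what forces $\eps$ and $\frac\pi2-\theta$ to be small and pins down the exponent $\alpha$ in terms of $\bar\gamma$.

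Iterating the two steps and passing to the limit gives genuine monotonicity of $u$ along every direction of $W_{\theta',\mu}$ throughout $\calQ_1$, with $\theta'\in(0,\theta)$ depending only on $\bar\gamma$ and $d$. Finally, full cone monotonicity is a geometric constraint on the positive set: at every free boundary point $(x,t)$ with $t\in(-1,1)$ one has $x+W_{\theta',\mu}\subset\{u(\cdot,t)>0\}$ and $x-W_{\theta',\mu}\subset\{u(\cdot,t)=0\}$ locally, so $\{u(\cdot,t)>0\}\cap B_1$ is the supergraph, in the $\mu$-direction, of a function with Lipschitz constant $\cot\theta'$; hence $\Gamma_u(t)\cap B_1$ is a Lipschitz graph, as claimed.
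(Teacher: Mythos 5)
Your high-level architecture (interior improvement plus propagation to the boundary via comparison with sup-convolution barriers, then iterate) matches the paper's, and you correctly identify the roles of Proposition~\ref{L.2.61}, of the growth estimates, and of the defect $a_\eps=\eps^\alpha$. However, there are two concrete problems with the way you set up the argument.

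First, your iteration bookkeeping is internally inconsistent and aimed at the wrong quantity. You state that each step "improves the opening half-angle $\theta_k$ by a fixed fraction of its deficit $\frac\pi2-\theta_k$, so that $\frac\pi2-\theta_k$ decays geometrically and $\theta_k\downarrow\theta'>0$"; geometric decay of the deficit would force $\theta_k\uparrow\frac\pi2$, not $\theta_k\downarrow\theta'$, and an angle-improvement scheme of that type is the Caffarelli Part~I ($C^{1,\gamma}$) iteration, which the paper explicitly does not carry out here. In the actual proof (Proposition~\ref{L.2.12} and the iteration following it), the quantity that improves is the \emph{monotonicity scale}: one passes from $(\eps_k,\cdot)$-monotonicity to $(j\eps_k,\cdot)$-monotonicity with a fixed $j<1$, so $\eps_k=j^k\eps\to0$ and full monotonicity follows in the limit, while the cone angle is allowed to \emph{degrade} by $C\eps_k^{\gamma_3}$ at each step — a summable loss, which is exactly why the conclusion only gives some $\theta'<\theta$. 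As written, your scheme has no mechanism driving the monotonicity scale to zero, so "passing to the limit gives genuine monotonicity" does not follow. Note also that the hyperbolic rescaling $\tilde u_r$ worsens, rather than improves, $(\eps,a)$-monotonicity (it becomes $(\eps/r,\cdot)$-monotonicity), so the dyadic-rescaling framing cannot substitute for the comparison-based scale improvement.

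Second, in the propagation step you claim that Lemmas~\ref{L.2.10} and~\ref{C.2.8} "yield a genuine lower bound on $|\nabla u|$ in a one-sided neighbourhood of $\Gamma_u$, hence a quantitative outward normal speed." This is not available at this stage: those lemmas give only the superlinear bound $u(x)\gtrsim d(x,\Gamma_u)^\beta$ with $\beta>1$, which degenerates as $d\to0$; a uniform lower bound on $|\nabla u|$ up to the free boundary is precisely the non-degeneracy statement of Theorem~\ref{T.2.2}, proved later under additional hypotheses (H-c) and Lipschitz $f$, and it cannot be assumed in the proof of Theorem~\ref{T.2.1}. What the growth estimates actually buy in the propagation step is a lower bound on $u$ itself at distance $\sim\eps^{\gamma_1}$ from the boundary (so that $\eps^\alpha u$ dominates the $O(\eps^{1+\bar\gamma-\kappa_1})$ source error in Lemma~\ref{L.2.4} and so that Lemma~\ref{L.3.4} applies), together with the bound $|\nabla \bar v|\gtrsim\eps^{\beta-1}$ at scale $\eps$ from the boundary, which is used only to absorb the $O(\eps^{1-\iota})$ error terms in the free boundary condition of the barrier — it is a quantity that tends to zero with $\eps$, not a uniform normal speed. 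Repairing the step requires running the comparison on the thin strip $\Sigma_{r,1}$ with the corrected sup-convolution $\bar v=(1+\eps^{\alpha+1})v-\eps^{\al_2}w_2^t+c_*\eps^{\al_1}w_1^t$ as in Lemma~\ref{L.5.2}, rather than invoking a gradient lower bound.
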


{We refer to Corollary~\ref{C.6.5} and Remark thereafter for further discussion on the case of $\vec{b}=0$. }

\medskip

%{\color{}The definition of $(\eps,\alpha)$-monotonicity and $W_{\theta,\mu}$ will be given in Section \ref{S.2}.} $(\e,0)$-monotonicity corresponds to the classical $\e$-monotonicity, or monotone in $\e$-space scale, in Caffarelli.  This assumption is essential in the regularity analysis for solutions of \eqref{1.1} with $f=0$, since the stronger monotonicity in the positive set propagates to the free boundary so that its small-scale oscillation diminishes in unit time scale. 
%The $(\alpha,\e)$-monotonicity states that our solution is directionally monotone in $\e$-scale, with strict monotonicity in its positive set, proportional to its size with the rate $\alpha$. It is a quantitative way to state the closeness of $u$ to cone-monotone solutions of the elliptic equation $-\Delta u = f$ in its positive set, see Lemma 3.3.  
%\textcolor{the paragraph below needs editing.}

{The definition of the $(\e,a)$-monotonicity will be given in Definition \ref{D.2.1}. The $(\e,0)$-monotonicity corresponds to the usual $\e$-monotonicity, which quantifies the scale at which the solution is monotone along a direction. The additional parameter $a$ adds a growth condition at the same scale $\e$. This is to ensure that away from the boundary the solution is directionally monotone even with smaller scales. While $\e$-monotonicity is sufficient to guarantee such ``interior improvement'' for harmonic functions, it is not the case for general $f$: see Remark~\ref{remark_mon} for further discussions. Our condition is also natural. In Lemma \ref{L.4.5} we show that if the free boundary is known to be Lipschitz continuous, then the solution is monotone and satisfies $(\e,a)$-monotonicity near the free boundary for any small $\e$ and $a$.  }

%A remark on this condition is in order, since it is stronger than the usual $\e$, or $(\e,0)$, -monotonicity for harmonic functions. \textcolor{}{First of all, only $(\eps,0)$-monotonicity is needed when $f$ is a constant.} For harmonic functions it is {\color{}also} enough to assume that the free boundary of the solution is close to the Lipschitz graph, whose quantitative version amounts to $(\e,0)$-monotonicity, to conclude that the level sets of the solution is Lipschitz unit distance away from the free boundary (see \cite[Corollary 11.16]{CafSal}). As mentioned above, this interior improvement of monotonicity is essential in the regularization mechanism of the flow.

\medskip

In the general setting, we state our non-degeneracy result. 

\begin{thm}\lb{T.2.2}
Under the assumption of Theorem \ref{T.2.1} and further assuming that $f$ is Lipschitz continuous, and
\begin{itemize}
    \item $u_t\geq \vec{b}\cdot\nabla u-Cu\text{ in } \calQ_2\text{ in the viscosity sense,}$ 
\smallskip
\item
$C^{-1}\leq \frac{u(-e_d,t)}{u(-e_d,0)}\leq C \text{ for all }t\in (-2,2)$ for some $C>0$, 
\end{itemize}
then if $\frac{\pi}{2}-\theta$ and $\eps$ are small enough, $u$ is non-degenerate in its positive set $\calQ_1$. In other words, $|\nabla u|$ is uniformly positive up to the free boundary. Here $\theta$ only depends on  $\bar\gamma$ and the dimension, and $\eps$ and the lower bound of $|\nabla u|$ also on $C,m$, $\|u\|_{L^\infty(\calQ_2)}$, $\|\vec{b}\|_{C^1}$ and $\|f\|_{C^{1}}$.
\end{thm}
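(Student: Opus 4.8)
The plan is to prove non-degeneracy by combining the Lipschitz/cone-monotonicity conclusion of Theorem \ref{T.2.1} with a quantitative growth estimate near the free boundary, propagated forward in time via the comparison principle. Since Theorem \ref{T.2.1} already gives that $u$ is non-decreasing along all directions in $W_{\theta',\mu}$ in $\calQ_1$, the free boundary is a Lipschitz graph and, in the cone of monotonicity directions, $u$ is comparable to the distance to the free boundary from above. The real content of non-degeneracy is the matching lower bound $|Du| \geq c > 0$ up to $\Gamma_u$, equivalently linear growth $u(x,t) \geq c\,\dist(x,\Gamma_u(t))$ from below. I expect the authors to have isolated this in the lemmas they advertise (the growth-rate estimates, Lemmas \ref{L.2.10} and \ref{C.2.8}, and the expansion-rate Proposition \ref{L.3.3}), so I would build on those.

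First I would establish a quantitative lower bound on $u$ at interior points: using the second hypothesis $C^{-1}\le u(-e_d,t)/u(-e_d,0)\le C$ together with $m = \inf_t u(-\mu,t) > 0$ and the Harnack inequality for the elliptic equation $-\Delta u = f \ge 0$ inside $\{u>0\}$, one gets $u \gtrsim c_0 > 0$ on a fixed interior ball $B_\rho(-\mu)$ uniformly in $t \in (-3/2,3/2)$. Second, I would use the differential inequality $u_t \ge \vec b\cdot\nabla u - Cu$ (a one-sided viscosity inequality expressing that the positive set cannot retreat too fast relative to streamlines) to show that the positive set at time $t$ contains a definite neighborhood of the positive set at an earlier time, transported backward along the flow of $\vec b$ and dilated — this is exactly the "expansion relative to streamlines" quantified in Proposition \ref{L.3.3}. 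The combination of a fixed interior lower bound plus controlled expansion lets one slide a barrier: at any free boundary point $(x_0,t_0)\in \Gamma_u \cap \overline{\calQ_1}$, build a subsolution of \eqref{1.1} (a suitably scaled and translated fundamental-solution-type profile, e.g. $w = c_1(|x - z|^{-(d-2)} - R^{-(d-2)})$ adjusted for $d=2$ and corrected by a quadratic to absorb the bounded source $f$ and the Lipschitz drift) whose support is an annular region that at time $t_0 - \tau$ sits inside $\{u(\cdot,t_0-\tau)>0\}$ near the interior lower-bound region and whose free boundary speed is dominated by that of $u$, so that by Lemma \ref{L.cp} the support of $w$ stays inside $\{u>0\}$ and $w \le u$; choosing the geometry so that $z$ and $x_0$ are at controlled distance and $w$ has linear growth of a fixed size near its own free boundary then forces $u(x,t_0) \ge c\,|x - x_0|$ near $x_0$.

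The cone-monotonicity from Theorem \ref{T.2.1} is what makes this barrier argument work uniformly: it guarantees that the free boundary is a Lipschitz graph with small constant, so the "interior cone" $x_0 + W_{\theta',\mu}$ at a free boundary point penetrates into $\{u>0\}$ and reaches the fixed interior region where $u \gtrsim c_0$ within a bounded number of steps, and it prevents the positive set from being pinched. Thus the iteration over scales — at each dyadic scale the monotonicity plus the interior lower bound feeds the barrier, which upgrades the lower bound at the next scale — terminates with a scale-independent constant. The regularity hypotheses ($f$ Lipschitz, and $C^3$ for $f,\vec b$ in the general drift case) enter when constructing the corrector to the barrier: one needs the source and drift perturbations of the radial profile to be of strictly lower order than the $O(r)$ growth one is trying to produce, and with only bounded $f$ (Example \ref{E.2}) this fails, which is consistent with the stated assumptions.

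The main obstacle, I expect, is precisely the construction and propagation of the subsolution barrier in the presence of a nonzero source: unlike the classical Hele-Shaw case where radial harmonic barriers are exact subsolutions, here $-\Delta w = f$ forces one to carry an $f$-dependent correction, and one must check simultaneously that (i) $w$ remains a viscosity subsolution of the free boundary velocity law after correction, (ii) $w \le u$ on the parabolic boundary of the comparison region (this is where the interior Harnack lower bound and the $u(-e_d,t)$ hypothesis are used), and (iii) the free boundary of $w$ does not outrun that of $u$, which requires the one-sided inequality $u_t \ge \vec b\cdot\nabla u - Cu$ to guarantee that $\{u>0\}$ does not collapse during the time interval $[t_0-\tau, t_0]$. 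Handling the drift term $\vec b$ — which is only Lipschitz in Theorem \ref{T.2.1} but needs $C^3$ here — likely means freezing $\vec b$ at a base point and absorbing the error into the time scale $\tau$; making all of these estimates scale-invariant so that the final constant $c$ in $|Du| \ge c$ is independent of the free boundary point is the delicate bookkeeping that the rest of the paper's lemmas are presumably designed to supply.
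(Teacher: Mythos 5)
Your overall architecture (interior lower bound from the $u(-e_d,t)$ hypothesis, expansion of $\Omega_u$ relative to streamlines, backward-in-time propagation, a comparison barrier at each free boundary point) matches the skeleton of the paper's Section \ref{S.7}, but the central step has a genuine gap: the radial, fundamental-solution-type barrier you propose cannot be initialized below $u$. To force $u(x,t_0)\geq c\,|x-x_0|$ you need a subsolution $w$ with \emph{linear} growth near its own free boundary and $w\prec u$ at the starting time; but near a free boundary point the only growth rate available for $u$ at this stage is the superlinear bound $u\gtrsim d(x,\Gamma_u)^{\beta}$ with $\beta>1$ (Lemmas \ref{L.2.10} and \ref{C.2.8}), so any linearly growing $w$ exceeds $u$ in a neighborhood of $\Gamma_u$ and the initial comparison fails. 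Quantitatively, if $\partial\Omega_w$ starts at distance $\rho$ inside $\Gamma_u$, the largest admissible slope for $w$ is of order $\rho^{\beta-1}$; the barrier then expands at speed $\rho^{\beta-1}$ and reproduces exactly the sublinear rate $r_t\sim t^{1/(2-\beta)}$ of Proposition \ref{L.3.3}, with the deduced lower bound on $|\nabla u|$ degenerating as $\rho\to0$. Your dyadic iteration does not repair this: rerunning the same radial barrier at scale $2^{-k}$ reproduces the exponent $\beta$ rather than improving it toward $1$, and your proposal contains no mechanism that upgrades the growth exponent.

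The paper breaks this circularity differently (Theorem \ref{T.6.1}): the subsolution is not a radial profile but a sup-convolution of $u$ itself, $V=(1-\sigma_1\eps)\sup_{B_{\eps(1-\sigma_2 t)\varphi(x)}(x)}\bar u(\,\cdot\,+r_*\eps e_d,(1-\sigma_3\eps)t)$, with a varying radius function $\varphi$ (built from $\Delta(\Phi^{-A_0+1})=0$) that is small on the outer boundary of the comparison region but large, $\geq 3r_*$, at the reference point. The parabolic boundary comparison $V\prec U$ then follows from the cone monotonicity of Theorem \ref{T.2.1} together with hypothesis (H-c) alone --- no growth rate of $u$ is used --- while the largeness of $\varphi$ at the center converts the conclusion into a definite vacuum ball below the free boundary at a slightly earlier time, i.e.\ a \emph{linear-in-time} expansion rate via Lemmas \ref{L.6.2} and \ref{L.6.3}. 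A second ingredient you omit is the Carleson-type estimate (Lemma \ref{C.7.2}), which is what converts that linear-in-time expansion into the spatial lower bound $u(-\delta e_d,t)\geq c_0\delta$ of Theorem \ref{T.6.8} (and then $|Du|\geq c$ via Lemma \ref{L.2.88}); your plan jumps from ``the support expands'' to ``$u$ grows linearly'' without this bridge. These two devices are the actual content of the proof and are absent from your proposal.
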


Our assumption ensures that $u$ does not decrease too fast in the direction of the streamline generated by $\vec{b}$. This assumption holds for solutions of \eqref{1.1} posed in $\R^d\times (0,\infty)$ when $f$ and $\vec{b}$ are smooth, see Corollary 6.6 and Theorem 2.1 in \cite{chu2022}.

%THE PROOF OF THIS THEOREM IS NOT COMPLETED.. NON-DEGENERACY WAS ONLY PROVED IN THE SENSE OF THM 6.8. WILL DO THIS SOON. \textcolor{got it, i don't think it is difficult though is it?} 

\begin{remark}
Our results apply to time-dependent $f$ and {$\vec{b}$} as well, even though we have only considered stationary ones for simplicity. With $f=f(x,t)$ and $\vec{b}=\vec{b}(x,t)$,  Theorem~\ref{T.2.1} continues to hold with straightforward modifications in the proof if $f$ and $\vec{b}$ are continuous in time. The same is true for Theorem~\ref{T.2.2} if $f$ and $\vec{b}$ are Lipschitz continuous in time. 

\end{remark}

Here is a brief outline of the paper. In Section \ref{S.2}, we introduce notations and preliminary properties. In Section \ref{S.3}, we prove several tools that will be used, including interior monotonicity and polynomial growth of superharmonic functions near the free boundary. In particular we obtain estimates on the growth rate of solutions near the free boundary {(Lemmas \ref{L.2.10} and \ref{C.2.8})}. Heuristically speaking, such growth rate translates into a strong elliptic effect, competing against the oscillations caused by the source and drift terms. We end Section \ref{S.3} with some examples discussing the optimality of our conditions and the waiting time phenomena.  Section \ref{S.4} is about superharmonic functions in Lipschitz domains. An important result is  Proposition \ref{L.2.61}, which compares superharmonic functions in a long strip domain with Lipschitz boundary. This boundary Harnack-type result enables us to compare our solutions to a localized harmonic function, ignoring the effect coming from the far-away regions. This result can be viewed as a generalized version of Dahlberg's lemma for harmonic functions, which was crucial for instance in showing that the interior improvement of the monotonicity.
Section \ref{S.5} introduces the sup-convolution and its properties. Finally, we give the proof of Theorem \ref{T.2.1} and Theorem \ref{T.2.2}, respectively, in Section \ref{S.6} and Section \ref{S.7}.

\medskip

\textbf{Acknowledgements.} I. Kim was partially supported by NSF grant DMS-2153254. We would like to thank Alp{\'a}r R. M{\'e}sz{\'a}ros for the initial discussions and helpful comments, motivating our investigation of this problem.

\medskip

\section{Preliminaries}\lb{S.2}

For a space-time function $u:\bbR^d\times [0,\infty)\to [0,\infty)$, we write 
\[
\Omega_u:=\{u(\cdot,\cdot)>0\},\quad \Omega_u(t):=\{u(\cdot,t)>0\},
\]
and
\[
\Gamma_u(t):=\partial \Omega_u(t),\quad \Gamma_u:=\bigcup_t\, \Gamma_u(t)\times\{t\} .
\]
Similarly, for a function $\omega:\bbR^d\to [0,\infty)$, we define
$$
\Omega_\omega:=\{\omega(\cdot)>0\}\quad\hbox{ and }\quad\Gamma_\omega:=\partial\Omega_\omega.
$$

Let us recall the notions of viscosity sub- and supersolutions to \eqref{1.1} from \cite{kim2003}, with trivial modifications due to the drift and source terms and reduced to continuous functions. 
Consider the domain $\Sigma:=D\times (0,T)$ with $T>0$ and $D\subseteq\bbR^d$ open and bounded. %\textcolor{}{For convenient references in later arguments, we present the differential sub-inequalities in the viscosity sense on $\Gamma_u$ separately for subsolutions.}

%\begin{definition}\lb{D.2.11}
%Let $u=u(x,t)$ be a nonnegative continuous function in $\Sigma$. Consider $\phi\in C^{2,1}_{x,t}(\Sigma)$ such that $u-\phi$ has a local maximum in $\overline{\Omega_u}\cap\{t\leq t_0\}\cap\Sigma$ at $(x_0,t_0)$.  Then we say $u$ satisfies $u_t - |\nabla u|^2 - \vec{b}\cdot\nabla u \leq 0$ on $\Gamma_u$ {\it in the viscosity sense} if for every such $\phi$  we have
%$$
%   (\phi_t-|\nabla\phi|^2-\vec{b}\cdot\nabla\phi)(x_0,t_0)\leq 0\quad \text{ if } (x_0,t_0)\in\Gamma_u. 
%$$
%\end{definition}

\begin{definition}\lb{D.21}
A non-negative continuous function $u$ defined in $\Sigma$ is a viscosity subsolution of \eqref{1.1} if for every $\phi\in C^{2,1}_{x,t}(\Sigma)$ such that $u-\phi$ has a local maximum in $\overline{\Omega_u}\cap\{t\leq t_0\}\cap\Sigma$ at $(x_0,t_0)$, then
\begin{align*}
    -(\Delta\phi+f)(x_0,t_0)\leq 0\quad &\text{ if } u(x_0,t_0)>0\\
    (\phi_t-|\nabla\phi|^2-\vec{b}\cdot\nabla\phi)(x_0,t_0)\leq 0\quad &\text{ if } (x_0,t_0)\in\Gamma_u \hbox{ and } -(\Delta\phi+f)(x_0,t_0)>0.
\end{align*}
\end{definition}

The reason for the intersection of the set $\overline{\Omega_u}$ in the definition is for the simple fact that there are no globally smooth function that crosses the solution from above at a free boundary point.

\begin{definition}
A non-negative continuous function $u$ defined in $\Sigma$ is a {\it viscosity supersolution} of \eqref{1.1} if for every $\phi\in C^{2,1}_{x,t}(\Sigma)$ such that $u-\phi$ has a local minimum in $\{t\leq t_0\}\cap\Sigma$ at $(x_0,t_0)$, then
\begin{align*}
    -(\Delta\phi+f)(x_0,t_0)\geq 0\quad &\text{ if } u(x_0,t_0)>0\\
    (\phi_t-|\nabla\phi|^2-\vec{b}\cdot\nabla\phi)(x_0,t_0)\geq 0\quad &\text{ if } (x_0,t_0)\in\Gamma_u,\, |\nabla\phi(x_0,t_0)|\neq 0 \text{ and }-(\Delta\phi+f)(x_0,t_0)<0.
 \end{align*}
\end{definition}

\begin{definition}
We say that a continuous non-negative function $u$ is a {\it viscosity solution} of \eqref{1.1} if $u$ is both a viscosity subsolution and a viscosity supersolution of \eqref{1.1}.
\end{definition}

To state the comparison principle, we need the following definition:
\begin{definition}
We say that a pair of functions $u_0,v_0:\overline{D}\to [0,\infty)$ are strictly separated (denoted by $u_0\prec v_0$) in $D$  if $u_0(x)<v_0(x)$ in $\overline{\Omega_{u_0}}\cap\overline{D}$. This says that the supports of the two functions are separated and in the support of the smaller function, the two functions are strictly ordered.
%\begin{itemize}
%    \item $\overline{\Omega_{u_0}}$ restricted in $\overline{D}$ is compact;
%    \item $u_0(x)<v_0(x)$ in $\overline{\Omega_{u_0}}\cap\overline{D}$.
%\end{itemize}
\end{definition}

Below we recall the comparison principle \cite{kim2, CJK}. %\textcolor{}{I feel like we need to explain why we need this assumption?}
%\textcolor{}{When comparing a solution with a non-smooth sub- or supersolution, the strict separation is needed to ensure that the first crossing time is not time zero.

\begin{lemma}\lb{L.cp}
Let $u,v$ be respectively viscosity sub- and supersolutions in $\Sigma=D\times (0,T)$ with initial data $u_0\prec v_0$ in $D$. In addition suppose that $\limsup_{t\to0^+}\Omega_u(t) = \Omega_{u_0}$. If $u\leq v$ on $\partial D\times (0,T)$ and $u<v$ on $(\partial D\times (0,T) )\cap \overline{\Omega_u}$, then $u(\cdot,t)\prec v(\cdot,t)$ in $D$ for all $t\in [0,T)$.
\end{lemma}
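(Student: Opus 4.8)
\emph{Proof proposal.} The plan is the classical argument for Hele--Shaw type free boundary problems: argue by contradiction using the first time the strict separation is lost, after perturbing $u$ into a \emph{strict} subsolution so that the free boundaries cannot touch for the first time. Assume the conclusion fails and set $t_0 := \sup\{\, t\in[0,T) : u(\cdot,s)\prec v(\cdot,s) \text{ in } D \text{ for all } s\le t \,\}$. Since $u_0\prec v_0$ and $\prec$ is an open condition (here the hypothesis $\limsup_{t\to0^+}\Omega_u(t)=\Omega_{u_0}$ rules out an instantaneous jump of $\Omega_u$ at $t=0^+$), we get $t_0>0$; by continuity $u(\cdot,t_0)\preceq v(\cdot,t_0)$ with $\overline{\Omega_u(t_0)}\subseteq\overline{\Omega_v(t_0)}$, while the separation is not strict, so there is $x_0\in\overline{\Omega_u(t_0)}$ with $u(x_0,t_0)=v(x_0,t_0)$. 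The lateral conditions $u\le v$ on $\partial D\times(0,T)$ and $u<v$ on $(\partial D\times(0,T))\cap\overline{\Omega_u}$ force $x_0\in D$. Two cases arise.

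\textbf{(i) Interior touching: } $x_0\in\Omega_v(t_0)$, so $v(x_0,t_0)=u(x_0,t_0)>0$ and, by continuity, $u>0$ in a space--time neighborhood of $(x_0,t_0)$. On each time slice in that neighborhood $-\Delta u\le f\le -\Delta v$ in the viscosity sense, hence $w:=v-u$ is superharmonic, $w\ge0$ on $\overline{\Omega_u(t_0)}$, and $w(x_0,t_0)=0$. The strong maximum principle forces $w\equiv0$ on the connected component $U$ of $\Omega_v(t_0)$ containing $x_0$; then $u\equiv v$ on $U$, and since $u=0$ on $\partial U\subseteq\Gamma_u(t_0)$ we also have $\partial U\subseteq\Gamma_v(t_0)$, so $U$ is a component of $\Omega_v(t_0)$ too. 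Picking any point of $\partial U$ now produces a failure of the type treated in case (ii), at the same time $t_0$.

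\textbf{(ii) Free boundary touching: } $x_0\in\overline{\Omega_u(t_0)}\cap\Gamma_v(t_0)$ with $u(x_0,t_0)=v(x_0,t_0)=0$; this is the heart of the matter. For small $\eps>0$, replace $u$ by its hyperbolic rescaling about $(x_0,t_0)$,
\[
u_\eps(x,t):=(1-\eps)^{3}\,u\big((1+\eps)(x-x_0)+x_0,\ (1+\eps)(t-t_0)+t_0\big),
\]
restricted to a slightly smaller space--time box. This rescaling leaves the free boundary velocity invariant, contracts $|\nabla u_\eps|^2$ relative to the transport term by a factor $1-O(\eps)$, and turns the source into $(1-\eps)^3(1+\eps)^2 f((1+\eps)(\cdot-x_0)+x_0)$; since $f$ is $\bar\gamma$--H\"older one checks $-\Delta u_\eps\le f$, and since $\vec{b}$ is Lipschitz the drift perturbation obeys $|\vec{b}(x)-\vec{b}((1+\eps)(x-x_0)+x_0)|\le L\eps|x-x_0|$, which is higher order near $x_0$, so $u_\eps$ is a \emph{strict} subsolution of \eqref{1.1} in a neighborhood of $(x_0,t_0)$ where $|\nabla u|$ is bounded below. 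By strictness of $u_0\prec v_0$ and continuity of the supports at $t=0$, also $u_\eps(\cdot,0)\prec v_0$, so one restarts the first--crossing--time argument with $u_\eps$; a strict subsolution whose support merges with that of the supersolution $v$ at a free boundary point can be separated by a smooth barrier touching $u_\eps$ from above there, which yields a strict inequality in the velocity law and contradicts minimality of the crossing time. Letting $\eps\to0$ gives $u\preceq v$ for all $t$, and feeding $(1-\eps)^3u$ in once more upgrades $\preceq$ to $\prec$.

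The main obstacle is the degenerate subcase of (ii) in which $|\nabla u|(x_0,t_0)=0$: there the free boundary moves by the pure transport law $V=-\vec{b}\cdot\nu$, the supersolution condition at $\Gamma_v$ is vacuous (it requires $|\nabla\phi|\ne0$), and the hyperbolic rescaling no longer produces a strict subsolution because the drift perturbation is not dominated by $\eps|\nabla u|^2$. I would handle this by changing to the frame moving with the Lipschitz flow $\dot X=-\vec{b}(X)$, which is bi--Lipschitz and transforms \eqref{1.1} into a drift--free Hele--Shaw problem with a still--H\"older source, so that the zero--drift comparison applies near such points; alternatively, one regularizes $u$ and $v$ by the space--time sup/inf--convolutions of Section \ref{S.5}, recovering enough structure of the free boundary (semiconcavity, a.e.\ twice differentiability) to exclude the degenerate touching directly.
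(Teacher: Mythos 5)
The paper does not prove this lemma at all: it is quoted from \cite{kim2} and \cite{CJK}, so there is no in-paper proof to compare against. Judged on its own, your proposal has the right skeleton (first crossing time, perturb the subsolution to a strict one, derive a contradiction at the touching point), which is indeed the strategy of the cited works, but two of its key steps do not go through as written.

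First, the perturbation in case (ii) has the wrong sign. Writing $u_\eps = A\,u(B(x-x_0)+x_0, C(t-t_0)+t_0)$ with $A=(1-\eps)^3$, $B=C=1+\eps$, the free boundary identity becomes $\partial_t u_\eps = \tfrac{C}{AB^2}|\nabla u_\eps|^2+\tfrac{C}{B}\vec b\cdot\nabla u_\eps$ with $\tfrac{C}{AB^2}=\bigl((1-\eps)^3(1+\eps)\bigr)^{-1}=1+2\eps+O(\eps^2)>1$, so $u_\eps$ is not even a subsolution at $\Gamma_{u_\eps}$, let alone a strict one; to get the coefficient below $1$ you must amplify ($A>1$), but then $-\Delta u_\eps=AB^2 f(B(\cdot-x_0)+x_0)>f$ and the interior subsolution condition fails. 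Moreover even with your parameters the claim ``$-\Delta u_\eps\le f$'' does not follow from H\"older continuity alone: at points where $f$ vanishes but is positive nearby, $(1-\eps)^3(1+\eps)^2 f((1+\eps)(x-x_0)+x_0)$ can exceed $f(x)$. This tension between the interior and boundary conditions is precisely why the cited proofs do not use a pure dilation but a sup-convolution over space--time balls with shrinking radius (the shrinkage supplies the strict inward advection at the free boundary without touching the interior inequality).

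Second, the contradiction at the touching point is asserted rather than proved. At a first contact of $\Gamma_{u_\eps}$ and $\Gamma_v$ neither free boundary is known to be regular, so there need not exist any admissible $C^{2,1}$ test function touching $u_\eps$ from above (or $v$ from below) at that point, and without a test function the viscosity inequalities give you nothing. Producing interior/exterior tangent balls at the first contact point is exactly what the sup/inf-convolution regularization of both $u$ and $v$ is for; you mention it only as an ``alternative'' for the degenerate subcase, but it (or an equivalent device) is needed for the main case as well. The degenerate case $|\nabla u|(x_0,t_0)=0$ that you flag is real, but it is subsumed by the same regularization argument; the change-of-frame idea does not by itself remove the difficulty, since the transformed problem is still a free boundary problem whose touching point must be tested.
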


%CHECK THESE STRICT ORDERS WHEN USING THE CP.

{Parallel argument as in Lemma 2.5 \cite{kim2} yields that the requirement at the free boundary in Definition \ref{D.21} can be simplified for testing against functions with nonzero gradient.}

\begin{lemma}\lb{L.2.66}
Let $u$ be a continuous viscosity subsolution of \eqref{1.1} in $\Sigma$, and $(x_0,t_0)\in\Gamma_u\cap\Sigma$. Let $\phi\in C^{2,1}_{x,t}(\Sigma)$ such that $u-\phi$ has a local maximum in $\overline{\Omega_u}\cap\{t\leq t_0\}\cap\Sigma$ at $(x_0,t_0)$ and $|D\phi(x_0,t_0)|\neq 0$. Then
\[
(\phi_t-|\nabla\phi|^2-\vec{b}\cdot\nabla\phi)(x_0,t_0)\leq 0.
\]
\end{lemma}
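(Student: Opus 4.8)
The plan is to run the standard perturbation argument (as in Lemma~2.5 of \cite{kim2}) that upgrades a test function with nonvanishing gradient to one that in addition satisfies the extra ellipticity requirement $-(\Delta\phi+f)(x_0,t_0)>0$ built into Definition~\ref{D.21}, while preserving the one‑sided touching at $(x_0,t_0)$ and only slightly perturbing the quantity $\phi_t-|\nabla\phi|^2-\vec{b}\cdot\nabla\phi$ there. First I would normalize: since $(x_0,t_0)\in\Gamma_u$ and $u$ is continuous, $u(x_0,t_0)=0$, and after subtracting a constant we may assume $\phi(x_0,t_0)=0$; then the local maximum of $u-\phi$ over $\overline{\Omega_u}\cap\{t\le t_0\}\cap\Sigma$ at $(x_0,t_0)$ gives $\phi\ge u\ge 0$ on $\overline{\Omega_u}\cap\big(B_{\rho_0}(x_0)\times(t_0-\rho_0,t_0]\big)$ for some $\rho_0>0$. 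Arguing by contradiction, suppose $(\phi_t-|\nabla\phi|^2-\vec{b}\cdot\nabla\phi)(x_0,t_0)=:2\delta_0>0$. If $-(\Delta\phi+f)(x_0,t_0)>0$, then Definition~\ref{D.21} applied to $\phi$ already yields $(\phi_t-|\nabla\phi|^2-\vec{b}\cdot\nabla\phi)(x_0,t_0)\le 0$, a contradiction; so we may assume $-(\Delta\phi+f)(x_0,t_0)\le 0$.

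I would then set $\tilde\phi:=\Psi(\phi)$, where $\Psi\in C^\infty(\R)$ satisfies $\Psi(0)=0$, $\Psi'(0)=1+\eta$, $\Psi''(0)=-K$, and $\Psi(s)\ge s$ for $s\in[0,s_*]$. The constants are chosen in order: first $K>0$ large enough that $\Delta\phi(x_0,t_0)-K|\nabla\phi(x_0,t_0)|^2<-f(x_0,t_0)$ (possible since $|\nabla\phi(x_0,t_0)|\ne 0$); then $\eta\in(0,1)$ small enough that $2\delta_0-\eta|\nabla\phi(x_0,t_0)|^2>0$; then $s_*>0$ small enough that $\Psi(s)-s=\eta s-\tfrac{K}{2}s^2+o(s^2)\ge 0$ on $[0,s_*]$; and finally $\rho\in(0,\rho_0]$ small enough, using continuity of $\phi$ and $\phi(x_0,t_0)=0$, that $0\le\phi\le s_*$ on $\overline{\Omega_u}\cap\big(B_\rho(x_0)\times(t_0-\rho,t_0]\big)$. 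Such a $\Psi$ exists: follow the downward parabola near $0$ and, before it drops below the diagonal, glue it smoothly to a nondecreasing function that stays above the diagonal. If one insists on a test function defined on all of $\Sigma$, extend $\tilde\phi$ by a cutoff equal to $1$ near $(x_0,t_0)$ and supported in that cylinder; Definition~\ref{D.21} is local, so this is harmless and does not change the derivatives of $\tilde\phi$ at $(x_0,t_0)$.

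Then $\tilde\phi\in C^{2,1}_{x,t}$, $\tilde\phi(x_0,t_0)=0=u(x_0,t_0)$, and on the small cylinder $u-\tilde\phi=u-\Psi(\phi)\le u-\phi\le 0=(u-\tilde\phi)(x_0,t_0)$, so $u-\tilde\phi$ has a local maximum at $(x_0,t_0)$ over $\overline{\Omega_u}\cap\{t\le t_0\}\cap\Sigma$. Evaluating at $(x_0,t_0)$, where $\phi=0$ and hence $\nabla\tilde\phi=(1+\eta)\nabla\phi$, $\tilde\phi_t=(1+\eta)\phi_t$, $\Delta\tilde\phi=\Delta\phi-K|\nabla\phi|^2$, one gets $-(\Delta\tilde\phi+f)(x_0,t_0)>0$ and
\[
(\tilde\phi_t-|\nabla\tilde\phi|^2-\vec{b}\cdot\nabla\tilde\phi)(x_0,t_0)=(1+\eta)\big(2\delta_0-\eta|\nabla\phi(x_0,t_0)|^2\big)>0 .
\]
Now the free‑boundary clause of Definition~\ref{D.21} applies to $\tilde\phi$ and forces the left‑hand side to be $\le 0$, a contradiction. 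Hence $(\phi_t-|\nabla\phi|^2-\vec{b}\cdot\nabla\phi)(x_0,t_0)\le 0$, which is the claim.

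The one genuinely delicate point is the competition between needing $\Psi(\phi)\ge\phi$ on the positivity set, which pushes $\Psi$ toward being affine or convex, and needing $\Psi''(0)<0$ of large modulus to open the strict gap $-(\Delta\tilde\phi+f)(x_0,t_0)>0$. This is resolved by spending a slightly supercritical slope $\Psi'(0)=1+\eta>1$: it buys a window $[0,s_*]$ on which a concave $\Psi$ still dominates the diagonal, after which one localizes so that $\phi$ takes only values in that window. Everything else is sign bookkeeping and the (noncircular) ordering $K\to\eta\to s_*\to\rho$ of the constants.
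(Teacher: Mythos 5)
Your argument is correct and is precisely the standard perturbation argument the paper invokes by citing Lemma~2.5 of \cite{kim2}: compose the test function with a concave $\Psi$ having $\Psi'(0)=1+\eta$, $\Psi''(0)=-K$ to force $-(\Delta\tilde\phi+f)(x_0,t_0)>0$ while keeping $\tilde\phi\ge\phi\ge u$ locally and only slightly perturbing the free-boundary quantity. The only slip is cosmetic: $\Delta\tilde\phi(x_0,t_0)=(1+\eta)\Delta\phi-K|\nabla\phi|^2$ rather than $\Delta\phi-K|\nabla\phi|^2$, which is absorbed by taking $\eta$ small (or $K$ slightly larger) in your stated ordering of constants.
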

%The proof is similar to that of Lemma 2.5 \cite{kim2}. Note that in the proof, the function $\varphi$ constructed in \cite{kim2003} actually satisfies $-\Delta\varphi>M$ for any fixed $M\geq0$ (we pick $M:=\|f\|_\infty$) if the domain is sufficiently small.

%Let us define the spatial cone of directions
% \begin{equation}\label{cone}
% W_{\theta,\mu} := \left\{y\in\bbR^d: \left|\frac{y}{|y|}-\mu\right| \leq  2\sin\frac{\theta}{2}\right\} \quad\hbox{ with axis }\mu \in \mathcal{S}^{d-1} \hbox{ and } \theta \in (0,\pi/2].
% \end{equation}

\subsection{Monotonicity assumption}

For two vectors $\nu,\mu \in  \bbR^d \setminus\{0\}$,  the angle between them is denoted as
\beq\lb{2.0}
\langle \nu,\mu\rangle:=\arccos\left(\frac{\nu\cdot\mu}{|\nu||\mu|}\right)\in [0,\pi].
\eeq
%For $\mu\in\mathbb{R}^d$, $\nu\in \mathbb{R}^{d+1}$ and $\theta\in [0,\pi/2]$, we define the space and space-time cones by
%\begin{equation}\label{spacetime cone}
%W_{\theta,\mu} := \{p\in\mathbb{R}^{d}: \,\langle p,\mu\rangle \leq \theta\},   \quad  \widehat{W}_{\theta,\nu}:=\{p\in\mathbb{R}^{d+1}:\, \langle p,\nu\rangle \leq \theta\}.
%\end{equation} 
We denote a spacial cone to direction $\mu\in\mathbb{S}^{d-1}$ with opening $2\theta$ for $\theta\in [0,\frac\pi2]$ as
\begin{equation}\label{2.1}
W_{\theta,\mu} := \left\{p\in\mathbb{R}^{d}: \,\langle p,\mu\rangle \leq \theta\right\}.
\end{equation} 
Our basic hypothesis will be a monotonicity with respect to the cone $W_{\theta,\mu}$.

For a space-time function $u:\bbR^d\times [0,\infty)\to [0,\infty)$, we write 
\[
\Omega_u:=\{u(\cdot,\cdot)>0\},\quad \Omega_u(t):=\{u(\cdot,t)>0\},
\]
and
\[
\Gamma_u(t):=\partial \Omega_u(t),\quad \Gamma_u:=\bigcup_t\, \Gamma_u(t)\times\{t\} .
\]
Similarly, for a function $\omega:\bbR^d\to [0,\infty)$, we define
$$
\Omega_\omega:=\{\omega(\cdot)>0\}\quad\hbox{ and }\quad\Gamma_\omega:=\partial\Omega_\omega.
$$

\begin{definition}\lb{D.2.1}
Let $\Omega\subseteq\bbR^d$, $\theta\in [0,\frac\pi2]$, $\mu\in\bbS^{d-1}$, $\eps\in [0,1)$ and $a\geq 0$. 
We say that a continuous function $\omega:\Omega\to \bbR$  is {\it $(\eps,a)$-monotone} %with parameters $\theta,\mu$ (or 
with respect to a cone $W_{\theta,\mu}$ in $D\subseteq\Omega$ if for every $\eps'\geq \eps$ and $x\in D$ we have
%\[
%(1+\alpha\eps)\sup_{y\in B_{\eps'\sin\theta}(x)\cap \,\Omega}\omega(y)\leq \omega(x+\eps'\mu).
%\]
\[
(1+a\eps)\,\omega(x)\leq \inf_{y\in B_{\eps'\sin\theta}(x)\cap \,\Omega}\omega(y+\eps'\mu)
\]
%for all $x$ such that $x+\eps'\mu\in\Omega$. 
%\text[wouldn't it be more natural to the the intersection also with $W_{\theta,\mu}$, i.e. $B_{\eps'\sin\theta}(x)\cap W_{\theta,\mu}$ in the previous supremum?]} 

\end{definition}

%\text{}{since this condition is not a standard one (and is a bit stronger actually), we need to think carefully about why it is needed and whether this is natural. For one thing we can  drop this if $f$ is zero. Does $f$ affect the solution strongly enough in the interior that plain $\e$-monotonicity is not enough for regularity theory? This is very interesting question.}

\begin{comment}
 \textcolor{}{When $f$ is just a non-negative constant, it is enough to assume that the free boundary of the solution is close to the Lipschitz graph, whose quantitative version amounts to $(\e,0)$-monotonicity.} Actually, it was proved in \cite[Corollary 11.16]{CafSal} that, for harmonic functions, $(\eps,0)$-monotonicity in a neighbourhood implies that the level sets of the solution is Lipschitz in a smaller neighborhood. As mentioned above, this interior improvement of monotonicity is essential in the regularization mechanism of the flow.
%Here we need to assume that the entire profile of solution is close to a cone-monotone profile, which amounts to $(\e,\alpha)$-monotonicity (Lemma \ref{L.4.5}), to reach the same conclusion (Lemma \ref{L.2.4}): see Example \ref{E.3}, where the interior monotonicity fails with just $\e$-monotonicity. 
\textcolor{Here we need to assume that the solution also grows slightly in the monotone direction, which amounts to $(\e,\eps^\alpha)$-monotonicity, to reach the same conclusion (which is proved in Lemma \ref{L.2.4} and its remark): see Example \ref{E.3}, where the interior monotonicity fails with just $\e$-monotonicity. In Lemma \ref{L.4.5}, we show that if the free boundary is known to be Lipschitz continuous, then the solution is monotone and satisfying the $(\e,a)$-monotonicity for some $a>0$ and for any small $\eps>0$ near the free boundary.}

\end{comment}

Here we need to assume that the solution also grows slightly in the monotone direction, which amounts to $(\e,\eps^\alpha)$-monotonicity, to reach the same conclusion (which is proved in Lemma \ref{L.2.4} and its remark): see Example \ref{E.3}, where the interior monotonicity fails with just $\e$-monotonicity. In Lemma \ref{L.4.5}, we show that if the free boundary is known to be Lipschitz continuous, then the solution is monotone and satisfying the $(\e,a)$-monotonicity for some $a>0$ and for any small $\eps>0$ near the free boundary.

\begin{remark}\label{remark_mon}
%1. Actually we only need $\eps'=\eps$ in Definition \ref{D.2.1}. Indeed, it is not hard to see that if $\omega$ satisfies such weak $(\eps,\alpha)$-monotonicity with respect to $W_{\theta,\mu}$ for $\eps'=\eps$, then for any $\theta'<\theta$, we can find $N=N(\theta,\theta')$ such that $\omega$ is $(N\eps,\alpha(N-1)\eps)$-monotone with respect to $W_{\theta',\mu}$ in the sense of the above definition. \textcolor{}{let's see if we need this remark or not and then think} 

%\smallskip

1. It is by now a well-known fact that the $(\eps,0)$-monotonicity of a positive harmonic function leads to full monotonicity in a smaller neighborhood, see for instance \cite[Corollary 11.16]{CafSal}. This fact is essential in the regularity analysis for solutions of \eqref{1.1} with $f=0$, since the stronger monotonicity in the positive set propagates to the free boundary so that its small-scale oscillation diminishes in unit time scale.  However when $f$ is present and $f$ is not a constant, this is not true. Indeed, in such cases, $\nabla_{\mu} \omega$ does not necessarily have a sign even if $\e$ is small compared to the $C^n$ norm of $f$ for any $n\geq 1$, see Example \ref{E.3}. Thus the assumption of $a\neq 0$ is sharp when $f$ is not a constant. With $(\e,a)$ monotonicity, the interior full monotonicity is shown in  Lemma \ref{L.2.4} and its remark.

\smallskip

2. If $f$ is a non-negative constant, our results hold even if $\alpha=\infty$. %When $f$ is not just a constant, solutions satisfying $(0,\eps)$-monotonicity on some open set are not necessarily monotone in the interior of the set. 
We refer readers to the remarks after Lemma \ref{L.2.4} for the detailed discussion.
\end{remark}

Below for any $\bar\gamma$-H\"{o}lder continuous function (with $\bar\gamma\in (0,1)$) $g:\Omega\to\bbR$ with $\Omega\subseteq\bbR^d$ an open set, we denote its $\bar\gamma$-H\"{o}lder seminorm and $\bar\gamma$-H\"{o}lder norm, respectively, as
\[
\|g\|_{C^{0,\bar\gamma}(\Omega)}:=\sup_{x,y\in\Omega,x\neq y}\frac{|g(x)-g(y)|}{|x-y|^{\bar\gamma}}\quad\text{and}\quad\|g\|_{C^{\bar\gamma}(\Omega)}:=\|g\|_{L^\infty(\Omega)}+\|g\|_{C^{0,\bar\gamma}(\Omega)}.
\]
When there is no ambiguity regarding the domain, we will drop $\Omega$ from the notations of $C^{0,\bar\gamma}(\Omega)$ and $C^{\bar\gamma}(\Omega)$, and we will simply write  %$\|g\|_{C^\bar\gamma}:=\|g\|_{C^{\bar\gamma}(\Omega)}$, $\|g\|_{C^{0,\bar\gamma}}:=\|g\|_{C^{0,\bar\gamma}(\Omega)}$ and
$\|g\|_\infty:=\|g\|_{L^\infty(\Omega)}$, and the Lipschitz constant $\|g\|_{\Lip}:=\|g\|_{C^{0,1}}$.
%\text{} If $\bar\gamma=1$, $\|g\|_{C^{0,1}}$ denotes the Lipschitz constant of $g$.}

\subsection{Properties of harmonic and superharmonic functions}
First we recall the well-known Dahlberg lemma.

\begin{lemma}{~\rm (\cite{dah})}\lb{L.dah}
Let $\omega_1,\omega_2$ be two non-negative harmonic functions in a domain $D\subseteq \bbR^d$ of the form
\[
\left\{(x',x_d)\in \bbR^{d-1}\times\bbR\,:\, |x'|<2,\,|x_d|< 2\bar M,\, x_d<g(x')\right\}
\]
with $g\colon\R^{d-1}\to\R$ a Lipschitz function with Lipschitz constant less than $\bar{M}$ and $g(0)=0$. Assume further that $\omega_1=\omega_2=0$ along the graph of $g$. Then, there exists $C>1$ depending only on $d,\bar{M}$ such that
\[
\frac{1}{C}\leq \frac{\omega_1(x',x_d)}{\omega_2(x',x_d)}\cdot \frac{\omega_2(0,\bar M)}{\omega_1(0,\bar M)}\leq C
\]
in $\left\{(x',x_d)\,:\, |x'|<1,\,|x_d|<\bar M,\, x_d<g(x')\right\}$.
\end{lemma}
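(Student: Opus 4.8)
The plan is to prove Lemma~\ref{L.dah} by the classical two-step route for the boundary Harnack principle in Lipschitz domains: an upper estimate (the Carleson estimate) bounding a non-negative harmonic function in $B_\rho(Q)\cap D$ by its value at an interior ``corkscrew'' point, and a matching lower estimate; applying both to $\omega_1$ and $\omega_2$ and normalizing at the reference point $(0,\bar M)$ then yields the two-sided ratio bound. Concretely, after rescaling the $x_d$-variable by $\bar M$ we may regard $D$ as the region below (resp.\ above) a Lipschitz graph $\{x_d=g(x')\}$ of constant $\lesssim 1$, and the Lipschitz condition supplies, at every boundary point $Q$ and every scale $\rho$, both an interior cone — equivalently a corkscrew point $A_\rho(Q)$ with $\dist(A_\rho(Q),\partial D)\geq c_0\rho$ and $|A_\rho(Q)-Q|\leq C_0\rho$ — and an exterior cone of comparable aperture. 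Interior Harnack chained over a bounded number of balls shows both $\omega_i$ are comparable to a fixed constant on any compact subset of $D$ away from the graph; so after normalizing $\omega_1(0,\bar M)=\omega_2(0,\bar M)=1$ the whole problem reduces to controlling the ratio at points $x\in\{|x'|<1,\,|x_d|<\bar M\}\cap D$ lying close to the graph.

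Step~1 (Carleson estimate): for $\omega\geq 0$ harmonic in $D$ vanishing on $\partial D\cap B_{2\rho}(Q)$, I claim $\sup_{B_\rho(Q)\cap D}\omega\leq C\,\omega(A_{2\rho}(Q))$. I would prove this in the standard way: an exterior-cone barrier — a positive superharmonic comparison function built from a cone-harmonic function homogeneous of some degree $\beta_1=\beta_1(d,\bar M)>0$ and vanishing on the exterior cone — shows $\omega$ is pointwise small near $\partial D$ relative to its value at the corkscrew point of the scale in question, and then an iteration over dyadic subballs combined with Harnack chains upgrades this to the stated sup bound. This uses only interior Harnack, the maximum principle, and the cone geometry, with constants depending only on $d$ and $\bar M$.

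Step~2 (lower bound and ratio). Using the interior cone at a closest boundary point $Q_x$ to $x$, a subharmonic cone barrier (cone-harmonic, homogeneous of degree $\beta_0=\beta_0(d,\bar M)$, vanishing on the lateral surface), and a Harnack chain from $x$ to $A_{d(x)}(Q_x)$, one gets $\omega(x)\geq c\,\omega(A_{d(x)}(Q_x))$; combined with Step~1 at scale $d(x)$ this yields $\omega(x)\asymp \omega(A_{d(x)}(Q_x))$ for every non-negative harmonic $\omega$ vanishing near $Q_x$, so it suffices to compare $\omega_1,\omega_2$ at corkscrew points. The genuine obstacle is that the naive combination ``$\omega_1\leq$ Carleson bound, $\omega_2\geq$ cone-barrier bound'' produces a factor degenerating like a negative power of $d(x)$ as $d(x)\to 0$, since the exterior and interior cone barriers carry different homogeneity exponents; this is the one place where the Lipschitz self-similarity must be exploited. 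I would resolve it by the Caffarelli--Fabes--Mortola--Salsa iteration: with $r_k=2^{-k}\rho$, $a_k=\omega_1(A_{r_k}(Q))$, $b_k=\omega_2(A_{r_k}(Q))$, the Carleson estimate at each scale gives $a_{k+1}\leq C a_k$ and $b_{k+1}\leq C b_k$, and a diminish-of-oscillation argument — comparing $\omega_1$ with multiples $\lambda\,\omega_2$ on the non-contact boundary $\partial B_{r_k}(Q)\cap D$, where both are controlled by Step~1, and invoking the maximum principle — shows the oscillation of $\log(a_k/b_k)$ contracts geometrically in $k$, hence $a_k/b_k$ stays within a fixed multiplicative constant of $a_0/b_0\asymp 1$. (Equivalently one may run Dahlberg's original argument via the doubling property of harmonic measure and the $A_\infty$ relation between harmonic measure and surface measure on a Lipschitz graph; I prefer the barrier/iteration route as it keeps the dependence on $d,\bar M$ transparent.)

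Assembling: for $x\in\{|x'|<1,\,|x_d|<\bar M\}\cap D$ with $d(x)$ small, Step~2 gives $\omega_i(x)\asymp\omega_i(A_{d(x)}(Q_x))$, and tracking the chain of balls from $A_{d(x)}(Q_x)$ back to the normalization point gives $\omega_1(A_{d(x)}(Q_x))/\omega_2(A_{d(x)}(Q_x))\asymp 1$; for $x$ with $d(x)$ bounded below, interior Harnack already gives $\omega_i(x)\asymp 1$. Combining the two regimes yields $C^{-1}\leq \omega_1(x)/\omega_2(x)\leq C$ with $C=C(d,\bar M)$, which is the assertion after undoing the normalization at $(0,\bar M)$. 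The main obstacle, as stressed above, is Step~2: obtaining the ratio bound uniformly up to the boundary rather than the weaker power-of-$d(x)$ bound that the cone barriers alone yield.
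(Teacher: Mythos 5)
The paper does not prove Lemma~\ref{L.dah} at all: it is quoted verbatim as Dahlberg's boundary Harnack principle and attributed to \cite{dah}, so there is no in-paper argument to compare against. Your outline is a correct rendition of the standard proof as it appears in the literature (Dahlberg's original harmonic-measure route, or the Caffarelli--Fabes--Mortola--Salsa barrier-and-iteration route that you favor): the Carleson estimate via exterior-cone barriers and dyadic iteration, the equivalence $\omega(x)\asymp\omega(A_{d(x)}(Q_x))$ via Harnack chains at comparable scales, and — crucially — the recognition that the naive combination of interior and exterior cone barriers only yields a ratio bound degenerating like a negative power of $d(x)$, so that the uniform two-sided bound genuinely requires the oscillation-decay iteration on $\log\bigl(\omega_1(A_{r_k})/\omega_2(A_{r_k})\bigr)$ (equivalently, the doubling/$A_\infty$ properties of harmonic measure). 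That last point is the technical heart of the theorem and is only sketched in your proposal, but it is correctly identified and the claimed contraction is exactly what the CFMS argument delivers, with constants depending only on $d$ and $\bar M$ as required. The localization to the half-cylinder $\{|x'|<1,\ |x_d|<\bar M\}$ is handled correctly, since the boundary Harnack principle is local and the functions vanish on a full unit neighborhood of the relevant boundary portion.
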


The following lemma follows from Dahlberg's Lemma and the explicit form of harmonic functions in a cone domain. While the proof is basic, we present it here given the importance of the constant $\theta_{\beta}$ in our analysis ($\theta'_\beta$ will only be used in Lemma \ref{L.4.5}). 

\begin{lemma}\lb{L.3.1}
For given $\theta\in (0,\pi)$, $\mu\in\bbS^{d-1}$, consider a harmonic function $\omega$ in $W_{\theta,\mu}\cap B_2$ such that $\sup_{ W_{\theta,\mu}\cap B_1 }\omega= 1$  and $\omega=0$ on $\partial W_{\theta,\mu}\cap B_2$. Then there exists $c\in (0,1)$ such that for any $\beta\in (1,2)$, there are $\theta_{\beta},\theta'_\beta\in (0,\frac{\pi}{2})$ (which are continuous and monotonely decreasing in $\beta\in(1,2)$, and converge to $\frac{\pi}{2}$ as $\beta\to  1$) such that  we have
\begin{equation}\label{angle_1}
\omega(s\mu)\geq c\,s^\beta\quad\text{ for all }s\in (0,1) \hbox{ if } \theta \geq \theta_{\beta}
\end{equation}
and
\begin{equation}\label{angle_2}
\omega(s\mu)\leq s^{2-\beta}/c \quad\text{ for all }s\in (0,1) \hbox{ if } \theta \leq \pi-\theta'_\beta.
\end{equation}
%\item For any $\theta\in (0,\frac{\pi}{2}]$, there exist $\beta_\theta\geq 1$ and $c>0$ such that \textcolor{}{do you use this?} 
%\[
%\omega(s\mu)\geq c\,s^{\beta_\theta}\quad\text{ for all }s\in (0,1).
%\]

\end{lemma}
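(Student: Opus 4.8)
\textbf{Proof proposal for Lemma \ref{L.3.1}.}

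The plan is to reduce everything to the explicit spherical-harmonic expansion of a harmonic function in a cone, using Dahlberg's Lemma (Lemma \ref{L.dah}) to pass from ``comparable near the vertex'' to the precise growth rates. First I would recall the classical fact that a positive harmonic function in the infinite cone $\{p : \langle p,\mu\rangle < \theta\}$, vanishing on the lateral boundary, is (up to a positive constant) of the form $r^{\alpha(\theta)}\Phi_\theta(p/|p|)$, where $\Phi_\theta>0$ is the first Dirichlet eigenfunction of the Laplace--Beltrami operator on the spherical cap $\{\xi\in\bbS^{d-1}:\langle\xi,\mu\rangle<\theta\}$ and $\alpha(\theta)>0$ is the corresponding homogeneity exponent, determined by $\lambda_1(\theta)=\alpha(\theta)(\alpha(\theta)+d-2)$ with $\lambda_1(\theta)$ the first eigenvalue. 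The key monotonicity input is that $\theta\mapsto\lambda_1(\theta)$ is continuous and strictly decreasing (larger cap, smaller first eigenvalue), hence $\theta\mapsto\alpha(\theta)$ is continuous and strictly decreasing, with $\alpha(\pi/2)=1$ (the half-space case, where the harmonic function is linear), $\alpha(\theta)\to 0$ as $\theta\to\pi$, and $\alpha(\theta)\to\infty$ as $\theta\to 0$. Therefore, given $\beta\in(1,2)$, I define $\theta_\beta:=\alpha^{-1}(\beta)\in(0,\pi/2)$ and $\theta'_\beta:=\alpha^{-1}(2-\beta)\in(0,\pi/2)$; continuity and monotonicity of these in $\beta$, together with convergence to $\pi/2$ as $\beta\to1$, follow immediately from the corresponding properties of $\alpha$. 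Note that $\pi-\theta'_\beta$ is the angle whose homogeneity exponent is $2-\beta<1$, i.e.\ a cone larger than a half-space, which matches the "slow growth'' regime in \eqref{angle_2}.

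Next I would transfer this to our function $\omega$, which is harmonic only in $W_{\theta,\mu}\cap B_2$, not in the full infinite cone. For \eqref{angle_1}: assume $\theta\ge\theta_\beta$, so $\alpha(\theta)\le\beta$. Apply Dahlberg's Lemma (Lemma \ref{L.dah}), in the form adapted to the cone geometry by straightening a neighborhood of a lateral boundary point or, more directly, via the standard boundary-Harnack comparison of $\omega$ with the pure homogeneous harmonic function $\psi_\theta(p):=|p|^{\alpha(\theta)}\Phi_\theta(p/|p|)$ (suitably normalized) in $W_{\theta,\mu}\cap B_2$: both vanish on $\partial W_{\theta,\mu}\cap B_2$ and are nonnegative harmonic, so their ratio is bounded above and below by a constant depending only on $d$ and $\theta$ in $W_{\theta,\mu}\cap B_1$, after normalizing both at the interior point $(1/2)\mu$ (which lies well inside the cone since $\theta$ is close to $\pi/2$; one may fix $\bar M$ in Dahlberg's statement in terms of $\tan(\pi/2-\theta)$). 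Since $\sup_{W_{\theta,\mu}\cap B_1}\omega=1$, the normalization constant is controlled, and one gets $\omega(s\mu)\ge c\,\psi_\theta(s\mu)=c'\,s^{\alpha(\theta)}\ge c''\,s^\beta$ for $s\in(0,1)$, using $s^{\alpha(\theta)}\ge s^\beta$ because $\alpha(\theta)\le\beta$ and $s\le1$. The constant $c$ can be taken uniform because $\theta$ ranges over the compact set $[\theta_\beta,\pi/2]$ and, more importantly for the statement, over a neighborhood of $\pi/2$ where all the Dahlberg constants are uniformly controlled. The proof of \eqref{angle_2} is the mirror image: if $\theta\le\pi-\theta'_\beta$ then $\alpha(\theta)\ge\alpha(\pi-\theta'_\beta)=2-\beta$, and the same boundary-Harnack comparison gives $\omega(s\mu)\le C\,s^{\alpha(\theta)}\le C\,s^{2-\beta}$ since $\alpha(\theta)\ge 2-\beta$ and $s\le1$; absorbing $C$ into $1/c$ finishes it. One subtlety: as $\theta\to\pi$ the relevant Dahlberg/boundary-Harnack constants can blow up, so for \eqref{angle_2} I would either restrict attention to $\theta$ bounded away from $\pi$ (which is all that is needed downstream, since $\beta$ is close to $1$ forces $\theta'_\beta$ close to $\pi/2$, hence $\pi-\theta'_\beta$ close to $\pi/2$, not close to $\pi$) or handle the near-$\pi$ range by a crude maximum-principle bound comparing $\omega$ to $1$ on $B_1$.

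The main obstacle is the precise dependence of constants: making sure the constant $c$ in both inequalities can be chosen independent of $\theta$ in the relevant ranges, and that $\theta_\beta,\theta'_\beta$ depend only on the dimension (through $\alpha=\alpha_d$) and on $\beta$, not on $\omega$. This is handled by noting $\alpha=\alpha_d$ is a fixed function of $\theta$ determined by $\lambda_1$, and by invoking uniform boundary-Harnack estimates on the family of Lipschitz domains $W_{\theta,\mu}\cap B_2$ for $\theta$ in a compact subinterval of $(0,\pi)$ — these have uniformly controlled Lipschitz character. A secondary technical point is recasting Dahlberg's Lemma, stated for a graph domain $\{x_d<g(x')\}$, into the cone setting: near any lateral boundary point of $W_{\theta,\mu}$ the boundary is a hyperplane through the origin, so it is literally a graph with Lipschitz constant $\cot\theta<\tan(\pi/2-\theta)$, and a finite cover of $\partial W_{\theta,\mu}\cap B_{3/2}$ by such charts plus an interior Harnack chain connecting to the point $(1/2)\mu$ yields the global comparison; this is the ``basic'' bookkeeping the authors allude to.
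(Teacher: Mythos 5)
Your proposal is correct and follows essentially the same route as the paper: identify the homogeneous positive harmonic function in the cone whose exponent is determined by the first Dirichlet eigenvalue of the spherical cap (the paper cites Ancona for this), use continuity and monotonicity of that exponent in $\theta$ together with the value $1$ at $\theta=\pi/2$ to define $\theta_\beta,\theta'_\beta$, and conclude by Harnack plus Dahlberg's lemma; you in fact supply more detail than the paper's one-line conclusion. One small slip: you write $\theta'_\beta:=\alpha^{-1}(2-\beta)$, which lies in $(\pi/2,\pi)$ rather than $(0,\pi/2)$; your subsequent use makes clear you mean $\theta'_\beta:=\pi-\alpha^{-1}(2-\beta)$, so that $\pi-\theta'_\beta$ is the angle with exponent $2-\beta$.
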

\begin{proof}
This result is a direct consequence of \cite[Theorem 1.1]{anc}. The theorem proves the existence of a harmonic function in $W_{\theta,\mu}$ such that it vanishes on the boundary of $W_{\theta,\mu}$. Moreover, the harmonic function is of the following form
\[
h(r\vartheta)=c\,r^{\beta_\theta}\varphi(\vartheta)
\]
where $c,r>0$, $\vartheta\in\Sigma_{\theta}$ with $\Sigma_{\theta}:=\bbS^{d-1}\cap W_{\theta,\mu}$, and $\varphi$ is a positive function in $\Sigma_{\theta}$ vanishing on $\partial \Sigma_{\theta}$. The constant $\beta_\theta>0$ is given by
\[
\beta_\theta:=\frac{-d+2+\sqrt{(d-2)^2+4\lambda_1(\Sigma_{\theta})}}{2}
\]
where $\lambda_1(\Sigma_{\theta})$ denotes the first eigenvalue of the opposite of the Dirichlet Laplacian in $\Sigma_{\theta}$, i.e.
\[
\lambda_1(\Sigma_\theta) = \inf\left\{\int_{\bbS^{d-1}} |\nabla u|^2 d\sigma\,:\, u\in C_c^1(\Sigma_\theta),\,\int_{\bbS^{d-1}}|u|^2d\sigma\geq 1\right\},
\]
with $\sigma$ the standard Riemannian spherical measure in $\bbS^{d-1}$.
It is not hard to see that $\beta_\theta$ is non-increasing in $\theta$, and $\beta_{\frac{\pi}{2}}=1$ (since $h=x\cdot\mu$ is a positive harmonic function in $W_{\frac{\pi}{2},\mu}$).
We refer readers to \cite{betz1983bounds} for several bounds of $\lambda_1(\Sigma_\theta)$.
Since $\lambda_1(\Sigma_{\theta})$ and $\beta_{\theta}$ depend continuously on $\theta$, $\beta_\theta$ can be arbitrarily close to $1$ if $\theta$ is large (close to $\frac{\pi}{2}$). The conclusions follow immediately from Harnack's inequality and Dahlberg's lemma.
\end{proof}

\begin{remark}
When $d=2$ the formula can be written as
\[
\theta_{\beta}=\frac{\pi}{2\beta}\quad\text{and}\quad \theta'_\beta=\max\left[\pi-\frac{\pi}{2(2-\beta)},0\right].
\]
In particular one can deduce that $\theta_{\beta} \geq \theta_{2} \geq \frac\pi4$ when $d\geq 2$, by comparison principle for harmonic functions.
\end{remark}

Next we show some properties of superharmonic functions.

\begin{lemma}\lb{l.2.2}
Let $f:\bbR^d\to [0,\infty)$ be %H\"{o}lder 
continuous, let $r>0$ and let $\omega:\overline{B_{2 r}}\to [0,\infty)$, $\omega\in C^2(\overline{B_{2 r}})$  be a classical solution to 
$$-\Delta \omega=f,\ \ {\rm{in}}\ B_{2 r}.$$
Then there exists a constant $C>0$, depending only on the dimension, such that for all $x\in B_{r}$,
%There exists a dimensional constant $C>0$ such that for any $r>0$ and any non-negative continuous function $\omega:\overline{B}_{2 r}\to [0,\infty)$ satisfying  $-\Delta \omega=f$ in $B_{2 r}$ with Lipschitz continuous $f:\bbR^d\to \bbR$, we have for all $x\in B_{r}$, 
\begin{align*}
    \omega(x)\leq C\omega(0)+Cr^2\|f\|_{L^\infty(B_{2r})},\quad\quad
    |\nabla \omega(x)|\leq Cr^{-1}\omega(0)+ Cr\| f\|_{L^\infty(B_{2r})}.%\left(r\| f\|_{L^\infty(B_{2r})}+ r^2\|\nabla f\|_{L^\infty(B_{2r})}\right).
\end{align*}

Moreover if $\nabla_\mu\omega\geq 0$ for some $\mu\in\bbS^{d-1}$ in $B_{2r}$, then for all $x\in B_r$,
\[
\nabla_\mu\omega(x)\leq C\nabla_\mu\omega(0)+Cr\|f\|_{L^\infty(B_{2r})}.
\]
\end{lemma}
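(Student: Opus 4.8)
\textbf{Proof proposal for Lemma \ref{l.2.2}.}

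The plan is to decompose $\omega$ into its harmonic part and a particular solution of the Poisson equation, then apply classical interior estimates for harmonic functions (Harnack, gradient estimates) to the harmonic part and explicit Newtonian-potential bounds to the particular solution. Concretely, let $w$ solve $-\Delta w = f$ in $B_{2r}$ with $w = 0$ on $\partial B_{2r}$; by the maximum principle $0 \le w \le C r^2 \|f\|_{L^\infty(B_{2r})}$ on $B_{2r}$, and by standard elliptic estimates $|\nabla w| \le C r \|f\|_{L^\infty(B_{2r})}$ on $B_r$ (rescaling the unit-ball estimate). Then $h := \omega - w$ is harmonic and non-negative on $B_{2r}$ (since $\omega \ge 0$ and $w$ vanishes on the boundary, $h = \omega \ge 0$ on $\partial B_{2r}$, hence $h \ge 0$ inside by the minimum principle). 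Now $h(0) = \omega(0) - w(0) \le \omega(0)$, so by Harnack's inequality on $B_{3r/2} \supset B_r$, $\sup_{B_r} h \le C h(0) \le C\omega(0)$, and by the interior gradient estimate for non-negative harmonic functions, $\sup_{B_r}|\nabla h| \le C r^{-1} \sup_{B_{3r/2}} h \le C r^{-1} h(0) \le C r^{-1}\omega(0)$. Adding the two contributions yields the first two displayed inequalities.

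For the third inequality, observe that if $\nabla_\mu \omega \ge 0$ in $B_{2r}$, then $v := \nabla_\mu \omega$ is a non-negative function on $B_{2r}$ (after a harmless shrinking to $B_{2r-\eta}$ so that difference quotients make sense, or simply using $\omega \in C^2$) solving $-\Delta v = \nabla_\mu f$ in the distributional sense; but rather than differentiating $f$ (which is only assumed continuous), I would instead split $v = \nabla_\mu h + \nabla_\mu w$. The term $\nabla_\mu h$ is a harmonic function, but it need not be signed, so I cannot apply Harnack directly to it. Instead I use that $v = \nabla_\mu h + \nabla_\mu w \ge 0$, so $-\nabla_\mu h \le \nabla_\mu w \le Cr\|f\|_{L^\infty(B_{2r})}$, i.e. $\nabla_\mu h$ is bounded below by $-Cr\|f\|_\infty$ on $B_{3r/2}$. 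Then $\nabla_\mu h + Cr\|f\|_\infty$ is a non-negative harmonic function on $B_{3r/2}$, so by Harnack, for $x \in B_r$,
\[
\nabla_\mu h(x) + Cr\|f\|_\infty \le C\bigl(\nabla_\mu h(0) + Cr\|f\|_\infty\bigr),
\]
whence $\nabla_\mu h(x) \le C\nabla_\mu h(0) + Cr\|f\|_\infty$. Combining with $\nabla_\mu w(x) \le Cr\|f\|_\infty$ and $\nabla_\mu h(0) = \nabla_\mu\omega(0) - \nabla_\mu w(0) \le \nabla_\mu\omega(0) + Cr\|f\|_\infty$ gives $\nabla_\mu\omega(x) \le C\nabla_\mu\omega(0) + Cr\|f\|_\infty$, as desired.

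\emph{Main obstacle.} The only subtlety is the third inequality: one must avoid differentiating $f$, which is merely continuous, so the Harnack inequality must be applied not to $\nabla_\mu h$ (unsigned) but to $\nabla_\mu h$ shifted by the explicit $Cr\|f\|_\infty$ bound coming from the particular solution, using the sign of $\nabla_\mu\omega$ to control the shift. A minor technical point is justifying that $\nabla_\mu h$ is genuinely harmonic and that the interior estimates apply on the slightly larger ball $B_{3r/2}$; this is routine since $\omega \in C^2(\overline{B_{2r}})$ and harmonic functions are smooth, and all constants depend only on the dimension after the standard parabolic-free rescaling $x \mapsto x/r$.
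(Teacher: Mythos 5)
Your proof is correct. It takes a route that is presentationally different from the paper's, though the two are close cousins: the paper works directly with the Green's function representation $\omega(x)=-\int_{\partial B_{2r}}\omega\,\partial_nG(x,\cdot)\,d\sigma+\int_{B_{2r}}fG(x,\cdot)$ and the kernel bounds $-\partial_nG(x,y)\leq -C\partial_nG(0,y)$, $\int G(x,\cdot)+\int|\nabla_xG(x,\cdot)|\leq C$, whereas you split $\omega=h+w$ into the harmonic extension of the boundary data plus the zero-boundary-data Poisson solution and invoke the named theorems (Harnack, maximum principle, interior gradient estimate) on each piece. These are the same estimates in different clothing --- your Harnack step is the paper's Poisson-kernel bound, and your bound $0\le w\le Cr^2\|f\|_\infty$ is the paper's $\int G\le C$. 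The genuine divergence is in the third inequality: the paper differentiates the representation formula and exploits $\nabla_\mu\omega\ge 0$ on the boundary sphere $\partial B_{2r}$ (together with $|\nabla_x\partial_nG(x,y)|\le -C\partial_nG(0,y)$), while you exploit the sign in the interior to get the lower bound $\nabla_\mu h\ge -Cr\|f\|_\infty$ on $B_{3r/2}$ and then apply Harnack to the shifted harmonic function $\nabla_\mu h+Cr\|f\|_\infty$; that shift trick is clean and correctly avoids ever differentiating $f$. Two small points you should make explicit: the gradient bound $|\nabla w|\le Cr\|f\|_\infty$ for merely continuous $f$ is not a Schauder estimate but follows from the Newtonian potential (local integrability of $\nabla N$) plus an interior gradient estimate for the harmonic remainder, and you need it on $B_{3r/2}$ (not just $B_r$) for the third inequality --- both are routine since $\dist(B_{3r/2},\partial B_{2r})=r/2$ is comparable to $r$.
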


\begin{proof}
Set $\tilde{\omega}(x):=\omega(rx)$, and so
$-\Delta \tilde\omega=\tilde f$ in $B_2$ with $\tilde f(x):=r^2f(rx)$.
Let $G$ be the Green's function of Laplacian in $B_{2}$. Then, we have the representation formula (see e.g., \cite{evans})
\beq\lb{2.31}
\tilde\omega(x)=-\int_{\partial B_{2}}\tilde\omega(y) \partial_nG(x,y)d\sigma(y)+\int_{B_{2 }}\tilde f(y)G(x,y)dy,
\eeq
where $n$ denotes the outward pointing unit normal to $\partial B_{1}$. Notice that there exists $C=C(d)>0$ such that 
\beq\lb{21.1}
\sup_{x\in B_1}\left(\int_{B_{2 }}G(x,z)dz+\int_{B_{2 }}|\nabla_x G(x,z)|dz\right)\leq C,
\eeq
and
$
0<-\partial_nG(x,y)\leq  -C\partial_nG(0,y)
$  for $(x,y)\in B_1\times\partial B_{2 }$.
Therefore, also using that $\omega\geq 0$ and \eqref{2.31} with $x=0$, we get for $x\in B_1$ that
\begin{align*}
\tilde\omega(x)&\leq -C\int_{\partial B_{2}}\tilde\omega(y) \partial_nG(0,y)d\sigma(y)+C\int_{B_{2 }}\tilde f(y)G(0,y)dy+(C+1)\sup_{B_1}\|\tilde f\|_{L^{\infty}(B_{1})}\int_{B_{ 2}}G(\cdot,y)dy\\
&\leq C\tilde\omega(0)+C(C+1)\|\tilde f\|_{L^{\infty}(B_{2})}%\leq C'\|\tilde f\|_{L^{\infty}(B_{2})}.
\end{align*}
By rewriting this estimate for $\omega$ and $f$, this yields the first inequality of the conclusion after enlarging $C$.

Next since $|\nabla \partial_nG(x,y)|\leq -C\partial_nG(0,y)$ for any $(x,y)\in B_{1}\times \partial B_{2 }$, taking derivatives on both sides of \eqref{2.31} yields%\text{why do we have so many terms?} {I add the second term, and "subtract" it}
\begin{align*}
|\nabla \tilde\omega(x)| &\leq -C\int_{\partial B_{2 }}\tilde\omega(y) \partial_nG(0,y)d\sigma(y)%+C\int_{B_{2 }}\tilde f(y)G(0,y)dy\\
%&\qquad +C\left|\int_{B_{2 }}\tilde f(y) G(0,y)dy\right|
+\left|\int_{B_{2}} \tilde f(y)\nabla_x G(x,y)dy\right|\leq C\tilde{\omega}(0)+C\| \tilde f\|_{L^{\infty}(B_{2})}%+Cr^2\|\nabla f\|_{L^{\infty}(B_{2r})}
\end{align*}
where in the last inequality we used \eqref{2.31} with $x=0$ and \eqref{21.1}.
This then implies the second inequality, again, by using the definition of $\tilde\omega$ and $\tilde f$.

For the last claim, without loss of generality, we assume that $\omega$ is $C^2$ in a neighbourhood of $B_{2r}$. Taking derivatives on both sides of \eqref{2.31} and using $\nabla_\mu\omega\geq 0$ yield
\begin{align*}
\nabla_\mu \tilde\omega(x) &\leq -\int_{\partial B_{2 }} \nabla_\mu\tilde\omega(y)\partial_n G(x,y)d\sigma(y)+\int_{B_{2 }}\tilde f(y)|\nabla G(x,y)|dy\\
&\leq -C\int_{\partial B_{2 }} \nabla_\mu\tilde\omega(y)\partial_n G(0,y)d\sigma(y)+C\|\tilde{f}\|_{L^{\infty}(B_{2})}\leq C\nabla_\mu\tilde{\omega}(0)+C\| \tilde f\|_{L^{\infty}(B_{2})}%+Cr^2\|\nabla f\|_{L^{\infty}(B_{2r})}
\end{align*}
which implies the last inequality.
\end{proof}

\section{Monotonicity properties, Streamlines, and Examples}\lb{S.3}

In this section, we prove several tools that will be used to prove the main theorems, and we discuss by examples the optimality of our monotonicity assumptions and the formation of cusps on the free boundary.  

\subsection{Interior monotonicity}

The goal of this section is to show that if a superharmonic function is $(\eps,\eps^\alpha)$-monotone, then under some assumptions it is fully monotone in the interior. %More precisely we prove that if a solution to the Poisson equation is $(\eps,\alpha)$-monotone with respect to $W_{\theta,\mu}$ in $B_{\eps^{7/8}}$, then it is $(J \eps,\alpha-C\eps^\frac18)$-monotone  with respect to the same cone at the origin for any $J \in (0,1)$. 

The {corresponding} result with $f\equiv 0$, $\vec{b}\equiv 0$, and $(\eps,0)$-monotonicity is proved in the book of Caffarelli and Salsa \cite[Corollary 11.16]{CafSal}. Here we need $\eps^\alpha$ to be positive to compensate the possible loss of monotonicity caused the source function $f$. 
One important ingredient of the proof in \cite[Corollary 11.16]{CafSal} is the Harnack inequality, which is applied to
$h:=\omega(x)-\omega(x-\eps\mu)$. However when $f\neq 0$, $h$ solves a Poisson equation with the source term $f(x-\eps\mu)-f(x)$ which can be negative at some points, and in such cases the Harnack inequality might fail (because for example, $h:=x^2$ solves $-\Delta h=-2$ and $h(x)\geq0$ with equality holds if and only if $x=0$). To overcome the problem, we estimate carefully the ``error'' from the source term in the lemma below. We will later combine this lemma with Lemma \ref{L.2.10}, which provides a lower bound for $\omega$, to conclude the interior monotonicity.
Below we use the convention that $\eps^\infty=0$ for $\eps\in (0,1)$.

%Now we prove the main result of this subsection, a monotonicity property.  

%\textcolor{Is there cone-monotone solutions with the operator $-\Delta u = f$? We should perhaps think about this...Lemma 3.3 still requires one direction monotonicity.} 

\begin{lemma}\lb{L.2.4}
Let $f\geq 0$ be $\bar\gamma$-H\"{o}lder continuous on $\overline{B_1}$ for some $\bar\gamma\in(0,1)$, and $\alpha\in [0,\infty]$ and $\eps,\kappa_1\in (0,1)$. There exists $C=C(d)>0$ such that the following holds for all $\eps$ small enough (depending only on $d,\alpha,\kappa_1$). If $\omega$ is a non-negative solution to $
-\Delta \omega=f$ in $B_{\eps^{1-\kappa_1}}$, and $\omega$ is $(\eps,\eps^\alpha)$-monotone with respect to $W_{0,\mu}$ for $\mu\in\bbS^{d-1}$,
then
\[
\nabla_\mu\omega(x)\geq \eps^\alpha(1-C\eps^{\kappa_1})\omega(x)-C\eps^{1+{\bar\gamma}-\kappa_1}\|f\|_{C^{0,\bar\gamma}(B_1)}\quad{\mathrm{for\ all\ }}x\in B_{\eps}. %\omega(z \mu)e^{J \eps(\alpha-C_{f}'\eps)}
\]

%where $C_\eps:=C$ when $d\geq3$, and $C_\eps:=C(1+|\log \eps|)$ when $d=2$.
%where $C_{f}':=C\| f\|_{C^2}/\inf_{B_{\eps^{7/8}}}f$.
\end{lemma}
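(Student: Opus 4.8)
The plan is to quantify, using the representation formula from Lemma \ref{l.2.2}, exactly how much of the $(\eps,\eps^\alpha)$-monotonicity of $\omega$ survives when we differentiate in the direction $\mu$. Fix $x \in B_\eps$ and work at scale $\eps$, i.e. consider $h(y) := \omega(y) - (1+\eps^\alpha)\,\omega(y-\eps\mu)$ on a ball $B_{r}(x)$ with, say, $r$ comparable to $\tfrac12\eps^{1-\kappa_1}$ so that $B_{r}(x)$ and its $\eps\mu$-translate both sit inside $B_{\eps^{1-\kappa_1}}$. By the $(\eps,\eps^\alpha)$-monotonicity hypothesis applied with $\eps' = \eps$ we have $h \geq 0$ on this ball. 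The function $h$ is not harmonic: $-\Delta h = f(y) - (1+\eps^\alpha) f(y-\eps\mu) =: g(y)$, and $g$ can change sign, which is exactly the obstruction flagged in the paragraph preceding the lemma. So the first step is to split off the Newtonian potential of $g$: write $h = h_0 + v$ where $v$ is the solution of $-\Delta v = g$ in $B_r(x)$ with $v = 0$ on $\partial B_r(x)$, and $h_0$ is harmonic in $B_r(x)$ with $h_0 = h$ on the boundary. Since $\|g\|_{L^\infty} \leq \|f\|_{C^{0,\bar\gamma}}\eps^{\bar\gamma} + \eps^\alpha\|f\|_\infty \lesssim \eps^{\bar\gamma}\|f\|_{C^{\bar\gamma}}$ (the first term from Hölder continuity across a displacement of length $\eps$, the second from the growth factor; for the claimed form one keeps track that $\|f\|_{L^\infty(B_1)}$ is also controlled), elliptic estimates give $|\nabla v(x)| \lesssim r \|g\|_{L^\infty} \lesssim \eps^{1-\kappa_1}\cdot\eps^{\bar\gamma}\|f\|_{C^{0,\bar\gamma}}$, which is precisely the last error term in the conclusion.

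The second step is to handle $h_0$, the harmonic part. Here the point is that $h_0 \geq -\|v\|_{L^\infty(B_r(x))} \geq -C r^2 \|g\|_{L^\infty} =: -\delta$ on the boundary, hence (by the maximum principle) on all of $B_r(x)$, with $\delta \lesssim \eps^{2-2\kappa_1}\eps^{\bar\gamma}\|f\|_{C^{0,\bar\gamma}}$, a genuinely small quantity. So $h_0 + \delta$ is a non-negative harmonic function on $B_r(x)$ and we may apply the interior gradient estimate / Harnack-type bound for non-negative harmonic functions (as in Lemma \ref{l.2.2} or the classical estimate $|\nabla (h_0+\delta)(x)| \leq C r^{-1}(h_0+\delta)(x)$ when $x$ is the center, or more precisely a bound valid in a slightly smaller ball): this controls $|\nabla h_0|$ — and in particular $\nabla_\mu h_0(x)$ from below, but that is not quite what we want. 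What we actually want is a lower bound on $\nabla_\mu\omega(x)$; the cleaner route is to combine the monotonicity at \emph{all} scales $\eps' \geq \eps$. Writing the monotonicity inequality as $(1+\eps^\alpha)\omega(y) \leq \omega(y + \eps'\mu)$ and sending $\eps' \to 0^+$ is illegal (we only have it for $\eps' \geq \eps$), so instead one differentiates the discrete inequality: from $(1+\eps^\alpha)\omega(y-\eps\mu) \leq \omega(y)$ we get, after subtracting and dividing by $\eps$, that the forward difference quotient of $\omega$ in direction $\mu$ at scale $\eps$ is $\geq \tfrac{\eps^\alpha}{\eps}\omega(y-\eps\mu) + \tfrac{1}{\eps}(h - \text{nonneg})$...

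Let me restate the mechanism cleanly. The real argument: we have $h = \omega(\cdot) - (1+\eps^\alpha)\omega(\cdot - \eps\mu) \geq 0$. Interior regularity for $\omega$ (Lemma \ref{l.2.2}, second and third estimates) already gives $|\nabla \omega|$ and, since $\nabla_\mu \omega \geq 0$ is \emph{not} assumed but $(\eps,\eps^\alpha)$-monotonicity implies the "averaged" version, we use the representation formula for $\omega$ itself on $B_r(x)$ and differentiate: $\nabla_\mu\omega(x) = -\int_{\partial B_r(x)} \omega(z)\,\partial_n G\,d\sigma$-type terms give $\nabla_\mu \omega(x) \geq \tfrac{1}{\eps}\big[\text{(boundary average of } h \text{ weighted by Poisson kernel derivative)}\big] - C\eps^{1+\bar\gamma-\kappa_1}\|f\|_{C^{0,\bar\gamma}}$. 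Then one bounds the boundary-average-of-$h$ term from below: $h \geq 0$ and $h \geq \eps^\alpha\omega(\cdot-\eps\mu) - (\text{error})$ pointwise, and using Harnack for the non-negative superharmonic-up-to-error function $\omega$ one propagates the value $\omega(x)$ to the boundary sphere $\partial B_r(x)$, losing a factor $(1 - C\eps^{\kappa_1})$ (this is where $\eps^{1-\kappa_1}$ being only slightly larger than $\eps$ matters: the ratio of radii is $\eps^{-\kappa_1}$, and Harnack on a ball of radius $\rho$ evaluated at distance $\eps$ from center costs $1 - C\eps/\rho = 1 - C\eps^{\kappa_1}$). Assembling: $\nabla_\mu\omega(x) \geq \eps^\alpha(1 - C\eps^{\kappa_1})\omega(x) - C\eps^{1+\bar\gamma-\kappa_1}\|f\|_{C^{0,\bar\gamma}(B_1)}$, which is the claim.

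The main obstacle, and the place requiring the most care, is the sign-changing source: one cannot apply Harnack directly to $h$, so the whole argument is organized around peeling off the Newtonian potential of $g = f(\cdot) - (1+\eps^\alpha)f(\cdot-\eps\mu)$ and absorbing it into the $C\eps^{1+\bar\gamma-\kappa_1}\|f\|_{C^{0,\bar\gamma}}$ error, while proving that the remaining harmonic part still carries the full $\eps^\alpha(1-C\eps^{\kappa_1})\omega(x)$ lower bound. The bookkeeping of powers of $\eps$ — making sure the potential error is $\eps^{1+\bar\gamma-\kappa_1}$ and not worse, and that the Harnack loss is $\eps^{\kappa_1}$ — is the technical heart; the choice of intermediate radius $\sim \eps^{1-\kappa_1}$ is dictated by balancing these two. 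One should also double-check the degenerate cases: $\alpha = \infty$ (then $\eps^\alpha = 0$ and the statement reduces to $\nabla_\mu\omega(x) \geq -C\eps^{1+\bar\gamma-\kappa_1}\|f\|_{C^{0,\bar\gamma}}$, which is just the gradient estimate from Lemma \ref{l.2.2} applied at scale $\eps^{1-\kappa_1}$ together with $\eps$-monotonicity), and $\alpha = 0$, where the factor $(1-C\eps^{\kappa_1})$ must genuinely be $<1$ to absorb the Harnack loss.
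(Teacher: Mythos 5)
Your proposal correctly identifies the obstruction (the source of the finite difference $h$ changes sign, so Harnack cannot be applied to $h$ directly) and the right bookkeeping for the error: peeling off the Newtonian potential of $g=f(\cdot)-(1+\delta)f(\cdot-\eps\mu)$ at radius $r\sim\eps^{1-\kappa_1}$ does produce the $C\eps^{1+\bar\gamma-\kappa_1}\|f\|_{C^{0,\bar\gamma}}$ term, and the $\eps^{\kappa_1}$ loss does come from the ratio $\eps/r$. But the proof has a genuine gap at its center: you never actually convert the nonnegativity of the discrete difference $h$ into a pointwise lower bound on $\nabla_\mu\omega(x)$. You stall at exactly this point twice (``but that is not quite what we want'', then the difference-quotient attempt that trails off), and the final ``clean restatement'' simply asserts the inequality $\nabla_\mu\omega(x)\geq\frac1\eps[\cdots]-C\eps^{1+\bar\gamma-\kappa_1}\|f\|_{C^{0,\bar\gamma}}$ without deriving it. Worse, as literally described --- differentiating the representation formula for $\omega$ itself on $B_r(x)$ --- the potential term contributes $\big|\int_{B_r}f\,\nabla_xG\big|\lesssim r\|f\|_\infty=\eps^{1-\kappa_1}\|f\|_\infty$, which is far larger than the claimed error; the smallness only appears if you differentiate the representation formula for the \emph{difference} $h_s$, whose source is $O(\delta\|f\|_\infty+\eps^{\bar\gamma}\|f\|_{C^{0,\bar\gamma}})$. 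There is also a factor error: $(\eps,\eps^\alpha)$-monotonicity in Definition \ref{D.2.1} gives $(1+\eps^{\alpha+1})\omega(x)\leq\omega(x+\eps\mu)$, so the correct comparison function is $h=\omega(\cdot)-(1+\eps^{\alpha+1})\omega(\cdot-\eps\mu)$, not $(1+\eps^\alpha)$; with your normalization $h\geq0$ is false, and the powers of $\eps$ in the main term would not come out as $\eps^\alpha\omega(x)$.

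The missing mechanism, which is the actual content of the paper's proof, runs as follows. One works with the whole family $h_s(x)=\omega(x+s\mu)-(1+\delta)\omega(x)$, $s\in[\eps,2\eps]$, $\delta=\eps^{\alpha+1}$, all nonnegative. A telescoping use of the monotonicity bounds $h_s(0)$ by a sum of values of $h_\eps$ plus $\delta\omega(0)$, and the Harnack-type estimate of Lemma \ref{l.2.2} applied to each $h_s\geq0$ at scale $r=\frac12\eps^{1-\kappa_1}$ gives $|\nabla h_s(0)|\leq Cr^{-1}h_\eps(0)+Cr^{-1}\delta\omega(0)+Cr\eps^{\bar\gamma}\|f\|_{C^{0,\bar\gamma}}+\cdots$. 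The key identity is then $\nabla_\mu\omega(s\mu)=\nabla_\mu h_s(0)+(1+\delta)\nabla_\mu\omega(0)$ together with $\int_\eps^{2\eps}\nabla_\mu\omega(s\mu)\,ds=\omega(2\eps\mu)-\omega(\eps\mu)$, which yields
\[
h_\eps(0)\ \leq\ C\eps^{\kappa_1}h_\eps(0)+C\eps^{1+\kappa_1+\alpha}\omega(0)+C(1+\delta)\bigl(\eps\nabla_\mu\omega(0)-\delta\omega(0)\bigr)+C\eps^{2+\bar\gamma-\kappa_1}\|f\|_{C^{\bar\gamma}}.
\]
Only now does the nonnegativity $h_\eps(0)\geq0$ enter: it forces the right-hand side to be nonnegative, which after dividing by $\eps$ is exactly $\nabla_\mu\omega(0)\geq\eps^\alpha(1-C\eps^{\kappa_1})\omega(0)-C\eps^{1+\bar\gamma-\kappa_1}\|f\|_{C^{0,\bar\gamma}}$. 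Without this chain --- in particular without the step that expresses the difference $\omega(2\eps\mu)-\omega(\eps\mu)$ as an integral of $\nabla_\mu\omega$ and then trades $\nabla_\mu\omega(s\mu)$ for $\nabla_\mu\omega(0)$ at the cost of $|\nabla h_s(0)|$ --- the argument does not close.
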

\begin{proof}
Let us denote
$\delta:=\eps^{\alpha+1}<1$. We will only show the conclusion for $x=0$, and the general case of $x\in B_\eps$ follows the same. For $s\in [\eps,2\eps]$, define
\beq\lb{21.2}
h_s(x):=\omega(x+s\mu)-(1+\delta)\omega(x),
\eeq
and it follows from the $(\eps,\eps^\alpha)$-monotonicity assumption that %$h_s\geq h_\eps$ if $s\geq 2\eps$, and 
$h_s\geq 0$. % in $B_{(M-3)\eps}$. 
Using the monotonicity again yields for $s\in [\eps, 2\eps]$, 
\beq\lb{2.11}
\begin{aligned}
\sum_{i=0}^2 (1+\delta)^{-i} h_\eps(x+i\eps\mu)&= (1+\delta)^{-2}\omega(x+3\eps\mu)-(1+\delta)\omega(x)\\
&\geq (1+\delta)^{{-1}}\omega(x+s\mu)-(1+\delta)\omega(x)\geq (1+\delta)^{-1}h_s(x)-{\delta}\omega(x).
\end{aligned}
\eeq
Note that $-\Delta h_s=(1+\delta)f(\cdot)-f(\cdot+s\mu)$ and 
\[
|(1+\delta)f(\cdot)-f(\cdot+s\mu)|\leq \delta\|f\|_\infty+s^{\bar\gamma}\| f\|_{C^{0,\bar\gamma}}.
\]
Hence $h_s\geq 0$ and
Lemma \ref{l.2.2} (after shifting $0$ to any $y\in B_{3\eps}$) yield for some $C >0$ (if $\eps$ is small) and any $s\in [\eps,2\eps]$ that
\beq\lb{2.10}
h_s(x)\leq Ch_s(y)+C\eps^2
\delta\|f\|_{\infty}+C\eps^{2+\bar\gamma}\| f\|_{C^{0,\bar\gamma}}\quad \text{ for all }x,y\in B_{3\eps}.
\eeq
This and \eqref{2.11} with $x=0$ yield
\begin{align*}
    h_s(0)&\leq C \sum_{i=0}^2h_\eps(i\eps\mu)+C\delta\omega(0)+C\eps^2
\delta\|f\|_{\infty}+C\eps^{2+{\bar\gamma}}\| f\|_{C^{0,\bar\gamma}}\\
&\leq  C h_\eps(0)+C\delta\omega(0)+C\eps^2
\delta\|f\|_{\infty}+C\eps^{2+{\bar\gamma}}\| f\|_{C^{0,\bar\gamma}}.
\end{align*}
Next, by Lemma \ref{l.2.2} again, for $s\in[\eps,2\eps]$ and $r:=\frac12\eps^{1-\kappa_1}$ with $\kappa_1\in(0,1)$, we get
\beq\lb{2.12}
\begin{aligned}
|\nabla h_s(0)|&\leq Cr^{-1}h_s(0)+Cr\delta\|  f\|_{\infty}+C r\eps^{\bar\gamma}\| f\|_{C^{0,\bar\gamma}}\\
&\leq Cr^{-1}h_\eps(0)+Cr^{-1}\delta\omega(0)+Cr\delta\|  f\|_{\infty}+C r\eps^{\bar\gamma}\| f\|_{C^{0,\bar\gamma}}.
\end{aligned}
\eeq

%Below we estimate $h_\eps(\cdot)$ in $B_\eps$, and for simplicity of clarification
Now we estimate $h_\eps(0)$.
We obtain from \eqref{21.2} and \eqref{2.10} with $s=\eps$ that
%\beq\lb{2.21}
\begin{align*}
    h_\eps(0)&\leq C(\omega(2\eps\mu)-(1+\delta)\omega(\eps\mu))+c_{\eps,f},%= +C\eps^3\|f\|_{C^1}.
\end{align*}
where $c_{\eps,f}:=C\eps^2
\delta\|f\|_{\infty}+C\eps^{2+{\bar\gamma}}\| f\|_{C^{0,\bar\gamma}}$.
%\eeq
Since $\nabla_\mu h_s(0)=\nabla_\mu \omega(s\mu)-(1+\delta) \nabla_\mu \omega(0)$, this implies
\beq\lb{2.22}
\begin{aligned} 
   h_\eps (0)&\leq C\left(\int_{\eps}^{2\eps}\nabla_\mu\omega(s\mu)ds-\delta\omega(\eps\mu)\right)+c_{\eps,f}\\
     &\leq C\left(\int_{\eps}^{2\eps} |\nabla h_s(0)|ds+\eps(1+\delta) \nabla_\mu\omega(0)-(1+\delta)\delta\omega(0)\right)+c_{\eps,f},
\end{aligned}
\eeq
where we also used $\omega(\eps\mu)\geq (1+\delta)\omega(0)$. Then by \eqref{2.12} with $r=\frac12\eps^{1-\kappa_1}$ and the definitions of $c_{\eps,f}$ and $\delta$, we obtain for some $C=C(d)>0$,
\begin{align*} 
   h_\eps(0)\leq C\eps^{\kappa_1} h_\eps(0)+C\eps^{1+\kappa_1+\alpha} \omega(0)+C(1+\delta)\left(\eps \nabla_\mu\omega(0)-\delta\omega(0)\right)+C\eps^{2+{\bar\gamma}-\kappa_1}(\eps^\alpha\|f\|_{\infty}+\| f\|_{C^{0,\bar\gamma}}).
\end{align*}
Using $h_\eps\geq 0$, the above estimate yields for all $\eps>0$ small enough,
\beq\lb{2.13}
\nabla_\mu\omega(0)\geq \eps^\alpha(1-C\eps^{\kappa_1})\omega(0)-C(1+\eps^\alpha)\eps^{1+{\bar\gamma}-\kappa_1}\|f\|_{C^{0,\bar\gamma}}.%(\eps^\frac{15}{8}\delta\|f\|_{\infty}+\eps^\frac{23}8\| \nabla f\|_{\infty}).
\eeq
This yields the conclusion for $x=0$.
\end{proof}

%\noindent{\it Remark. }Can we take $\alpha=0$ after assuming some assumptions on $f$???

%\smallskip

\begin{remark}\lb{R.3.2}
1. It is clear that $(\eps,\eps^\alpha)$-monotonicity with respect to $W_{\theta,\mu}$ for some $\theta\geq 0$ and $\mu\in\bbS^{d-1}$ implies $(\eps,\eps^{\alpha'})$-monotonicity with respect to $W_{0,\mu}$ for all $0\leq \alpha\leq\alpha'$.

\smallskip

2. Let $\omega$ be from Lemma \ref{L.2.4}. If either $\alpha\neq\infty$ or $f$ is constant, and $\eps>0$ is small enough such that $\eps^\alpha\omega(\cdot) \geq 2C\eps^{1+{\bar\gamma}-\kappa_1}\|f\|_{C^{0,\bar\gamma}}$ and $C\eps^{\kappa_1}<\frac12$, then $ \omega(s \mu)$ is non-decreasing in $s$ for all $s\in (-\eps,\eps)$.

\smallskip

3. Furthermore, if $\eps^\alpha\omega(\cdot)\geq C \eps^{1+{\bar\gamma}-2\kappa_1}\|f\|_{C^{0,\bar\gamma}}$ in $B_{2\eps^{1-\kappa_1}}$, 
then for some larger $C>0$ and any $j \in (0,1)$, $\omega$ is $(j \eps,\eps^\alpha(1-C\eps^{\kappa_1}))$-monotone with respect to $W_{0,\mu}$ in $B_{\eps^{1-\kappa_1}}$. %\text{}In view of 2 and 3, we either need $\alpha>0$ or $f\equiv C$ for some $C\geq 0$ later. MORE CHANGES BELOW OR ABOVE ARE REQUIRED...}

\end{remark}

\subsection{Polynomial growth near the free boundary}

The goal of this section is to show that a superharmonic function which has cone monotonicity up to $\e$-scale has a polynomial growth bound up to the same scale.  The growth rate lower bound will be used in competition to the irregularity of the source term, to show that the regularity propagates to the boundary over time (see Lemma \ref{L.5.2}, Proposition \ref{L.2.12} and Theorem \ref{T.6.1}).
This bound can be improved to a linear rate once we obtain full monotonicity,  later in Section  \ref{S.7}.

%if known that the free boundary is strictly expanding relatively to the trajectories of the vector field, then the $(\eps,\delta)-$monotonicity of solutions implies that the solution is ``non-degenerate at scale $\eps$''.

%\begin{remark}
%It is clear that we can take $\theta_{\beta},\theta'_\beta$ to be monotonely decreasing in $\beta\in(0,1)$.
%The second part of Lemma \ref{L.3.1} (1) is only used in Theorem \ref{T.6.1} and the proof of Theorem \ref{T.2.2}.
%\end{remark}
%Under the same condition, we also have for some $\sigma=\sigma(\theta)>0$ and $C>0$,
%\[
%\omega(s\mu)\leq C\,s^\sigma\quad\text{ for all }s\in (0,1).
%\]

Next lemma provides a lower bound for the growth rate of  $(\eps,0)$-monotone superharmonic functions. 
\begin{lemma}\lb{L.2.10}
Let $\mu\in\bbS^{d-1}$, and let $\omega\geq 0$ be a continuous function in $B_2$ such that 
\[
-\Delta\omega\geq 0\text{ in }\Omega_\omega\cap B_2,\quad 0\in\Gamma_\omega=\partial\Omega_\omega,\quad \omega(\mu)\geq 1,
\]
and $\omega$ is $(\eps,0)$-monotone with respect to $W_{\theta,\mu}$ in $B_2$ for some $\eps$ small enough. Then for some dimensional constant $c>0$ and for any $\beta\in (1,2)$, if $ \theta\geq \theta_{\beta}$ (with $\theta_{\beta}$ given in Lemma \ref{L.3.1}) we have
\[
\omega(x)\geq c\,d(x,\Gamma_\omega)^\beta 
\]
for all $x\in B_{1}\cap\Omega_\omega$ satisfying $d(x,\Gamma_\omega)\geq 2\eps$.
\end{lemma}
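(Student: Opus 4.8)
\textbf{Proof strategy for Lemma \ref{L.2.10}.}

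The plan is to prove the growth bound $\omega(x)\geq c\,d(x,\Gamma_\omega)^\beta$ by a barrier/comparison argument that exploits the cone monotonicity to locate a large chunk of the positive set, and then compares $\omega$ from below against the explicit harmonic function in a cone provided by Lemma \ref{L.3.1}. Fix $x\in B_1\cap\Omega_\omega$ with $r:=d(x,\Gamma_\omega)\geq 2\eps$. The first step is to use $(\eps,0)$-monotonicity with respect to $W_{\theta,\mu}$ together with $\omega(\mu)\geq 1$ to show that $\omega$ is bounded below by a fixed constant on a suitable ball: since $0\in\Gamma_\omega$ and $\omega(\mu)\geq1$, monotonicity along the cone $W_{\theta,\mu}$ forces $\Omega_\omega$ to contain the truncated cone $(\mu + W_{\theta,\mu})\cap B_2$ up to the $\eps$-scale, and a Harnack chain (interior estimate for superharmonic functions, using that $\omega\geq 0$) gives $\omega\geq c_0>0$ on, say, $B_{1/4}(\mu)$.

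Next, I would set up the comparison in a cone with vertex near $\Gamma_\omega$. Because $d(x,\Gamma_\omega)=r$, pick a free boundary point $z\in\Gamma_\omega$ with $|x-z|=r$. The cone monotonicity implies that the positive set, viewed from $z$, contains (up to scale $\eps\le r/2$) a spatial cone of opening $2\theta$ in a direction that can be taken close to $\mu$; more precisely, $z + W_{\theta,\mu}$ intersected with a ball of radius comparable to $1$ is contained in $\overline{\Omega_\omega}$ modulo an $\eps$-neighborhood of the vertex, and $x$ lies in the interior of this cone at distance $\sim r$ from $z$. Now compare $\omega$ on the region $(z+W_{\theta,\mu})\cap B_{\rho}(z)$ (for a fixed $\rho$, say $\rho = 1/8$) against $\delta\, h$, where $h$ is the cone-harmonic function from Lemma \ref{L.3.1} rescaled and recentered at $z$: $h$ is harmonic and positive inside $z+W_{\theta,\mu}$, vanishes on its lateral boundary, satisfies $h(z+s\mu)\geq c\,s^\beta$ for $\theta\geq\theta_\beta$, and is $\leq 1/c$ on $B_\rho(z)$. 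On the lateral boundary of the cone $\omega\geq 0 = h$; on the spherical cap $\partial B_\rho(z)\cap(z+W_{\theta,\mu})$ we have $\omega\geq c_0$ by the Harnack step (after connecting to $B_{1/4}(\mu)$), so choosing $\delta$ small makes $\delta h\leq \omega$ there. Since $-\Delta\omega\geq 0 = -\Delta(\delta h)$ in $\Omega_\omega$ and the boundary data are ordered, the comparison principle for superharmonic functions gives $\omega\geq \delta h$ throughout $(z+W_{\theta,\mu})\cap B_\rho(z)$, hence in particular $\omega(x)\geq \delta h(x)\geq \delta c\,r^\beta$ since $x$ sits along (or near) the axis at distance $r$. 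Renaming $\delta c$ as the new $c$ and absorbing the angle correction (reducing $\theta_\beta$ slightly, or noting $x$ can be joined to the axis point within the cone by a bounded Harnack chain of balls of radius $\sim r$) finishes the bound.

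The main obstacle is the bookkeeping needed to make the comparison domain honest: the cone monotonicity only controls $\Omega_\omega$ up to the $\eps$-scale near the vertex, so I must be careful that the vertex $z$ of the comparison cone is a genuine free boundary point while the region where I apply the maximum principle stays strictly inside $\Omega_\omega$ — this is precisely why the hypothesis $d(x,\Gamma_\omega)\geq 2\eps$ appears, and one should shrink the cone vertex inward by $O(\eps)$ and check the barrier inequalities survive with constants independent of $\eps$. A secondary technical point is that the cone-harmonic function $h$ from Lemma \ref{L.3.1} is only guaranteed to dominate $c s^\beta$ along the axis, so if $x$ is not on the axis of $z+W_{\theta,\mu}$ one needs an extra short Harnack chain (valid since the ball $B_{r/4}(x)\subset\Omega_\omega$) to transfer the lower bound from the nearest axis point to $x$, at the cost of a worse constant $c$; this is harmless since $\beta$ and hence $\theta_\beta$ are already fixed and $c$ is allowed to depend only on the dimension (and, through the Harnack constants in the fixed cone opening $2\theta_\beta$, still only on $d$).
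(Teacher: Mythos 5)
Your overall strategy is the same as the paper's: use the cone monotonicity to place a truncated cone $x_0+\eps\mu+W_{\theta,\mu}$ (vertex at a free boundary point, pushed in by $\eps$) inside $\Omega_\omega$, use a Harnack chain together with $\omega(\mu)\geq 1$ to get a uniform lower bound at a fixed interior point, and then transfer the $s^\beta$ growth of the cone-harmonic function from Lemma \ref{L.3.1} to $\omega$. (The paper even simplifies your nearest-point choice of $z$ by taking $x_0$ on the ray $x-\bbR_+\mu$, so that $x$ sits exactly on the axis and no off-axis Harnack chain is needed; that is a cosmetic difference.)

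There is, however, one step that fails as written: the claim that $\omega\geq c_0$ on the \emph{entire} spherical cap $\partial B_\rho(z)\cap(z+W_{\theta,\mu})$, which you need in order to conclude $\delta h\leq\omega$ there. The cap meets the lateral boundary of the cone, and $\Gamma_\omega$ is only known to lie within $O(\eps)$ of that lateral boundary — indeed $\Omega_\omega$ could essentially coincide with the cone itself — so near the edge of the cap $\omega$ tends to $0$ and admits no uniform positive lower bound. Both $\omega$ and $h$ are small there, but without a rate comparison you cannot order them pointwise, and no choice of $\delta$ rescues the inequality. The standard repair, and the one the paper uses, is to avoid comparing on the full cap: take the barrier to be the harmonic function in the truncated cone whose boundary data is $c_0$ on a compact \emph{central} portion $K$ of the cap (where the Harnack chain genuinely gives $\omega\geq c_0$, since $K$ stays a fixed distance from $\Gamma_\omega$) and $0$ elsewhere; the superharmonicity of $\omega$ then gives $\omega\geq$ barrier by the minimum principle, and Dahlberg's lemma (Lemma \ref{L.dah}) shows this barrier is comparable to the Ancona cone function of Lemma \ref{L.3.1} away from the cap, so the $c(s-\eps)^\beta$ lower bound on the axis survives. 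Your alternative of shrinking the cone opening to $\theta'<\theta$ would also make the cap comparison honest, but it degrades the exponent when $\theta=\theta_\beta$ exactly, so the boundary Harnack route is the right one. With that modification (and your $O(\eps)$ vertex shift, which correctly uses $d(x,\Gamma_\omega)\geq 2\eps$ to absorb the loss into the constant), the argument closes.
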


\begin{proof}
For each $x\in B_{1}\cap\Omega_\omega$ satisfying $d(x,\Gamma_\omega)\geq 2\eps$, there is $x_0\in \Gamma_\omega\cap B_2$ such that $x= x_0+s\mu$ with $s\geq d(x,\Gamma_\omega)\geq 2\eps$. Note that it follows from the monotonicity assumption and $\{0,x_0\}\subset\Gamma_\omega$ that $\omega>0$ in $((x_0+\eps\mu+W_{\theta,\mu})\cup(\eps\mu+W_{\theta,\mu}))\cap B_2$. Thus Harnack's inequality and $\omega(\mu)\geq 1$ yield $\omega(x+\frac{1}{2}\mu)\geq c$ for some dimensional constant $c>0$.  
Then by comparing $\omega$ with a non-negative harmonic function whose support is $x_0+\eps\mu+W_{\theta,\mu}$, Lemma \ref{L.dah} and Lemma \ref{L.3.1} yield for some dimensional $c'>0$ we have 
\[
\omega(x)\geq c' (s-\eps)^\beta\geq 4^{-1}c' d(x,\Gamma_\omega)^\beta\quad \text{ whenever }\theta\geq \theta_{\beta}.
\]
\end{proof}

%In the next lemma we only assume $(\eps,0)$-monotonicity in $B_1$. 

%\textcolor{In the next lemma we strengthen above lemma to track the domain of optimal growth in terms of the $\e$-monotonicity.} %\textcolor{}{ Caffarelli used re-scaling. however we need to be carefull since scaling changes the constant of f and b. Instead I didn't rescale but only shrink the domain of $\eps^\gamma$ size each time. maybe we can discuss this} 

For the next lemma the growth rate bound is obtained excluding only a small portion of the original domain $B_1$, with the expanse of restricting to the near boundary region. 
%\textcolor{}{just making sure that this does not follow from Lemma 3.3 from some scaling argument.} I think so  

\begin{lemma}\lb{C.2.8}
Under the assumptions of Lemma \ref{L.2.10} except that $\omega$ is only assumed to be $(\eps,0)$-monotone with respect to $W_{\theta,\mu}$ in $B_1$ (instead of $B_2$), then for some $c=c(d)>0$ and for any $\beta\in (1,2)$, if $ \theta\geq \theta_{\beta}$ we have
\[
\omega(x)\geq c\,d(x,\Gamma_\omega)^\beta 
\]
for all $x\in B_{1-\eps^{1/2}}\cap\Omega_\omega$ satisfying $d(x,\Gamma_\omega) \in [2\eps,\eps^\frac12]$. 
\end{lemma}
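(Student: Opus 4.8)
\textbf{Proof strategy for Lemma \ref{C.2.8}.} The plan is to reduce to Lemma \ref{L.2.10} by a scaling argument, trading the loss of a $B_2$-to-$B_1$ margin for the restriction to the near-boundary region $d(x,\Gamma_\omega)\le\eps^{1/2}$. Fix $x\in B_{1-\eps^{1/2}}\cap\Omega_\omega$ with $d(x,\Gamma_\omega)\in[2\eps,\eps^{1/2}]$. As in the proof of Lemma \ref{L.2.10}, pick $x_0\in\Gamma_\omega\cap B_1$ with $x=x_0+s\mu$ and $s\ge d(x,\Gamma_\omega)$. The idea is to work in the ball $B_{\rho}(x_0)$ with radius $\rho\sim\eps^{1/2}$, which is comparable to the distance $s$ but small enough that $B_{2\rho}(x_0)\subset B_1$. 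After rescaling $B_{2\rho}(x_0)$ to $B_2$ via $y\mapsto x_0+\rho y$, the rescaled function $\tilde\omega(y):=\omega(x_0+\rho y)$ is superharmonic in its positive set, has $0\in\Gamma_{\tilde\omega}$, and is $(\eps/\rho,0)$-monotone with respect to $W_{\theta,\mu}$ in $B_2$ (monotonicity is scale-covariant, and $\eps/\rho\to0$ stays small). So Lemma \ref{L.2.10} applies once we verify a normalization $\tilde\omega(\mu)\gtrsim\tilde\omega(x/\rho\text{-type point})$, i.e.\ a Harnack-type lower bound at unit distance from $x_0$ inside the cone.

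The one missing ingredient is the normalization: Lemma \ref{L.2.10} needs $\tilde\omega$ to be bounded below by $1$ at the point $\mu$, but after rescaling we only control $\omega$ at the fixed point $\mu\in B_2$, which is now at distance $\sim\rho^{-1}\gg1$ in rescaled coordinates. The fix is to \emph{not} normalize to an absolute constant but to keep track of the value $\omega(x_0+\tfrac12 s\mu)$ (or $\omega$ at a point at comparable distance inside the cone near $x_0$). Cone monotonicity gives $\omega>0$ on $(\eps\mu+W_{\theta,\mu})\cap B_2$ and on $(x_0+\eps\mu+W_{\theta,\mu})\cap B_2$; the two cones overlap on a non-degenerate region of $B_1$ (because $x_0\in B_1$ and the opening $2\theta$ is bounded below, $\theta\ge\theta_\beta\ge\pi/4$), so Harnack's inequality chains the value $\omega(\mu)\ge1$ to a lower bound $\omega(x_0+t\mu)\ge c(t)$ along the axis emanating from $x_0$, for every fixed $t$ up to some fixed fraction of $1$; in particular $\omega(x_0+c_0\mu)\ge c_1$ for dimensional $c_0,c_1>0$ as long as $x_0+c_0\mu\in B_1$, which holds since $|x_0|<1$ and we may take $c_0$ small. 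Then inside $B_{2\rho}(x_0)\subset B_1$ we compare $\omega$ with a non-negative harmonic function supported in the shifted cone $x_0+\eps\mu+W_{\theta,\mu}$, exactly as in Lemma \ref{L.2.10}: Lemma \ref{L.dah} (Dahlberg) together with the power-law lower bound \eqref{angle_1} of Lemma \ref{L.3.1} gives
\[
\omega(x)\ \ge\ c'(s-\eps)^\beta\ \ge\ c''\,d(x,\Gamma_\omega)^\beta\qquad\text{whenever }\theta\ge\theta_\beta,
\]
where the constant depends only on $d$ and $\beta$ (through $\theta_\beta$ and the Dahlberg constant) once we know $s\le\eps^{1/2}$, so that $B_{2s}(x_0)\subset B_1$ and the harmonic comparison function may be built at scale $s$.

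More precisely, here is the order of the steps. (i) Fix $x$ and $x_0$ as above with $s=d(x,\Gamma_\omega)\le\eps^{1/2}$, noting $B_{2s}(x_0)\subset B_{1-\eps^{1/2}+2\eps^{1/2}}\subset B_1$ when $\eps$ is small (shrink the admissible range if necessary, or absorb into ``$\eps$ small''). (ii) Establish the interior lower bound $\omega(x_0+c_0\mu)\ge c_1$ with $c_0,c_1$ dimensional: positivity of $\omega$ on the two overlapping cones $\eps\mu+W_{\theta,\mu}$ and $x_0+\eps\mu+W_{\theta,\mu}$ inside $B_1$, superharmonicity, and Harnack chaining from $\omega(\mu)\ge1$. (iii) Let $v$ be the non-negative harmonic function in $(x_0+\eps\mu+W_{\theta,\mu})\cap B_{2s}(x_0)$ vanishing on the cone boundary and normalized suitably; apply Lemma \ref{L.dah} on the Lipschitz-graph domain to get $\omega/v$ comparable to $\omega(x_0+c_0\mu)/v(x_0+c_0\mu)$ near the vertex, and use \eqref{angle_1} for $v$ to extract the $\beta$-power growth at $x=x_0+s\mu$. (iv) Combine: $\omega(x)\ge c\,(s-\eps)^\beta\ge c\,4^{-\beta}d(x,\Gamma_\omega)^\beta$ using $s\ge 2\eps$.

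\textbf{Main obstacle.} The delicate point is step (ii): the fixed normalization point $\mu$ from Lemma \ref{L.2.10} lies at a fixed location in $B_2$, but in Lemma \ref{C.2.8} we have monotonicity only in $B_1$, so the ``good'' cone with vertex near $x_0$ may have its tip close to $\partial B_1$, leaving little room to run Harnack's chain from $\mu$ to a point at fixed distance from $x_0$. Controlling the number of Harnack steps (hence the dependence of $c_1$ only on $d$, not on $x_0$ or $\eps$) requires using that $\theta\ge\theta_\beta\ge\pi/4$ is bounded below, so the two cones always overlap in a region of definite size inside $B_1$, and that $x_0\in B_1$ keeps it a bounded distance from $\mu$; this is where the restriction to $d(x,\Gamma_\omega)\le\eps^{1/2}$ and to $x\in B_{1-\eps^{1/2}}$ (rather than $B_1$) is genuinely used, to guarantee that the scale-$s$ comparison domain stays inside $B_1$ where monotonicity and superharmonicity are available.
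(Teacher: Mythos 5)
Your proposal is correct and follows essentially the same route as the paper: the whole content of the lemma is the observation that the restrictions $x\in B_{1-\eps^{1/2}}$ and $d(x,\Gamma_\omega)\le\eps^{1/2}$ force the boundary point $x_0$ with $x=x_0+s\mu$ to lie in $B_1$, where the cone monotonicity is still available, after which the argument of Lemma \ref{L.2.10} (positivity on the two overlapping cones, Harnack to normalize at a point at fixed distance from $x_0$, then Dahlberg's lemma and the power-law bound \eqref{angle_1}) goes through verbatim. The rescaling detour at the start of your write-up is unnecessary, and you correctly discard it in favor of the direct comparison at scale $s$.
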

\begin{proof}
For any $x\in B_{1-\eps^{1/2}}\cap\Omega_\omega$ satisfying $d(x,\Gamma_\omega) \in [2\eps,\eps^\frac12]$, there exists $x_0\in  \Gamma_\omega\cap B_1$ such that $x=x_0+s\mu$ with $s\geq 2\eps$. Note that this is not true if $d(x,\Gamma_{\omega})>>\eps^{1/2}$. With this $x_0\in  \Gamma_\omega\cap B_1$, we can conclude the proof the same as in Lemma \ref{L.2.10}. 
\end{proof}

%\begin{remark}
%For any $\beta\in (1,2)$, in the rest of the paper, we write
%\beq\lb{thetab}
%\theta_\beta:=\max\left\{\theta_{(\beta+1)/2},{3\pi}/{8}\right\},
%\eeq
%with $\theta_{\beta}$ from Lemma \ref{L.3.1}. For $\theta \geq \theta_\beta$ we will ensure from Lemma~\ref{C.2.8} and Lemma  \ref{L.3.1} that our solutions are not too degenerate near the free boundary compared to its average value in a unit neighborhood. 

%MAYBE WE CAN WRITE THE PROOF IN THE REMARK? THEN COROLLARY 3.7 WILL BE VERY SIMPLE, AND WE NEED TO SPEND SOME EFFORT ON THE PROOF OF THM A.

%\textcolor{}{BTW. however the choice of $\theta$ for THM B will still be implicit due to Lemma 7.6..}

\subsection{Streamlines}
Here we introduce \textit{streamlines} associated with the drift term, which yields an important monotonicity property for our flow. They are defined as the unique solution $X(t;x_0)$ of the ODE
\begin{equation}\label{ode}
\left\{\begin{aligned}
    &\partial_t X(t;x_0)={-}\vec{b}(X(t;x_0)), \quad t\in \bbR,\\
    &X(0;x_0)=x_0.
    \end{aligned}\right.
\end{equation}
We write $X(t):=X(t;0)$. In order to analyze the solution along one streamline that passes through $(0,0)$, we define 
\beq\lb{3.0}
\bar{u}(x,t):=u(x+X(t),t).
\eeq
Then $\bar{u}$ satisfies
\begin{equation}\lb{3.1}
    \left\{\begin{aligned}
        -\Delta \bar u &=f_0(x,t)\quad  &&\text{ in }\{\bar u>0\},\\
        \bar u_t&=|\nabla \bar u|^2+\vec{b}_0(x,t)\cdot\nabla \bar u\quad  &&\text{ on }\partial\{\bar u>0\},
    \end{aligned}\right.
\end{equation}
where 
\beq\lb{3.1'}
f_0(x,t):=f(x+X(t)),\quad \vec{b}_0(x,t):=\vec{b}(x+X(t))-\vec{b}(X(t)).
\eeq

%Moreover, if $\vec{b}$ is $C^2$, then we also have
%\beq\lb{3.1'''}
%|\partial_t\vec{b}_0(x,t)|=|\nabla\vec{b}(x+X(t))\vec{b}(X(t))-\nabla\vec{b}(X(t))\vec{b}(X(t))|\leq \|D^2\vec{b}\|_\infty\|\vec{b}\|_\infty|x|\leq L|x|.
%\eeq

It was shown in \cite[Lemma 3.5]{kimsingular} for the drift porous medium equation that $\{u>0\}=:\Omega_u$ is non-decreasing along the streamlines. 
The same holds in our case. %\textcolor{}{In the proof, we need the following comparison principle.}

%\textcolor{}{We first state the comparison principle.}

\begin{lemma}\lb{L.2.9}
If $(x_0,t_0)\in \Omega_u$, then $(X(t;x_0),t+t_0)\in\Omega_u$ for all $t>0$. 
\end{lemma}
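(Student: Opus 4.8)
The plan is to reduce the claim to a comparison between $\bar{u}$ (the solution followed along the streamline through the origin) and a small sub-solution localized near a point in its positive set, then propagate positivity forward in time. Concretely, suppose $(x_0,t_0)\in\Omega_u$. By translating time and using the semigroup structure of the streamline flow (i.e. $X(t;x_0)$ depends only on the position $x_0$ and elapsed time), it suffices to prove: if $0\in\Omega_u(0)$ then $X(t)=X(t;0)\in\Omega_u(t)$ for all $t>0$, where we now work with the shifted solution and allow a general base point. Equivalently, after passing to $\bar{u}(x,t)=u(x+X(t),t)$ defined in \eqref{3.0}, which by \eqref{3.1} solves a Hele-Shaw problem with source $f_0\geq 0$ and drift $\vec b_0$ satisfying $\vec b_0(0,t)=0$, it suffices to show that $0\in\Omega_{\bar u}(t)$ for all $t>0$ provided $0\in\Omega_{\bar u}(0)$.

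The key step is then a barrier argument at the (shifted) origin. Since $\vec b_0(0,t)=0$ and $\vec b_0$ is Lipschitz in $x$ uniformly in $t$, we have $|\vec b_0(x,t)|\le L|x|$ on a neighborhood of the origin, so the drift term in the free boundary velocity law for $\bar u$ is weak near $x=0$: on a small ball $B_\rho$, the normal velocity satisfies $V=|\nabla\bar u|-\vec b_0\cdot\nu\ge |\nabla\bar u|-L\rho$. I would construct an explicit radial sub-solution supported in a shrinking-then-stable ball around $0$: take $\psi(x,t)$ to be (a time-dependent multiple of) the function $c(t)(\rho(t)^2-|x|^2)_+$ or, better, a truncated fundamental-solution-type profile $c(t)(|x|^{2-d}-\rho(t)^{-(2-d)})_+$ for $d\ge 3$ (and the logarithmic analogue for $d=2$), chosen so that $-\Delta\psi\le 0\le f_0$ inside its support (making it a subsolution of the elliptic equation trivially, since a harmonic function with the right sign suffices, or one absorbs $f_0\ge0$ which only helps), while the free boundary $\partial B_{\rho(t)}$ moves with normal speed $\dot\rho(t)$ that we choose strictly smaller than $|\nabla\psi|-L\rho(t)$ on $\partial B_{\rho(t)}$. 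Because $\bar u(\cdot,0)>0$ on a neighborhood of $0$, we can fit $\psi(\cdot,0)\prec\bar u(\cdot,0)$ with $\rho(0)$ small; the comparison principle Lemma~\ref{L.cp} (applied on a cylinder $B_R\times(0,T)$ with the strict-separation hypotheses checked on the lateral boundary, where $\bar u$ stays positive by continuity on short time intervals) then gives $\psi(\cdot,t)\prec\bar u(\cdot,t)$, so in particular $0\in\Omega_\psi(t)\subseteq\Omega_{\bar u}(t)$ as long as $\rho(t)>0$. Since $\rho$ can be kept positive (indeed bounded below) for all $t>0$ by choosing it to increase, or at worst decay slowly, this yields $0\in\Omega_{\bar u}(t)$ for all $t>0$, hence $X(t;x_0)\in\Omega_u(t+t_0)$.

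The main obstacle is two-fold. First, one must ensure the lateral boundary hypotheses of Lemma~\ref{L.cp} — namely $\psi<\bar u$ on $(\partial B_R\times(0,T))\cap\overline{\Omega_\psi}$ and the initial strict separation $\psi_0\prec\bar u_0$ — hold uniformly; this is delicate only because $\bar u$ could in principle shrink, so one works on short time increments $[0,\tau]$ with $\tau$ small enough that $\bar u$ remains positive on a fixed neighborhood of $0$ (using continuity of $\bar u$ and openness of $\Omega_{\bar u}$), re-centers, and iterates — a standard continuation argument, and the cited \cite[Lemma 3.5]{kimsingular} confirms this works for the drift porous medium analogue. Second, one must verify $\psi$ is genuinely a viscosity subsolution of \eqref{3.1}, including the free-boundary inequality in Definition~\ref{D.21}: the computation $\psi_t-|\nabla\psi|^2-\vec b_0\cdot\nabla\psi\le 0$ on $\partial B_{\rho(t)}$ reduces, since $\nabla\psi$ is parallel to $\nu=-x/|x|$ there, to $\dot\rho(t)\,|\nabla\psi|\le|\nabla\psi|^2+\vec b_0\cdot(x/|x|)|\nabla\psi|$, i.e. $\dot\rho(t)\le|\nabla\psi(\cdot,t)|_{\partial B_\rho}|-L\rho(t)$, which is arranged by the choice of profile (the gradient of the truncated fundamental solution on $\partial B_\rho$ is $\sim c(t)\rho^{1-d}$, which stays bounded below as long as $\rho$ does, easily dominating $L\rho$ for $\rho$ small). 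Everything else — the elliptic inequality, nonnegativity, continuity — is immediate from the explicit form of $\psi$ and $f_0\ge 0$. Hence I would organize the proof as: (i) reduce to the origin along the shifted equation \eqref{3.1}; (ii) record $|\vec b_0(x,t)|\le L|x|$ near $0$; (iii) build the radial subsolution $\psi$ with controlled $\rho(t)$; (iv) apply Lemma~\ref{L.cp} on short intervals and iterate.
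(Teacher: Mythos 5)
Your argument is correct in substance, but it runs on a different barrier than the paper's. The paper's proof transports an arbitrary $D_0\subset\subset\Omega_u(0)\cap B_1$ along the streamlines, takes as subsolution the largest subharmonic function in $D_t\setminus\{z\}$ vanishing on $\partial D_t$ and anchored at an interior point $z$ where $u\geq c$ persists by continuity; since the free boundary $\partial D_t$ of that barrier moves with normal velocity exactly $-\vec{b}\cdot\nu$, the subsolution inequality $V\leq|\nabla v|-\vec{b}\cdot\nu$ holds trivially (no gradient estimate is needed), and one obtains $\Omega_u(t)\supseteq D_t$ for all starting points at once. You instead pass to $\bar{u}$, use $\vec{b}_0(0,t)=0$ and $|\vec{b}_0(x,t)|\leq L|x|$, and build a radial, nearly stationary harmonic barrier in an annulus around the single streamline; the free boundary inequality is then nontrivial and is won by the quantitative bound $|\nabla\psi|\gtrsim r_0^{d-2}\rho^{1-d}\gg L\rho$. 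This is essentially the construction the paper deploys later, in Proposition~\ref{L.3.3}, to prove the stronger quantitative expansion; your route therefore yields as a byproduct a local lower bound for $\bar{u}$ on a ball around the streamline, at the cost of the gradient computation the paper's construction avoids and of anchoring the barrier at the very point being tracked rather than at a separate interior point. Three small repairs: the quadratic profile $c(t)(\rho(t)^2-|x|^2)_+$ you offer as a first option is not a subsolution when $f_0\equiv 0$ (it has $-\Delta\psi>0$), so only the truncated fundamental-solution variant is admissible; the comparison must be run on the annular cylinder $(B_R\setminus\overline{B_{r_0}})\times(0,\tau)$ with the inner sphere counted as lateral boundary, not on the full ball $B_R$; and your continuation step is stated at the same level of detail as the paper's own ``and then the same holds for all positive time,'' so it is not a gap relative to the paper, though in both arguments it is the strict separation $\psi\prec\bar{u}$ produced by Lemma~\ref{L.cp} that allows the restart at the endpoint of each time interval.
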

\begin{proof}
Let us assume $(x_0,t_0)=(0,0)$. By continuity of the solution, suppose that for some $z\in B_1$ we have $u(z,t)\geq c>0$ for all $t\in [0,\tau)$ with some small $\tau>0$. Let $D_0$ be any strict open subset of $\Omega_u(0)\cap B_1$, and then for $t\in (0,\tau)$ define
\[
D_t:=\{X(t;x)\,:\, x\in D_0\}\cap B_1.%\quad\text{and}\quad D:=\{(x,t)\,:\, x\in D_t,t\in (0,\tau).
\]
We can assume that $z\in D_t$ for $t\in [0,\tau]$.
Let $v(\cdot,t)$ be the largest subharmonic function in $D_t\backslash\{z\}$ such that $v(\cdot,t)=0$ on $\partial D_t$ and $v(z,t)= c$. It is clear that $v\prec u$ at $t=0$ and $v< u$ on $(\overline{\Omega_u(t)}\cap \partial B_1)\cup\{z\}$ for $t\in (0,\tau)$.

We claim that that $v$ is a viscosity subsolution to \eqref{1.1} in $(B_1\backslash\{z\})\times (0,\tau)$. Let us only verify the free boundary condition. Suppose for a smooth function $\phi\in C_{x,t}^{2,1}$ such that $v-\phi $ has a local maximum in $\overline{\Omega_v}\cap\{t\leq t_0\}$ that equals to $0$ at $(x_0,t_0)\in\Gamma_v$ and $x_0\notin \partial B_1$. Note that by the definition of $D_t$, $\phi(x_0,t_0)\leq \phi(X(-\eps;x_0),t_0-\eps)$ for all $\eps$ sufficiently small. %if $|\nabla\phi(x_0,t_0)|\neq 0$, $\phi_t/|\nabla\phi|$ at $(x_0,t_0)$ equals the speed of the $0$-level set of $\phi$ at $(x_0,t_0)$ (to the direction of $-\nabla\phi$). 
Therefore %due to the  the definition of $D_t$, 
$\phi_t\leq \vec{b}\cdot\nabla\phi$ at $(x_0,t_0)$, and thus we can conclude with the claim. 

Then the comparison principle (Lemma \ref{L.cp}) yields $v\leq u$. Note that $\{(x,t)\,:\,x\in D_t,t\in [0,\tau)\}$ is non-decreasing along streamlines and $D_0$ can be arbitrarily close to $\Omega_u(0)\cap B_1$. So $\Omega_u$ is non-decreasing along streamlines for $t\in (0,\tau)$, and then the same holds for all positive time.
\end{proof}

\subsection{Lipschitz space-time neighborhood of the free boundary}
In this subsection we show that if the solution $u$ to \eqref{1.1} is $(\eps,0)$-monotone in space, then there exists a Lipschitz space-time neighborhood of the free boundary of $u$. The interesting feature lies in the time variable component of the Lemma: for the space variable it can be derived from a geometric argument, for instance see Proposition 11.14 in \cite{CafSal}. %\textcolor{isn't this the same as what you mentioned in the beginning of subsection 4.1, that comes from Caffarelli? }
This Lipschitz set will be used as the region where we do comparison later. 
For simplicity of discussions, we take $\mu:=-e_d$ below.

\begin{lemma}\lb{L.3.11}
Suppose $u, f,\vec{b}$ satisfy \eqref{1.1}, and they are uniformly bounded by $L$ in $\calQ_2=B_2\times (-2,2)$ for some $L\geq 1$. If $u$ is $(\eps,0)$-monotone with respect to $W_{\theta,-e_d}$ for some $\theta\in (0,\frac{\pi}{2})$ in $\calQ_2$, then for any $r\in [4\eps,\frac14]$ there exists a Lipschitz continuous function $\Phi_r:\bbR^{d}\to\bbR$ such that
\[
\Gamma_u(t) \cap B_{3/2}\subseteq \{(x',x_d)\in B_{3/2}\,:\, |\Phi_r(x',t)-x_d|<r\}
\]
for all $t\in (-2,2)$. Moreover, $\Phi_r$ is $\cot\theta$-Lipschitz continuous in space and $C/r$-Lipschitz continuous in time for some $C=C(L,\theta)>0$.

% $\Phi:\bbR^{d}\to\bbR$ such that $\Sigma_r=$ is Lipschitz in space with Lipschitz constant $C$, and is Li depending only on $L,\theta$)
\end{lemma}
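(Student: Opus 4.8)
The statement has two halves. The spatial half — that at each frozen time the free boundary sits in an $r$–band around a $\cot\theta$–Lipschitz graph — is essentially the geometry of cone monotonicity. Since $r\ge 4\eps$, the $(\eps,0)$–monotonicity of $u$ with respect to $W_{\theta,-e_d}$ forces, for every $t$, an \emph{interior downward cone condition} and an \emph{exterior upward cone condition} (opening $\theta$ about $\mp e_d$) for $\Omega_u(t)$ valid down to scale $O(\eps)$. Writing $\sigma_t(x'):=\sup\{x_d:(x',x_d)\in\Omega_u(t)\cap B_{15/8}\}$ and taking $g_t$ to be the sup–convolution of $\sigma_t$ with the cone $W_{\theta,-e_d}$, one gets a globally $\cot\theta$–Lipschitz function with $\Omega_u(t)\cap B_{7/4}\subseteq\{x_d\le g_t(x')\}$ and $\{x_d\le g_t(x')-C\eps\}\cap B_{7/4}\subseteq\Omega_u(t)$, hence $\Gamma_u(t)\cap B_{3/2}\subseteq\{|x_d-g_t(x')|\le C\eps\}$ with $C$ an absolute constant. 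This is the analogue of Proposition 11.14 in \cite{CafSal}, and uses only $u\ge0$, boundedness, and the monotonicity. The substantive half is that $g_t$ moves in $t$ at rate $\lesssim 1/r$, and this is where the comparison principle enters; note that, in contrast to classical Hele–Shaw, $\Omega_u(t)$ here need not be monotone in $t$, so both the advance and the retreat of the free boundary must be estimated.

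\textbf{Reduction to a one–step estimate.} Set $\tau:=c(L,\theta)\,r^{2}$ and $t_j:=j\tau$. I will prove the claim: if $\Omega_u(t_0)\cap B_{3/2}\ne\emptyset$ then, for $|t-t_0|\le\tau$, $\Gamma_u(t)\cap B_{3/2}\subseteq\{|x_d-g_{t_0}(x')|\le r/4\}$ (if $\Omega_u(t_0)\cap B_{3/2}=\emptyset$ the same holds on a $t$–neighborhood of $t_0$ by continuity of the free boundary, and there I set $\Phi_r$ equal to a large constant). Granting the claim, define $\Phi_r(x',t)$ on each slab $[t_j,t_{j+1}]$ as the affine–in–$t$ interpolation between $g_{t_j}(x')$ and $g_{t_{j+1}}(x')$. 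Interpolating $\cot\theta$–Lipschitz functions keeps $\Phi_r(\cdot,t)$ $\cot\theta$–Lipschitz in $x'$; applying the claim at $t=t_{j+1}$ gives $|g_{t_{j+1}}(x')-g_{t_j}(x')|\le r/4+C\eps\le r$, so $|\partial_t\Phi_r|\le r/\tau=C(L,\theta)/r$; and combining the claim with the spatial band one checks $\Gamma_u(t)\cap B_{3/2}\subseteq\{|x_d-\Phi_r(x',t)|<r\}$ for all $t\in(-2,2)$.

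\textbf{The one–step estimate: two barriers and a continuity argument.} Fix $t_0$ and place barriers at a $\rho$–net of points $x_0\in\Gamma_u(t_0)\cap\overline{B_{3/2}}$, with $\rho\asymp r\min(1,\tan\theta)$. For the \emph{advance} direction, compare $u$ in the ball $B:=B_\rho(x_0+\rho e_d)$ with a supersolution $w$ of \eqref{1.1} whose zero set is $\{x_d\ge P(x',t)\}$ for a downward paraboloid $P$ of curvature $\asymp 1/\rho$ that lies initially just above $\Omega_u(t_0)\cap B$ and translates upward at speed $s_0$, with $-\Delta w=\|f\|_\infty$ below $P$ and $w\equiv M(d,\theta)L$ on $\partial B$. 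Taking $M$ large makes $w(\cdot,t_0)\succ u(\cdot,t_0)$ in $B$ — using only $u\le L$ and the cone–domain lower bound of Lemma \ref{L.3.1} for $w$ near its own free boundary; because $P$ is smooth with curvature $\asymp 1/\rho$, the boundary gradient bound for superharmonic functions (cf. Section \ref{S.4}) gives $|\nabla w|\lesssim M/\rho$ on $\{x_d=P(x',t)\}$, so $w$ is a genuine supersolution once $s_0:=C(d,\theta)(M/\rho+\|\vec{b}\|_\infty)$, which is $\le C(L,\theta)/r$. Lemma \ref{L.cp} then yields $\Omega_u(t)\cap B\subseteq\Omega_w(t)$, so $\Gamma_u(t)$ over columns near $x_0'$ stays below $g_{t_0}(x_0')+O(\cot\theta\,\rho)+s_0\tau\le g_{t_0}(x_0')+r/8$. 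For the \emph{retreat} direction compare, below a paraboloid translating downward at the maximal admissible speed $\|\vec{b}\|_\infty$ — admissible because the free–boundary velocity in \eqref{1.1} is $\ge-\vec{b}\cdot\nu\ge-\|\vec{b}\|_\infty$ irrespective of $|\nabla u|$ — with a subsolution seeded by a small positive constant away from its free boundary so that it is $\prec u(\cdot,t_0)$; this keeps $\Gamma_u(t)$ over columns near $x_0'$ above $g_{t_0}(x_0')-r/8$. Both comparisons are mildly circular — each needs the lateral boundary of its comparison region to remain inside, resp. the barrier's support to remain outside, $\Omega_u(t)$, which is part of what is being shown — and this is closed by the usual continuity argument: $\{t\in[t_0,t_0+\tau]:\Gamma_u(s)\cap B_{3/2}\subseteq\{|x_d-g_{t_0}|\le r/4\}\ \forall s\in[t_0,t]\}$ is relatively open and closed, hence all of $[t_0,t_0+\tau]$.

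\textbf{Main obstacle.} The delicate point is the advance estimate. One cannot bound the normal speed of $\Gamma_u(t)$ pointwise: $|\nabla u|$ need not be bounded on $\Gamma_u(t)$, since Lemma \ref{L.3.1} only gives the lower growth $u\gtrsim d(\cdot,\Gamma_u(t))^{\beta}$ with $\beta$ slightly above $1$, leaving room for a sublinear upper rate. The resolution — and the reason the time–Lipschitz constant is $C/r$ rather than absolute — is that $\Phi_r$ need only track $\Gamma_u$ up to the band half–width $r$: at scale $\rho\asymp r$ away from $\Gamma_u(t)$ the bare bound $u\le L$ already forces $\Omega_u(t)$ to sit a definite depth $\gtrsim\rho$ below the free boundary of the comparison paraboloid, so that paraboloid is admissible with free–boundary speed only $\lesssim L/\rho\asymp C(L,\theta)/r$. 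Everything else — reconciling the $O(\eps)$ cone–regularization error with the hypothesis $r\ge4\eps$, and making all the numerical constants consistent across the net of barriers — is routine book-keeping.
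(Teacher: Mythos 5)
Your proposal follows the same skeleton as the paper's proof: the spatial band at each fixed time via the sup-convolution/cone argument of Proposition 11.14 in \cite{CafSal}; a one-step estimate on a time interval of length $\sim r^2$ showing the graph advances by at most $O(r)$ and retreats by at most $O(Lr^2)$; and linear interpolation in time to produce $\Phi_r$ with time-Lipschitz constant $C(L,\theta)/r$. You also correctly identify the key obstacle (no pointwise bound on $|\nabla u|$ at $\Gamma_u$) and its resolution (only the scale-$r$ behavior matters, where $u\le L$ caps the barrier's gradient by $L/r$). Where you differ is in the implementation of the one-step estimate. The paper isolates a cleaner statement — if $u(\cdot,t_0)=0$ on $B_r(x_0)$ then $u(x_0,\cdot)=0$ persists for time $c(L)r^2$ — proved with a \emph{radially symmetric} supersolution on a fixed ball $B_{1/2}(x_0)$ whose zero set is a shrinking concentric ball; because the lateral boundary is a fixed sphere on which the barrier is a constant dominating $\|u\|_\infty$ for all times, no open/closed continuity argument is needed, whereas your translating paraboloid in $B_\rho(x_0+\rho e_d)$ forces you to track $\Omega_u(t)\cap\partial B$ and close the loop by connectedness. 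For the retreat, the paper simply invokes the already-established streamline monotonicity (Lemma \ref{L.2.9}) together with $|\vec b|\le L$, which is exactly the fact your downward-translating subsolution re-derives. One point in your sketch deserves more care than "routine book-keeping": the initial strict separation $w(\cdot,t_0)\succ u(\cdot,t_0)$. Near the paraboloid $w$ degenerates linearly while you only know $u\le L$, so you must arrange that $\overline{\Omega_u(t_0)}\cap\overline{B}$ stays a distance $\gtrsim\rho$ below the paraboloid; since the cone vertex $x_0$ lies on $\partial B_\rho(x_0+\rho e_d)$ and the paraboloid must fit strictly inside the cone $x_0+W_{\theta,e_d}$ within $B$, this is a genuine geometric constraint whose constants degenerate as $\theta\to 0$ (which is admissible, as $C$ may depend on $\theta$, but should be spelled out). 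The paper's shrinking-ball barrier sidesteps this because the set where $u$ vanishes is an honest metric ball of radius $r\sin\theta$ supplied directly by the $(\eps,0)$-monotonicity.
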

\begin{proof}
From the $(\eps,0)$-monotonicity assumption, it follows from Proposition 11.14 \cite{CafSal} that for each $t\in (-2,2)$, $\Gamma_u(t)$ is contained in a $(1-\sin\theta)\eps$-neighborhood of the graph of a Lipschitz function, with Lipschitz constant $\cot\theta$. Therefore we can find a Lipschitz function $\phi^t:\bbR^{d-1}\to\bbR$ with the same Lipschitz constant such that
\beq\lb{2.93}
\Gamma_u(t)\cap B_2\subseteq \{(x',x_d)\in B_2\,:\, |\phi_t(x')-x_d|<\eps\}.
\eeq

{\it Claim. If $r\in (0, \frac14]$ and $u(\cdot,t_0)=0$ in $B_{r}(x_0)$ for some $(x_0,t_0)\in  B_{3/2}\times (-2,2)$, then $u(x_0,t_0+t)=0$ for all $t\leq cr^2$ for some $c=c(L)>0$. }

{\it Proof of claim.} We use a barrier argument to prove the claim for $d\geq 3$ (the proof for $d=2$ is similar). Also suppose, without loss of generality, that $t_0=0$ and $x_0=0$. For some $A\geq 1$ to be determined, let 
\[
 w(x,t):=a_t-2^{-1}L|x|^2-b_t|x|^{2-d}\quad \text{ in }\Sigma:=\{(x,t)\,:\,x \in B_{1/2}\backslash B_{r_t},\, t\in [0,r^2/(2A)]\}
\]
where
\[
a_t:=1+8^{-1}L+b_t2^{d-2},\quad b_t:=\frac{8+L-4Lr_t^2}{8r_t^{2-d}-2^{d+1}},\quad r_t:=r-Ar^{-1}t.
\]
Then it is straightforward to verify that for $t\in [0,(2A)^{-1}r^2]$, $-\Delta w(\cdot,t)=d L$, $w(\cdot,t)={1}$ on $\partial B_{1/2}$ and $w(\cdot,t)=0$ on $\partial B_{r_t}$. Moreover for these $t$, 
\[
|\nabla w(x,t)|\leq L|x|+(d-2)b_t|x|^{1-d}\leq C/r\quad \text{ for }x\in B_{1/2}\backslash B_{r_t},
\]
as $1/2<1/r$, with $C>0$ only depending on $d,L$. Therefore, using that $\frac{d}{dt}r_t=-Ar^{-1}$ and by picking $A:=C+L$, we get that $w$ is a supersolution to \eqref{1.1}. So the assumptions and the comparison principle yield $u\leq w$ in $\Sigma$. Since $w(\cdot,t)=0$ on $\partial B_{r_t}$ for all $t\in [0,(2A)^{-1}r^2]$, we proved the claim with $c:=(2A)^{-1}$.

Now  for each $x'\in \bbR^{d-1}$ satisfying $|x'|\leq \frac32$, since $u( (x',\phi_t(x')+\eps),t)=0$, the $(\eps,0)$-monotonicity yields $u(\cdot,t)=0$ in $B_{r\sin\theta}((x',\phi_t(x')+r+\eps))$ for all $r\geq\eps$. 
Hence the above claim implies
\[
u\left((x',\phi_t(x')+r+\eps),t+s\right)=0\quad\text{ for all }s\in [0,c_\theta r^2]
\]
where $c_\theta:=c\sin\theta $. 
This yields
\beq\lb{2.91}
\phi_{t+s}(x')\leq \phi_t(x')+r+2\eps\quad\text{ for all }s\in[0,c_\theta r^2].
\eeq
On the other hand, since $\Omega_u$ is non-decreasing along streamlines and $|\vec{b}|\leq L$, we obtain
\beq\lb{2.92}
\phi_{t+ s}(x')\geq \phi_t(x')-c_\theta Lr^2-2\eps \quad\text{ for all }s\in[0,c_\theta r^2].
\eeq

Let $r\in [\eps,\frac{1}{4}]$
and we use $\phi_t$ to construct a Lipschitz space-time function $\Phi_r$. Let $t_0:=-2$, and define iteratively for $k\in\bbN$ that $t_k:=t_0+kc_\theta r^2$, and $\Phi_r(x',t_k):=\phi_{t_k}(x')$. Then we extend $\Phi_r(x',\cdot)$ to all $t\in (-2,2)$ by linear interpolation. We see that $\Phi_r$ is $\cot\theta$-Lipstchiz continuous in space and $2(c_\theta r)^{-1}$-Lipschitz continuous in time.
Finally, \eqref{2.93}, \eqref{2.91} and \eqref{2.92} yield that
\[
\Gamma_u(t) \cap B_{3/2}\subseteq \{(x',x_d)\in B_{3/2}\,:\, |\Phi_r(x',t)-x_d|<r+3\eps\}
\]
which finishes the proof with $r+3\eps$ in place of $r$.
\end{proof}

\subsection{Examples: Waiting time, Formation of cusps and discussion of optimality for the monotonicity assumption}  

%First let us show that the theorem is false when the angle is small. 

First let us show an example of  the {\it waiting time} phenomena, where the initial free boundary and stays as a sharp cone with fixed vertex for a finite amount of time. 

\begin{example}\lb{cone_sharp} 
Let  $f=1$ and $\vec{b}\equiv 0$. We only sketch the proof for $d=2$, by constructing an increasing-in-time supersolution whose free boundary has one point that does not move for a short positive time.

For some $k>\frac{3}{2}$ and $c>0$, and for $t<k/c$, consider
\[
\Phi(x,t)=\left(x_2^2-k x_1^2+ctx_1^2\right)_+ \text{ if }x_2\leq 0, \text{ otherwise }0.
\]
Then when $t=0$, the support of $\Phi$ is a cone of angle $(\pi- 2\arctan \sqrt{k})$, and as time increases the cone enlarges but always with the origin as its vertex.
Indeed, the free boundary $\Gamma_\Phi(t)=\{(x,t)\,|\, x_2=-\sqrt{k-ct}|x_1|\}$. Now we show that $\Phi$ is a viscosity supersolution. Direct computation yields on $\Gamma_\Phi(t)\backslash\{0\}$,
\[
\Phi_t-|\nabla\Phi|^2=cx_1^2-4x_2^2-4(k-ct)^2x_1^2=\left(c+1-(2k-2ct+1)^2\right)x_1^2\geq 0.
\]
While at the origin, suppose $\Phi-\phi$ with $\phi$ a test function has a local minimum for $t\leq t_0$ at $(0,t_0)$. Since $\Phi(x,t_0)=0$ in a cone of angle $>\pi$, then $\phi_t(0,t_0)\leq 0$ and $|\nabla \phi(0,t_0)|=0$. We checked the free boundary condition.
Next, inside the support of $\Phi$, we have
\[
-\Delta\Phi=-2+2k-2ct\geq 1=f \text{ if }t\leq (2k-3)/(2c).
\]
Thus we conclude that $\Phi$ is a viscosity supersolution in $\R^d\times [0,\frac{2k-3}{2c})$.
\end{example}

In the next example, we show that the free boundary of solutions starting with a cone as its positive set % flat free boundary
develops a cusp at the vertex of the cone if the vector field is only H\"{o}lder continuous. 
\begin{example}\lb{E.1}
We only consider space dimension $2$ and we use the polar coordinates $r,\theta$ such that $(x_1,x_2)=(r\cos\theta,r\sin\theta)$. 
In the example we take $f\equiv0$, and $\vec{b}$ to be of the form $\vec{b}=(C_0|x_2|^{\gamma_0-1},0)$ with $C_0>1$ and $\gamma_0\in (1,2)$.

First we show that the support of the solution is contained in a shrinking cone when $C_0$ is large.
For $t\in [0,1]$, let
\[
\Gamma_t':=\{|\theta|=\theta_t\}\quad\text{ where }\theta_t:=(1-t)\frac\pi{2\gamma_0}+t\frac{\pi}{2\gamma_1}\in (0,\frac{\pi}{2})\text{ and }\gamma_1>\gamma_0.
\]
The opening of the cones $\{|\theta|<\theta_t\}$ shrinks from $\theta_0$ to $\theta_1$ for $t\in [0,1]$. For each $t$, let $\varphi^t=r^{\gamma_t}(\cos(\gamma_t\theta))_+$ with $\gamma_t:=\frac{\pi}{2\theta_t}>1$. It is easy to see that $\Delta \varphi^t=0$ in $\{\varphi^t>0\}=\{|\theta|<\theta_t\}$, and
\[
|\nabla \varphi^t|=r^{\gamma_t-1}\sqrt{\cos^2(\gamma_t\theta)+{\gamma_t}^2\sin^2({\gamma_t}\theta)}\Big|_{|\theta|=\theta_t}={\gamma_t} r^{{\gamma_t}-1}\quad\text{ on $|\theta|=\theta_t$. }
\]
By direct computations, the outer normal direction of $\Gamma_t'$ is $\nu_t'=(-\sin\theta_t,\pm\cos\theta_t)$, and  the normal velocity of $\Gamma_t'$ at $(r,\pm\theta_t)$ equals to
$
V'(r,\pm\theta_t)=-(\frac{\pi}{2\gamma_0}-\frac{\pi}{2\gamma_1})r.
$
We obtain on $\Gamma_t'\cap B_1$,
\[
V'-|\nabla\varphi^t|-\vec{b}\cdot\nu_t'=-(\frac{\pi}{2\gamma_0}-\frac{\pi}{2\gamma_1})r-\gamma_tr^{\gamma_t-1}+C_0|r\cos\theta_t|^{\gamma_0-1}\sin\theta_t 
\]
which is non-negative if $C_0$ is large enough, due to $\gamma_0\leq\gamma_t$.
So $\varphi^t$ is a supersolution to \eqref{1.1} in $B_1\times (0,1)$.

Next let $u$ be a solution with initial data $\leq \varphi^0$ and with boundary value $\leq\varphi^t$ on $\partial B_1\times\{t>0\}$, then the origin is on $\Gamma_u$ by Lemma \ref{L.2.9} and $\vec{b}(0)=0$. We claim that the comparison principle (Lemma \ref{L.cp}) yields $u\leq\varphi^t$ and so $\Omega_u(t)$ is contained in $\{\varphi^t>0\}=\{|\theta|<\theta_t\}$. To justify the use of the comparison principle, by the choice of $\vec{b}$, we first compare $u$ with $\varphi^t(x_1+\delta,x_2)$ for $\delta>0$ (the two functions are strictly separated) and then passing $\delta\to 0$ yields the desired inequality $u\leq \varphi^t$.

\medskip

Now we start with $t=1$ and a solution $u$ such that $(\Omega_u(1)\cap B_1)\subseteq\{|\theta|<\theta_1\}$, and show the formation of cusps. Assume $\gamma_1>2$ and $\sigma:=\frac2{\gamma_1}\in (\gamma_0-1,1)$. For $t\in (1,2)$, define
\[
\Gamma_t:=\{x_1=g(x_2,t)\}\quad \text{ where $g(x_2,t):=|x_2|\cot\theta_1+(t-1)|x_2|^{\sigma}$.} %\sigma\in(0,1).
\]
So a cusp develops at the vertex of the set $\{x_1>g(x_2,t)\}$ when $t>1$. For each $t\in (1,2)$, let $\phi^t$ be a harmonic function in $\{x_1>g(x_2,t)\}$ with $0$ boundary condition and $\phi^t(\frac12,0)=1$.
If we can show that $\phi^t(x_1,x_2)$ is a supersolution for $t\in (1,2)$, then after further assuming $u$ to be smaller on $\partial B_1$ and by the comparison principle (which can be justified similarly as before), the support of $u$ is contained in cusps for $t\in (1,2)$, which shows the formation of cusps.

To show that $\phi^t$ is a supersolution, it suffices to verify the free boundary condition on $\Gamma_t\cap B_1$.
Note that the curvature of $\Gamma_t$ at point $(g(x_2,t),x_2)$ satisfies
\[
\frac{|\partial_{x_2}^2 g({x_2},t)|}{(1+|\partial_{x_2} g({x_2},t)|^2)^{3/2}}\lesssim \frac{1}{|{x_2}|}\quad \text{ uniformly for all }|{x_2}|<1\text{ and }t\in (1,2).
\]
For any fixed $(y_1,y_2)\in \Gamma_t$, let us consider $\tilde{\phi}^t(x_1,x_2):=\phi^t(|y_2|x_1+y_1,|y_2|x_2+y_2)$. Then the free boundary of $\tilde{\phi}_t$ is a graph of finite curvature in a unit neighbourhood of the origin.  
Thus it follows from Lemma \ref{L.dah} (by comparing with radially symmetric harmonic functions, see also \cite{JK1}) that for some $c>0$, 
\[
|\nabla \tilde{\phi}^t(0,0)|\leq c\,\tilde{\phi}^t(0,-y_2/|y_2|).
\]
After scaling back, we get
\[
|\nabla \phi^t(x_1,x_2)|\leq c|\phi^t(x_1,0)|/|x_2|\leq cx_1^{\gamma_1}/|x_2| \quad \text{ on }\Gamma_t\cap B_1.
\]
In the last inequality we used
$\phi^t(x_1,0)\lesssim \varphi^1(x_1,0)\lesssim x_1^{\gamma_1}$, which is due to the support of $\phi^t$ is contained in $\{|\theta|<\theta_1\}$ and Lemma \ref{L.3.1}.  
Moreover, by direct computation,
\[
\vec{b}\cdot\nu_t=(C_0|{x_2}|^{{\gamma_0}-1},0)\cdot\frac{(-1,\pm (\cot\theta_1+\sigma (t-1)|{x_2}|^{\sigma-1}))}{\sqrt{1+(\cot\theta_1+\sigma (t-1)|{x_2}|^{\sigma-1})^2}}\approx\frac{-C_0|{x_2}|^{{\gamma_0}-1}}{\sqrt{1+(t-1)^2|{x_2}|^{2\sigma-2}}}
\]
where $\nu_t$ denotes the unit normal direction on $\Gamma_t$. The normal velocity of $\Gamma_t\cap B_1$ is
\[
V:=(|{x_2}|^{\sigma},0)\cdot\frac{(-1,\pm (\cot\theta_1+\sigma (t-1)|{x_2}|^{\sigma-1}))}{\sqrt{1+(\cot\theta_1+\sigma (t-1)|{x_2}|^{\sigma-1})^2}}\approx\frac{-|{x_2}|^{\sigma}}{\sqrt{1+(t-1)^2|{x_2}|^{2\sigma-2}}}\gtrsim \frac{-|{x_2}|^{{\gamma_0}-1}}{\sqrt{1+(t-1)^2|{x_2}|^{2\sigma-2}}}
\]
where the last inequality is due to $\sigma>\gamma_0-1$.

It remains to show that, if $C_0$ is large enough, then
\beq\lb{E.11}
V\geq \vec{b}\cdot\nu'+|\nabla\phi^t|\quad \text{ on } \Gamma_t\cap B_1.
\eeq
If $(t-1)|{x_2}|^{\sigma-1}\geq 1$, then $(t-1)|{x_2}|^\sigma\approx x_1$ on $\Gamma_t$. Due to $\gamma_0-\sigma<1$ and $\gamma_1\sigma=2$, we have for $t\in (1,2)$,
\[
V-\vec{b}\cdot\nu'\approx {C_0|{x_2}|^{\gamma_0-\sigma}}/{(t-1)}\gtrsim C_0|{x_2}|\quad\text{and}\quad |\nabla\phi^t|\lesssim (t-1)^{\gamma_1}|{x_2}|^{\gamma_1\sigma-1}\lesssim |{x_2}| .
\]
While if $(t-1)|{x_2}|^{\sigma-1}\leq 1$, then $|{x_2}|\approx x_1$ on $\Gamma_t$, and so, by $1<\gamma_0<\gamma_1$,
\[
V-\vec{b}\cdot\nu'\approx C_0|{x_2}|^{\gamma_0-1}\quad\text{and}\quad |\nabla\phi_t|\lesssim |{x_2}|^{\gamma_1-1}\lesssim |{x_2}|^{\gamma_0-1}.
\]
These imply \eqref{E.11}, and we conclude with the formation of cusps.
\end{example}

%\begin{example}\lb{E.2}
%Fix any $\eps>0$. Let $\varphi:\bbR\to\bbR$ be any smooth function such that $\varphi(x)=x $ if $|x|>1$ and $\varphi'<0$ at some point of $(-1,1)$. For small $\delta\in(0,1)$ and $C\geq \|\varphi\|_{C^2}$, define
%\[
%\phi(x,y):=2C+\delta^2\varphi(x/\delta)-Cy^2.
%\]
%It is not hard to see that in $B_1\subseteq \bbR^2$,
%$\phi_x$ does not have a sign, $\phi\geq 0$, and $-\Delta\phi\geq 0$ (and is finite). Finally one can check that if $\delta<<\eps$ is small enough then $\phi$ is $(\eps,1/2)$-monotone with respect to some cone $W_{\theta,\mu}$ where $\theta=O(\delta)>0$ and $\mu$ is to the positive $x$-direction in $B_1$. In particular, $\phi$ is $(\eps,0)$-monotone with respect to direction $\mu$ in $B_2$ but it is not full monotone along $\mu$-direction in $B_1$.
%\end{example}

Finally, we show that $(\eps,0)$-monotonicity with respect to $W_{\theta,\mu}$ in a large neighborhood does not imply that the solution is monotone along the direction $\mu$ in smaller neighborhood. Here $\theta>0$ can be large and the source term $f$ is smooth.

\begin{example}\lb{E.3}
 %{i think the point is that the $C^1$ norm of $f$ could be uniformly bounded regardless of the choice of $\e$, otherwise there is no reason to set $f=\delta$ in $B_{\delta}$. But yes this is a good example. }  
Fix a small $\delta>0$, and any $\theta\in (0,\frac\pi2)$ and $n\in\bbN$. Take a smooth function $f\geq 0$ such that $f$ is radially decreasing, $f$ is supported in $B_{2\delta}$, and $f\equiv\delta^n$ in $B_\delta$. Then we can assume that $f$ is uniformly bounded in $C^n$ norm regardless of the choice  $\delta$. %For $x=(x_1,x_2)\in\bbR^2 $, write $r:=|x|$. 
Now let $\phi_1:\bbR^2\supseteq \overline{B_1}\to\bbR$ solve
\[
-\Delta\phi_1=f\text{ in }B_1\quad\text{and}\quad\phi_1=0\text{ on }\partial B_1.
\]
Note that $\phi_1(x)=\int_{B_{2\delta}}\frac{1}{2\pi}\ln|x-y|f(y)dy$. Hence by direct computations,
\beq\lb{E1}
\sup_{B_{2\delta}}|\nabla \phi_1|\geq \delta^{n+1}/C\quad\text{and}\quad \phi_1\in (0, C\delta^{n+2}|\ln\delta|)\text{ in }B_1
\eeq
for some $C>0$ independent of $\delta$.
Moreover, take
\beq\lb{E2}
\phi(x):=\phi_1(x)+2+\delta^{n+1} x_1/(2C),
\eeq
which is strictly positive, and satisfies $\phi_{x_1}<0$ at some points in $B_{2\delta}$ by \eqref{E1} and the fact that $\phi_1$ is radial. We claim that, with $\e:= \delta^{\frac12}$ and $\delta$ sufficiently small, $\phi$ is $(\eps,0)$-monotone with respect to $W_{\theta,\mu}$ with $\mu$ being the positive $x_1$-direction. Indeed, for any $x,y\in B_1$ and $y\in B_{(\sin\theta)\eps}(x+\eps \mu)$, \eqref{E2} and the second inequality in \eqref{E1} yield
\[
\phi(y)-\phi(x)\geq \delta^{n+1}(y_1-x_1)/(2C)-\phi_1(x)\geq (1-\sin\theta)\eps \delta^{n+1}/(2C)-C\delta^{n+2}|\ln\delta|\geq 0,
\]
after taking $\delta=\eps^2$ to be small enough. Thus this yields the claim.

\end{example}

Finally (still in dimension $2$), we show that $(\eps,\eps^\alpha)$-monotonicity with respect to $W_{\theta,\mu}$ in a large neighborhood does not imply that the solution is monotone along the direction $\mu$ in smaller neighborhood. The source term $f$ is bounded, the constant $\alpha\in (0,1)$, and the solution can be $>>\eps$.

\begin{example}\lb{E.2}
Let $\alpha>0$ be fixed, and let $\min\{0,1-\alpha\}<\kappa<1$ and $\delta:=\eps^{(\alpha+\kappa+1)/2}$. Then take $C,\theta,f$ and $\phi_1$ from the previous example with $n=0$. We define
\[
\phi(x):=\phi_1(x)+\delta (x_1+1)/2C+\eps^{\kappa}.
\]
By \eqref{E1} with $n=0$, we have for sufficiently small $\eps$ that
\beq\lb{E4}
\eps^{\kappa}\leq \phi\leq C\delta^2|\ln \delta|+\delta/C+\eps^\kappa\leq 2\eps^\kappa\quad\text{ in }B_1.
\eeq
From the previous example, $\phi_{x_1}$ does not have a sign in $B_1$. We now show that $\phi$ is $(\eps,\eps^\alpha)$-monotone with respect to $W_{\theta,\mu}$ with $\mu$ denoting the positive $x_1$-direction. Indeed, for $x,y\in B_1$ and $y\in B_{(\sin\theta)\eps}(x+\eps \mu)$, we get from \eqref{E1} and \eqref{E4} that
\begin{align*}
\phi(y)-(1+\eps^{\alpha+1})\phi(x)&\geq \delta(y_1-x_1)/(2C)-\phi_1(x)-\eps^{\alpha+1}\phi(x)\\
&\geq (1-\sin\theta)\eps\delta/(2C)-C\delta^2|\ln\delta|-2\eps^{\alpha+\kappa+1}.
\end{align*}
This is non-negative  when $\eps$ is sufficiently small, due to $\eps^{\alpha+\kappa}<<\delta<<\eps$ by the choice of the parameters.

\smallskip

Note that, later in Proposition \ref{L.2.12}, we will apply the improved interior monotonicity of the solution $u$ in the region that is $\eps^{\gamma_1}$-away (with $\gamma_1<1$ but close to $1$) from the free boundary and it is possible that $u\in (\eps^{1/\sigma},\eps^{\sigma})$ for some $\sigma<1$ in the region. Thus the above example indeed indicates that merely bounded source function is not sufficient for the purpose.
% only depends on $\theta$. 
\end{example}

%While the solution is small in $B_1$ since we need to pick $\delta<<\eps$. We construct one more example in which the solution is of unit size, but it is only $(\eps,\alpha)$-monotone with respect to $W_{0,\mu}$. The example confirms that one cannot only assume $f$ to be bounded in Lemma \ref{L.2.4}.

%\begin{example}\lb{E.5}
%Fix any $\eps>0$. Let $\varphi:\bbR\to\bbR$ be any smooth function such that $\varphi(x_1)=x_1 $ if $|x_1|>1$ and $\varphi'<0$ at some point of $(-1,1)$. For small $\delta\in(0,1)$ and $C\geq \|\varphi\|_{C^2}$, define
%\[
%\phi(x_1,x_2):=2C+\delta^2\varphi(x_1/\delta)-Cx_2^2.
%\]
%It is not hard to see that in $B_1\subseteq \bbR^2$,
%$\phi_{x_1}$ does not have a sign, and $\phi\geq 0$, and $-\Delta\phi\in [0,2C]$. Finally one can check that if $\delta<<\eps$ is small enough then $\phi$ is $(\eps,\alpha)$-monotone with respect to some cone $W_{\theta,\mu}$ where $\theta=O(\delta),\alpha>0$ and $\mu$ is to the positive $x_1$-direction in $B_1$. In particular, $\phi$ is $(\eps,0)$-monotone with respect to direction $\mu$ in $B_2$ but it is not full monotone along $\mu$-direction in $B_1$.
%\end{example}

\section{Superharmonic Functions in Lipschitz domains}\lb{S.4}
In this section, motivated by Lemma \ref{L.3.11} we begin with studying superharmonic functions in Lipschitz domains, starting with an important localization result (Proposition~\ref{L.2.61}). Building on this we achieve an important growth estimate for $(\e,\eps^\alpha)$ superharmonic functions, up to a small distance away from the free boundary (Lemma~\ref{L.3.4}). The challenge lies in the potential oscillation of the source term $f$, which could affect the distribution of $\nabla w$ in small scale.

\medskip

Throughout the section we denote $g:\bbR^{d-1}\to\bbR$ to be a Lipschitz continuous function with Lipschitz constant $c_g>0$ such that $g(0)=0$. For any $L\geq 2$, define a strip with width $1$ below the graph of $g$ in $B_L$ as
\[
\Sigma'_L:= B_L\cap \left\{x=(x',x_d)\,:\, g(x')-1<x_d<g(x')\right\},
\]
and denote the bottom part of the boundary as
\[
{\partial_b \Sigma'_L:= B_L\cap \{x=(x',x_d)\,:\, x_d=g(x')-1\}}.
\]
%It is clear that $\Sigma$ is a Lipschitz domain, and the Lipschitz constant of $\Sigma_L\cap B_{L-1}$ is $c_g$. Therefore, we mollify the domain as follows.
%Take $\Sigma'$ such that
%%    \item $\Sigma'$ is a simply connected, Lipschitz  domain with Lipschitz constant $c_g$,
    
%    \item $\Sigma\cap B_{15/16}\subseteq\Sigma'\subseteq \Sigma$,
    
%    \item $N\subseteq \partial\Sigma'$.
%\end{enumerate}

We consider two non-negative functions $w_{1,L}$ and $w_{2,L}$ such that
$$
\left\{\begin{array}{l}
-\Delta w_{1,L}=0, \quad  -\Delta w_{2,L}=1\hbox{  in }\Sigma'_L;\\ \\
w_{1,L}=1, \quad w_{2,L}=0 \hbox{ on }\partial_b\Sigma'_L; \quad w_{1,L}=w_{2,L}=0 \hbox{ on the rest of }\partial \Sigma'_L.
\end{array}\right.
$$

{Below we will show that the two functions are comparable, uniformly with respect to the width parameter $L$. Such result allows us to study our solutions using the well-known properties of harmonic functions in Lipschitz domains. While such result appears to be of classical nature, we were unable to find a relevant version in the literature. It does not appear to be directly verifiable from the Green's function presentation for each functions. }

%\textcolor{since this seems like a classical result, maybe one way to argue that this seems interesting is to see if it is false for large $c_g$. What would be the intuition for such bad scenario?} \text{It is false if $\beta>2$. In this case $w_{2,L}$ grows faster than quadratic growth near the Lipschitz free boundary. However $w_{1,L}\leq dist^{\beta}$, and so $w_{2,L}$ cannot be controlled by $w_{1,L}$.}

\begin{proposition}\lb{L.2.61}
For $w_{1,L},w_{2,L}$ and $g$ given as above, let $L \geq 2$ and $c_g < \cot \theta_{2}$, where $\theta_{2}$ is from \eqref{angle_1}.
Then
\[
       w_{2,L}\leq C w_{1,L} \quad\text{ in }\Sigma'_{L-1} \hbox{ for some } C=C(d,c_g). 
\]
\end{proposition}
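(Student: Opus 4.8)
The plan is to compare $w_{2,L}$ with $w_{1,L}$ via a covering argument that exploits the fact that $w_{2,L}$ solves a Poisson equation with bounded right-hand side in a strip of fixed width $1$, while $w_{1,L}$ is harmonic and vanishes on the same portions of the boundary. The starting point is the observation that $w_{2,L}$ is itself bounded, uniformly in $L$: since $-\Delta w_{2,L} = 1$ in $\Sigma'_L$ and $w_{2,L} = 0$ on all of $\partial \Sigma'_L$ except $\partial_b \Sigma'_L$, the maximum principle against the barrier $\frac{1}{2}(x_d - g(x') + 1)(g(x') - x_d)$ (which is nonnegative on $\Sigma'_L$, equals $0$ on $\partial_b\Sigma'_L$ and on the top graph, and has $-\Delta \geq 1$ up to the Lipschitz constant of $g$) shows $w_{2,L} \leq C(d,c_g)$ throughout $\Sigma'_L$. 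So it suffices to bound $w_{1,L}$ from below by a fixed constant on $\Sigma'_{L-1}$ at a controlled distance from the graph of $g$, and then handle the thin near-graph layer separately.

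First I would establish a quantitative lower bound for $w_{1,L}$ away from the top boundary. Fix a point $x \in \Sigma'_{L-1}$ and let $d(x)$ be its distance to the graph of $g$; then $d(x) \in (0,1)$. The key is that, because $c_g < \cot\theta_2$, the region $\Sigma'_L$ contains, below any point, a cone of opening governed by $\theta_2$, so one can compare $w_{1,L}$ from below using the harmonic function in a cone domain from Lemma \ref{L.3.1} with $\beta = 2$: walking from $\partial_b\Sigma'_L$ (where $w_{1,L} = 1$) upward, Harnack chains along the $e_d$-direction together with the cone sub-barrier give $w_{1,L}(x) \geq c\, d(x)^2$ for a dimensional constant $c = c(d,c_g) > 0$, uniformly in $L$. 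Crucially this is exactly the exponent $\beta = 2$ that the hypothesis $c_g < \cot\theta_2$ is tailored to, via \eqref{angle_1}.

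Next I would obtain a matching upper bound $w_{2,L}(x) \leq C\, d(x)^2$ for $x$ near the graph of $g$. Here I use that $w_{2,L}$ vanishes on the top graph together with the Poisson equation: writing $w_{2,L} = v + p$ where $p$ is the explicit quadratic barrier above (so $p \lesssim d(x)$ near the graph — actually one needs a slightly better comparison) and $v$ is harmonic, one sees $v \leq 0$ on the top boundary; more efficiently, compare $w_{2,L}$ directly against $C$ times a harmonic function vanishing on the graph of $g$ inside a suitable Lipschitz subdomain and use boundary regularity for harmonic functions in Lipschitz domains (the graph has Lipschitz constant $c_g$) to get the decay $w_{2,L}(x) \leq C d(x)^{\gamma}$; combined with the Poisson term this still gives at worst a linear-in-$d(x)$ bound, which is weaker than the quadratic lower bound for $w_{1,L}$. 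To close the argument I then bootstrap: on the set where $d(x) \geq \tfrac{1}{2}$ say, $w_{1,L} \geq c > 0$ is bounded below by a fixed constant, so $w_{2,L} \leq C \leq (C/c) w_{1,L}$ there; on the near-graph layer $d(x) < \tfrac12$, apply Dahlberg's Lemma \ref{L.dah} (valid because both $w_{1,L}$ and a comparison harmonic majorant of $w_{2,L}$ vanish on the Lipschitz graph of $g$ with constant $c_g < \bar M$) to transfer the comparison from an interior reference point, where we already know $w_{1,L} \gtrsim 1 \gtrsim w_{2,L}$, down to $x$.

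The main obstacle I anticipate is the last step: Dahlberg's lemma as stated applies to two harmonic functions, but $w_{2,L}$ is only superharmonic (it solves $-\Delta w_{2,L} = 1 \geq 0$). The fix is to split $w_{2,L} = h_L + q_L$ where $h_L$ is the harmonic function in $\Sigma'_L$ with the same boundary data as $w_{2,L}$ and $q_L$ solves $-\Delta q_L = 1$ with zero boundary data on all of $\partial\Sigma'_L$; then $q_L$ is controlled by the explicit quadratic barrier (and in particular $q_L \lesssim d(x)$, even $q_L \lesssim d(x)\cdot\mathrm{dist}(x,\partial_b\Sigma'_L)$, near the top graph), while $h_L \geq 0$ is harmonic and vanishes on the graph of $g$, so Dahlberg applies to the pair $(w_{1,L}, h_L)$. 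One must then check the uniformity in $L$ of all constants — this is where restricting to $\Sigma'_{L-1}$ (staying distance $\geq 1$ from the lateral boundary $\partial B_L$) is used, so that the reference points in the Harnack chains and in Dahlberg's lemma can always be chosen inside $\Sigma'_L$ with the geometry independent of $L$. Assembling: $w_{2,L} = h_L + q_L \leq C d(x)^2 \cdot \big(h_L(x_0)/w_{1,L}(x_0)\big) \cdot w_{1,L}(x)/d(x)^2 \,+\, C d(x)^2 \leq C w_{1,L}(x)$, using the lower bound $w_{1,L}(x) \gtrsim d(x)^2$ for the $q_L$ term and Dahlberg plus the interior estimate $h_L(x_0) \lesssim 1 \lesssim w_{1,L}(x_0)$ for the $h_L$ term.
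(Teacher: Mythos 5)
There is a genuine gap at the heart of your closing argument. Your decomposition $w_{2,L}=h_L+q_L$ is vacuous: since $w_{2,L}$ has \emph{zero} boundary data on all of $\partial\Sigma'_L$ (including $\partial_b\Sigma'_L$), the harmonic function $h_L$ with the same boundary data is identically zero and $q_L=w_{2,L}$. So Dahlberg's lemma has nothing to act on, and the whole burden falls on the claimed bound $q_L\leq Cd(x)^2$ near the graph. That bound is neither proved nor true: your explicit barrier gives at best linear decay $q_L\lesssim d(x)$ (as you yourself note mid-proof, before the $Cd(x)^2$ reappears in the final display), and in fact the correct decay of $w_{2,L}$ near a Lipschitz graph with $c_g<\cot\theta_2$ is generically $d(x)^{\beta}$ for some $\beta<2$ --- the same rate as $w_{1,L}$ --- not $d(x)^2$. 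Paired with your lower bound $w_{1,L}\gtrsim d(x)^2$, even a correct linear bound yields $w_{2,L}/w_{1,L}\lesssim 1/d(x)$, which blows up at the graph. A separate, smaller issue: the barrier $\tfrac12(x_d-g(x')+1)(g(x')-x_d)$ is not superharmonic when $g$ is merely Lipschitz (its Laplacian contains $F'(s)\,\Delta g$, a signless distribution), so even the uniform bound $\|w_{2,L}\|_\infty\leq C(d,c_g)$ needs another argument; the paper obtains it by a compactness/blow-up argument.

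The missing idea is precisely the one encoded in the strict inequality $c_g<\cot\theta_2$, which your proof never uses: it guarantees $w_{1,L}\gtrsim d(x)^{\beta}$ for some $\beta<2$ \emph{strictly}, while the source term contributes only $O(\rho^2)$ to $w_{2,L}$ on a layer of width $\rho$. The paper exploits this gap $2-\beta>0$ by a dyadic induction toward the graph: if $w_{2}\leq C_k w_{1}$ outside the layer of width $2^{-k}$, then adding the torsion function of that layer (of size $\lesssim 4^{-k}$ by scaling) and absorbing it into $w_{1}\gtrsim 2^{-(k+1)\beta}$ gives $C_{k+1}=C_k+C'2^{(\beta-2)k}$, a convergent series. (It first reduces to $L=2$ via Dahlberg and flattens the graph to a periodic slab to make the scaling clean.) Your one-shot comparison with the exponent $\beta=2$ discards exactly the margin that makes the argument close; to repair it you would need to replace the single near-boundary estimate by this iteration over scales.
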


Let us remark that if $c_g > \cot\theta_{2}$  the proposition is false. This is because near the vertex (at which $w_{1,L}=0$) of a cone with small opening, $w_{1,L}$ grows much slower than quadratic, while $w_{2,L}$ has a quadratic growth.

\begin{proof}
1. First we claim that  $w_{2,L}$ is bounded in $\Sigma'_{L}$, with the bound depending only on $d$ and $c_g$.
If this is not true, then we have a sequence of Lipschitz functions $g_n$ and the corresponding $w_{2,L_n}^n$ such that  $w_{2,L_n}^n(x_n)=\max_{x\in\Sigma'_L}  w_{2,L_n}^n(x)\to\infty$ as $n\to\infty$. Due to the classical regularity results for harmonic functions in Lipschitz domains (see e.g., \cite{JK}), $w_{2,L_n}^n(\cdot+x_n)/w_{2,L_n}^n(x_n)$ are uniformly continuous with bounds that only depends on $d$ and $c_g$. By taking a locally uniform convergent subsequence of $w_{2,L_n}^n(\cdot+x_n)/w_{2,L_n}^n(x_n)$, we can easily obtain a contradiction because the limiting function is harmonic in some Lipschitz domain {whose dirichlet boundary is a unit distance away from the origin}, and it assumes its maximum value $1$ at the origin, which is not possible. So we can conclude.

\medskip

2.  We now simplify what we need to prove. First, by Dahlberg's lemma (Lemma \ref{L.dah}), there is no loss of generality to assume that $w_{1,L}=1$ on $\partial_b\Sigma'_L\cup (\partial B_L\cap\overline{  \Sigma'_L})$.
 Next we claim that if we can prove the conclusion for  $L=2$, then the general case follows. Indeed since $w_{2,L}$ for all $L>2$ are uniformly bounded (denote the bound as $C_*$), $w_{2,L}\leq C_*w_{1,2}$ %{[we need to change the superscript indices]} 
on the boundary of $\Sigma'_L\cap B_2$. This implies that $w_{2,L}\leq C_*w_{1,2}+w_{2,2}$ on $\Sigma'_L\cap B_2$. Then by the assumption that the conclusion {of the lemma} holds for $L=2$ and by Dahlberg's lemma, we obtain
\[
w_{2,L}\leq C_*w_{1,2}+w_{2,2}\leq C' w_{1,2}\leq C'' w_{1,L}\quad \text{ on } \Sigma'_L\cap B_1.
\]
The same holds on $\Sigma'_{L-1}$ by shifting the functions.

\medskip

3. Now we set $L=2$ and  change the variable
\[
y':=x',\quad y_d:=x_d-g(x')\quad (\text{write }y:=(y',y_d)).
\]
Under the transformation, the Lipschitz boundary $x_d=g(x')$ becomes a flat hyperplane $y_d=0$. The operator $-\Delta$ changes to 
\beq\lb{3.62}
\calL:=\calL_g=-\nabla\cdot ((Dy)^T Dy\nabla )
\eeq
where $Dy$ denotes the Jacobian matrix of the transformation. The operator remains uniformly elliptic since $(Dy)^T Dy$ is bounded, measurable and uniformly {positive definite}.
%\medskip

Working with the new coordinates, let us consider the following two non-negative functions
\beq\lb{3.66}
\left\{
\begin{aligned}
   -\calL w'_1&=0,\quad &-\calL w'_2&=1\quad &&\text{ on }\{ |x'|< 2,\, x_d\in (-1,0)\}=:\calT',\\
   w'_1&=1, \quad   &w'_2&=0 \quad &&\text{ on }\{|x'|\leq 2,\, x_d=-1\},\\
      w'_1&=0, \quad &w'_2&=0, \quad &&\text{ on } \{(|x'|=2,\,x_d\in (-1,0))\text{ or } (|x'|\leq 2,\,x_d=0)\},
\end{aligned}
\right.
\eeq
It suffices to show that 
\begin{equation}\label{reduction}
w'_2\leq Cw'_1  \hbox{ on }\calT'.
\end{equation}

\medskip

3. We would like to further reduce the problem to periodic domains. Let us denote $\bbT^{d-1}$ as the $(d-1)$ dimensional torus, and consider 
\beq\lb{3.61}
\left\{
\begin{aligned}
   -\calL w''_1&=0,\quad &-\calL w''_2&=1\quad &&\text{ on }\calT=\{ x'\in \bbT^{d-1},\, x_d\in (-1,0)\}, \\
   w''_1&=1, \quad   &w''_2&=0 \quad &&\text{ on }\{x'\in \bbT^{d-1},\, x_d=-1\},\\
      w''_1&=0, \quad &w''_2&=0, \quad &&\text{ on } \{ x'\in \bbT^{d-1},\,x_d=0\}.
\end{aligned}
\right.
\eeq

We claim that to show \eqref{reduction} it suffices to show $w''_2\leq Cw''_1 $ on $\calT$.  
To prove the claim, we can construct a Lipschitz function $\tilde{g}:4\bbT^{d-1}\to \bbR$ with Lipschitz constant $c_g$ such that $\tilde{g}\equiv g$ on $\{x'\in\bbR^{d-1}\,:\,|x'|<2\}$. Then the corresponding operator $\calL_{\tilde{g}}$ agrees with $\calL$ on the same region. 
Let us still call solutions from \eqref{3.61} with $\calL_{\tilde{g}}$ and $4\bbT^{d-1}$ in place of $\calL$ and $\bbT^{d-1}$ as $w_1''$ and $w_2''$. 
Then Lemma \ref{L.dah} in pre-transformation coordinates and uniform continuity of $w_1',w_1''$ yield $w_1''\leq Cw_1'$ on $\calT'$, and the comparison principle yields $w_2'\leq w_2''$. Hence $w''_2\leq Cw''_1$ implies \eqref{reduction}, which shows the claim after rescaling.

%replace $\bbT^{d-1}$ by $4\bbT^{d-1}$ in \eqref{3.61} and the operator $\calL$ is extended from the one in \eqref{3.66} such that it is obtained from some Lipschitz function $g:4\bbT^{d-1}\to\bbR$.
%If still calling the solutions $w''_1,w''_2$ (of \eqref{3.61} but with domain $4\bbT^{d-1}\times [-1,0]$), then $w_2'\leq w''_2 $ on $\calT'$, and $w''_1\leq Cw_1'$ on $\calT'$ by Dahlberg's lemma, applied in previous coordinates.

\medskip

4. Now we proceed to show $w''_2\leq Cw''_1 $ in the periodic domain $\calT$ for $w''_1,w''_2$ from \eqref{3.61}. We will proceed with induction, to approach the boundary of $x_d=0$. Let us denote\[
\calT_k:=\{x\in \calT\,:\, x_d\in (-2^{-k},0)\}.
\]
Since $w''_1>0$ is uniformly bounded away from $0$ when $x_d\in (-1,-\frac12]$ and $w''_2$ is uniformly bounded, there exists $C_1>0$ such that $w''_2\leq C_1w''_1$ in $\calT\backslash \calT_1$. Suppose $w''_2\leq C_kw''_1$ in $\calT\backslash \calT_k$ for some $k\geq 1$ and $C_k>0$. %(i.e. $C_kw''_1\geq w''_2$ in $\calT\backslash \calT_k$). 
Let $\phi_k$ be the unique solution to
\[
\left\{
\begin{aligned}
   -\calL \phi_k&=1\quad\text{ in }\calT_k,\\
   \phi_k&=0 \quad\text{ on }\partial\calT_k.
\end{aligned}
\right.
\]
Then by considering $4^k\phi_k(2^{-k}x)$ in $2^k \calT_k$,  the bound in Step 1. in {pre-transformation coordinates} yields that $ \phi_k\leq C_* 4^{-k}$ for some $C_*>0$ independent of $k$. Since $w''_2\leq C_kw''_1$ on $\partial \calT_k$, we obtain
\[
w''_2\leq C_kw''_1+\phi_k\leq C_kw''_1+C_*4^{-k}\quad\text{ in }\calT.
\]
Since $c_g \leq \cot\theta_{\beta}<\cot\theta_{2}$ for some $\beta<2$,  Lemma \ref{L.2.10} yields that  $w''_1\geq C|x_d|^\beta$. Thus, using that $w''_1\geq C 2^{-(k+1)\beta}$ in $\calT\backslash\calT_{k+1}$, there exists $C'=C'(C_*)>0$ such that
\[
C'w''_1\geq C'2^{-(k+1)\beta}\geq 2^{(2-\beta)k} (4^{-k}C_*)\quad\text{ in }\calT\backslash \calT_{k+1}.
\]
We then obtain 
\[
w''_2\leq C_{k+1}w''_1\quad\text{ in }\calT\backslash\calT_{k+1}=\{x\in\calT\,:\, x_d\in (-1,-2^{-k-1}]\}.
\]
with $C_{k+1}:=C_k+C'2^{(\beta-2)k}$. Because $C'$ is independent of $k$ and $\beta<2$, we have 
\[
\lim_{k\geq 0}C_{k}<\infty,
\]
and therefore $w''_2\leq Cw''_1 $ in $\calT$ for some $C>0$ which finishes the proof.
\end{proof}

Later, instead of applying Proposition \ref{L.2.61} directly, we are going to use the following corollary.
In it, we use $\Sigma_\delta$ which recales the previous $\Sigma'_L$ to unit length but with $\delta$ width (so $\delta\sim 1/L$). For any $\delta\in (0,\frac{1}{2})$, consider the domain
\begin{equation}\label{strip}
\Sigma_{\delta}:=B_1\cap \{x_d\in (g(x')-\delta,g(x'))\},\quad \partial_b\Sigma_\delta:=B_1\cap \{x_d=g(x')-\delta\}.
\end{equation}
%\text{}{please explain what this corollary is doing and where it will be used.}

\begin{corollary}\lb{C.2.7}
Let $g$ be as in Proposition \ref{L.2.61} with $c_g\leq \theta_{\beta}$ for some $\beta\in (1,2)$. Let $f:\bbR^d\to [0,\infty)$ be %H\"{o}lder 
continuous, and let $\omega$ be a non-negative function solving
\[
-\Delta \omega=f\text{ in }B_2\cap \{x_d<g(x')\},\quad \omega=0\text{ on }B_2\cap \{x_d=g(x')\}, \omega(-e_d) >0. 
\]
Consider $w_1$ and $w_2$ each solving
$$
\left\{\begin{array}{lll}
-\Delta w_1 = 0, &  -\Delta w_2 = f &\hbox{ in } \Sigma_\delta,\\
w_1=\omega ,  & w_2= 0 &\hbox{ on } \partial_b \Sigma_\delta,\\  
w_1=w_2=0 \,\, &&\hbox{ on the rest of }\partial\Sigma_\delta.
\end{array}\right.
$$ 

Then  there exists $C=C(d,\beta)$ such that 
$$
\delta^{\beta-2} w_2 \leq C\frac{\|f\|_\infty}{\omega(-e_d)}w_1\quad\hbox{ in }B_{1-\delta}\cap \Sigma_{\delta}.
$$ 
Moreover, in the same domain we have
%\text{}{i, like most people, appreciate consistency in inequality directions. I like $A \leq B$ not the other way} 
$$
w_1 \leq \omega \leq \left(1+C\delta^{2-\beta}\frac{\|f\|_\infty}{\omega(-e_d)}\right) w_1.
$$
\end{corollary}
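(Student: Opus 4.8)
\textbf{Proof plan for Corollary \ref{C.2.7}.}

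The plan is to transfer the scale-invariant comparison of Proposition \ref{L.2.61} from the unit-width strip $\Sigma'_L$ (with $L\sim 1/\delta$) to the $\delta$-width strip $\Sigma_\delta$ by a rescaling, then to post-process the inhomogeneous term by the maximum principle and Dahlberg's lemma. First I would introduce the rescaled domain: set $\tilde\Sigma := \delta^{-1}\Sigma_\delta$, which is (a piece of) a unit-width strip below the graph of the Lipschitz function $\tilde g(y') := \delta^{-1} g(\delta y')$, whose Lipschitz constant is still $\le c_g \le \cot\theta_\beta < \cot\theta_2$, so Proposition \ref{L.2.61} applies. The rescaled $w_1$, call it $\hat w_1(y) := w_1(\delta y)$, is harmonic and equals $\omega(\delta y)$ on the bottom face; normalizing by a constant it is comparable (via Dahlberg, Lemma \ref{L.dah}) to the function $w_{1,L}$ of Proposition \ref{L.2.61} times $\omega(-e_d)$, since $\omega$ restricted to $\partial_b\Sigma_\delta$ is a nonnegative harmonic-type trace that is, by the interior Harnack inequality and Dahlberg's lemma applied to $\omega$ in $B_2\cap\{x_d<g(x')\}$, comparable up to dimensional constants to its value $\omega(-e_d)$ at the reference point.

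Next I would handle $w_2$. Its rescaling $\hat w_2(y) := \delta^{-2} w_2(\delta y)$ satisfies $-\Delta \hat w_2 = f(\delta y) \le \|f\|_\infty$ in $\tilde\Sigma$ with zero boundary data, so by comparison $\hat w_2 \le \|f\|_\infty\, w_{2,L}$ where $w_{2,L}$ is the solution with source $1$ from Proposition \ref{L.2.61}. Applying Proposition \ref{L.2.61}, $w_{2,L} \le C\, w_{1,L}$ on $\Sigma'_{L-1}$, which after undoing all scalings gives $\delta^{-2} w_2 \le C\|f\|_\infty\, \delta^{-\beta}\,\frac{w_1}{\omega(-e_d)}$ on the inner strip $B_{1-\delta}\cap\Sigma_\delta$ — i.e. $\delta^{\beta-2} w_2 \le C\frac{\|f\|_\infty}{\omega(-e_d)} w_1$, the first claim. (Here I must track that the reference-point normalizations in Proposition \ref{L.2.61}, which are at unit distance from the Dirichlet face, become distance-$\delta$ facts after scaling, which is exactly why the factor $\delta^{\beta-2}$ and the quotient by $\omega(-e_d)$ appear.)

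For the two-sided bound on $\omega$ itself: $w_1 \le \omega$ is immediate from the maximum principle, since $\omega$ is superharmonic in $\Sigma_\delta$ with boundary data dominating that of the harmonic function $w_1$ (they agree on $\partial_b\Sigma_\delta$ and $\omega \ge 0 = w_1$ on the rest). For the upper bound, write $\omega = w_1 + (\omega - w_1)$; the difference $v := \omega - w_1$ is nonnegative, vanishes on $\partial_b\Sigma_\delta$ and on the lateral/top boundary equals $\omega \ge 0$, and $-\Delta v = f$. Comparing $v$ with $w_2$ (which has the same source but zero data on all of $\partial\Sigma_\delta$) and with a harmonic majorant $\tilde w$ of $\omega$'s lateral data — or, more cleanly, by splitting $v = w_2 + h$ with $h$ harmonic and $h = \omega$ on the top/lateral part and $0$ on the bottom — one controls $h$ by $\omega$'s interior values, hence (again by Harnack/Dahlberg) by $\omega(-e_d)$, and ultimately by $C w_1$ on the inner strip. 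Combining $h \le C w_1$ with the already-established $w_2 \le C\delta^{2-\beta}\frac{\|f\|_\infty}{\omega(-e_d)} w_1$ yields $\omega \le C(1+\delta^{2-\beta})\frac{\|f\|_\infty}{\omega(-e_d)} w_1$.

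\textbf{Main obstacle.} The delicate point is not the rescaling bookkeeping but justifying that $\omega$'s boundary trace on the top/lateral portions of $\partial\Sigma_\delta$ is controlled by $\omega(-e_d)$ with constants depending only on $d$ and $c_g$ — in other words, that the harmonic piece $h$ (or equivalently the comparison $w_1 \le \omega \le Cw_1$ modulo the source) is uniform in $\delta$. This is exactly a boundary-Harnack statement for $\omega$ in the full Lipschitz domain $B_2\cap\{x_d<g(x')\}$, for which Lemma \ref{L.dah} applies after using the $\|f\|_\infty$ bound to pass from superharmonic $\omega$ to a comparable harmonic function (replacing $\omega$ by $\omega$ minus the Newtonian potential of $f\chi_{B_2}$, which is bounded by $C\|f\|_\infty$ and thus absorbed into the stated constants); one then needs the normalization $\omega(-e_d)>0$ to make the comparison quantitative. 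Keeping all constants dependent only on $d$ and $\beta$ (through $c_g \le \theta_\beta$) is the thing to be careful about throughout.
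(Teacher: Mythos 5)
Your treatment of the first inequality and of the lower bound $w_1\leq\omega$ follows the paper's route: rescale $\Sigma_\delta$ to the unit-width strip, dominate the rescaled $w_2$ by $\|f\|_\infty w_{2,L}$, and invoke Proposition \ref{L.2.61}. One imprecision: the factor $\delta^{\beta-2}/\omega(-e_d)$ enters because the bottom data of $w_1$ satisfies $\omega\geq c\,\omega(-e_d)\,\delta^{\beta}$ on $\partial_b\Sigma_\delta$, which comes from Lemma \ref{L.3.1} (the cone comparison under $c_g\leq\cot\theta_\beta$), not from interior Harnack plus Dahlberg alone; your claim that the trace there is ``comparable to $\omega(-e_d)$'' is wrong as stated, though you recover the correct power $\delta^{-\beta}$ in the final inequality, so this is self-correcting bookkeeping rather than a flaw in the mechanism.

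The genuine gap is in the upper bound on $\omega$. Writing $\omega=w_1+w_2+h$ with $h$ harmonic (your $h$ is exactly the paper's $v-w_1$ for $v:=\omega-w_2$), the crux is $h\leq Cw_1$ in $B_{1-\delta}\cap\Sigma_\delta$, and your plan --- control $h$ by $\omega$'s interior values, hence by $\omega(-e_d)$, ``and ultimately by $Cw_1$'' --- does not close. A pointwise bound $h\leq C\,\omega(-e_d)$ cannot be converted into $h\leq Cw_1$, since $w_1$ vanishes on the graph $\{x_d=g(x')\}$ and is only of order $\delta^{\beta}\omega(-e_d)$ in the strip; and Dahlberg's lemma cannot be applied to the pair $(h,w_1)$, because $h$ vanishes on $\partial_b\Sigma_\delta$ as well as on the graph while $w_1$ does not, so the two functions do not vanish on the same boundary portion and the ratio $h/w_1$ has no usable normalization point. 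The paper's resolution is to compare the two \emph{positive} harmonic functions $v=w_1+h$ and $w_1$: both vanish precisely on the graph and they \emph{coincide} on $\partial_b\Sigma_\delta$, so Dahlberg's lemma applied at scale $\delta$ near the graph, normalized at the bottom where the ratio is exactly $1$, gives $v\leq Cw_1$ in $\Sigma_{\delta/2}\cap B_{1-\delta/2}$; a second application to $v-w_1$ then propagates the bound to the lower half of the strip inside $B_{1-\delta}$. Your ``main obstacle'' paragraph locates the difficulty in controlling $\omega$'s lateral trace by $\omega(-e_d)$, but that step is the easy one; the real issue is obtaining a ratio bound against $w_1$ all the way up to the graph, which requires choosing the right pair of functions for the boundary Harnack comparison.
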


%\begin{corollary}\lb{C.2.7}
%Let $f:\bbR^d\to \bbR$ be Lipschitz, and let $g:\bbR^{d-1}\to\bbR$ be Lipschitz with Lipschitz constant $\cot \theta_0 $ and $g(0)=0$. Suppose that $\omega\geq 0$ satisfies
%\[
%-\Delta \omega=f\text{ in }B_2\cap \{x_d<g(x')\},\quad \omega=0\text{ on }B_2\cap \{x_d=g(x')\},
%\]
%and $
%a:=\omega(-e_d)>0$.
%For any $\delta\in (0,\frac{1}{2})$, write
%\[
%\Sigma_\delta:=B_1\cap \{x_d\in (g(x')-\delta,g(x'))\},\quad \partial_b\Sigma_\delta:=B_1\cap \{x_d=g(x')-\delta\}.
%\]
%If $w_1\geq 0$ is the harmonic function in $\Sigma_\delta$ such that $w_1=\omega$ on $\partial_b \Sigma_\delta$ and $w_1=0$ on $\partial \Sigma_\delta\backslash \partial_b \Sigma_\delta$, and $w_2\geq 0$ satisfies $-\Delta w_2= f$ in $\Sigma_\delta$ and $w_2=0$ on $\partial\Sigma_\delta$, then there exists $C>1$ (independent of $\delta$) such that
%\[
%Cw_1\geq  (a\delta^{\beta-2}/\|f\|_\infty) w_2 \quad \text{ in }B_{1-\delta}\cap \Sigma_\delta.
%\]
%Moreover,
%\[
% C(1+\delta^{2-\beta}\|f\|_\infty/a) w_1\geq \omega\geq C^{-1}w_1\quad \text{ in }B_{1-\delta}\cap \Sigma_\delta.
%\]
%\end{corollary}

\begin{proof}
First, for $m:=\omega(-e_d)$, Lemma \ref{L.3.1} yields that $\omega\geq cm\delta^\beta$ on $\partial_b \Sigma_\delta$ for some $c>0$. % \textcolor{}{if $c_g\leq \cot\theta_\beta$}. %Notice that $-\Delta u\geq \inf f=:c_0$ yields $u=w_1\geq Cc_0\delta^2$ on $\partial_b\Sigma_\delta$. 
So that $\bar w_1(x):=m^{-1}\delta^{-\beta}w_1(\delta x)$ is harmonic in $\Sigma_\delta/\delta$ and $\bar w_1\geq c$ on $(\partial_b \Sigma_\delta)/\delta$. Note that $\bar w_2(x):=\delta^{-2} w_2(\delta x)$ satisfies
\[
-\Delta \bar w_2\leq \|f\|_\infty \text{ in }\Sigma_\delta/\delta\quad\text{ and }\quad \bar w_2=0 \text{ on }(\partial_b \Sigma_\delta)/\delta.
\]
Thus, applying the comparison principle and Proposition \ref{L.2.61} with $\|f\|_\infty^{-1}\bar{w}_2$ (when $\|f\|_\infty>0$) and $\bar{w}_1$ in place of $w_2$ and $w_1$ yield for some $C>0$,
\[
m\delta^{\beta-2} w_2\leq C\|f\|_\infty w_1 \quad\text{ in }\Sigma_\delta\cap B_{1-\delta}.
\]

 For the second claim, note that $v:=\omega-w_2\geq0$ is a harmonic function in $\Sigma_\delta$, $w_1=\omega=v$ on $\partial_b\Sigma_\delta$ and $w_1\leq v$ on $\partial\Sigma_\delta$. Hence the comparison principle yields $w_1\leq v$ in $\Sigma_\delta$. 
And by Dahlberg's lemma (Lemma \ref{L.dah}), we have
\beq\lb{3.10}
(w_1\leq)\, v\leq Cw_1\quad\text{ in }\Sigma_{\delta/2}\cap B_{1-\delta/2}
\eeq
for some $C=C(d)>1$.
Since $w_1$ and $v$ are harmonic, 
$v-w_1=0 $ on $B_1\cap\partial_b\Sigma_{\delta}$, and $0\leq v-w_1\leq (C-1)w_1$ on $B_{1-\delta/2}\cap \partial_b \Sigma_{\delta/2}$  by \eqref{3.10}, we apply Lemma \ref{L.dah} again to have $v-w_1\leq C'w_1$ in $B_{1-\delta}\cap (\Sigma_{\delta}\backslash \Sigma_{\delta/2})$ for some $C'=C'(d)$. Hence we have \eqref{3.10} holds (with possibly a different $C=C(d)$) in $B_{1-\delta}\cap\Sigma_{\delta}$, which finishes the proof by the established first claim.
\end{proof}

%Now we assume that $\omega_{x_d}\leq 0$ and show the following monotonicity property of $\omega$. In the case of $f=0$, this property was proved in Lemma 11.11 \cite{CafSal}. 

{The following Corollary states a scenario where the assumption of our main theorem is satisfied, with a quantitative range of space and time.}

\begin{corollary}\label{cor:flat}
Let $u$ solve (1.1) with $\vec{b}=0$ and $f\equiv 1$ %$f\in L^{\infty}(\R^d)$ 
in $B_1(0)\times [0,1]$, with initial free boundary  $\Gamma_0:= \{x_d= g(x')\}$, where $g$ is as given in Proposition 4.1. Assume that $u(-\frac{1}{2}e_d,t)\approx 1$ for $t\in[0,1]$. If $\delta>0$ and the Lipschitz constant of $g$ are small, {then $u$ satisfies the assumptions in Theorem A.} More precisely, there exists $c>0$ independent of $\delta$ such that $u$ is $(\delta^{3/2},1)$-monotone with respect to the cone $W_{\theta, -e_d}$ in $B_{c\delta}(0)\times [0,  t_{c\delta}]$, where $t_s:=\frac{s}{u(-s e_d,0)}$ and $\theta:=\theta_\beta$ is from Corollary \ref{C.2.7}.
\end{corollary}

%\textcolor{blue}{this is only a sketch of the proof, we will see if it makes sense first.} 

 \begin{proof}
%Let us only sketch the proof. Let $\delta>0$ to be determined, and let $v$ solve the homogeneous problem $(HS)$ with fixed boundary data $u$ on $\partial B_1(0)\cup \{x_n<g(x')-\delta\}$. 

We only sketch the proof. Let $\Sigma:=[\{x\,|\,  g(x') -C\delta < x_d\} \cap B_1(0)]\times [0,1]$ for some $C$ large.
If we use $u$'s value as the data on the parabolic boundary of $\Sigma$, and solve the homogeneous problem $(HS)$ (that is \eqref{1.1} with $f, \vec{b} \equiv 0$) in $\Sigma$, and if $C$ is large enough (and we fix it), Theorem 5.7 in \cite{CJK} in particular states that the corresponding solution $w$ has spatially Lipchitz free boundary which is monotone in the cone $W_{\theta, -e_d}$ for $t\in [0,t_{\delta}]$. Let us call $\Gamma_t(w)$ as the free boundary of $w(\cdot,t)$, which is a Lipschitz graph with respect to $x'$-variable up to the time $t_{\delta}$. Let $w_1$ solve $-\Delta w_1(\cdot,t)=1$ in the same domain and boundary data as $w(\cdot,t)$. Then by Corollary \ref{C.2.7},  $w\leq w_1 \leq (1+C\delta) w$, and so $|Dw_1| \leq (1+C\delta) |Dw|$ in $\Sigma\cap \{t<t_\delta\}$. 
Since the domain is expanding in time, $w_1-w$ is increasing in time and so we have $(w_1)_t\geq w_t$. Thus there exists $C>0$ such that
\[
\partial_t w_1(\cdot,(1+C\delta)\cdot)\geq (1+C\delta)w_t=(1+C\delta)|\nabla w|^2\geq |\nabla w_1|^2
\]
in the viscosity sense on $\Gamma_t(w)$. In particular, $w_1(x, (1+C\delta)t)$ is a supersolution to \eqref{1.1}. So we get
\[
w_1\leq u\leq w_1(\cdot, (1+C\delta)\cdot).
\]
Since $\{w(\cdot,t)>0\}=\{w_1(\cdot,t)>0\}$,  it follows from Lemma 3.3 \cite{CJK} that for $t\in [0,t_\delta]$ that
\[
\{w_1(\cdot,t)>0\}\cap B_{\delta}\subseteq \{u(\cdot,t)>0\}\cap B_{\delta}\subseteq \{w_1(\cdot,t)>0\}+B_{C\delta^2}
\]
where the last term denotes the $C\delta^2$-neighbourhood of $\{w_1(\cdot,t)>0\}$. So the comparison principle yields
\beq\lb{123}
w_1(x,t)\leq u(x,t)\leq w_1(x+C\delta^2e_n,t).
\eeq

Recall that $w$ is monotone with respect to $W_{\theta,-e_d}$ for $t\in[0,t_\delta]$. It follows from the proof of Lemma \ref{L.4.5} below that there exists $c>0$ such that for any $e\in W_{\theta,-e_d}$, and $(x,t)\in B_{c\delta}\times [0,t_{c\delta}]$,
\[
d(x,\Gamma_t(w_1))\nabla_e(w_1)\geq c\,w_1.
\]
From this, \eqref{123}, and $\eps:=\delta^{3/2}>>\delta^2$, it follows that
\[
(1+\eps)u(x,t)\leq (1+\eps)w_1(x+C\delta^2 e_n,t)\leq (1+\eps)w_1(x+\eps e,t)-c\eps|\nabla w_1(x+\eps e,t)|\leq u(x+\eps e,t)
\]
which finishes the proof.
\end{proof}

The next two lemmas connect $\omega$ and $\nabla\omega$ in terms of its distance from the free boundary. 
These were proved in \cite[Lemma 11.11]{CafSal} for the case $f=0$. {A crucial element in the proof is Harnack inequality for the directional derivative of harmonic functions. In our case Proposition~\ref{L.2.61} is applied to avoid differentiating the source function. }

%\text{}{is $f$ Lipschitz necessary here? please explain.} 

\begin{lemma}\lb{L.2.88}
Let $\omega$ and $g$ be as in Corollary \ref{C.2.7}, and in addition suppose that $\omega_{x_d}\leq 0$. 
Then there exists $C>0$ such that for all  sufficiently small $\delta$ we have% independent of $\delta$ such that
\[
C d(x,\{y_d=g(y')\})\,\omega_{-x_d}(x)\geq \omega(x)
\]
holds for all $x\in \Sigma_\delta\cap B_{1-\delta}$, {where $\Sigma_\delta$} is given in \eqref{strip}.
\end{lemma}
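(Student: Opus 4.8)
Fix $x=(x',x_d)\in\Sigma_\delta\cap B_{1-\delta}$, put $x_0:=(x',g(x'))\in\{y_d=g(y')\}$ and $s:=g(x')-x_d$. Since $g$ is $c_g$-Lipschitz, $s$ is comparable to $d(x,\{y_d=g(y')\})$, so it is enough to prove $\omega_{-x_d}(x)\ge c\,\omega(x)/s$ with $c=c(d,\beta)>0$, for all $\delta$ (hence all $s$) small. The plan is to pass, on the local scale $s$ near $x_0$, to a harmonic comparison function, to run a ``boundary decay plus integration along the vertical segment'' argument for it, and to absorb the inhomogeneity using the polynomial growth of $\omega$ with exponent $\beta<2$.

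\textbf{Step 1: growth bound and a local harmonic comparison.} Comparing $\omega$ from below with a harmonic function supported in the inscribed cone $x_0+W_{\arctan(1/c_g),-e_d}$ — whose opening is at least $\theta_\beta$ by the smallness hypothesis on $c_g$ — and invoking \eqref{angle_1} together with Harnack chaining to $\omega(-e_d)$, one obtains $\omega(y)\ge c_1\,\omega(-e_d)\,d(y,\{z_d=g(z')\})^{\beta}$ on $\Sigma_\delta\cap B_{1-\delta}$ with $\beta<2$ (this is essentially contained in Corollary~\ref{C.2.7}). Next, at the scale $s$ replace $\omega$ by a harmonic function: on $D_s:=B_{4s}(x_0)\cap\{y_d<g(y')\}$ (contained in $B_2$ for $\delta$ small) write $\omega=h+N$, where $-\Delta h=0$ in $D_s$ and $h=\omega$ on $\partial D_s$ — so $h=0$ on the graph portion, $h=\omega\ge0$ on the spherical portion, hence $h\ge0$ — and $N$ is the Newtonian correction, $-\Delta N=f$ in $D_s$, $N=0$ on $\partial D_s$, which satisfies $\|N\|_{L^\infty(D_s)}\le Cs^2\|f\|_\infty$ and $\|\nabla N\|_{L^\infty(D_{2s})}\le Cs\|f\|_\infty$. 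In particular $\partial_{x_d}h=\omega_{x_d}-N_{x_d}\le Cs\|f\|_\infty=:\varepsilon_0$, so the harmonic function $h_{-x_d}+\varepsilon_0\ge\omega_{-x_d}+\varepsilon_0-|\nabla N|\ge0$ on $D_{2s}$. (Alternatively one rescales $B_{cs}(x_0)$ to unit size and applies Corollary~\ref{C.2.7}/Proposition~\ref{L.2.61}; Proposition~\ref{L.2.61} is exactly what lets one pass to a harmonic comparison function without differentiating $f$, and the growth bound makes the rescaled source small relative to the rescaled solution since $\beta<2$.)

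\textbf{Step 2: boundary decay, integration, and transfer.} Let $z:=(x',x_d+s/2)$. Comparing $h$ with harmonic functions in the inscribed and circumscribed cones at $x_0$ (using \eqref{angle_1}, \eqref{angle_2} and Dahlberg's Lemma~\ref{L.dah}), the nonnegative harmonic $h$ vanishing on the graph satisfies $h(z)\le(1-c_0)h(x)$ for some $c_0=c_0(d,c_g)\in(0,1)$ — going halfway to the boundary along the vertical direction costs a fixed fraction. Since $h(x_0)=0$, the fundamental theorem of calculus gives $h(x)=\int_{x_d}^{g(x')}h_{-x_d}(x',\tau)\,d\tau$, and hence $h(x)-h(z)=\int_{x_d}^{x_d+s/2}h_{-x_d}(x',\tau)\,d\tau\le \tfrac{s}{2}\sup_{[x_d,\,x_d+s/2]}h_{-x_d}(x',\cdot)$. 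Applying Harnack to $h_{-x_d}+\varepsilon_0\ge0$ along a chain of balls of radius $\sim s$ joining $(x',\tau)$ to $x$ inside $D_{2s}$ at depth $\gtrsim s$, this supremum is at most $C_d(h_{-x_d}(x)+\varepsilon_0)$; combining with $h(x)-h(z)\ge c_0h(x)$ gives $h(x)\le \tfrac{C_ds}{c_0}(h_{-x_d}(x)+\varepsilon_0)$. Finally $h_{-x_d}(x)=\omega_{-x_d}(x)+N_{x_d}(x)\le\omega_{-x_d}(x)+\varepsilon_0$ and $h(x)\ge\omega(x)-Cs^2\|f\|_\infty$, so $\omega(x)\le \tfrac{C_ds}{c_0}\omega_{-x_d}(x)+C's^2\|f\|_\infty$; since $\omega(x)\ge c_1\omega(-e_d)s^\beta$ and $\beta<2$, the last term is $\le\tfrac12\omega(x)$ once $s$ is small, which yields $\omega(x)\le C\,s\,\omega_{-x_d}(x)$, i.e.\ the claim.

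\textbf{Main obstacle.} The delicate point is the combined handling of the two scales and of the source term. Monotonicity is known only for $\omega$, not for a comparison harmonic function, so one must carry $\omega$ itself into a local harmonic picture and control the correction $N$ both in $C^0$ and in $C^1$; and one must know that $\omega$ does not decay faster than $d^{2}$ near the free boundary in order to absorb the $O(s^2\|f\|_\infty)$ errors. Both of these are supplied by the polynomial growth estimate with exponent $\beta<2$ coming from Lemma~\ref{L.3.1}, which is precisely why that estimate — and the smallness hypothesis on $c_g$ — are needed. The boundary-decay step ($h(z)\le(1-c_0)h(x)$) is classical for harmonic functions but must be derived with some care in the Lipschitz (non-smooth) setting, via Dahlberg's lemma and the cone barriers of Lemma~\ref{L.3.1}.
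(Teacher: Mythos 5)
Your argument is correct in outline and follows essentially the same mechanism as the paper's proof: a boundary-decay estimate for a harmonic comparison function, the fundamental theorem of calculus along the vertical segment, a Harnack inequality for the (almost nonnegative) vertical derivative to control the integrand by its value at $x$ up to an $O(s\|f\|_\infty)$ error, and absorption of the resulting $O(s^2\|f\|_\infty)$ term via the lower growth bound $\omega\gtrsim s^\beta$ with $\beta<2$. The only structural difference is that you introduce a local harmonic replacement $h=\omega-N$ at scale $s$ and run the derivative-Harnack on $h_{-x_d}+\varepsilon_0$, whereas the paper works with $\omega$ itself, using the strip-scale harmonic function $w_1$ of Corollary \ref{C.2.7} only for the boundary decay and invoking the last claim of Lemma \ref{l.2.2} (a Harnack-type bound for the signed directional derivative of a superharmonic function) in place of your $C^1$ control of the Newtonian correction $N$. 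Both routes are fine; yours trades the global comparability $w_1\le\omega\le Cw_1$ for interior gradient estimates on $N$, which hold at the points you use them (all at depth $\gtrsim s$ from $\partial D_s$), though your blanket statement $\|\nabla N\|_{L^\infty(D_{2s})}\le Cs\|f\|_\infty$ should be restricted to such points, since gradients need not be bounded up to a Lipschitz boundary.

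The one step that is not justified as written is the decay $h(z)\le(1-c_0)h(x)$ with $z$ at exactly half the depth of $x$. What the boundary Harnack principle (the paper's \eqref{3.6}, from \cite[Theorem 11.5]{CafSal}) gives is $h(A_{\tau r})\le C\tau^{\sigma}h(A_r)$ with a constant $C$ that need not be close to $1$, so taking $\tau=\tfrac12$ does not by itself produce a factor strictly below $1$; one must choose $\tau$ small enough that $C\tau^{\sigma}\le\tfrac12$, exactly as the paper does. Replacing your $z=(x',x_d+s/2)$ by $z=(x',x_d+(1-\tau)s)$ for this small universal $\tau$ leaves the rest of your argument intact: the segment of integration then reaches depth $\tau s$, the Harnack chain acquires a $\tau$-dependent (hence still universal) constant, and the interior gradient bound on $N$ at depth $\tau s$ is still $O_\tau(s\|f\|_\infty)$. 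With that adjustment the proof is complete.
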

\begin{proof}
%\text{}Isn't it just Lemma 3.1 that we need, if we just  need $\omega(\tau x) \leq \frac{1}{2}\omega(x)$ for small $\tau$? It feels like an overkill here 

%Let us assume that $g(0)=0$ and we will only consider a free boundary point near the origin (the general case follows after shifting).}
Let us fix a point $(y',g(y'))\in B_{1-\delta}$. For simplicity, we may assume that $y'=0$ and $g(y')=0$. For $r>0$ define $z_r:=g(y')-re_d$. Then for $w_1$ given in Corollary \ref{C.2.7} the following is true due to the boundary Harnack principle \cite[Theorem 11.5]{CafSal} and the remarks (a)(b) in its proof (see also the proof of \cite[Lemma 11.11]{CafSal}):
There exist $C,\sigma>0$ such that %(also independent of $y'$ if it is not $0$ \textcolor{}{this is very confusing remark since we already set $y'=0$. can we just remove it?} ) 
we have
\beq\lb{3.6}
w_1(\tau z_r)\leq C\tau^\sigma w_1( z_r) \hbox{ for any } r\in (0,\delta/2), \tau \in (0,1). 
\eeq

Next if $\delta^{2-\beta}\|f\|_\infty\leq \omega(-e_d)$, then $w_1\leq \omega\leq Cw_1$ for some $C>1$ in $\Sigma_\delta\cap B_{1-\delta}$ by Corollary \ref{C.2.7}. Thus by taking $\tau>0$ to be small enough (independent of $r$ and $y'$), we obtain from \eqref{3.6} that $\omega(\tau z_r)\leq \frac12\omega(z_r)$. This implies
\beq\lb{2.41}
\frac{1}{2}\omega(z_r)\leq \int_{\tau r}^r \omega_{-x_d}(-se_d)ds \,\,(\leq \omega(z_r)).
\eeq 

Now since $\omega_{x_d}\leq 0$, by applying the last claim of Lemma \ref{l.2.2} for possibly multiple times, we get for all $s\in (\tau r,r)$ that
\[
\omega_{-x_d}(-se_d)\leq C\omega_{-x_d}(z_r)+Cr\| f\|_{L^\infty(B_1)}
\]
with $C>0$ depending on $\tau$ and an upper bound of $c_g$.
So \eqref{2.41} yields
\beq\lb{241}
Cr\omega_{-x_d}(z_r)\geq \omega(z_r)-Cr^2\|f\|_\infty.%\geq\omega(z_r)-Cr^2\omega(-e_d).
\eeq
For $\theta:=\arccot c_g$ ($\geq\theta_{\beta}$ with $\beta\in (1,2)$), we get
\[
 r\sin\theta\leq d(z_r,\{y_d=g(y')\})\leq r.
\]
Also using $\omega(z_r)\geq Cr^\beta\omega(-e_d)$ by Lemma \ref{L.3.1} and the above, we get from \eqref{241} that for some positive constants $C,C'$ depending on $\theta,\omega(-e_d)$ and $\|f\|_\infty$, 
\begin{align*}
Cd(z_r,\{y_d=g(y')\})\omega_{-x_d}(z_r)&\geq \omega(z_r)-Cr^2\|f\|_\infty\\
&\geq \frac12\omega(z_r)+Cr^\beta\omega(-e_d)-C'r^2\omega(-e_d)\geq \frac12\omega(z_r)
\end{align*}
if $r<\delta$ is small enough.
\end{proof}

We now  relax the previous assumption and consider $(\eps,\eps^\alpha)$-monotone funcitons. {Note that the cone of monotonicity needs to be wider as the regularity of $f$ decreases.} 

%We upgrade the previous lemma to $\omega$ satisfying $(\eps,\alpha)$-monotonicity. The parallel result for $f=0$ was shown in Lemma 5.6 \cite{CafSal}. 

\begin{lemma}\lb{L.3.4}

Let $f\in C^{\bar{\gamma}}(\R^d)$ for some $\bar{\gamma} \in (0,1]$ be non-negative, and let $\omega\geq 0$ solve $-\Delta \omega=f$ in $B_2\cap\Omega_\omega$ with $0\in\Gamma_\omega$ and $\omega(-e_d)>0$. Suppose that $\alpha\in (0,\frac{\bar\gamma}2)$ and $\kappa_2\in (\frac{\alpha}{\bar\gamma},\frac12)$ if $f$ is not a constant. Otherwise take $\alpha=\infty$ and any $\kappa_2\in (0,\frac12)$ if $f$ is a constant.

  In addition suppose that $\omega$ is $(\eps,\eps^\alpha)$-monotone with respect to $W_{\theta,-e_d}$ in $B_1$ with $\theta >\theta_{1+\bar{\gamma}}$. Then there exists $C=C(\theta,\omega(-e_d), \|f\|_\infty)>0$ such that: for all $\eps$ sufficiently small, 
\begin{equation}\label{conclusion}
C|\nabla\omega(x)|d(x,\Gamma_\omega) \geq \omega(x)\quad\text{ in } B_{1-\eps^{1/2}}\cap\{x: C\eps^{1-\kappa_2} \leq d(x,\Gamma_\omega) \leq \eps^{1/2}\}\cap\Omega_\omega.
\end{equation}
\end{lemma}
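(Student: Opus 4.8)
\textbf{Proof plan for Lemma \ref{L.3.4}.}

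The plan is to reduce the $(\eps,\eps^\alpha)$-monotone situation to the monotone situation already handled in Lemma~\ref{L.2.88}, by passing to an auxiliary function that is genuinely monotone at the relevant scales. The point $x$ in question satisfies $d(x,\Gamma_\omega)\geq C\eps^{1-\kappa_2}\gg\eps$, so there is ``room'' to apply the interior monotonicity machinery. First I would fix such an $x$ and a boundary point $x_0\in\Gamma_\omega$ realizing (up to a constant) the distance $\rho:=d(x,\Gamma_\omega)$, so that $x=x_0+s\mu$ with $\mu=-e_d$ and $s\sim\rho$. Since $\theta>\theta_{1+\bar\gamma}$, Lemma~\ref{L.2.10} (or \ref{C.2.8}, since we work in $B_{1-\eps^{1/2}}$) gives the growth bound $\omega(x)\geq c\,\rho^{1+\bar\gamma}$ for points at distance in $[2\eps,\eps^{1/2}]$. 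Next, on the ball $B_{\rho/2}(x)$, which lies well inside $\Omega_\omega$, I would invoke Lemma~\ref{L.2.4} (interior monotonicity) with the scale $\eps$ and parameter $\kappa_1$ chosen appropriately: since $\eps^\alpha\omega(x)\gtrsim \eps^\alpha\rho^{1+\bar\gamma}\geq \eps^{\alpha}\cdot\eps^{(1-\kappa_2)(1+\bar\gamma)}$, and this dominates the error term $\eps^{1+\bar\gamma-\kappa_1}\|f\|_{C^{0,\bar\gamma}}$ precisely because $\kappa_2>\alpha/\bar\gamma$ (this is where that hypothesis is used), Remark~\ref{R.3.2}(2)--(3) upgrades $\omega$ to being genuinely $(j\eps,\eps^\alpha(1-C\eps^{\kappa_1}))$-monotone — in particular monotone along $\mu$ — in a neighborhood $B_{\rho/4}(x)$ of size comparable to $\rho$.

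With true monotonicity $\omega_{x_d}\leq 0$ available at scales $\lesssim\rho$ near $x$, I would then run the argument of Lemma~\ref{L.2.88} \emph{localized at the point $x_0$ and rescaled to the scale $\rho$}. Concretely, set $\tilde\omega(y):=\rho^{-(1+\bar\gamma)}\omega(x_0+\rho y)$ (or a similar normalization making $\tilde\omega$ of unit order at $y=-e_d$); then $\tilde\omega$ solves $-\Delta\tilde\omega=\tilde f$ with $\|\tilde f\|_\infty=\rho^{2-(1+\bar\gamma)}\|f\|_\infty=\rho^{1-\bar\gamma}\|f\|_\infty$, the rescaled monotonicity scale becomes $\eps/\rho\to 0$, and the local Lipschitz graph $\{x_d=g(x')\}$ through $x_0$ has controlled Lipschitz constant $\cot\theta<\cot\theta_{1+\bar\gamma}$. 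Now Corollary~\ref{C.2.7} applies to $\tilde\omega$ in a strip $\Sigma_\delta$ of width $\delta\sim 1$ (after shrinking), giving $\tilde w_1\leq\tilde\omega\leq C\tilde w_1$ with $\tilde w_1$ harmonic and vanishing on the Lipschitz graph, where the constant $C$ depends on the ratio $\|\tilde f\|_\infty/\tilde\omega(-e_d)$, which by our growth lower bound is bounded by a constant depending only on $\theta,\omega(-e_d),\|f\|_\infty$. Then the boundary-Harnack decay estimate \eqref{3.6} for $\tilde w_1$, combined with the last claim of Lemma~\ref{l.2.2} to compare $(\tilde\omega)_{-x_d}$ at different heights (absorbing the $\tilde f$-error since $\|\tilde f\|_\infty\rho\lesssim$ the main term), yields exactly $C\,d(y,\Gamma_{\tilde\omega})\,(\tilde\omega)_{-y_d}(y)\geq\tilde\omega(y)$ near $y=-e_d$, i.e.\ at $y$ corresponding to our $x$. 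Rescaling back gives \eqref{conclusion}, since $|\nabla\omega(x)|\geq|\omega_{x_d}(x)|$ and $\omega_{x_d}(x)\leq 0$ from the monotonicity just established.

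The main obstacle I anticipate is the \emph{bookkeeping of exponents} to make the interior-monotonicity step genuinely produce a sign for $\omega_{x_d}$ at scale $\rho$: one needs $\eps^\alpha\omega(x)$ to beat the source error $\eps^{1+\bar\gamma-\kappa_1}\|f\|_{C^{0,\bar\gamma}}$ \emph{uniformly} over the whole range $d(x,\Gamma_\omega)\in[C\eps^{1-\kappa_2},\eps^{1/2}]$, and the worst case is the smallest admissible distance $\rho\sim\eps^{1-\kappa_2}$; here the chain of inequalities $\alpha<\bar\gamma/2$, $\kappa_2\in(\alpha/\bar\gamma,1/2)$, and a suitable choice of $\kappa_1$ (small, depending on $\bar\gamma,\kappa_2,\alpha$) must be shown to close, and this is the one place where all the structural hypotheses of the lemma are consumed. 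A secondary technical nuisance is that the boundary point $x_0$ realizing the distance need not lie on a \emph{globally} Lipschitz graph with the right constant; but since $\omega$ is $(\eps,0)$-monotone with respect to $W_{\theta,\mu}$ in $B_1$, Proposition~11.14 of \cite{CafSal} provides such a graph up to an $\eps$-error, which is negligible at the scale $\rho\gg\eps$, so this causes no real difficulty. The constant-$f$ case ($\alpha=\infty$) is strictly easier since the error term vanishes and one only needs $\omega$ monotone, reducing directly to Lemma~\ref{L.2.88} after rescaling.
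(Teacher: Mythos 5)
Your opening moves match the paper's: the growth lower bound from Lemma \ref{C.2.8} with $\beta\in(1,1+\bar\gamma)$, the upgrade to full monotonicity in the region $\{d(\cdot,\Gamma_\omega)>\eps^{1-\kappa_2}\}$ via Lemma \ref{L.2.4}/Remark \ref{R.3.2}, and the exponent arithmetic $\eps^{\alpha+(1-\kappa_2)\beta}\gg\eps^{1+\bar\gamma-\kappa_2}$ that consumes the hypothesis $\kappa_2>\alpha/\bar\gamma$ are all exactly what the paper does. The gap is in your main step: you cannot ``run the argument of Lemma \ref{L.2.88}'' with $\Gamma_\omega$ as the reference boundary. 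That argument needs two things you do not have. First, Corollary \ref{C.2.7} and the decay estimate \eqref{3.6} require $\omega$ to vanish \emph{on} a genuine Lipschitz graph; $\Gamma_\omega$ is only $\eps$-close to one, and the comparison $w_1\leq\omega\leq Cw_1$ with a harmonic function vanishing on the graph — which is what turns \eqref{3.6} into the crucial decay $\omega(\tau z_r)\leq\frac12\omega(z_r)$ — is simply not available for $\omega$ itself. Second, and more seriously, the derivative-Harnack chain in Lemma \ref{L.2.88} needs $\omega_{x_d}\leq0$ on the whole segment from $\tau z_\rho$ down to distance $\sim\tau\rho$ from the boundary, together with balls around it; but the monotonicity you established only holds at distance $\geq\eps^{1-\kappa_2}$ from $\Gamma_\omega$. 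At the critical lower end of the admissible range, $\rho\sim\eps^{1-\kappa_2}$, the non-monotone layer has thickness comparable to the \emph{entire} distance from $x$ to the boundary, so in your rescaled picture the strip where Lemma \ref{L.2.88}'s proof operates is precisely where you have no sign on $\omega_{x_d}$. Your claim that monotonicity is ``available at scales $\lesssim\rho$ near $x$'' is false in this regime.

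The paper's proof resolves both problems simultaneously with a device absent from your proposal: it replaces $\Gamma_\omega$ by the level surface $\{\omega=a_0\}$ with $a_0=\omega(x_0)$, which (by the full monotonicity just established) is a genuine Lipschitz graph lying entirely inside the monotone region, and applies Lemma \ref{L.2.88} to $(\omega-a_0)^+$ in the level-set domain $D_1$, where that function vanishes on the graph and is monotone. To convert the resulting estimate for $(\omega-a_0)^+$ back into one for $\omega$, the paper needs $\omega$ to increase by a definite multiplicative factor $C_0>1$ across a thin box $D_0$ around $x_0$; this is proved by a harmonic-measure argument (Lemma 11.9 of \cite{CafSal}, exploiting that a fixed proportion of $\partial D_0$ lies in $\{\omega=0\}$) combined with the bound $w_2\leq C\delta_0^{2-\beta}w_1$ controlling the inhomogeneous correction. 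Then $\omega-a_0\geq(1-C_0^{-1})\omega$ on $\{\omega>C_0a_0\}$, and the conclusion follows. This level-set substitution and the accompanying harmonic-measure improvement step are the missing ideas; without them your plan cannot close at the scale $\rho\sim\eps^{1-\kappa_2}$.
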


\begin{proof}

1. Let $x_0\in B_{1-\eps^{1/2}}\cap\Omega_\omega$ satisfying $\delta_0:=d(x_0,\Gamma_\omega)\in [ 2\eps^{1-\kappa_2},\eps^{1/2}]$. Below we write $a_0:=\omega(x_0)$. Denoting $x= (x', x_d)$, we consider the domain  
\[
D_0:= \left\{x\,:\, |x'-(x_0)'|<8^{-1}{\delta_0}, \, |x_d-(x_0)_d|< 2\delta_0\right\} .
\]
Let us also define%\text{}{how are they related, these two domains? picture  would be nice}
\[
N_\eps:=\{x\in B_{1-\eps^{1/2}}\cap\Omega_\omega\,:\, d(x,\Gamma_\omega)>\eps^{1-\kappa_2}\}.
\]
By Lemma \ref{C.2.8} with some $\beta\in (1,1+\bar{\gamma})$ and the assumption, for some $c>0$ we have
\beq\lb{3.3}
\omega(x)%\geq c\omega(-e_d) d(x,\Gamma_\omega)^{\beta}
\geq  c\,\omega(-e_d) \eps^{(1-\kappa_2)\beta}\quad  \hbox{ in } N_{\eps}.
\eeq
%if $x\in B_{1-\eps^\kappa}\cap\Omega_\omega$ is $\eps^{1-\kappa_2}$-away from the free boundary. We denote this region as 
%\[
%N_\eps:=\{x\in B_{1-\eps^\kappa}\cap\Omega_\omega\,:\, d(x,\Gamma_\omega)>\eps^{1-\kappa_2}\}.
%\]

Note that when $f$ is not a constant, we have $\eps^{(1-\kappa_2)\beta+\alpha}>>\eps^{1+\bar{\gamma}-\kappa_2}$ and $\eps^{1/2}\geq\eps^{1-\kappa_2}$ due to $\beta\in(1,1+\bar\gamma)$ and $\kappa_2\in (\frac{\alpha}{\bar{\gamma}},\frac12)$. So it follows from the second remark of Remark \ref{R.3.2} that 
\beq\lb{3.5}
\omega(\cdot)\text{ is fully monotone non-decreasing along all directions in $W_{\theta,-e_d}$ in $ N_\eps$}.
\eeq
By our assumption of $(\eps,\eps^\alpha)$-monotonicity and the fact that  $\theta\geq\frac\pi4$, it follows that the set $\{\omega(\cdot)=a_0\}\cap D_0$ is at least $\eps^{1-\kappa_2}$-away from $\Gamma_\omega\cap D_0$, and therefore $\{\omega(\cdot)-a_0=s\}$ for any $s>0$ are Lipschitz hypersurfaces in $D_0$.

\medskip

2. Now let $w_1(\cdot)$ and  $w_1'(\cdot)$ be, respectively, the harmonic functions in $D_0\cap\Omega_\omega$ with $w_1=\omega$ on $\partial (D_0\cap\Omega_\omega)$ and in $D_0$ with $w_1'=\omega$ on $\partial D_0$. From \eqref{3.3} and classical regularity results of elliptic operators, we get for some $c=c(d)>0$,
\[
w_1'(x_0)\geq w_1(x_0)\geq c\,\omega(-e_d)\delta_0^\beta.
\]
Since $w_2:=\omega-w_1$ satisfies $-\Delta w_2=f$ and $w_2= 0$ on $\partial(D_0\cap\Omega_\omega)$, we get
$w_2\leq C\delta_0^2\|f\|_\infty$ for some $C>0$ in $D_0\cap \Omega_\omega$. Therefore, using the fact that $w_2(x_0)\leq C'\delta_0^{2-\beta}w_1(x_0)$ with  $C':=C\|f\|_\infty/(c\omega(-e_d))$, we have
\beq\lb{3.11}
a_0\leq (1+C'\delta_0^{2-\beta})w_1'(x_0).
\eeq
%By  $\delta_0^{2-\beta}\|f\|_\infty\leq a_0$ holds easily. Then by Corollary \ref{C.2.7} (after rescaling), since $w_2:=\omega-w_1$ satisfies the Poisson equation and $w_2\leq 0$ on $\Gamma_\omega\capD_0$, we get $ C\delta_0^{2-\alpha} w_1\geq w_2$ and then $\omega\leq (1+C\delta_0^{2-\beta})w_1$. From this, we obtain
%\[
%a_0=a_0\leq (1+C\delta_0^{2-\beta})\int_{\partialD_0} w_1(x) dh^{x_0}(x)
%\]

Next, similarly as done in the proof of Lemma 5.6 \cite{CafSal}, let $h^{x}$ (with $x\in D_0$) be the harmonic measure in $D_0$. By the $(\eps,\eps^\alpha)$-monotonicity assumption and $0\in\Gamma_\omega$, we have 
\[
|\partial D_0\cap \{w'_1=0\}|=|\partial D_0\cap \{\omega=0\}|\geq c |{\partial D_0}|\quad \text{ for some } c=c(d,\theta)\in (0,1). 
\]
Hence Lemma 11.9 \cite{CafSal} implies that 
\[
w'_1(x_0)=\int_{\partial D_0} \omega(\sigma) dh^{x_0}(\sigma)\leq (1-c')\max_{\partial D_0}\omega
\]
for some $c'=c'(d,\theta)\in (0,1)$. Thus by taking $\delta_0$ (and so $\eps$) to be small enough and applying \eqref{3.11}, we obtain
\[
a_0\leq (1-c'/2)\max_{\partial D_0} w_1.
\]
Therefore there exists $x_1\in \partial D_0$ such that for $C_0:=\frac{1}{1-c'/2}>1$, 
\beq\lb{2.42}
w_1(x_1)=\omega(x_1)\geq C_0a_0>a_0.
\eeq

\medskip

3. Let us consider the domain 
\[
D_1:= \left\{x\,:\, |x'-(x_0)'|<8^{-1}{\delta_0}, \, -3\delta_0<x_d-(x_0)_d,\, \omega(x)>a_0\right\},
\]
From the full monotonicity \eqref{3.5} that the level sets $\{\omega-a_0=s\}\cap D_1$ for $s>0$ are Lipschitz graphs. Since $x_1\in D_1$, the set $\{\omega>C_0a_0\}\cap D_1$ is at most $C\delta_0$-away from $\Gamma_\omega\cap D_0$.
Since $-\Delta(\omega-a_0)^+=f$ in $ D_1$ and $\omega_{e_d}\leq 0$, %and the $0$-level set of $(\omega-a_0)^+$ is Lipschitz, 
we can apply Lemma \ref{L.2.88} to $(\omega-a_0)^+$ to obtain
\[
(\omega(x)-a_0)^+\leq C|\nabla\omega(x)|\,d(x_1,\{\omega>a_0\})\leq C'|\nabla\omega(x)|\,d(x_1,\Gamma_\omega)
\]
for all $x\in D_1$ when $\eps$ is sufficiently small.
While we also know from \eqref{2.42} that 
\[
\omega-a_0\geq (1-C_0^{-1})\omega \quad \hbox{ in } \{\omega > C_0a_0\} \cap D_1.
\]
Thus the inequality \eqref{conclusion}   holds for $x\in \{\omega>C_0a_0\}\cap  D_1$. Since $x_0$ is an arbitrary point that is $\eps^{1-\kappa_2}$-away from $\Gamma_\omega$, by shifting $x_0$, $\{\omega>C_0a_0\}\cap  D_1$ contains all points $x\in B_{1-\eps^{1/2}}$ such that $d(x,\Gamma_\omega)\in [C\eps^{1-\kappa_2},\eps^\frac12]$. We finished the proof.
\end{proof}

\subsection{Lipschitz free boundary implies cone monotonicity}

\medskip

Here we show that if the boundary is Lipschitz continuous, then the solutions to $-\Delta \omega = f$ with $0$ boundary data are cone-monotone when sufficiently close to the boundary.  

Let $g$ be a Lipschitz function as given in the beginning of Section \ref{S.4}. 
\begin{lemma}\lb{L.4.5}
Let $D_r:= B_r\cap\{x_d<g(x')\}$ for $r>0$ and let $\omega\geq0$ be a solution to $-\Delta \omega = f$ in $D_1$ such that $\omega=0$ on $g(x')=0$ and $\omega(-\frac{1}{2}e_d)=1$. Then if $c_g\leq \min\{\cot\theta_{\beta},\cot\theta'_\beta\}$ for some $\beta\in (1,2)$ (where $\theta_\beta,\theta'_\beta$ are from Lemma \ref{L.3.1}), then there are $c,r>0$ such that $d(x,\{y_d=g(y')\})\,\omega_{-x_d}\geq c\,\omega$ in $D_r$.
\end{lemma}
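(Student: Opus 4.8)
The plan is to decompose $\omega = w_1 + w_2$ on a suitable strip below the graph of $g$, where $w_1$ is the harmonic part picking up the boundary data on the bottom face and $w_2$ is the Newtonian potential of $f$ vanishing on the whole boundary, exactly as in Corollary \ref{C.2.7}. For the harmonic piece $w_1$, the conclusion $ (w_1)_{-x_d} \geq c\, w_1$ near the Lipschitz graph is the classical statement that a positive harmonic function vanishing on a Lipschitz graph is monotone in the inward normal cone once the Lipschitz constant is below the critical value; this is where the hypothesis $c_g \leq \min\{\cot\theta_\beta, \cot\theta'_\beta\}$ enters — the upper and lower polynomial growth bounds \eqref{angle_1}--\eqref{angle_2} from Lemma \ref{L.3.1}, combined with Dahlberg's lemma (Lemma \ref{L.dah}), sandwich $w_1$ between two multiples of $d(x, \{x_d = g(x')\})^\beta$ with $\beta \in (1,2)$, and a standard barrier/comparison argument then forces the directional derivative $(w_1)_{-x_d}$ to dominate $w_1/d$, hence $w_1$ itself up to the distance factor which is bounded on $D_r$. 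First I would make this precise by normalizing at the interior point $-\tfrac12 e_d$ and invoking the boundary Harnack principle (as quoted from \cite[Theorem 11.5]{CafSal}) to get $(w_1)_{-x_d} \gtrsim w_1/d(x,\Gamma)$ on a strip $\Sigma_\delta$.

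Next I would control the potential part $w_2$. By Lemma \ref{l.2.2} applied on balls of radius comparable to $d(x,\Gamma)$, one has $w_2(x) \lesssim \|f\|_\infty\, d(x,\Gamma)^2$ and $|\nabla w_2(x)| \lesssim \|f\|_\infty\, d(x,\Gamma)$; more importantly Corollary \ref{C.2.7} gives $w_2 \leq C \delta^{2-\beta}\|f\|_\infty\, w_1 / \omega(-e_d)$ on $B_{1-\delta}\cap\Sigma_\delta$, and since $\omega(-\tfrac12 e_d) = 1$ (so $\omega(-e_d)$ is bounded below by Harnack along an interior path staying in $D_1$) this says $w_2$ is a small perturbation of $w_1$ when the strip width $\delta$ is small, i.e. close enough to the boundary. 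Combining: for $x$ with $d(x,\Gamma) \leq \delta$ small,
\[
\omega_{-x_d}(x) = (w_1)_{-x_d}(x) + (w_2)_{-x_d}(x) \geq \frac{c_0}{d(x,\Gamma)} w_1(x) - C\|f\|_\infty\, d(x,\Gamma) \geq \frac{c_0}{2\,d(x,\Gamma)} w_1(x),
\]
where the last step uses the lower bound $w_1(x) \gtrsim d(x,\Gamma)^\beta$ with $\beta < 2$ to absorb the error term once $d(x,\Gamma)$ is small; then $w_1 \geq \omega - w_2 \geq (1 - C\delta^{2-\beta}\|f\|_\infty)\,\omega \geq \tfrac12\omega$, and since $d(x,\Gamma) \leq 1$ on $D_r$ we conclude $\omega_{-x_d} \geq c\,\omega$ on $D_r$ for $r,\delta$ small. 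One subtlety I would address carefully is that the monotonicity is only claimed in the thin strip near the graph; for the part of $D_r$ bounded away from $\Gamma$ (where $\omega$ is bounded below by a positive constant via Harnack and $|\nabla\omega|$ is bounded above), the inequality $\omega_{-x_d} \geq c\,\omega$ is not automatic and in fact need not hold pointwise — so I expect the statement really should be read (and proved) with $r$ small enough that $D_r$ is contained in such a strip, which is consistent with the phrasing "there are $c,r>0$."

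The main obstacle is the interplay between the two thresholds $\theta_\beta$ and $\theta'_\beta$: one needs $\beta \in (1,2)$ simultaneously close enough to $1$ that $\cot\theta_\beta$ exceeds $c_g$ (guaranteeing the lower growth bound and hence that the $f$-error is absorbed) and close enough to $2$ — or rather, the matching condition $c_g \leq \cot\theta'_\beta$ controls the \emph{upper} growth $w_1 \lesssim d^{2-\beta}$, which together with the $\delta^{2-\beta}$ gain from Corollary \ref{C.2.7} is what makes $w_2$ genuinely lower order. Verifying that a single $\beta$ can be chosen to satisfy both, given only $c_g \leq \min\{\cot\theta_\beta,\cot\theta'_\beta\}$ for \emph{some} $\beta$, is the crux, together with making the barrier argument for $(w_1)_{-x_d} \gtrsim w_1/d$ fully rigorous in the Lipschitz (rather than smooth) setting — for which I would lean on the cited boundary Harnack results of \cite{CafSal} rather than reprove them.
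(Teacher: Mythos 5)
Your decomposition $\omega = w_1+w_2$ on a fixed strip $\Sigma_\delta$ and your treatment of the harmonic part are reasonable, but the control of the potential part has a genuine gap. The bound $|\nabla w_2(x)|\lesssim \|f\|_\infty\,d(x,\Gamma)$ is not justified and is in fact false: Lemma \ref{l.2.2} on a ball of radius $\sim d:=d(x,\Gamma)$ only gives $|\nabla w_2(x)|\leq Cd^{-1}w_2(x)+Cd\|f\|_\infty$, so you would need $w_2(x)\lesssim\|f\|_\infty d^{2}$, i.e.\ quadratic vanishing of $w_2$ at the graph. But $w_2$ vanishes at $\Gamma$ only at the rate dictated by the Lipschitz geometry (Proposition \ref{L.2.61} gives $w_2\leq Cw_1$, a harmonic rate $d^{\beta'}$ with $\beta'<2$, not $d^2$); already in the flat model $g\equiv 0$, $f\equiv 1$ one has $w_2=\tfrac12|x_d|\,(\delta-|x_d|)\approx\tfrac{\delta}{2}\,d$ and $|\nabla w_2|\approx\delta/2$ near the graph. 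So the error term in your key display is of size $\sim\delta$ (and the crude bound $d^{-1}w_2\lesssim\delta^{2-\beta}d^{1-\beta}\|f\|_\infty$ even blows up as $d\to0$), not $O(d)$, and the absorption $\tfrac{c_0}{d}w_1\gtrsim d^{\beta-1}\geq C\cdot(\text{error})$ fails in the layer $d\lesssim\delta^{1/(\beta-1)}$ --- exactly the region where the conclusion must be proved. Letting $\delta$ depend on $d$ could in principle repair this, but that is a genuinely multi-scale argument which the proposal does not carry out.

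This is precisely the difficulty the paper's proof is organized around: it rescales by $\delta$, setting $\omega_\delta:=\omega(\delta\,\cdot)/\omega(-\delta e_d/2)$ so that the rescaled source satisfies $\|f_\delta\|_\infty\lesssim\delta^{2-\beta}\|f\|_\infty\to 0$, compares $\omega_\delta$ with its harmonic replacement $h_1$ only at the single interior point $-\delta e_d$ of the rescaled domain (where $\nabla_{-x_d}h_1\geq c\,\delta^{\beta-1}$ beats the perturbation $|\nabla(\omega_\delta-h_1)|\lesssim\delta^{5-2\beta}$ because $\beta<2$), and deduces merely the sign $\nabla_{-x_d}\omega(-\delta^2 e_d)\geq 0$. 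Running over all small $\delta$ and all boundary points yields $\omega_{x_d}\leq 0$ near the graph, and only then is the quantitative inequality $\omega_{-x_d}\geq c\,\omega$ obtained from Lemma \ref{L.2.88}, whose hypothesis is that sign. Two of your side concerns are non-issues: the hypothesis already hands you a single $\beta$ with $c_g\leq\min\{\cot\theta_\beta,\cot\theta'_\beta\}$, so no matching of two values of $\beta$ is needed; and your reading that $r$ must be taken small (so that all of $D_r$ lies near $\Gamma$) is correct and consistent with the statement. The real gap is the gradient bound on $w_2$.
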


\begin{proof}
For some $\delta\in(0,1)$ to be determined, let 
\[
\omega_\delta(x):=a\,\omega (\delta x)\quad\text{ with }a:=1/\omega(-\delta e_d/2).
\]
Then $\omega_\delta$ satisfies $\omega_\delta(-e_d/2)=1$ and $-\Delta\omega_\delta=f_\delta$ with $f_\delta(x):=a\delta^2f(\delta x)$.
By the assumption on $c_g$, Lemma \ref{L.3.1} and Corollary \ref{C.2.7} yield that 
\beq\lb{2.50}
C^{-1}\delta^\beta\leq a\leq C\delta^{2-\beta},\quad \omega_\delta(-\delta e_d)\leq C\delta^{2-\beta}.
\eeq

Now let $h_{1},h_{2}$ be two harmonic functions  in 
$
D^\delta_1:=B_1\cap\{\delta x_d<g(\delta x')\}$ such that 
\begin{align*}
&h_{1}=\omega_\delta,\,h_{2}=1&&\text{ on } \partial B_1\cap\overline{D^\delta_1}\\
&h_{1}=h_{2}=0&&\text{ on }B_1\cap \{\delta x_d=g(\delta x'
)\}. 
\end{align*}
For $y:=-\delta e_d$, Corollary \ref{C.2.7} and \eqref{2.50} yield
\[
\omega_\delta(y)-h_{1}(y)\leq C\|f_\delta\|_\infty h_{1}(y)\leq C\|f_\delta\|_\infty \omega_\delta(y)\leq C\delta^{6-2\beta}.
\]

Next, it follows from the last two lines of the proof of Lemma 11.12 \cite{CafSal} that if $\delta$ is sufficiently small depending only on $c_g$ and $d$,
\[
\nabla_{-x_d}h_{1}(y)\geq c\,\nabla_{-x_d}h_{2}(y)\geq c\, h_{2}(y)/\delta.
\]
where $c$ is a dimensional constant. By Lemma \ref{L.3.1} again,  
\beq\lb{2.51}
\nabla_{-x_d}h_{1}(y)\geq c\, \delta^{\beta-1}.
\eeq
In view of Lemma \ref{l.2.2} and $|f_\delta|\leq C\delta^{4-\beta}$,
\[
|\nabla_{-x_d}(\omega_\delta-h_{1})(y)|\leq C\delta^{-1}(\omega_\delta(y)-h_{1}(y))+C\delta\|f_\delta\|_\infty\leq C\delta^{5-2\beta}.
\]
Thus \eqref{2.51} and $\beta<2$ yield for all $\delta$ sufficiently small that $\nabla_{-x_d}\omega_\delta(y)\geq 0$. This implies that $\nabla_{-x_d}\omega(-\delta^2 e_d)\geq 0$ for all $\delta$ sufficiently small. Finally the proof is finished after applying Lemma \ref{L.2.88}.
\end{proof}

\section{Sup-convolution}\lb{S.5}

%In this section we discuss the sup-convolution construction with $(x,t)$-dependent radius, which was introduced first by Caffarelli (see e.g. \cite{Caf87}). We will present several properties of sup-convolutions, which will be used in the next section as subsolutions in the aforementioned barrier argument.

%Let $u:(0,1)\times B_1\to [0,\infty)$ be continuous, and $\varphi:(0,1)\times B_1\to (0,\frac{1}{2})$ be $C^{2,1}_{x,t}$. Define
%\begin{equation}\label{sup}
%v(x, t):=\sup_{B_{\varphi(x,t)}(x)}u(y,t)\quad\text{ in }(0,1)\times B_{1/2}.
%\end{equation}
%The following lemma says that if $u$ is $(\eps,0)$-monotone, then the level surfaces of $v$ are Lipschitz graphs whenever $\varphi,\nabla \varphi$ are small enough.

In this section we prove several properties of sup-convolutions, first introduced by Caffarelli (see e.g. \cite{Caf87}). They will be used in the constructions of barriers in the next section. 
 
\medskip

For non-negative functions  $u$ in $C(B_1\times (0,T) )$ and $\varphi \in C^{2,1}_{x,t}(B_1\times (0,T) )$ with $0< \varphi \leq 1/2$,  define
\begin{equation}\label{sup}
v(x, t):=\sup_{B_{\varphi(x,t)}(x)}u(y,t)\quad\text{ in } B_{1/2}\times (0,T).
\end{equation}
The following lemma says that if $u$ is $(\eps,0)$-monotone, then the level surfaces of $v$ are Lipschitz graphs whenever $\eps/\varphi$ and $\nabla \varphi$ are not too big. %small enough \textcolor{}{this doesn't make sense, where is the $\e$ dependence?} 

\begin{lemma}{\rm (Lemma 5.4 \cite{CafSal})}
\lb{L.2.6}
Let $v$ be as given in \eqref{sup}. Suppose that $u(\cdot,t)$ is $(\eps,0)$-monotone with respect to $W_{\theta,\mu}$ for some $\theta\in (0,\frac\pi2]$ in $B_{1}$, and for some $x\in B_{1/2}$ and $\bar\theta\in (0,\frac\pi2)$ we have
\beq\lb{2.011}
\sin\bar\theta\leq \frac{1}{1+|\nabla_x\varphi(x,t)|}\left(\sin\theta-\frac{\eps\cos^2\theta}{2\varphi(x,t)}-|\nabla_x\varphi(x,t)|\right).
\eeq
Then $v(\cdot,t)$ is non-decreasing along all directions in $W_{\bar\theta,\mu}$ at $x$. %In particular, if $|\varphi(\cdot,t)|$, and $|\nabla_x\varphi(\cdot,t)|$ are small enough \textcolor{}{i think $\varphi$ small is not good for this condition}  in $B_{1/2}$, then $\Gamma_v(t)\cap B_{1/2}$ is a Lipschitz continuous graph.
\end{lemma}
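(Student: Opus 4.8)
The plan is to translate the assertion into a statement about how the super-level sets of $u(\cdot,t)$ sit inside the balls $B_{\varphi}$, and then close the geometry using precisely the hypothesis \eqref{2.011}. Fix $t$ and suppress it. Fix a unit vector $e\in W_{\bar\theta,\mu}$; since $u(\cdot,t)$ is continuous the supremum defining $v(x)$ is attained at some $y^*\in\overline{B_{\varphi(x)}(x)}$, and it suffices to produce a competitor $z\in\overline{B_{\varphi(x+\tau e)}(x+\tau e)}$ with $u(z)\ge u(y^*)$: this yields $v(x+\tau e)\ge u(z)\ge u(y^*)=v(x)$, i.e. $v$ is non-decreasing along $e$ at $x$.

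First I would normalize the maximizer. By the $(\eps,0)$-monotonicity of $u$ with respect to $W_{\theta,\mu}$ the super-level set $\{u(\cdot,t)\ge v(x)\}$ is, modulo an $\eps$-scale correction, monotone along every direction of $W_{\theta,\mu}$; using this I may choose $y^*$ to be extremal, lying on $\partial B_{\varphi(x)}(x)$ in the position least favourable to the argument — concretely with $w:=y^*-x$, $|w|=\varphi(x)$, and $w$ making angle at most $\tfrac\pi2-\theta$ with $\mu$ up to an error of order $\eps$ (the error being the price of $u$ being monotone only at scales $\ge\eps$). This is the only information about $y^*$ that will enter.

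Then I would take one monotonicity step: set $z:=y^*+\eps'\mu+p$ with $\eps'\ge\eps$ and $|p|\le\eps'\sin\theta$ still free, so that $u(z)\ge u(y^*)$ holds automatically. It remains to choose $\eps',p$ with $z\in\overline{B_{\varphi(x+\tau e)}(x+\tau e)}$, and after optimizing $p$ over the ball of radius $\eps'\sin\theta$ this amounts to finding $\eps'\ge\eps$ with
\[
\bigl|\,w+\eps'\mu-\tau e\,\bigr|\ \le\ \eps'\sin\theta+\varphi(x+\tau e).
\]
Using $|w+\eps'\mu-\tau e|^2=|w-\tau e|^2+2\eps'(w-\tau e)\cdot\mu+\eps'^2$, the normalization of $w$, the bound $e\cdot\mu\ge\cos\bar\theta$, and the Lipschitz estimate $\varphi(x+\tau e)\ge\varphi(x)-|\nabla_x\varphi(x)|\,\tau-o(\tau)$, one squares and then picks the near-optimal $\eps'$; the constraint $\eps'\ge\eps$ is exactly what produces the loss $\frac{\eps\cos^2\theta}{2\varphi(x)}$ (the quadratic cost, in the geometry of the sphere, of being unable to shrink the step below $\eps$), while the variation of $\varphi$ contributes the terms $|\nabla_x\varphi(x)|$ and the factor $1+|\nabla_x\varphi(x)|$, which absorbs the tilt of $e$ away from $\mu$. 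The inequality then reduces to
\[
(1+|\nabla_x\varphi(x)|)\,\sin\bar\theta\ \le\ \sin\theta-\frac{\eps\cos^2\theta}{2\varphi(x)}-|\nabla_x\varphi(x)|,
\]
which is exactly \eqref{2.011}; this completes the proof.

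The step I expect to be the main obstacle is the last one: identifying the least-favourable position of $y^*$ inside $\overline{B_{\varphi(x)}(x)}$ and matching the resulting ball--cone intersection inequality to the exact form of \eqref{2.011}, which requires careful bookkeeping of the $\eps$-scale truncation of the monotonicity cone of $u$ (the source of the $\frac{\eps\cos^2\theta}{2\varphi}$ term) and of the first-order variation of the radius $\varphi$. A secondary point needing care is making the normalization of $y^*$ rigorous as a genuine consequence of the $(\eps,0)$-monotonicity, together with the borderline case $\theta=\tfrac\pi2$, where $\cos^2\theta$ vanishes and the $\eps$-term disappears from \eqref{2.011}.
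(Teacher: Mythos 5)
This lemma is not proved in the paper at all --- it is quoted verbatim from Caffarelli--Salsa (Lemma 5.4 there) --- so I am comparing your sketch with the classical argument. Your overall framing is the right one: reduce to a ball/cone containment, use the $(\eps,0)$-monotonicity to normalize the maximizer $y^*$ of $u$ over $\overline{B_{\varphi(x)}(x)}$, and read off \eqref{2.011} from the resulting geometry. You also correctly identify the final inequality to be verified.

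The gap is in the closing mechanism. After the normalization, $y^*=x+w$ lies in the ``unpushable'' cap $\{|w|\le\varphi(x),\ w\cdot\mu\ge\varphi(x)\sin\theta-\tfrac{\eps\cos^2\theta}{2}\}$: this is exactly the set of points from which no cone step $+\eps'\mu+p$, $\eps'\ge\eps$, $|p|\le\eps'\sin\theta$, lands back in $\overline{B_{\varphi(x)}(x)}$, and the term $\tfrac{\eps\cos^2\theta}{2\varphi}$ in \eqref{2.011} is born \emph{here}, in the normalization, not in a final push. From such a $y^*$ your single monotonicity step cannot reach $\overline{B_{\varphi(x+\tau e)}(x+\tau e)}$ when $\tau$ is small relative to $\eps$: at the worst point $w=\varphi(x)\mu$ the requirement $|w+\eps'\mu-\tau e|\le\eps'\sin\theta+\varphi(x+\tau e)$ forces $\eps'(1-\sin\theta)\le\tau\,(1+|\nabla_x\varphi|)+o(\tau)$, which has no solution $\eps'\ge\eps$ once $\tau<\eps(1-\sin\theta)/(1+|\nabla_x\varphi|)$. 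Since ``non-decreasing at $x$'' must hold for arbitrarily small $\tau>0$, the one-step push cannot close the argument, and the constraint $\eps'\ge\eps$ is therefore \emph{not} the source of the $\tfrac{\eps\cos^2\theta}{2\varphi}$ loss as you claim. The missing idea is that after normalization no further cone step is needed: \eqref{2.011} is precisely a clean sufficient condition for the enlarged cap to be \emph{directly contained} in $\overline{B_{\varphi(x+\tau e)}(x+\tau e)}$. Indeed, to first order in $\tau$ the containment reads $w\cdot e\ge-\varphi\,\nabla_x\varphi\cdot e$, whose worst case over $w$ in the cap and $e\in W_{\bar\theta,\mu}$ is $\cos(\alpha+\bar\theta)\ge|\nabla_x\varphi|$ with $\cos\alpha=\sin\theta-\tfrac{\eps\cos^2\theta}{2\varphi}$, and this is implied by \eqref{2.011}. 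In short, the $\eps$-cone is used only to move the maximizer \emph{within} the original ball; the comparison between the two balls is a pure containment statement, with the remaining (pushable) points of $\overline{B_{\varphi(x)}(x)}$ handled either by direct containment or by a push back into the original ball.
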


%\begin{proof}
%Let $c>0$ be such that
%$
%e^{c\eps(1-\sin\theta)}\geq 1+\delta.
%$ Then the assumption that $u(\cdot,t)$ is $(\eps,\delta)$-monotone with respect to $W_{\theta,\mu}$ yields that $e^{c\mu \cdot x}u(x,t)$ is $(\eps,0)$-monotone with respect to $W_{\theta,\mu}$, because for $\eps'\geq \eps$,
%\[
%e^{c\mu\cdot(x+\eps'\mu)}u(t,x+\eps'\mu)\geq \frac{e^{c\eps'(1-\sin\theta)}}{1+\delta} \sup_{y\in B_{\eps'\sin\theta}(x)}e^{c\mu\cdot y}u(y,t)\geq \sup_{y\in B_{\eps'\sin\theta}(x)}e^{c\mu\cdot y}u(y,t).
%\]
%It follows from Lemma \ref{2.6} that $e^{c\mu\cdot x}v(x,t)$ is non-decreasing along all directions in $W_{\bar \theta,\mu}$. Therefore $\Gamma_v(t)$ is a Lipschitz graph for each $t\in (0,1)$.
%\end{proof}

The following lemma estimates $\Delta v$. The proof is similar to those in \cite{Caf87,kimzhang,CJK}.%, except that we will use the notion of viscosity solutions.

%\text{}{this is linear PDE, so viscosity or weak solutions are the same.} 

%\begin{lemma}\lb{L.2.7}
%Let $u,v$ be as the above. Suppose $-\Delta u= f\geq 0 $ in the classical sense in $\Omega_u$ with continuous $f:\bbR^{d}\to\bbR$. There are dimensional constants $A_0,A_1>1$ such that if $\varphi$ satisfies
%\beq\lb{2.4}
%\Delta\varphi\geq \frac{A_0|\nabla\varphi|^2}{|\varphi|} \quad \hbox{ in } \calQ_1,
%\eeq
%then we have  for each $t\in(-1,1)$,
%\[
%-\Delta v(x,t)\leq (1+A_1 \|\nabla\varphi\|_\infty)f (y_t(x))\quad \text{ for } x\in B_{1/2}\cap\{v(\cdot,t)>0\},
%\] 
%in the viscosity sense,
%where $y_t(x)\in B_{\varphi(x)}(x)$ satisfies that $v(\cdot,t)=u(t,y_t(\cdot))$ in $B_{1/2}$.
%\end{lemma}

\begin{lemma}\lb{L.2.7}
Suppose $-\Delta u= f\geq 0$ in $\Omega_u$ with continuous $f:\bbR^{d}\to\bbR$. Let $v$ be given by \eqref{sup}, then $v(x,t) = u(y(x,t),t)$ for some $y(x,t) \in B_{\varphi(x,t)}(x)$. Then there are dimensional constants $A_0,A_1>1$ such that if $\varphi$ satisfies 
\beq\lb{2.4}
\Delta\varphi\geq \frac{A_0|\nabla\varphi|^2}{|\varphi|} \quad \hbox{ in } B_1\times (0,T) ,
\eeq
then $v$ satisfies (in the viscosity sense)
\[
-\Delta v \leq (1+A_1 \|\nabla\varphi\|_\infty)f \circ y \quad \text{ in } \Omega_v \cap [ B_{1/2}\times (0,T)].
\] 
\end{lemma}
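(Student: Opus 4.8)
The plan is to reduce the inequality $-\Delta v \le (1+A_1\|\nabla\varphi\|_\infty) f\circ y$ to a touching-from-above computation in the viscosity sense. Fix $(x_0,t_0)\in\Omega_v\cap[B_{1/2}\times(0,T)]$ and suppose $\psi\in C^{2,1}_{x,t}$ touches $v$ from below at $(x_0,t_0)$ (i.e. $v-\psi$ has a local min there), so that to verify the supersolution property for $-\Delta$ we must show $-\Delta\psi(x_0,t_0)\le (1+A_1\|\nabla\varphi\|_\infty) f(y(x_0,t_0))$. Let $y_0=y(x_0,t_0)\in B_{\varphi(x_0,t_0)}(x_0)$ be a point realizing the supremum in \eqref{sup}, so $u(y_0,t_0)=v(x_0,t_0)$; note $y_0\in\Omega_u$ since $v(x_0,t_0)>0$. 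The standard device is to exploit that for $x$ near $x_0$ one has $v(x,t_0)\ge u\big(y_0+(x-x_0)+\nabla\varphi(x_0,t_0)\cdot(\text{something}),t_0\big)$; more precisely, the natural candidate map is $x\mapsto y_0 + \tau(x)(y_0-x_0)+(x-x_0)$ where $\tau$ is chosen so that $|y_0+\tau(x)(y_0-x_0)+(x-x_0)-x|\le\varphi(x,t_0)$. Writing $|y_0-x_0|=\varphi(x_0,t_0)=:\rho_0$ (the relevant case; if $|y_0-x_0|<\rho_0$ then $y_0$ is an interior max of $u(\cdot,t_0)$ over a ball and a direct second-derivative comparison gives $-\Delta\psi(x_0,t_0)\le -\Delta v \le$ the trivial bound since $v$ is itself a translate of $u$ locally, giving even $f\circ y$ without the correction), one sets up the perturbed point and Taylor-expands.

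The key computation is then: the map $\Psi(x):=y_0 + \tfrac{\varphi(x,t_0)}{\rho_0}(y_0-x_0) + (x-x_0)$ has $|\Psi(x)-x| = \varphi(x,t_0)$ exactly when we travel along the radial direction $(y_0-x_0)/\rho_0$, but to get an inequality $|\Psi(x)-x|\le\varphi(x,t_0)$ valid for all nearby $x$ one instead uses $\Psi(x):=y_0+(x-x_0)+\big(\varphi(x,t_0)-\varphi(x_0,t_0)\big)\frac{y_0-x_0}{\rho_0}$ together with a quadratic correction, or equivalently one absorbs the error into a slightly shrunk radius. Then $v(x,t_0)\ge u(\Psi(x),t_0)$, and since $u(\Psi(x_0),t_0)=u(y_0,t_0)=v(x_0,t_0)=\psi(x_0,t_0)$ (local min equality), the function $x\mapsto u(\Psi(x),t_0)-\psi(x)$ has a local max at $x_0$ with value $0$, hence (once we check $\Psi(x_0)=y_0\in\Omega_u$, so $u$ is $C^2$ near $y_0$ with $-\Delta u = f$) one gets by the chain rule
\[
-\operatorname{tr}\!\big(D^2u(y_0)\,D\Psi(x_0)D\Psi(x_0)^T\big) - \nabla u(y_0)\cdot\Delta\Psi(x_0) \;\le\; -\Delta\psi(x_0).
\]
Here $D\Psi(x_0) = I + \frac{(y_0-x_0)}{\rho_0}\otimes\nabla\varphi(x_0,t_0)$, so $D\Psi D\Psi^T = I + $ terms of size $O(|\nabla\varphi|)$, and $\Delta\Psi(x_0)$ has a leading term $\frac{\Delta\varphi(x_0,t_0)}{\rho_0}(y_0-x_0)$ plus $O(|\nabla\varphi|^2/\rho_0)$ pieces. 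The crucial sign: $\nabla u(y_0)$ at an interior (constrained) maximum of $u$ over the sphere $\partial B_{\rho_0}(x_0)$ points in the direction $-(y_0-x_0)$ up to first order (Lagrange condition), i.e. $\nabla u(y_0)\cdot(y_0-x_0) \le 0$ — this is exactly where the hypothesis \eqref{2.4}, $\Delta\varphi\ge A_0|\nabla\varphi|^2/|\varphi|$, earns its keep: it guarantees the term $-\nabla u(y_0)\cdot\frac{\Delta\varphi}{\rho_0}(y_0-x_0)$ has the right (favorable) sign and dominates the $O(|\nabla\varphi|^2/\rho_0)$ garbage, so those terms can be dropped. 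What survives is $-\operatorname{tr}(D^2u(y_0)) + (\text{error}) \le -\Delta\psi(x_0)$ with error bounded by $A_1\|\nabla\varphi\|_\infty\,|{-\Delta u(y_0)}| = A_1\|\nabla\varphi\|_\infty f(y_0)$ after using $|\operatorname{tr}(D^2u\,M)|\le \|M-I\|\,|\Delta u| \cdot(\text{const})$ — wait, one must be careful that $D^2u$ need not be sign-definite, so the bound on $\operatorname{tr}(D^2u(D\Psi D\Psi^T - I))$ requires a little more: one writes $D\Psi D\Psi^T - I = \text{(symmetric, }O(|\nabla\varphi|))$ and uses that at the constrained max the relevant second-order form is controlled, OR one chooses $A_0$ large enough that the defect matrix can be written as a nonnegative combination absorbing into $-\Delta u = f\ge0$. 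This is the standard computation in \cite{Caf87,CJK}.

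The main obstacle, and the step I'd spend the most care on, is the bookkeeping in the second-order chain rule: controlling $\operatorname{tr}(D^2u(y_0)(D\Psi D\Psi^T-I))$ when $D^2u$ is not sign-definite, and making sure the term coming from $\Delta\varphi$ (which is \emph{large} and positive by \eqref{2.4}) combines with the first-order Lagrange inequality $\nabla u(y_0)\cdot(y_0-x_0)\le 0$ rather than against it. The clean way is: (i) reduce to $|y_0-x_0|=\varphi(x_0,t_0)$ (interior case trivial); (ii) at the constrained maximum, use that $u(\cdot,t_0)$ restricted to the sphere $\partial B_{\varphi(x_0,t_0)}(x_0)$ is maximized at $y_0$, hence $\nabla u(y_0) = -\lambda(y_0-x_0)$ for some $\lambda\ge0$ modulo the tangential part being zero, and the tangential Hessian of $u$ is $\le 0$; (iii) pick the perturbation $\Psi$ so that its "radial stretch" is exactly $\varphi(x,t_0)/\varphi(x_0,t_0)$ and its error terms are quadratic; (iv) feed into $-\Delta\psi(x_0)\ge -\Delta(u\circ\Psi)(x_0)$ and expand, using \eqref{2.4} to kill the $|\nabla\varphi|^2/\varphi$ terms and $-\Delta u=f$, $\nabla u(y_0)\cdot(y_0-x_0)\le0$ for the rest, landing at the claimed bound with $A_1$ dimensional. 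The condition $\varphi\le1/2$ and $(x_0,t_0)\in B_{1/2}\times(0,T)$ ensures $\Psi(x)$ stays in $B_1$ so that the $(\eps,0)$-monotonicity / definition domain of $v$ is respected, i.e. $v(x,t_0)\ge u(\Psi(x),t_0)$ is legitimate.
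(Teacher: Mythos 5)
Your overall strategy is the same as the paper's (Caffarelli's sup-convolution perturbation: at a touching point, slide the test function down onto $u$ through a perturbed composition $u\circ\Psi$ that minorizes $v$), but two of the steps you identify as crucial are not carried out correctly. First, the sign of your Lagrange condition is reversed, and this is fatal as written: $y_0$ realizes the supremum of $u(\cdot,t_0)$ over the \emph{closed ball} $\overline{B_{\varphi(x_0,t_0)}(x_0)}$ and lies on its boundary, so moving inward cannot increase $u$; hence $\nabla u(y_0)$ is a \emph{nonnegative} multiple of $y_0-x_0$ (it points outward), not a nonpositive one. It is exactly this sign, combined with \eqref{2.4}, that makes the first-order contribution $-\nabla u(y_0)\cdot\Delta\Psi(x_0)\approx-\lambda\,\rho_0\,\Delta\varphi(x_0)$ (with $\lambda\geq0$) nonpositive and therefore harmless in the upper bound for $-\Delta(u\circ\Psi)(x_0)$. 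With the sign you assert, this term is positive of size $\sim|\nabla u(y_0)|\,\Delta\varphi$, which \eqref{2.4} makes \emph{large}, and the argument collapses. Second, the term $\mathrm{tr}\bigl(D^2u(y_0)\,(D\Psi^T D\Psi-I)\bigr)$ that you flag as ``the main obstacle'' is genuinely uncontrolled for your purely radial stretch: the defect is a rank-one perturbation of $I$ contracted against $D^2u(y_0)$, whose off-diagonal entries are not bounded by $|\Delta u|=f$. The missing idea --- which is the actual content of the paper's proof, following Caffarelli's Lemma 9 --- is to choose the perturbing vector field $\nu_*$ as in \eqref{2.2'}--\eqref{2.3} so that the linearization $Y_*$ of $x\mapsto x+\varphi(x)\nu(x)$ is \emph{exactly a rotation composed with a dilation} by $1+O(\|\nabla\varphi\|_\infty)$; then the second-order trace becomes $(1+O(\|\nabla\varphi\|_\infty))^2\Delta u(y_0)$ on the nose, and since $\Delta u(y_0)=-f(y_0)\le0$ this produces precisely the factor $(1+A_1\|\nabla\varphi\|_\infty)f(y_0)$. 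Your proposal leaves this step open, so the key estimate is not established.

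Two smaller points. For the viscosity inequality $-\Delta v\le(1+A_1\|\nabla\varphi\|_\infty)f\circ y$ one tests with $\phi$ touching $v$ from \emph{above} (so that $u\circ\Psi-\phi$ has a local maximum at $x_0$ and $\Delta(u\circ\Psi)(x_0)\le\Delta\phi(x_0)$, i.e. $-\Delta\phi(x_0)\le-\Delta(u\circ\Psi)(x_0)$); your opening paragraph tests from below and your displayed chain-rule inequality points the wrong way, though your step (iv) quietly uses the correct orientation. Finally, the paper runs the computation in averaged form, $\liminf_{r\to0}r^{-2}\fint_{B_r}(v-v(0))$, splitting $u(y(x))-u(y(0))$ into $[u(y(x))-u(Y_*(x)+y(0))]+[u(Y_*(x)+y(0))-u(y(0))]$ and showing the first average is nonnegative via \eqref{2.4}; this sidesteps the need for $y(x)$ to be globally well defined and smooth and is the cleaner way to organize the $o(|x|^2)$ errors.
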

\begin{proof}
%Since the $t$ variable is irrelevant, let us fix one $t\in(0,1)$ and then drop it from the notations of $u,v,\varphi$.
%We follow the idea of Lemma 9 \cite{Caf87} and compute
%\[\Delta v(0)=\underline{\lim}_{r\to 0}\left(\fint_{B_r}v(x)-v(0)dx\right).\]
%Here we used the notation
%\[
%\fint_{B_r}v(x)dx:=\frac{1}{|B_r|}\int_{B_r} v(x)dx,
%\]
%with $|B_r|$ being the volume of $B_r$.

%Fix any $ x_0\in B_{1/2}\cap\{v(\cdot,t)>0\}$, and by shifting we assume that $x_0=0$. Let us assume that $\Delta u(0,y_{t}(0))< 0$ because otherwise it equals to $0$ and there is nothing to prove.
%Then by choosing an appropriate system of coordinates, we can assume for some $\gamma_1,\gamma_2\in\bbR$ that
%\begin{align}\lb{2.2}
%    v(0)=u(\varphi(0)e_d)\quad\text{ and }\quad \nabla\varphi(0)=\gamma_1 e_1+\gamma_2 e_d.
%\end{align}

 Since $t$ stays fixed in the proof, we will omit its dependence from the notations of $u, v, \varphi$ and $y$.
We follow the idea of Lemma 9 in \cite{Caf87} and compute
\[\Delta v(0)=\underline{\lim}_{r\to 0}\left(\fint_{B_r}v(x)-v(0)dx\right), \quad\hbox{ where } \fint_{B_r}v(x)dx:=\frac{1}{|B_r|}\int_{B_r} v(x)dx.
\]

Let $ x_0\in B_{1/2}\cap\{v(\cdot,t)>0\}$, which may be set to be the origin. {If $y(0)$ is a local supremum of $u$, then there is nothing to prove since $\Delta v(0)=0$.} Otherwise $y(0)\in \partial B_{\varphi(0)}(0)$, by choosing an appropriate system of coordinates, we can assume for some $\gamma_1,\gamma_2\in\bbR$ that
\begin{align}\lb{2.2}
    v(0)=u(\varphi(0)e_d)\quad\text{ and }\quad \nabla\varphi(0)=\gamma_1 e_1+\gamma_2 e_d.
\end{align}
Recall that%\text{}{I know that it is positive at zero, but why is $v$ positive at $x$ below? Also where  do you  use this fact?}
\[
v(x)=\sup_{|\nu|\leq 1}u\left(x+\varphi(x)\nu\right)\geq 0.
\]
Let us estimate $v(x)$ from below by taking $\nu(x):=\frac{\nu_*(x)}{|\nu_*(x)|}$ where
\beq\lb{2.2'}
\nu_*(x):=e_d+\frac{\gamma_2 x_1-\gamma_1 x_n}{\varphi(0)}e_1+\frac{\gamma_3}{\varphi(0)}\Big(\sum_{i=2}^{d-1}\,x_i\,e_i\Big)
\eeq
and $\gamma_3\in\bbR$ satisfies
\beq\lb{2.3}
(1+\gamma_3)^2=(1+\gamma_2)^2+\gamma_1^2.
\eeq
With this choice of $\nu$, we define $y(x):=x+\varphi(x)\nu(x)$ and so
$y(0)=\varphi(0)e_d$.
Then direct computations yield (also see \cite{Caf87})
\beq\lb{B.10}
y(x)=Y_*(x)+\varphi(0)e_d+o(|x|^2)
\eeq
where $Y_*(x)$ denotes the first-order term that is
\beq\lb{B.5}
Y_*(x):=x+(\gamma_1 x_1+\gamma_2 x_n)e_d+(\gamma_2 x_1-\gamma_1 x_n)e_1+\gamma_3\sum_{i=2}^{d-1} x_ie_i.
\eeq
Hence $Y_*(x)$ is a rigid rotation plus a dilation, and \eqref{2.2} and \eqref{2.3} imply
\begin{equation}
    \label{B.1}
     \left|\frac{D (Y_*(x)-x)}{Dx}\right|\leq C\|\nabla\varphi\|_\infty.
\end{equation}
Then we have
\[
\begin{aligned}
    \fint_{B_r}v(x)-v(0)dx&\geq 
    \fint_{B_r}u(y(x))-u(y(0))dx\\
    &\geq 
    \fint_{B_r}u(y(x))-u(Y_*(x)+y(0))dx+
    \fint_{B_r}u(Y_*(x)+y(0))-u(y(0))dx. 
\end{aligned}
\]
Using \eqref{2.4} and following the computations done in Lemma 9 \cite{Caf87}, we find that the first integration in the above $\geq0$. 
Since $u$ is $C^2$ near $y(0)$ by the assumption, 
\[
\lim_{r\to0}\frac1{r^2}\fint_{B_r}u(Y_*(x)+y(0))-u(y(0))dx=\left(\left|\frac{D Y_*(x)}{D x}\right|_{x=0}\right)^2f(y(0)).
\]
Using \eqref{B.1} and $\Delta u (y(0))\leq 0$, we get
\beq\lb{2.6}
\begin{aligned}
\liminf_{r\to0}\frac{1}{r^2}\fint_{B_r}v(x)-v(0)dx
    &\geq \lim_{r\to0}\frac{1}{r^2}\fint_{B_r}u(Y_*(x)+y(0))-u(y(0))dx\\
    &\geq -(1+C \|\nabla\varphi\|_\infty)f (y(0)).
\end{aligned}
\eeq

Finally to show the conclusion, suppose $\phi$ is a smooth function such that $\phi$ touches $v$ from above at $0$. %It suffices to show that
%\[
%\Delta\phi(0)\geq (1+C \|\nabla\varphi\|_\infty)f(y(0)).
%\]
If $r>0$ is small enough, we have
$
\phi(x)\geq v(x)$ for $x\in B_r$, and thus
\[
\Delta\phi(0)=\lim_{r\to0}\frac{1}{r^2}\fint_{B_r}\phi(x)-\phi(0)dx\geq \limsup_{r\to0}\frac{1}{r^2}\fint_{B_r}v(x)-v(0)dx.
\]
This and \eqref{2.6} shows 
\[
\Delta\phi(0)\geq -(1+C \|\nabla\varphi\|_\infty)f(y(0))
\]
which finishes the proof.
%Next \eqref{B.1} and $\Delta u= -f\leq 0$ yield
%\begin{align*}
%\liminf_{r\to0}\frac{1}{r^2}\fint_{B_r}u(Y_*(x)+y(0))-u(y(0))dx\geq f(Y_*(0)+y(0)).
%\end{align*}
%Finally to show the conclusion, suppose $\phi,\psi$ are two smooth function such that $\phi$ touches $v$ from above at $0$ and $\psi$ touches $u$ from below at $y(0)$. It suffices to show that
%\[
%\Delta\phi(0)\geq (1+C \|\nabla\varphi\|_\infty)(\Delta \psi) (y(0)).
%\]
%Indeed if $r>0$ is small enough, we have for $x\in B_r$ that
%\[
%\phi(x)\geq v(x),\quad  u(Y_*(x)+y(0))\geq \psi(Y_*(x)+y(0)).
%\]
%Thus
%\[
%\Delta\phi(0)\geq %\limsup_{r\to0}\frac{1}{r^2}\fint_{B_r}v(x)-v(0)dx,
%\]
%and by \eqref{B.1} and $\Delta\psi(Y_*(0)+y(0))\leq 0$ (due to $\Delta u\leq 0$),
%\begin{align*}
%\liminf_{r\to0}\frac{1}{r^2}\fint_{B_r}u(Y_*(x)+y(0))-u(y(0))dx&\geq \Delta (\psi (Y_*(x)+y(0)))|_{x=0}\\
%&\geq (1+C\|\nabla\varphi\|_\infty)(\Delta \psi)(Y_*(0)+y(0)).
%\end{align*}
%These and \eqref{2.6} finishes the proof.
%\[
%\fint_{B_r}v(x)-v(0)dx\leq \fint_{B_r}\phi(x)-\phi(0)dx=\Delta \phi(0)<(1+C \|\nabla\varphi\|_\infty)(\Delta u) (y(0))
%%which contradicts with \eqref{2.6}. This finishes the proof.
%Using \eqref{B.1} and the assumption that $\Delta u\leq 0$, we get
%\beq\lb{2.6}
%    \fint_{B_r}v(x)-v(0)dx
%    \geq \fint_{B_r}u(Y_*(x)+y(0))-u(y(0))dx\geq (1+C \|\nabla\varphi\|_\infty)(\Delta u) (y(0)).
%\eeq
\end{proof}

%Let $\eps>0$ be a small constant, and let $\sigma\in (0,\eps)$.
%Let $\varphi$ be from Lemma \ref{L.2.8} for some $\eta,\tau>0$ to be determined. Then we define $v$ as in \eqref{2.2} with $\varphi$ replaced by $\sigma\varphi$: 
%\beq
%\label{2.9}
%v(x, t):=\sup_{B_{\sigma\varphi(x,t)}(x)}u(y,t).
%\eeq
%Since $u$ is $\eps$-monotone, it follows from Lemma \ref{L.2.6} that $\Gamma(v)$ is Lipschitz continuous uniformly for all $\eps$ small enough.
Below we show that if $u $ satisfies the free boundary condition in \eqref{1.1}, then ${v}$ satisfies some appropriate free boundary condition as well. %The proof is standard, except that we need to carefully track the effect from the drift.

% \textcolor{}{these definitions probably being to section 2}
%\begin{definition}
%Let us define the following:
%\begin{itemize}
%\item A nonnegative function $\phi$ in $\R^d\times \R^+$ is a {\it test function } if 
%\begin{itemize} 
%\item[(a)]$\phi$ is $C^{2,1}_{x,t}$ in  $\overline{\Omega_{\phi}}$;
%\item[(b)] $|\nabla_x\phi|$ does not vanish on $\Gamma_{\phi}$.
%\end{itemize}
%\item $\phi$ {\it crosses $u$ from above} $u$ at $(x,t)$ if $u-\phi$ has a local maximum zero at $(x,t)$ in $\overline{\Omega_u} \cap \mathcal{N}$ with a parabolic neighborhood $\mathcal{N}:=\mathcal{O} \times [t_1,t]$, $t_1<t$. 
%\item $u$ satisfies the velocity law $V=F(Du,x,t, \nu)$ on $\Gamma_u$ {\it in the viscosity sense,} or alternatively the differential inequality $u_t \leq |Du| F(Du,x,t,\nu)$ on $\Gamma_u$, where $\nu$ is the outward spatial normal to $\Gamma_u$, if the inequality is satisfied with any test function $\phi$ that crosses $u$ from above at $(x,t)\in \Gamma_{\phi}$, with $\nu$ as the outward spatial normal to $\Gamma_{\phi}$.
%\end{itemize} 
%The reason for this definition of the viscosity subsolution property  for the free boundary velocity is for the simple fact that there are no globally smooth function that crosses the solution from above at a free boundary point. 
%\end{definition}

\begin{lemma}\label{L.2.8}
Let $u,v$ be as given in Lemma ~\ref{L.2.7}, where $\varphi\in C^{2,1}_{x,t}$ satisfies 
\[
\varphi \leq \eps_1,\quad |\nabla\varphi|\leq \eps_2,\quad -1/2\leq  \varphi_t\leq \eps_3.
\]
In addition, if $u$ is a viscosity subsolution of \eqref{1.1} in $B_1\times (0,T)$,  %$u_t\leq |\nabla u|^2-\vec{b}\cdot\nabla u $ in the viscosity sense on  $\Gamma_u\cap  B_{1}\times (0,T)$, , 
and if $\eps_1,\eps_2,|\eps_3|$ are small enough, then
$v$ is a viscosity subsolution of 
\[
\left\{
\begin{aligned}
   -\Delta v &\leq (1+A_1 \|\nabla\varphi\|_\infty)f \circ y\qquad\qquad\text{ in }\Omega_v\cap (B_{1/2}\times (0,T)),\\
v_t&\leq (1+2\eps_2)^2|\nabla v|^2{+}\vec{b}\cdot\nabla v+\left(\eps_1\|\nabla\vec{b}\|_\infty+2(\eps_1+\eps_2)\|\vec{b}\|_\infty+\eps_3+|\eps_3|/2\right)|\nabla v|\\
&\qquad\qquad\qquad\qquad\qquad\qquad\qquad\,\text{ on }\Gamma_v\cap (B_{1/2}\times (0,T)).
\end{aligned}
\right.
\]
\end{lemma}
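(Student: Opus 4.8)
The plan is to verify the two defining inequalities for a viscosity subsolution of the displayed system separately. The first (the interior/PDE inequality $-\Delta v\le (1+A_1\|\nabla\varphi\|_\infty)f\circ y$ in $\Omega_v$) is exactly the content of Lemma \ref{L.2.7}, so nothing new is needed there beyond checking that the hypothesis \eqref{2.4} is implied by the present smallness assumptions on $\varphi$ — but in fact the statement of Lemma \ref{L.2.7} already requires \eqref{2.4}, so I would simply point out that this is carried over as a standing assumption (or absorbed into the smallness of $\eps_1,\eps_2$, since \eqref{2.4} is a one-sided bound on $\Delta\varphi$ that can be arranged). The substance is the free boundary inequality. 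So first I fix $(x_0,t_0)\in\Gamma_v\cap(B_{1/2}\times(0,T))$ and a test function $\phi\in C^{2,1}_{x,t}$ touching $v$ from above at $(x_0,t_0)$ in $\overline{\Omega_v}\cap\{t\le t_0\}$ with $|\nabla\phi(x_0,t_0)|\ne 0$; by Lemma \ref{L.2.66} it suffices to handle this nondegenerate case, and if $|\nabla\phi|=0$ there is nothing to check. After a translation assume $(x_0,t_0)=(0,0)$.

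Next comes the geometric heart of the argument: transferring the touching from $v$ to $u$. Since $v(0,0)=\sup_{B_{\varphi(0,0)}(0)}u(\cdot,0)$ and $(0,0)\in\Gamma_v$, one has $v(0,0)=0$, hence $u\equiv 0$ on $\overline{B_{\varphi(0,0)}(0)}$, and there is a contact point $y_0\in\partial B_{\varphi(0,0)}(0)\cap\Gamma_u(0)$ where the supremum is attained in the limit. The idea, following \cite{Caf87, CJK}, is to build from $\phi$ a test function $\psi(y,t)$ for $u$ at $(y_0,0)$: roughly, $\psi(y,t):=\phi\big(x(y,t),t\big)$ where $x\mapsto x+\varphi(x,t)\nu$ inverts the ball-map near the contact direction $\nu=(y_0-0)/\varphi(0,0)$; concretely one sets $y=x+\varphi(x,t)\nu(x,t)$ as in the proof of Lemma \ref{L.2.7} and checks that $u-\psi$ has a local max at $(y_0,0)$ in $\overline{\Omega_u}\cap\{t\le 0\}$. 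Because $u$ is a viscosity subsolution and $-\Delta\psi(y_0,0)>0$ can be assumed (otherwise the first inequality of Definition \ref{D.21} is what is active and, combined with Lemma \ref{L.2.7}, gives the PDE bound), Lemma \ref{L.2.66} applies to $\psi$ and yields
\[
\big(\psi_t-|\nabla\psi|^2-\vec b\cdot\nabla\psi\big)(y_0,0)\le 0.
\]
Then I would carefully compute $\psi_t(y_0,0)$, $\nabla\psi(y_0,0)$ in terms of $\phi_t(0,0)$, $\nabla\phi(0,0)$ and the first derivatives of $\varphi$ at $(0,0)$: the chain rule through $y=x+\varphi\nu$ produces $\nabla\psi(y_0,0)=\nabla\phi(0,0)$ up to a factor $(1+O(|\nabla_x\varphi|))$ (this is where the $(1+2\eps_2)^2$ in front of $|\nabla v|^2$ comes from), while $\psi_t$ picks up the term $\varphi_t\,|\nabla\phi|$ (sign controlled by $\eps_3$ and $-1/2\le\varphi_t$, giving $\eps_3+|\eps_3|/2$) and the drift $\vec b$ evaluated at $y_0$ versus at $0$ differs by $O(\varphi)\|\nabla\vec b\|_\infty$ plus the rotation error $O((\eps_1+\eps_2))\|\vec b\|_\infty$ — these are exactly the four error terms collected in the displayed coefficient of $|\nabla v|$. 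Feeding $\psi_t-|\nabla\psi|^2-\vec b\cdot\nabla\psi\le 0$ back through these identities, and using $\nabla\phi(0,0)=\nabla v$-test-gradient, gives precisely the claimed inequality for $v$.

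The main obstacle will be the bookkeeping in this last step: making the comparison of $\psi$ with $\phi$ rigorous at the level of viscosity testing (in particular ensuring $u-\psi$ genuinely has a one-sided touching at $(y_0,0)$ restricted to $\{t\le 0\}\cap\overline{\Omega_u}$, which requires that the ball-map inversion is valid in a full space-time neighborhood and that the supremum-attaining point varies continuously, and handling the possibility that $y_0$ is not unique), and then keeping uniform track of all the $O(\eps_i)$ errors so that they land in exactly the stated combination. I expect the derivative computations to be essentially those of \cite{Caf87} and \cite[Lemma 2.7 or similar]{CJK} with the drift term $\vec b\cdot\nabla$ added, so the genuinely new work is only in estimating the drift-induced error $|\vec b(y_0)-\vec b(0)|\le \varphi(0,0)\|\nabla\vec b\|_\infty$ together with the rotational distortion, which is elementary. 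I would also note at the end that the case where the active constraint at the touching point is $-\Delta\psi(y_0,0)\le 0$ rather than the free boundary condition is subsumed by the already-established first inequality via Lemma \ref{L.2.7}, so the two bullets of the conclusion are exhaustive.
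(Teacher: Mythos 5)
Your plan follows the paper's proof essentially step for step: the interior inequality is inherited from Lemma \ref{L.2.7}, and the free-boundary inequality is obtained by pulling the test function $\phi$ back to a test function $\psi=\phi\circ h^{-1}$ for $u$ at the contact point $y_0$ via the map $h(x,t)=x+\varphi(x,t)\nu(x)$, with the same case split on the sign of $-(\Delta\psi+f)$ and the same four error terms. The only step you gloss over that the paper makes explicit is the geometric argument that $\nabla\psi(y_0,t_0)$ is parallel to $\nu(0)=e_d$ (via the inclusion $\{x:|x-y_0|<\varphi(x,t_0)\}\subseteq\Omega_v(t_0)$ and the Jacobian of $h$), which is what allows the term $\varphi_t\,\nabla\psi\cdot\nu$ to be bounded by $(\eps_3+|\eps_3|/2)|\nabla\phi|$ rather than merely by $\tfrac12|\nabla\phi|$ from the crude bound $\varphi_t\geq -1/2$.
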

\begin{proof}
%By Lemma \ref{L.2.7}, it suffices to verify the boundary condition. 
{Suppose that for a smooth test function $\phi$, $v-\phi$ has a local maximum at $(x_0,t_0)\in \Gamma_v$ in $\overline{\Omega_v}\cap  B_{1/2}\times \{0\leq t\leq t_0\}$. We would like to verify the subsolution property for $\phi$.}

As done before, suppose $x_0=0$ and \eqref{2.2} holds, and let $\nu_*(x)$ be from \eqref{2.2'}, and $\nu(x):=\frac{\nu_*(x)}{|\nu_*(x)|}$. Then $v(0,t_0)=u(y_0,t_0)=0$ with $y_0:=\varphi(0,t_0)e_d$, $\nu(0)=e_d$, and $|\nabla\nu|\leq 1$. We now define 
\[
h(x,t):=x+\varphi(x,t)\nu(x)\quad (\text{ then }h(0,t_0)=y_0).
\]
If $\varphi$ has sufficiently small $C^1$ norm, $h$ is invertible and 
$h^{-1}$ is $C^{2,1}_{x,t}$. In particular  $\psi(y,t):=\phi(h^{-1}(y,t),t)$ is $C^{2,1}_{x,t}$ in a neighborhood of $(y_0,t_0)$.
Since $v(x,t)\geq u(h(x,t),t)$ and $v(0,t_0)=u(y_0,t_0)=0$,  $u-\psi$ has a local maximum in $\overline{\Omega_u}\cap \{t\leq t_0\}$ at $(y_0,t_0)$.

First, suppose that $-(\Delta\psi+f)(0,t_0)>0$. 
Since $u$ is a subsolution, $\psi$ satisfies
\beq\lb{2.80}
\psi_t(y_0,t_0)\leq |\nabla \psi(y_0,t_0)|^2+\vec{b}(y_0)\cdot\nabla \psi(y_0,t_0)
\eeq
in the classical sense.
By taking $\eps_1,\eps_2$ to be small enough, we get
\beq\lb{2.81}
\begin{aligned}
|\nabla \psi(y_0,t_0)-\nabla\phi(0,t_0)|&\leq \sup_{x\in B_{1/2}}\|D (h^{-1}(x,t_0))-I_d\||\nabla\phi(0,t_0)|\\
&\leq 2(\eps_1+\eps_2)|\nabla\phi(0,t_0)|
\end{aligned}
\eeq

Next we estimate $\phi_t(0,x_0)$. To do this, we first show that $\nabla\psi(y_0,t_0)$ is to the direction of $e_d$. Let us consider the set
\[
D:=\{x\,:\,|x-y_0|<\varphi(x,t_0)\},
\]
and then we have $0\in\partial{D}$ by $y_0\in \Gamma_u(t_0)$. 
Since $\varphi(\cdot,t_0)$ is $C^2$ and $\nabla\varphi(0,t_0)=\gamma_1e_1+\gamma_2e_d$ by \eqref{2.2}, $\partial{D}$ is $C^1$ and the inner normal direction at $0$ equals to $\gamma_1e_1+(1+\gamma_2)e_d$. Note that $v>0$ in $D$ by the definition of $v$ and $0\in\Gamma_v$. Thus we get $\phi(\cdot,t_0) > \phi(0,t_0)$ in $D\cap B_r(0)$ for some $r>0$, which implies that $\nabla\phi(0,t_0)$ is pointing to the direction of $\gamma_1e_1+(1+\gamma_2)e_d$.
In view of \eqref{B.10} and \eqref{B.5},  we have
\[
Dh(0,t_0)=
\begin{bmatrix}
1+\gamma_2 &  & && -\gamma_1\\
 & 1+\gamma_3 & & & \\
&  & \cdots & & \\
& & &1+\gamma_3& \\
\gamma_1 &  & & &1+\gamma_2
\end{bmatrix}.
\]
This and
\[
\frac{\gamma_1e_1+(1+\gamma_2)e_d}{\gamma_1^2+(1+\gamma_2)^2}|\nabla\phi (0,t_0)|=\nabla\phi (0,t_0)= \nabla\psi(y_0,t_0)\cdot Dh(0,t_0)
\]
yield that $\nabla\psi(y_0,t_0)=|\nabla\psi(y_0,t_0)|e_d$. 

Now, since $\varphi_t\leq \eps_3$, \eqref{2.81} shows that if $\eps_2$ is sufficiently small, %\text{}{i don't really like this $1/4$} 
\beq\lb{2.83}
\begin{aligned}
\phi_t(0,t_0)&=\psi_t(y_0,t_0)+\nabla\psi(y_0,t_0) \cdot h_t(0,t_0)\leq \psi_t(y_0,t_0)+[\nabla\psi(y_0,t_0)\cdot \nu(0)]\, \varphi_t(0,t_0)\\
&\leq \psi_t(y_0,t_0)+(\eps_3+2^{-1}|\eps_3|)|\nabla\phi(0,t_0)|.
\end{aligned}
\eeq
Then \eqref{2.80}, \eqref{2.81} and \eqref{2.83} yield at $(0,t_0)$,% (writing $\phi=\phi(0,t_0)$ and $\nabla\phi=\nabla\phi(0,t_0)$),
\[
\phi_t\leq (1+2\eps_2)^2|\nabla\phi|^2+\vec{b}(y_0)\cdot \nabla\phi+\left(\eps_3+2^{-1}|\eps_3|+2\eps_2\|\vec{b}\|_\infty\right) |\nabla\phi|.
\]
Also using $|\vec{b}(y_0)-\vec{b}(0)|\leq \eps_1\|\nabla\vec{b}\|$ in the above  inequality and rearranging the terms, we obtain
\beq\lb{2.85}
\phi_t\leq(1+2\eps_2)^2|\nabla\phi|^2+\vec{b}(0)\cdot\nabla\phi+ \left(\eps_3+2^{-1}|\eps_3|+\eps_1\|\nabla\vec{b}\|_{\infty}+2\eps_2\|\vec{b}\|_{\infty}\right) |\nabla\phi|.
\eeq

Finally, if $-(\Delta\psi+f)(0,t_0)\leq 0$, it follows from the proof of Lemma \ref{L.2.7} with $\psi,\phi$ in place of $u,v$ that (note that $\phi(x,t_0)=\psi(x+\varphi(x,t_0)\nu(x),t_0)$ and we only used $v(x)\geq u(x+\varphi(x)\nu(x))$ in the proof before)
\[
-\Delta\phi(0)\leq (1+C \|\nabla\varphi\|_\infty)f(y_0)
\]
which finishes the proof.
\end{proof}

\begin{remark}
The conclusion of Lemma \ref{L.2.7} holds the same if $f=f(x,t)$ (then we replace $f(y(x,t))$ by $f(y(x,t),t)$). Similarly Lemma \ref{L.2.8} holds the same if $\vec{b}=\vec{b}(x,t)$ (in this case we don't need any regularity of $\vec{b}$ in $t$). The proofs are identical.
\end{remark}

%Let $u$ be from Theorem \ref{T.2.1} with $\eps\in (0,1)$ being small.
%

%{\noindent \it Remark.} Both the statement and the proof stay the same if we consider drifts $\vec{b}=\vec{b}(x)$.

Lastly, we describe a family of smooth functions $\varphi_\eta$, which will be used in the next section. The proof is {parallel} to that of  Lemma 10.10  in \cite{CafSal} with $\alpha:=4\kappa$. In the referenced lemma, the domain is assumed to be uniformly Lipschitz continuous in time.  However if we replace $L_1$ (the graph's Lipschitz constant in time) by $C/r=C\eps^{-{\gamma_1}}$, the same arguments apply to yield the desired estimates for our setting. 

% In Lemma 10.10 \cite{CafSal}, the domain was taken to be a Lipschitz neighborhood of the free boundary with fixed width. Since $\Phi_r\in (-\frac12,\frac12)$, there is no essential difference. Also 

%\text{}{ %i think it is better to state this lemma in terms of a given Lipschitz continuous function $\Phi$, not $\Phi_r$. 
%Then maybe there should be some dependence on the Lipschitz constant of $\Phi$. 
%Why is there no $\theta_0$ dependence in the original statement by the way?} 

\begin{lemma}\lb{L.4.1}
 Let $A_1>1$ be the dimensional constant from Lemma \ref{L.2.7}, and let $\gamma_1,{\gamma_2},\kappa\in (0,1)$ such that $\gamma_1-{\gamma_2}>4\kappa$. Take $r:=\eps^{\gamma_1}$ and $T\geq 4\eps^{4\kappa}$  with $\eps>0$ sufficiently small. %, assume $\Phi_r\in (-\frac{1}{2},\frac{1}{2})$ in $B_1\times (-T,T) $. 
{Suppose that $\Phi(x,t)$ is Lipschitz continuous with constant $C$ in space and with constant $C/r$ in time. } Then for
\[
\Sigma_{r,T}:=\{(x,t)\in B_1\times (-T,T)\,:\, |\Phi(x',t)-x_d|<2r\},
\]
there is $A_2=A_2(A_1,C)\geq 1$ such that for any $\eta\in [0,1]$, there exists a $C^2$ function $\varphi_\eta(x,t)$ in $\Sigma_{r,T}$ such that
\begin{enumerate}
    \item $0<\bar{c}\leq\varphi_\eta\leq 1+ \eta$ in $\Sigma_{r,T}$ for some universal constant $\bar c$,
    
    \smallskip
    
    \item $\varphi_\eta\Delta\varphi_\eta\geq A_1|\nabla\varphi_\eta|^2$ holds in $\Sigma_{r,T}$,
    \smallskip
    
    \item $\varphi_\eta\leq 1$ outside $\{(x,t)\in \Sigma_{r,T}\,:\,t>-T+\eps^{4\kappa},\,d(x,\partial B_{1})>\frac12\eps^{\kappa}\}$, 
    \smallskip

    \item $\varphi_\eta\geq 1+\eta(1-A_2\eps^{{\gamma_2}})$ in  $\{(x,t)\in \Sigma_{r,T}\,:\,t>-T+2\eps^{4\kappa},\,d(x,\partial B_{1})>\eps^{\kappa}\}$,%in the set $\{(x,t)\in \Sigma_{r,T}\,:\,t\in (-T+C\tau,T),\,d(x,\partial B_{1})>r\}$ for some $C\geq 1$, 
    %\[
   % \varphi_\eta\geq 1+\eta\left(1-\frac{A_2r}{d(x,\partial B_{1})^2}\right),
   % \]
   \smallskip
    
    \item $|\nabla\varphi_\eta|\leq  A_2\eps^{{\gamma_2}-{\gamma_1}}$ and $0\leq \partial_t\varphi_\eta\leq  A_2\eps^{{\gamma_2}-{\gamma_1}}$ in $\Sigma_{r,T}$.
\end{enumerate}
%(The constants $A_2,\delta,\eta$ are independent of $r$.)
\end{lemma}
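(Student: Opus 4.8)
The plan is to reuse the construction of \cite[Lemma 10.10]{CafSal} essentially verbatim, the only structural change being that the graph bounding the relevant strip is now Lipschitz in time with constant $C/r = C\eps^{-\gamma_1}$ in place of a uniform constant $L_1$; I would then check that, together with the hypothesis $\gamma_1 - \gamma_2 > 4\kappa$, this single substitution leaves all five conclusions intact. Before invoking the reference I would isolate the one clean point: conclusion (2) involves no time derivative and is equivalent to subharmonicity of a power of $\varphi_\eta$. Indeed, seeking $\varphi_\eta$ in the form $\varphi_\eta = (1 - b_\eta \psi)^{-1/(A_1-1)}$ with $b_\eta \in (0,1)$ and $\psi = \psi_\eta(x,t)$ valued in $[0,1]$, a direct computation gives
\[
\varphi_\eta \Delta \varphi_\eta - A_1 |\nabla \varphi_\eta|^2 \;=\; \frac{b_\eta}{A_1-1}\,\frac{\varphi_\eta^{2}}{1-b_\eta\psi}\,\Delta_x \psi ,
\]
the $|\nabla\psi|^2$-terms cancelling precisely because $A_1$ is the exponent making this so, so (2) holds exactly when $\psi(\cdot,t)$ is subharmonic in $x$ at each fixed time. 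Choosing $b_\eta := 1-(1+\eta)^{-(A_1-1)} \in (0,1)$ makes $\varphi_\eta$ sweep out exactly $[1,1+\eta]$ as $\psi$ runs over $[0,1]$, so conclusion (1) is automatic, and conclusions (3)--(5) for $\varphi_\eta$ follow from the analogues for $\psi_\eta$ because $\varphi_\eta$ and $\partial_\psi\varphi_\eta$ are bounded by constants depending only on $A_1$ (a short computation with $b_\eta\le (A_1-1)\eta$ also converts $\psi_\eta\ge 1-A_2'\eps^{\gamma_2}$ into $\varphi_\eta\ge 1+\eta(1-A_2\eps^{\gamma_2})$). Thus everything reduces to producing, for each $\eta\in[0,1]$, a $C^2$ function $\psi_\eta$ on $\Sigma_{r,T}$, $x$-subharmonic at each time, with $0\le\psi_\eta\le1$, non-decreasing in $t$, which vanishes on $\{t\le -T+\eps^{4\kappa}\}\cup\{d(x,\partial B_1)\le\tfrac12\eps^{\kappa}\}$, is $\ge 1-A_2'\eps^{\gamma_2}$ on $\{t> -T+2\eps^{4\kappa},\ d(x,\partial B_1)>\eps^{\kappa}\}$, and satisfies $|\nabla_x\psi_\eta|+|\partial_t\psi_\eta|\le A_2'\eps^{\gamma_2-\gamma_1}$.

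Concretely, I would build $\psi_\eta$ exactly as in \cite{CafSal}: combine, through a smoothed minimum, a one-dimensional time cutoff $\Xi_t(t)$ increasing from $0$ to $1$ over $[-T+\eps^{4\kappa},-T+2\eps^{4\kappa}]$ with $0\le\Xi_t'\lesssim\eps^{-4\kappa}$ (this factor enters only the $\partial_t$ estimate, never $\Delta_x$), and a spatial factor which in the thin strip $\Sigma_{r,T}$ is a capacitary-type potential vanishing near the portion of $\partial B_1$ meeting the strip, equal to $\approx 1$ away from it, and $x$-subharmonic; the subharmonicity through the width-$\sim\eps^{\kappa}$ transition collar around $\partial B_1$ and the smoothing of the minimum are arranged with a lower-order correction of size $O(\eps^{\gamma_2})$, which is where the slack $A_2\eps^{\gamma_2}$ in (4) is spent, while the thinness $2r=2\eps^{\gamma_1}$ of the strip (and $\gamma_1 > \gamma_2+4\kappa > \kappa$) makes the potential exponentially close to $1$ on the good region, so (3) and (4) follow from the placement of the collars. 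For (5) I would track the two competing lengthscales: $|\nabla_x\psi_\eta|\lesssim\eps^{-\kappa}$ from the collar, and $|\partial_t\psi_\eta|\lesssim\eps^{-4\kappa}$ from $\Xi_t'$ together with the one new term $(C/r)\,\|\partial_{x_d}\psi_\eta\|_\infty \lesssim C\eps^{-\gamma_1}\cdot\eps^{\gamma_2}$ coming from the motion of the strip; since $\gamma_1-\gamma_2>4\kappa\,(\ge\kappa)$, each of $\eps^{-\kappa}$, $\eps^{-4\kappa}$, $C\eps^{\gamma_2}$ and $C\eps^{\gamma_2-\gamma_1}$ is $\le A_2\eps^{\gamma_2-\gamma_1}$ for a suitable $A_2=A_2(A_1,C)$, which is (5).

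I expect the only real work to be bookkeeping rather than any new idea: one must verify that with $L_1$ replaced by $C/r=C\eps^{-\gamma_1}$ every estimate in \cite[Lemma 10.10]{CafSal} still closes, and in particular that the collar widths $\eps^{\kappa},\eps^{4\kappa}$ are reconciled with the strip width $\eps^{\gamma_1}$ and the target slack $\eps^{\gamma_2}$ --- which is exactly the role of the hypothesis $\gamma_1-\gamma_2>4\kappa$. Given that, I would present only the modifications to the cited argument and refer to \cite{CafSal} for the unchanged parts of the construction.
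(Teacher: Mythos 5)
Your proposal is correct and matches the paper's treatment: the paper likewise disposes of this lemma by citing \cite[Lemma 10.10]{CafSal} (with $\alpha:=4\kappa$) and noting that the only change is to replace the uniform time-Lipschitz constant $L_1$ of the graph by $C/r=C\eps^{-\gamma_1}$, after which the same estimates close thanks to $\gamma_1-\gamma_2>4\kappa$. Your additional unpacking of the substitution $\varphi_\eta=(1-b_\eta\psi)^{-1/(A_1-1)}$ and the bookkeeping of the scales $\eps^{\kappa},\eps^{4\kappa},\eps^{\gamma_1},\eps^{\gamma_2}$ is consistent with that construction and goes somewhat beyond the level of detail the paper itself records.
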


In  the next section, we will choose  $\Phi_r$ to be the  Lipschitz function from Lemma \ref{L.3.11}, whose graph approximates the free boundary of $u$ up to order $r$.

\section{Flat free boundaries are Lipschitz}\lb{S.6}

In this section we will show by iteration that flat free boundaries are Lipschitz. Similar to \cite{CafSal} and \cite{CJK}, the proof is based on the construction of a family of subsolution, building on Section \ref{S.5}. These are constructed as a small perturbation of $u$ of \eqref{1.1} in a local domain $B_2(0)\times (-1,1)$. %\text{}{i think it should be $B_2(0)\times (-2T, 2T)$ to be consistent with the theorem.}

\medskip

{The family of subsolutions will represent the regularization mechanism of the flow, by the varying size of regularization given as a radius of the sub-convolution we apply to the solution. Due to the presence of the source and drift term with minimal regularity, and their competition with the regularization mechanism, there are additional terms to the perturbation: this makes the construction of barrier function rather versatile and technical. In an effort to make the construction more accessible for interested readers, we list the family of parameters in the next subsection. Readers may also choose to skip to our iterative statement, Proposition~\ref{L.2.12}, and the proof of our main theorem thereafter. }

%\textcolor{}{explain difficulties that is new here....}

\subsection{Parameters and Assumptions} 

Our barrier construction involves many parameters, which we put together here for the reader's convenience. First  we choose the minimal angle for the cone of monotonicity, from which we will apply our iteration arguments. In light of Lemma~\ref{L.3.4} we will assume that 
$$
f\in C^{\bar{\gamma}}(\R^d) \quad\hbox{ and }\quad \beta \in (1, 1+\bar \gamma).
$$

Let $\theta_{\beta}$ be as in \eqref{angle_1}, and let $\Theta_0\in (\theta_{\beta},\frac\pi2)$ be such that
\beq\lb{5.0}
\sin\theta_{\beta}< \sin\Theta_0-\cos^2\Theta_0/\bar{c}
\eeq
with $\bar c$ from Lemma \ref{L.4.1}. We will work with the cone angle $\theta \in (\Theta_0, \pi/2)$ in this section.

\medskip

Throughout this section we assume that $u$ satisfies the following for some $T\in (0,1]:$ 
\begin{itemize}
\item[(H-a)] $u(\cdot,t)$ is $(\eps,\eps^\al)$-monotone with respect to $W_{\theta,-e_d}$ in $B_1$ for some $\theta \in(\Theta_0, \pi/2)$ for all $t\in (-T,T)$, and with $\alpha$ satisfying 
\beq\lb{10.13}
0<\alpha <\min\left\{\frac{\bar\gamma^2}{2},\,\frac{\bar\gamma(1-\bar\gamma)}8,\,\frac{1-\bar\gamma^2}{16}\right\}
\eeq 
when $f$ is not a constant.

\smallskip
\item[(H-b)] $(0,0) \in \Gamma_u$ and $m:=\inf_{t\in (-T,T)}u(-e_d,t)>0$. 
\end{itemize}
Note that if $T<1$, a simple rescaling argument can reduce the problem to the case of $T=1$. So, for simplicity of notations, let us assume $T=1$ from now on. 

\medskip

Let us proceed with the next set of parameters, to be used in the next subsection for the construction of barrier functions.  For $\kappa:=\frac{2-\beta}{8}$, we choose $\gamma_1$ and  $\iota$ such that $\gamma_1$ and $\iota$ are respectively close to $1$ and $4\kappa$, and $\beta+ \iota <  2\gamma_1$. More specifically we choose 
\begin{equation}\lb{10.14}
\gamma_1:=\max\left\{\frac{3}{4}+\frac{\beta}{8},1-\frac{\bar\gamma}{2}\right\}<1, \quad \iota:=5\kappa=\frac{5}{4}-\frac{5\beta}{8} \quad \hbox{ and } \quad \gamma_2:= \gamma_1 {-}\iota,
\end{equation}
and so $\gamma_1 - \gamma_2 > 4\kappa$. With this choice of $\kappa, \gamma_1$ and $\gamma_2$,  let $\varphi_\eta$ be from Lemma \ref{L.4.1}, with some $\eta\in (0,1)$. 

\medskip

We also define $0<\al_1 < \al_2<1$ so that
\beq\lb{10.10}
1-\gamma_1 < \alpha_1 < 1-{(\beta + \iota)}/{2}, \quad   \al_2< \min\{1-\iota,\bar\gamma\}.
\eeq
Note that this is possible since $\beta+ \iota < 2\gamma_1 $ and $\max\{\iota,1-\bar\gamma\} < \gamma_1$.

Lastly we define universal constants: in this section $C$ or $c$ denotes constants
that only depend on $d,\alpha,\beta, m,\bar\gamma$, $\|u\|_{L^\infty(B_2(0)\times (-1,1))}$, $\|f\|_\infty$, $\|f\|_{C^{0,\bar\gamma}(B_2)}$, and $\|\vec{b}\|_{C^1(B_2)}$.

\medskip

\subsection{Construction of the base barrier $\bar{v}$}

Let us define $r:=\eps^{\gamma_1}$. We will construct our subsolution in the domain 
\[
\Sigma_{r,1}=\{(x,t)\in \calQ_1 \,:\, |\Phi_r(x',t)-x_d|<2r\},
\]
where $\Phi_r$ is as given in Lemma \ref{L.3.11} {that approximates $\Gamma_u$ in $r$-scale.}

\medskip

For a given $\sigma\in [\frac{\eps}{2},\eps]$, define
\beq\lb{5.1}
v(x,t):=\sup_{B_{\sigma\varphi_\eta(x,t)}(x)}u(y,t).
\eeq
{It then follows from \eqref{5.0}, Lemma \ref{L.2.6} and Lemma \ref{L.4.1} that $v(\cdot,t)$ is non-decreasing along all directions of $W_{\theta_{\beta},-e_d}$ when $\eps$ is sufficiently small.}
With this choice of $v$, we define the domains
\[
\Sigma^+:=\Sigma_{r,1}\cap\Omega_v,\quad \Sigma^+(t):=\{x\,:\,(x,t)\in\Sigma^+\},\quad \Sigma_{r,1}(t):=\{x\,:\,(x,t)\in\Sigma_{r,1}\}.
\]
and the bottom boundary of $\Sigma^+(t)$ as
\[
\partial_b \Sigma^+(t):= \left(\partial \Sigma_{r,1}(t)\cap\Omega_v\right)\backslash \partial B_1.
\]
%{\color{}Due to \eqref{5.1} and that $\varphi_\eta\geq\bar{c}$ is smooth, $\Gamma_v$ satisfies the interior ball (with radius $\geq \bar{c}\eps/2$) condition.}
Due to the presence of $f$, we need to adjust $v$ and the adjustments are superharmonic functions.

\medskip

For each $t\in (-1,1)$, let us define $w_1^t$ and $w_2^t$ by:
\begin{enumerate}
    \item $-\Delta w_1^t=0$ in $\Sigma^+(t)$  with $w_1^t=v(\cdot,t)$ on $\partial_b \Sigma^+(t)$ and zero elsewhere on the boundary.
    
    \smallskip
    
    \item $-\Delta w_2^t=1+\|f\|_\infty$ in $\Sigma^+(t)$ with zero boundary condition.
\end{enumerate}

\medskip

Consider a non-negative harmonic function $\phi$ in the annulus
$(B_{1}\backslash B_{1-2\eps^\kappa})\cap\Sigma^+(t)$.
Since $\e^{\kappa} << r$,  
if $\phi\geq v$ on $\partial_b\Sigma^+(t)\cap (B_{1-\eps^\kappa/2}\backslash B_{1-2\eps^\kappa}) $,  then Dahlberg's lemma yields a dimensional constant $c_*$ such that 
\beq\lb{10.16}
c_* w_1^t\leq \phi\quad \text{ on }\Sigma^+(t)\cap \partial B_{1-\eps^\kappa}
\eeq

\medskip

With this choice of $c_*$, we finally define our barrier function by
\beq\lb{10.15}
\bar{v}(\cdot,t):=(1+\eps^{\alpha+1})v(\cdot,t)-\eps^{\al_2} w_2^t+c_*\eps^{\al_1} w_1^t,
\eeq
where $\alpha_1, \alpha_2$ are given in \eqref{10.10}.
\begin{lemma}\lb{L.5.2}
%There exists a universal $c>0$ such that the following holds. 

For sufficiently small $\e>0$, $\bar{v}$ given by \eqref{10.15} satisfies the following in the viscosity sense: For any $e\in B_1$,
\[\left\{
\begin{aligned}
-\Delta\bar{v}&\leq f(x-\eps e) &&\quad\text{ in } \Sigma^+\cap (B_{1-\eps^\kappa}\times (-1,1)),\\
    \bar{v}_t&\leq |\nabla \bar{v}|^2+\vec{b}(x-\eps e)\cdot\nabla \bar{v}&&\quad\text{ on }\Gamma_{\bar{v}}\cap  (B_{1-\eps^\kappa}\times (-1,1)).
\end{aligned}
\right.\]
 %Here $\eps$ depends on $m$ and $\Theta_0$. 
%(The first inequality is satisfied in a slightly larger domain.)
\end{lemma}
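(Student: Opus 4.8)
The plan is to verify the two inequalities separately, using that $\bar v$ is built from three pieces: the sup-convolution $v$, which carries the free-boundary behavior of $u$; the superharmonic corrector $w_2^t$, which absorbs the source term; and the harmonic corrector $w_1^t$, which is a small positive perturbation needed to make the free-boundary condition strict. I would first treat the \emph{elliptic inequality} in $\Sigma^+\cap(B_{1-\eps^\kappa}\times(-1,1))$. By Lemma~\ref{L.2.7} applied with $\varphi=\sigma\varphi_\eta$ (whose admissibility condition \eqref{2.4} is exactly property (2) of Lemma~\ref{L.4.1}), we have $-\Delta v\le (1+A_1\|\nabla(\sigma\varphi_\eta)\|_\infty)f\circ y$ in the viscosity sense, with $\|\nabla(\sigma\varphi_\eta)\|_\infty\le \eps A_2\eps^{\gamma_2-\gamma_1}=A_2\eps^{1+\gamma_2-\gamma_1}$, which is $\ll\eps^{\al_2}$ by the choice \eqref{10.14}, \eqref{10.10} (since $1+\gamma_2-\gamma_1 = 1-\iota > \al_2$). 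Also $y=y(x,t)\in B_{\sigma\varphi_\eta(x,t)}(x)\subseteq B_{2\eps}(x)$, so $|f(y)-f(x-\eps e)|\le 2^{\bar\gamma}\eps^{\bar\gamma}\|f\|_{C^{0,\bar\gamma}}\lesssim \eps^{\bar\gamma}$, again $\ll \eps^{\al_2}$ because $\al_2<\bar\gamma$. Finally $-\Delta w_2^t = 1+\|f\|_\infty$, so $-\Delta(-\eps^{\al_2}w_2^t) = -\eps^{\al_2}(1+\|f\|_\infty)$, while $-\Delta(c_*\eps^{\al_1}w_1^t)=0$ and $-\Delta((1+\eps^{\al+1})v)\le (1+\eps^{\al+1})(1+o(\eps^{\al_2}))(f(x-\eps e)+O(\eps^{\bar\gamma}))$. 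Adding these and using $f\ge 0$, $\al_2<\al_1$, the negative contribution $-\eps^{\al_2}(1+\|f\|_\infty)$ dominates all the $O(\eps^{\bar\gamma})$, $O(\eps^{1-\iota})$ and $O(\eps^{\al_1})$ error terms for small $\eps$, giving $-\Delta\bar v\le f(x-\eps e)$. One should do this carefully in the viscosity sense, but it is a routine combination of the viscosity inequalities for each piece since $w_1^t,w_2^t$ are classical in the interior.

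\textbf{The free-boundary inequality.} On $\Gamma_{\bar v}\cap(B_{1-\eps^\kappa}\times(-1,1))$, the key point is that near $\Gamma_{\bar v}$ the correctors $w_1^t$ and $w_2^t$ vanish, so $\Gamma_{\bar v}$ essentially coincides with $\Gamma_v$, and the free-boundary velocity of $\bar v$ is governed by $v$ up to lower-order corrections coming from the gradients of $w_1^t,w_2^t$. First I would invoke Lemma~\ref{L.2.8} (with $\eps_1=\sigma\|\varphi_\eta\|_\infty\le 2\eps$, $\eps_2=\sigma\|\nabla\varphi_\eta\|_\infty\le A_2\eps^{1+\gamma_2-\gamma_1}=A_2\eps^{1-\iota}$, $\eps_3=\sigma\|\partial_t\varphi_\eta\|_\infty\le A_2\eps^{1-\iota}$, noting $\partial_t\varphi_\eta\ge 0$): this gives that $v$ is a viscosity subsolution of the free-boundary inequality
\[
v_t\le (1+2\eps_2)^2|\nabla v|^2+\vec b(x-\eps e)\cdot\nabla v + E(\eps)|\nabla v|,
\]
where $E(\eps)=\eps_1\|\nabla\vec b\|_\infty+2(\eps_1+\eps_2)\|\vec b\|_\infty+\eps_3+|\eps_3|/2 = O(\eps)+O(\eps^{1-\iota})=O(\eps^{1-\iota})$ (here the drift is centered at $x-\eps e$ rather than at $x_0$, since the sup-convolution point $y_0$ lies within $2\eps$ of $x$, which only shifts $\vec b$ by $O(\eps)$, absorbed into $E(\eps)$). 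Then I would transfer this to $\bar v$. At a point where a smooth $\phi$ touches $\bar v$ from above at a free-boundary point $(x_0,t_0)$ with $|\nabla\phi(x_0,t_0)|\ne 0$ (the degenerate case being vacuous by Lemma~\ref{L.2.66}-type reasoning), the function $\phi+\eps^{\al_2}w_2^{t}-c_*\eps^{\al_1}w_1^{t}$ divided by $(1+\eps^{\al+1})$ touches $v$ from above at the same point. Using Lemma~\ref{l.2.2} to bound $|\nabla w_1^t|, |\nabla w_2^t|$ near the boundary—here the width of $\Sigma^+(t)$ is $\sim r=\eps^{\gamma_1}$, so $|\nabla w_i^t|\lesssim r^{-1}\cdot(\text{boundary value})$, and the boundary value of $w_1^t$ is $\lesssim v\lesssim$ the local height, while $w_2^t\lesssim r^2$—one gets that $\nabla(c_*\eps^{\al_1}w_1^t - \eps^{\al_2}w_2^t)$ contributes a term of size $O(\eps^{\al_1}r^{-1})\cdot|\nabla v| = O(\eps^{\al_1-\gamma_1})|\nabla v|$ to the velocity, which is $\ll|\nabla v|$ only if $\al_1>\gamma_1$—but that fails. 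The correct mechanism is rather that the \emph{positive} corrector $c_*\eps^{\al_1}w_1^t$ pushes the level set of $\bar v$ strictly \emph{outward} compared to that of $v$, so that $\Gamma_{\bar v}$ sits in a region where $v$ is already positive and growing; one uses the polynomial growth bound (Lemma~\ref{C.2.8} / Lemma~\ref{L.3.4}) to see that $|\nabla v|$ is bounded below by a positive power of the distance, which beats the error terms.

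\textbf{Main obstacle.} The hard part will be the free-boundary inequality, specifically controlling the competition between the $-\eps^{\al_2}w_2^t$ term (which could, a priori, pull the level set of $\bar v$ inward where $v$ is small and hence $|\nabla v|$ could be small) and the $+c_*\eps^{\al_1}w_1^t$ term (which pushes outward). The resolution—and this is where the subtle choice of exponents $1-\gamma_1<\al_1<1-(\beta+\iota)/2$ and $\al_2<\min\{1-\iota,\bar\gamma\}$ enters—is that on $\Gamma_{\bar v}$ one has $c_*\eps^{\al_1}w_1^t = \eps^{\al+1}v\cdot(\text{something})+\eps^{\al_2}w_2^t$, and since $w_2^t\le C\eps^{2\gamma_1}$ while $w_1^t$ enjoys the lower bound $w_1^t\gtrsim \eps^{\beta\gamma_1}$-type growth (from Lemma~\ref{C.2.8} and $\theta\ge\Theta_0>\theta_\beta$), the positive term strictly dominates provided $\al_1+\beta\gamma_1 < \al_2+2\gamma_1$, i.e. $\al_1-\al_2 < (2-\beta)\gamma_1$, which holds because $\al_1-\al_2<1-(\beta+\iota)/2 < (2-\beta)\gamma_1$ for our choice of $\iota=5\kappa$ and $\gamma_1$ close to $1$. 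Once this is established, at the touching point $|\nabla\phi(x_0,t_0)|$ is comparable to $|\nabla v|$ evaluated slightly inside $\Omega_v$, which by Lemma~\ref{L.3.4} is $\gtrsim \eps^{(1-\kappa_2)(\beta-1)}$—a quantity that dwarfs $E(\eps)=O(\eps^{1-\iota})$ and the $(1+2\eps_2)^2-1=O(\eps^{1-\iota})$ error on the quadratic term (after squaring, the surplus $|\nabla v|^2\cdot O(\eps^{1-\iota})$ must exceed the velocity defect, which requires $|\nabla v|\gg$ a small power of $\eps$; this is precisely guaranteed by the lower bound on $|\nabla v|$ away from $\Gamma_v$ by distance $\sim\eps^{1-\kappa_2}$ where $\kappa_2>\al/\bar\gamma$). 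Collecting all inequalities, one concludes $\bar v_t\le|\nabla\bar v|^2+\vec b(x-\eps e)\cdot\nabla\bar v$ on $\Gamma_{\bar v}\cap(B_{1-\eps^\kappa}\times(-1,1))$ for $\eps$ small, which is the claim.
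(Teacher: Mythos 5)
Your elliptic inequality is handled essentially as in the paper (Lemma \ref{L.2.7} with $\varphi=\sigma\varphi_\eta$, H\"older continuity of $f$ to replace $f\circ y$ by $f(x-\eps e)+O(\eps^{\bar\gamma})$, and the $-\eps^{\al_2}w_2^t$ term absorbing the $O(\eps^{1-\iota})+O(\eps^{\bar\gamma})$ errors since $\al_2<\min\{1-\iota,\bar\gamma\}$), and that part is fine. The free-boundary verification, however, has a genuine gap. First, a factual error: the zero sets of $\bar v$ and $v$ coincide ($w_1^t$ and $w_2^t$ both vanish on $\Gamma_v(t)$, and $\bar v>0$ in $\Omega_v\cap\Sigma_{r,1}$ once $c_*\eps^{\al_1}w_1^t\gg\eps^{\al_2}w_2^t$ is checked), so your ``correct mechanism'' --- that the corrector pushes $\Gamma_{\bar v}$ strictly into $\{v>0\}$, where Lemma \ref{L.3.4} would give a gradient bound --- cannot work: the contact point lies on $\Gamma_v$ itself, where Lemmas \ref{C.2.8} and \ref{L.3.4} say nothing (they are interior bounds, valid at distance at least $2\eps$ resp.\ $C\eps^{1-\kappa_2}$ from the boundary, and they apply to solutions of $-\Delta\omega=f$, not to the sup-convolution $v$). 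The mechanism you never isolate is the pointwise inequality $\bar v\ge (1+2c_1\eps^{\al_1})v$ in $\Sigma^+\cap(B_{1-\eps^\kappa}\times(-1,1))$, which follows from the boundary-Harnack comparisons $Cw_1^t+w_2^t\ge v$ and $w_2^t\le C\eps^{\gamma_1(2-\beta)}w_1^t$ (Corollary \ref{C.2.7}). It implies that a test function touching $\bar v$ from above at $(x_0,t_0)\in\Gamma_{\bar v}=\Gamma_v$ also touches $(1+2c_1\eps^{\al_1})v$, so Lemma \ref{L.2.8} yields the velocity law with coefficient $(1+C\eps^{1-\iota})^2(1+2c_1\eps^{\al_1})^{-1}\le 1-c_1\eps^{\al_1}$ on $|\nabla\phi|^2$. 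This gain of $-c_1\eps^{\al_1}|\nabla\phi|^2$ is what absorbs the $O(\eps^{1-\iota})|\nabla\phi|$ error. Your division by $(1+\eps^{\al+1})$ produces only an $\eps^{\al+1}$ gain, far too small, and your alternative of folding $w_1^t,w_2^t$ into the test function would require time-regularity of $w_i^t$ (defined by elliptic problems in the time-varying domains $\Sigma^+(t)$) that is not available.

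Second, the gradient lower bound at the contact point must come from the harmonic corrector, not from $v$: since $\eps^{\al_2}w_2^t\le v$, one has $\phi\ge\bar v\ge c_*\eps^{\al_1}w_1^t$ locally with equality at $(x_0,t_0)$, and $w_1^{t_0}$ is a genuine harmonic function in a Lipschitz domain with an interior ball at $x_0$ (inherited from the sup-convolution) and $\max_{B_{3\eps}(x_0)}w_1^{t_0}\gtrsim\eps^{\beta}$; Dahlberg's lemma then gives $|\nabla w_1^{t_0}(x_0)|\gtrsim\eps^{\beta-1}$, hence $|\nabla\phi(x_0,t_0)|\gtrsim\eps^{\beta-1+\al_1}$. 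The closing inequality $c_1\eps^{\al_1}|\nabla\phi|^2\gg\eps^{1-\iota}|\nabla\phi|$ then reduces exactly to $\beta+2\al_1+\iota<2$, i.e.\ $\al_1<1-(\beta+\iota)/2$, which is the upper constraint in \eqref{10.10}. Your stated bound $|\nabla\phi|\gtrsim\eps^{(1-\kappa_2)(\beta-1)}$, obtained by asserting comparability with $|\nabla v|$ ``slightly inside'' $\Omega_v$, is unjustified at a boundary contact point, and the exponent check you run with it is not the one that makes the argument close.
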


\begin{proof}
%We need $T$ to be small in order to have $\Phi_r(x',t)\in (-\frac12,\frac12)$ for all $t\in (-1,1)$ and $|x'|<1$. This can be easily achieved by Lemma \ref{L.3.11}.
%Let $\kappa_2:=1-2\kappa/3$ and then $\kappa_2\in (17/18,1)$.
Since $\sigma|\nabla\varphi_\eta|\leq A_2\eps^{1-\iota}$ by Lemma \ref{L.4.1}, $\kappa<1$ and $\sigma\leq \eps$,  the proof of Lemma \ref{L.2.7} yields for small $\eps$,
\beq\lb{4.21}
-\Delta v(x,t)\leq (1+A_1A_2\eps^{1-\iota})\sup_{B_{\sigma\varphi_\eta(x,t)}(x)}f(y).
\eeq
Using $ \| f\|_{C^{0,\bar\gamma}}\leq C$ and $\varphi_\eta\leq 2$, the right-hand side of the above
\beq\nonumber%\lb{10.5}
\begin{aligned}
&\leq 
(1+A_1A_2\eps^{1-\iota})(f(x-\eps e)+C\eps^{\bar{\gamma}})\\
&\leq (1+A_1A_2\eps^{1-\iota})f(x-\eps e)+C\eps^{\bar\gamma}.
\end{aligned}
\eeq
From \eqref{10.15} and the fact that $\alpha_2<\min\{1-\iota,\bar\gamma\}$, we obtain for all $\eps>0$ sufficiently small that
\begin{align*}
-\Delta\bar{v}- f(x-\eps e)&\leq- (1+\eps^{\alpha+1})\Delta v-\eps^{\al_2} - f(x-\eps e)\\
&\leq C\eps^{1-\iota}f(x-\eps e)+C\eps^{\bar{\gamma}}-\eps^{\al_2}\leq 0\qquad\text{ in }\Sigma^+(t)\cap B_1.
\end{align*}

It remains to show the appropriate free boundary condition. By Lemma \ref{L.2.6} and the choice of $\Theta_0$, for each $t\in (-1,1)$, $\Gamma_{v}(t)$ is a Lipschitz graph with Lipschitz constant less than $\cot\theta_{\beta}$ when $\eps$ is small. 
Then, using $u(-e_d,t)\geq m$, Corollary \ref{C.2.7} with $\delta:=\eps^{\gamma_1}<\eps^\kappa$ yields for some $C>0$,
\[
C\eps^{\gamma_1(2-\beta)} w_1^t\geq w_2^t\quad\text{ in }\Sigma^+(t)\cap B_{1-\eps^\kappa}.
\]
Thus we have, for $\eps$ sufficiently small,
\beq\lb{10.12}
c_*\eps^{\alpha_1} w_1^t>> \eps^{\alpha_1}w_2^t>> \eps^{\al_2}w_2^t\quad\text{ in }\Sigma^+(t)\cap B_{1-\eps^\kappa}.
\eeq
%Here we used that $\beta=\frac98$ by Corollary \ref{C.2.8}.
Next, $-\Delta v\leq 1+\|f\|_\infty$ by \eqref{4.21} for small $\eps$, the construction of $w_1^t$ and $w_2^t$, and Dahlberg's Lemma imply for some $C>1$,
\beq\lb{10.11}
Cw_1^t+w_2^t\geq  v(\cdot,t)\quad\text{ in }\Sigma^+(t)\cap B_{1-\eps^\kappa}.
\eeq
So for all $\eps$ small enough, $\al_2\geq \al_1$,
\eqref{10.15}, \eqref{10.12} and \eqref{10.11} show (for $c_1:=c_*/(4C)>0$)
\beq\lb{10.7}
\begin{aligned}
\bar{v}&\geq (1+\eps^{\alpha+1})v-(\eps^{\al_2}+c_*\eps^{\al_1}/(2C))w_2^t+c_*\eps^{\al_1}w_1^t/2+c_*\eps^{\al_1}v/(2C)\\
&\geq (1+2c_1\eps^{\al_1})v-C'\eps^{\al_1}w_2^t+c_*\eps^{\al_1}w_1^t/2\\
&\geq (1+2c_1\eps^{\al_1})v\qquad\qquad\text{ in }\Sigma^+(t)\cap B_{1-\eps^\kappa}.
\end{aligned}
\eeq

We then show that $\bar{v}$ has a linear growth near the free boundary.
For $x_0\in \Gamma_{\bar{v}}(t_0)\cap B_{1-\eps^\kappa}$ and $t_0 \leq 1$, since $x_0\in \Gamma_{\bar v}(t_0)=\Gamma_{v}(t_0)$, there exists $y_0\in \Gamma_{u}(t_0)\cap \overline{B_{\sigma\varphi_\eta(x_0,t_0)}(x_0)}$. By the definition of sup-convolution, $B_{\sigma\varphi_\eta(x_0,t_0)}(y_0)\subseteq \Omega_v(t_0)$. This means that $\Gamma_{{v}}(t_0)$ satisfies the interior ball property at $x_0$:
\[
B_{\bar{c}\eps/2}(y')\subseteq\Omega_{v}\quad\text{and}\quad x_0\in\partial B_{\bar{c}\eps/2}(y') \cap \Gamma_v(t_0)
\]
for some $y'$. Thus $\bar v\geq \frac{c_*\eps^{\alpha_1}}2 w_1^t$ (which easily holds for sufficiently small $\eps$ by Corollary \ref{C.2.7}) implies that $\bar{v}$ grows at least linearly at $(x_0,t_0)$. Moreover, we use $u(-e_d,t)>0$ and Lemma \ref{C.2.8} to obtain that 
\[
C\max_{B_{3\eps}(x_0)}w_1^{t_0}\geq \max_{B_{3\eps}(x_0)}v(\cdot,t_0)\geq {c}\,\eps^\beta
\]
for some universal ${c}>0$. 
It then follows from the interior ball property, and Dahlberg's Lemma that 
\beq\lb{10.21}
|\nabla\bar v(x_0,t_0)|\geq |\nabla w_1^{t_0}(x_0)|\geq {c}\,\eps^{\beta-1}\quad\text{ with possibly different universal ${c}>0$}.
\eeq

Now we check the viscosity subsolution property for $\bar{v}$ at the free boundary. Suppose that  a test function $\phi$  crosses $\bar v$ from above at $(x_0,t_0)$. 
The linear growth of $\bar{v}$ yields $|\nabla\phi(x_0,t_0)|\neq 0$.
Due to \eqref{10.7} and the fact that $\bar{v}(x_0,t_0)=v(x_0,t_0)=0$, $ (1+2c_1\eps^{\al_1})v-\phi$ has a local maximum at $(x_0,t_0)$ as well.
Using Lemma \ref{L.2.66}, Lemma \ref{L.2.8} and Lemma \ref{L.4.1} yields that $\phi$ satisfies
\[
\phi_t\leq (1+C\eps^{1-\iota})^2(1+2c_1\eps^{\al_1})^{-1}|\nabla \phi|^2+\vec{b}(\cdot-\eps e)\cdot\nabla \phi+C\eps^{1-\iota}|\nabla\phi|\quad\text{ at }(x_0,t_0)%\Gamma_v\cap (0,\tau)\times B_\frac12
\]
for some $C=C(A_2,\|\vec{b}\|_{C^1})$.
Since $1-\iota>\al_1$, we get for small $\eps>0$,
\beq\lb{10.9}
\phi_t\leq (1-c_1\eps^{\al_1})|\nabla \phi|^2+\vec{b}(\cdot-\eps e)\cdot\nabla \phi+C\eps^{1-\iota}|\nabla\phi|\quad\text{ at }(x_0,t_0).%\Gamma_v\cap (0,\tau)\times B_\frac12
\eeq

Next, since $\eps^{\al_2} w_2^t\leq v$ and $\bar{v}-\phi$ obtains a local maximum in $\overline{\Omega_v}\cap\{t\leq t_0\}$ at $(x_0,t_0)$, then $c_*\eps^{\al_1} w_1^t-\phi$ has a local maximum in the same domain at $(x_0,t_0)$ as well. Combining this with \eqref{10.21} shows
\[
|\nabla\phi(x_0,t_0)|\geq cc_*\eps^{\beta-1+\al_1}.% \eps^{\frac18}= c\eps^{\frac38},
\]
So $\eps^{\al_1}|\nabla\phi(x_0,t_0)|>> \eps^{1-\iota}$ since {$\beta +  2\alpha_1 +\iota <2$ by \eqref{10.10}}. 
%where we used that $\kappa_1\sim \eps^\frac14$, and $\beta\leq \frac54$ when $\Theta_0$ is small enough.
From this and \eqref{10.9}, we obtain
\[
\phi_t\leq |\nabla \phi|^2+\vec{b}(\cdot-\eps e)\cdot\nabla \phi\quad\text{ at }(x_0,t_0),
\]
and thus the subsolution property is verified.
% Clearly from the equation, we have on $\Gamma_{\tilde{u}}(t)$ that
%\beq\lb{10.8'}
%\tilde{u}_t= |\nabla \tilde{u}|^2-\vec{b}(x-j\eps e_d)\cdot\nabla \tilde{u}.
%\eeq
\end{proof}

\subsection{Flat free boundary is Lipschitz}% implies Lipschitz}

Now we can prove the following main inductive proposition.%, and from which we will obtain the main theorem. %The parameter $c_*$ will be finally selected (independent of $\eps$) in the lemma.
\medskip

\begin{proposition}\lb{L.2.12}
Under the assumptions {\rm (H-a)(H-b)} and for any fixed $\kappa\in (0,\frac{2-\beta}4)$, there exist $C>0$ and $j,\delta,\gamma_3\in (0,1)$ such that if $\eps>0$ is sufficiently small, $u$ is $(j\eps,\eps^\alpha(1-C\eps^\delta))$-monotone with respect to $W_{\theta-C\eps^{\gamma_3},-e_d}$ in $ B_{1-\eps^\kappa}\times (-1+2\eps^{4\kappa},1)$.
\end{proposition}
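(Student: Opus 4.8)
The plan is to combine the barrier $\bar v$ from Lemma~\ref{L.5.2} with the comparison principle (Lemma~\ref{L.cp}) and the interior monotonicity improvement (Lemma~\ref{L.2.4}), following the iterative scheme of \cite[Chapter~10]{CafSal} and \cite{CJK}. First, fix a direction $e\in B_1$ and a shift parameter $\sigma\in[\tfrac\eps2,\eps]$, and form the sup-convolution $v$ as in \eqref{5.1} and the adjusted barrier $\bar v(\cdot,t)=(1+\eps^{\alpha+1})v-\eps^{\al_2}w_2^t+c_*\eps^{\al_1}w_1^t$. By Lemma~\ref{L.5.2}, $\bar v$ is a viscosity subsolution of \eqref{1.1} with the shifted coefficients $f(x-\eps e),\vec b(x-\eps e)$ in $\Sigma^+\cap(B_{1-\eps^\kappa}\times(-1,1))$. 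On the other hand, the shifted solution $u(x-\eps e,t)$ is an exact solution of the same shifted problem. The key point is to check that on the parabolic boundary of the relevant sub-cylinder, $\bar v$ lies strictly below $u(\cdot-\eps e,\cdot)$: on the bottom of the strip $\partial_b\Sigma^+(t)$ this uses the defining boundary conditions of $w_1^t$ together with property (4) of $\varphi_\eta$ in Lemma~\ref{L.4.1} (so that $v\le u(\cdot-\eps e)$ there after accounting for the $(1+\eps^{\al+1})$ factor and the $w_1^t$-term scaled by $c_*\eps^{\al_1}$); near $\partial B_{1-\eps^\kappa}$ it uses \eqref{10.16} and the choice of $c_*$; on the lateral boundary $|\Phi_r(x',t)-x_d|=2r$ it uses that $\bar v=0$ there by construction of $w_1^t,w_2^t$; and at the initial time slice $t=-1+2\eps^{4\kappa}$ it uses that $\varphi_\eta\le 1$ off the designated region, property (3), so $v\le u(\cdot-\eps e)$ there.

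Once strict separation on the parabolic boundary is established, Lemma~\ref{L.cp} gives $\bar v(\cdot,t)\prec u(\cdot-\eps e,t)$ throughout $\Sigma^+\cap(B_{1-\eps^\kappa}\times(-1+2\eps^{4\kappa},1))$. Unwinding the definition of sup-convolution, $\bar v(x,t)\ge(1+2c_1\eps^{\al_1})v(x,t)\ge(1+2c_1\eps^{\al_1})\sup_{B_{\sigma\varphi_\eta(x,t)}(x)}u(y,t)$ by \eqref{10.7}, and in the region where property (4) of Lemma~\ref{L.4.1} gives $\varphi_\eta\ge1+\eta(1-A_2\eps^{\gamma_2})$ we get a genuine dilation of the ball of integration. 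Taking $\eta$ of order $\eps^{\gamma_2}$ (or optimizing over $\eta$), this translates the comparison $\bar v\prec u(\cdot-\eps e,\cdot)$ into the statement that $u$ is monotone at scale $j\eps$ with an enlarged cone $W_{\theta-C\eps^{\gamma_3},-e_d}$: the gain in the opening angle $C\eps^{\gamma_3}$ comes from the $\eta$-enlargement of the sup-convolution radius in the cone direction, exactly as in \cite[Lemma~10.9]{CafSal}, while the gain in scale from $\eps$ down to $j\eps$ comes from iterating over the continuum of shift parameters $\sigma\in[\tfrac\eps2,\eps]$ and directions $e$. Finally, to recover the growth factor $(1+\eps^\alpha(1-C\eps^\delta))$ in the $(\e,a)$-monotonicity and to push the monotone scale below $j\eps$ in the interior, we invoke Lemma~\ref{L.2.4} and the third item of Remark~\ref{R.3.2}, using the polynomial growth lower bound from Lemma~\ref{C.2.8} (valid since $\theta\ge\Theta_0>\theta_\beta$) to dominate the error term $C\eps^{1+\bar\gamma-\kappa_1}\|f\|_{C^{0,\bar\gamma}}$ by $\eps^\alpha\omega$; the exponent constraints in \eqref{10.13}, \eqref{10.14}, \eqref{10.10} are exactly what make all these competing powers of $\eps$ line up.

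The main obstacle I expect is the boundary comparison on $\partial_b\Sigma^+(t)$ and the bookkeeping of the three competing correction terms ($\eps^{\alpha+1}v$, $\eps^{\al_2}w_2^t$, $c_*\eps^{\al_1}w_1^t$) near that bottom boundary and near $\partial B_{1-\eps^\kappa}$: one must verify that the superharmonic defect $\eps^{\al_2}w_2^t$ introduced to absorb the oscillation of $f$ is always beaten by the harmonic gain $c_*\eps^{\al_1}w_1^t$ (this is \eqref{10.12}, which relies on Corollary~\ref{C.2.7} and hence on $c_g\le\cot\theta_\beta$, i.e. on $\theta>\Theta_0$), while simultaneously $c_*\eps^{\al_1}w_1^t$ is small enough not to destroy the ordering against $u(\cdot-\eps e)$ on the bottom boundary. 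This forces the delicate two-sided constraint $1-\gamma_1<\al_1<1-(\beta+\iota)/2$ and $\al_2<\min\{1-\iota,\bar\gamma\}$, together with $\beta+2\al_1+\iota<2$ needed in Lemma~\ref{L.5.2} for the free-boundary velocity estimate; checking that the region where property (4) of Lemma~\ref{L.4.1} holds still contains a full sub-cylinder after removing the $\eps^\kappa$-collar and the $\eps^{4\kappa}$-initial-layer is the other fiddly point. Everything else is a routine adaptation of the Caffarelli--Salsa iteration, so I would present those steps briskly and concentrate the writing on the barrier-ordering verification.
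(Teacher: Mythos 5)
Your overall architecture matches the paper's: sup-convolve $u$ with the radius function $\sigma\varphi_\eta$, perturb by $\pm\eps^{\al_2}w_2^t$ and $c_*\eps^{\al_1}w_1^t$, order $\bar v$ against a shifted copy of $u$ on the parabolic boundary of the strip, invoke the comparison principle, and finish in the interior with Lemma \ref{L.2.4} and Remark \ref{R.3.2}. However, your description of the mechanism contains concrete errors. First, no iteration over shift directions $e$ or over $\sigma\in[\eps/2,\eps]$ takes place: the proof fixes a \emph{single} shift $e=je_d$ and a \emph{single} $\sigma=\eps(\sin\theta-(1-j))$, and the cone monotonicity at scale $j\eps$ is read off directly from the chain $(1+\eps^{\al+1})\sup_{B_{\eps(j\sin\theta-C\eps^{\gamma_3})}(x)}u\le\bar v(x,t)\le u(x-j\eps e_d,t)$ — the sup over the enlarged ball against the one shifted function already encodes all cone directions. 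Second, $\eta$ is \emph{not} of order $\eps^{\gamma_2}$; it is an $O(1)$ quantity fixed by $(1+\eta)(\sin\theta-(1-j))=j\sin\theta-\eps^{\gamma_3}$, and this $O(1)$ enlargement is precisely what converts the $\eps$-scale hypothesis into the $j\eps$-scale conclusion. Third, the opening angle is \emph{lost}, not gained: the cone shrinks to $W_{\theta-C\eps^{\gamma_3},-e_d}$, and the whole point of $\gamma_3>0$ is that these losses are summable over the iteration so the angle stays above $\Theta_0$. With $\eta=O(\eps^{\gamma_2})$ and an expectation of an angle gain, your scheme would not close.

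The step you correctly flag as the main obstacle — showing that $c_*\eps^{\al_1}w_1^t$ does not destroy the ordering on $\partial_b\Sigma^+(t)$ — is also where your sketch is missing the key ingredient. The ordering there is not obtained from the boundary data of $w_1^t$ and property (4) of $\varphi_\eta$ alone: one must produce a \emph{strict multiplicative gain} $v\le(1-C\eps^{\gamma_3+1-\gamma_1})\bar u$ on the bottom boundary to absorb the factor $(1+C\eps^{\al_1})$ coming from the $w_1^t$-term. The paper gets this by noting that the enlarged radius $(1+\eta)\sigma=\eps(j\sin\theta-\eps^{\gamma_3})$ falls short of $j\eps\sin\theta$ by $\eps^{1+\gamma_3}$, and then converting that radius deficit into a value deficit via the gradient lower bound $C|\nabla u|\,d(x,\Gamma_u)\ge u$ of Lemma \ref{L.3.4} (which in turn rests on Lemma \ref{C.2.8} and Proposition \ref{L.2.61}); the requirement $\gamma_3+1-\gamma_1<\al_1$, i.e.\ the choice $\gamma_3=\min\{(\al_1+\gamma_1-1)/2,\gamma_2\}$, is exactly what makes this gain dominate the perturbation. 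Without invoking Lemma \ref{L.3.4} at this point, the bottom-boundary comparison does not go through.
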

\begin{proof}

First we choose $\sigma$ and $\eta$ in the definition of the sup-convolution in \eqref{5.1}.
Since $\theta>\Theta_0\geq\pi/4$, we can take $j\in (0,1)$ so that
\[
\sigma:=\eps(\sin\theta-(1-j))\in ({\eps}/{2},\eps).
\]
 Define
\begin{equation}\label{gamma_3}
\gamma_3:=\min\left\{\frac{\al_1+\gamma_1-1}{2},\gamma_2\right\}.
\end{equation} 
Observe that $\gamma_3\in (0,1)$ by \eqref{10.10} and $\gamma_3+1-\gamma_1\in(0,\al_1)$. Choose $\eta=\eta_\eps>0$ such that \beq\lb{10.6}
\left(1+\eta\right)(\sin\theta-(1-j))= j\sin\theta-\eps^{\gamma_3}.
\eeq

By taking $\eps>0$ to be small enough, we have
\[
\eta\in \left(\frac{j\sin\theta-(\sin\theta-(1-j))}{2(\sin\theta-(1-j))},\frac{j\sin\theta-(\sin\theta-(1-j))}{\sin\theta-(1-j)}\right).
\]
It follows from Lemma \ref{L.4.1} (4), \eqref{gamma_3} and \eqref{10.6} that $\varphi_{\eta}$ in \eqref{5.1} then satisfies 
\beq\lb{10.2}
\sigma\varphi_\eta\geq \sigma(1+\eta(1-A_2\eps^{\gamma_2}))\geq \eps(j\sin\theta-C\eps^{\gamma_3})\, \text{ in }\Sigma_{r,1}\cap\{t>-1+2\eps^{4\kappa},\,d(x,\partial B_1)>\eps^\kappa\}
\eeq
for some $C=C(A_2)\geq1$, and by Lemma \ref{L.4.1} (3),
\beq\lb{10.3}
\varphi_\eta\leq 1\quad\text{ in }\Sigma_{r,1}\cap \{ t<-1+\eps^{4\kappa},\,d(x,\partial B_1)<\eps^\kappa/2\}.
\eeq

With above choice of $\sigma$ and $\eta$, we claim that
\beq\lb{10.5}
\bar{v}\leq \bar{u}:=u(x-j\eps e_d,t)\quad \text{ in } ( B_{1-\eps^{\kappa}}\times (-1,1))\cap \Sigma^+.
\eeq
Before showing this claim, we first discuss its consequence. 
%Without loss of generality, let us assume that $\kappa\leq \frac1{16}$. 
It follows from \eqref{10.12} and \eqref{10.2} that for $(x,t)\in  (B_{1-\eps^\kappa}\times (-1+2\eps^{4\kappa},1))\cap \Sigma^+$,
\[
(1+\eps^{\alpha+1})\sup_{B_{ \eps(j\sin\theta-C\eps^{\gamma_3})}(x)}u(y,t)\leq \bar{v}(x,t)\leq u(x-j\eps e_d,t)
\]
which yields the conclusion for those $(x,t)$. Next for $(x,t)\in ( B_{1-\eps^\kappa}\times (-1+2\eps^{4\kappa},1))\backslash \Sigma^+$, we have $d(x,\Gamma_u(t))\geq \eps^{\gamma_1}$, and thus Lemma \ref{C.2.8} and the $(\eps,\eps^{\alpha})$-monotonicity yield that $u(x,t)\geq c\,\eps^{\gamma_1\beta}$.

Recall the choice of the parameters: $\beta\in (1,1+\bar\gamma)$, $\kappa=\frac{2-\beta}8$ and \eqref{10.14}.  Therefore by taking $\kappa_1:=\frac{1+\bar\gamma-\gamma_1\beta}{
4}\in(0,\frac12)$ and using \eqref{10.13}, we have $1-\kappa_1 > \kappa$ and $\gamma_1\beta+\alpha<1+\bar\gamma-2\kappa_1$. Then for small $\eps$,
\[
\eps^{\alpha} u(x,t)\geq c\,\eps^{\gamma_1\beta+\alpha}>>\eps^{1+\bar\gamma-2\kappa_1}\quad\text{ in $(B_{1-\eps^\kappa}\times (-1+2\eps^{4\kappa},1))\cap \Sigma^+$},
\]
and so the third remark of Lemma \ref{L.2.4} concludes the proof of Proposition~\ref{L.2.12}  with $\delta:=\kappa_1$.
 
\medskip

It remains to prove \eqref{10.5}. To do this, we claim that it suffices to show that for each $t\in (-1,1)$, 
\beq\lb{10.100}
\bar{v}(\cdot,t)\leq  \bar{u}(\cdot,t) \hbox{ on } (\partial B_{1-\eps^\kappa}\cap \Sigma^+(t))\cup (\partial_b\Sigma^+(t)\cap B_{1-\eps^\kappa}).
\eeq
Indeed, by \eqref{10.3} and (H-a), when $t<-1+\eps^{4\kappa}$, we have $\Sigma^+(t)=(\Omega_{\bar{v}}(t)\cap \Sigma_{r,1}(t))\subseteq \{\bar{u}(\cdot,t)>0\}$. Then \eqref{10.100} and the comparison principle for Laplacian yield $\bar{v}\leq  \bar{u}$ in $\Sigma_{r,1}\cap  B_{1-\eps^\kappa}\times \{t<-T+\eps^{4\kappa}\}$. This and \eqref{10.100} show that $\bar{v}$ and $\bar{u}$ are ordered on the parabolic boundary of $\Sigma_{r,1}$. 
In view of Lemma \ref{L.5.2} with $e:=je_d$, and the equation that $\bar u$ satisfies, we want to apply the comparison principle to conclude with \eqref{10.5}.
To do this rigorously, we replace $\theta$ by a slightly smaller $\theta'$ at the beginning of the proof, the supports of $\bar v$ and $\bar{u}$ are then separated (because if $z\in \Gamma_{\bar{v}}$, then the definitions of $v$ and $\bar{v}$, and (H-a) yield $z\in\Omega_{\bar v}$). The strict order of $\bar{v}$ and $\bar{u}$ in one of their support follows easily from the proof below.
Then the comparison principle Lemma \ref{L.cp} can now yield \eqref{10.5} after passing $\theta'\to\theta$.% the conclusion. To justify the use of the comparison principle, 

\medskip

Now we show \eqref{10.100}. For any $t\in(-1,1)$ and $x\in \partial_b\Sigma^+(t)\cap B_{1-\eps^\kappa/2}$, since $x$ is at least $\eps^{\gamma_1}$-away from $\Gamma_v(t)$, Lemma \ref{l.2.2} and Lemma \ref{C.2.8} yield that $ \inf_{y\in B_{\eps}(x)}u(y,t)\approx u(x,t)\geq c\,\eps^{\gamma_1\beta}$. Also note that by \eqref{10.13} and \eqref{10.14} (when $f$ is not a constant), there exists $\kappa_2$ such that $\frac\alpha{\bar{\gamma}}<\kappa_2<\min\{\frac12,1-\gamma_1\}$.
Thus by Lemma \ref{L.3.4} and \eqref{10.6},
we have
\begin{align*}
v(x,t)&\leq\sup_{B_{(1+\eta)\sigma}(x)}u(y,t)\leq \sup_{B_{j\eps\sin\theta}(x)}u(y,t)-(j\eps\sin\theta-(1+\eta)\sigma)\inf_{y\in B_{j\eps\sin\theta}(x)}|\nabla u(y,t)|\\
&\leq \sup_{B_{j\eps\sin\theta}(x)}u(y,t)-C\eps^{\gamma_3+1-\gamma_1}\inf_{y\in B_{j\eps\sin\theta}(x)} u(y,t)\leq (1-C\eps^{\gamma_3+1-\gamma_1})\bar{u}(x,t).
\end{align*}
The last inequality is due to the full monotonicity of $u$ in the interior (the second remark after Lemma \ref{L.2.4}) since $\gamma_1\beta+\alpha<1+\bar{\gamma}-\kappa_1$.
Next using $w_t(x) \leq v(x,t)$, it follows that
\beq\lb{10.17}
\begin{aligned}
\bar{v}(x,t)& \leq (1+\eps^{\alpha+1}+c_*\eps^{\al_1})v(x,t)\\
&\leq (1+C\eps^{\al_1})(1-C\eps^{\gamma_3+1-\gamma_1})\bar{u}(x,t)\leq \bar{u}(x,t)
\end{aligned}
\eeq
when $\eps$ is sufficiently small.

%Clearly from the definition we have $w_1^t(x)\leq Cu(x,t)$. So by taking $\eps$ to be small enough, we get $\bar{v}(x,t)\leq \bar{u}(x,t)$.

Now we consider $(x,t)\in (\partial B_{1-\eps^\kappa}\times (-1,1))\cap \Sigma^+$. We define the following region that contains $x$:
\[
 \tilde\partial_l\Sigma^+(t):= (B_{1}\backslash B_{1-2\eps^\kappa})\cap\Sigma^+(t).
\]
%Then $x\in \tilde\partial_l\Sigma^+(t)$.
The construction of $\varphi_\eta$ yields that $\varphi_\eta(\cdot,t)\leq 1$ in $\tilde\partial_l\Sigma^+(t)$. 
Since $B_{\sigma}(x+j\eps e_d)\subseteq B_{\eps\sin\theta}(x+\eps e_d)$ by the definition of $\sigma$, the $(\eps,\eps^\al)$-monotonicity assumption yields that
\beq\lb{10.4}
\bar{u}(\cdot,t)\geq (1+\eps^{\alpha+1})\sup_{B_\sigma(\cdot)}u(y,t)\geq (1+\eps^{\alpha+1})v(\cdot,t)\quad \text{ on }\tilde\partial_l\Sigma^+(t).%\Sigma^+(t)\cup \Gamma_{v}\cap (B_1-B_{15/16})
\eeq
 Due to Lemma \ref{L.5.2} and $\Delta w_1^t=0$, $\bar{u}-(1+\eps^{\alpha+1})v+\eps^{\al_2} w_2^t\geq 0$ is superharmonic. Note that \eqref{10.17} implies 
\[
\eps^{\al_1}w_1^t= \eps^{\al_1}v \leq \bar{u}-(1+\eps^{\alpha+1})v+\eps^{\al_2} w_2^t
\]
on $\partial_b\Sigma^+(t)\cap (B_{1-\eps^\kappa/2}\backslash B_{1-2\eps^\kappa}) $.
Therefore, the choice of $c_*$ and \eqref{10.16} yield
\[
c_*\eps^{\al_1} w_1^t(x)\leq \bar{u}(x,t)-(1+\eps^{\alpha+1})v(x,t)+\eps^{\al_2} w_2^t(x).
\]
We obtain $\bar{v}(x,t)\leq \bar{u}(x,t)$ again.
Overall, we showed \eqref{10.100} which implies \eqref{10.5} and finishes the proof.

\end{proof}

{\bf Proof of of Theorem \ref{T.2.1}:}
Let us fix $(x_0,t_0)\in \Gamma_u$, which we may assume to be the origin. 
Applying Lemma \ref{L.3.11} with some $r>0$, the free boundary at any time $t\in (-T,T)$ is contained in a $(r+CT/r)$-neighborhood of a $\cot\theta$-Lipschitz graph. Thus it can not move too far away from $t=0$ when $r$ and then $T$ are sufficiently small.
Then by the assumption, after rescaling and rotating, we can assume that the conditions of Proposition \ref{L.2.12} hold. %\textcolor{}{i think this needs more explanation, the not moving too fast part.} 
%we can assume that $u(t,\mu)\geq a>0$ for some $a>0$ and all $t\in (-1,1)$.Since $u\geq 0$ is continuous, we can assume that $u$ is uniformly bounded, and $\inf_{t\in (-2,2)}\sup_{B_2}u(x,t)>0$. In view of Lemma \ref{L.3.11}, the free boundary is contained in a Lipschitz continuous domain $\Sigma\subseteq \calQ_2$ such that each time slice $\Sigma_t$ is a $2\eps$-neighbourhood of a Lipschitz graph. Thus, after re-scaling, we can easily place ourselves in the framework of Lemma \ref{L.2.12} i.e. assuming that the conditions of Lemma \ref{L.2.12} hold for some $T,\eps>0$ and $\theta>\Theta_0$.

Iterating Proposition \ref{L.2.12}, we generate a sequence of domains
\[
\calQ^k:= B_{R_k}\times (-T_k,1)
\]
where $T_k=1-2\Sigma_{n=1}^k (j^n\eps)^{4\kappa}$, $R_k:=1-\Sigma_{n=1}^k (j^n\eps)^{\kappa}$, in which $u$ is $(j^k\eps,\alpha_k)$-monotone with respect to the cone $W_{\theta_k,-e_d}$
where
\[
\theta_k:=\theta-C\Sigma_{n=1}^k (j^n\eps)^{\gamma_3},\quad \alpha_k=\eps^{\alpha}(1-C\Sigma_{n=1}^k (j^n\eps)^\delta).
\]
We claim that for each iteration, the constants $C,j,\gamma_3,\delta$ can be chosen the same. Indeed, by taking $\eps$ to be further small enough and $\theta>\Theta_0$, we have for all $k\geq 1$,
\[
T_k\geq 1/2,\quad R_k\geq 1/2, \quad \theta_k\geq \Theta_0,\quad \alpha_k\geq {\eps^\alpha}/{2}.
\]
The claim follows from the proof of Proposition \ref{L.2.12}.

Finally we obtain that $u$ is monotone non-decreasing in all directions of $W_{\Theta_0,-e_d}$ in $ B_{1/2}\times (-\frac12,1)$.
The last statement of the theorem follows from Corollary \ref{C.6.5} below.
The proof is then completed.

\hfill$\Box$

\subsection{$C^{1,\gamma}$ free boundary when $\vec{b}\equiv0$.}\lb{subs6.5}

In the zero-drift case, the support of $u$ increase over time. Using this fact, it is well-known that we can obtain an obstacle problem by integrating $u$ over time (for instance see \cite{EJ} for the classical setting). We will utilize this fact to derive further regularity result. 
\begin{corollary}\lb{C.6.5}
For $f \geq 0$ and $\vec{b}\equiv 0$, let $u$ be a viscosity solution to \eqref{1.1} in $\bbR^d\times (-2,2)$ with bounded support. Suppose the assumptions of Theorem \ref{T.2.1} hold in $\calQ_2$. Then there exists $0<\gamma<1$ such that  $\Gamma_u(t)$ is $C^{1,\gamma}$ in $B_1$ for each $t\in (-1,1)$.  
\end{corollary}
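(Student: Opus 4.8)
The plan is to reduce the parabolic free boundary problem to a stationary obstacle problem by integrating in time, and then to invoke the classical $C^{1,\gamma}$ regularity theory for obstacle problems together with the two structural facts that Theorem \ref{T.2.1} and Proposition \ref{L.3.3} provide: Lipschitz regularity of the free boundary and non-degeneracy (here furnished by the strict expansion along streamlines, which when $\vec b\equiv 0$ just means $\Omega_u(t)$ is monotone increasing in $t$). First I would fix $t_0\in(-1,1)$ and define $U(x):=\int_{-2}^{t_0} u(x,s)\,ds$. Because $\vec b\equiv 0$, Lemma \ref{L.2.9} shows $\Omega_u(s)$ is nondecreasing in $s$, so $\{U>0\}=\Omega_u(t_0)$ and the free boundary of $U$ coincides with $\Gamma_u(t_0)$. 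On $\{U>0\}$ one has $-\Delta U = \int_{-2}^{t_0} f\,ds + (u(x,-2^+)\text{-type boundary term})$; more precisely, integrating $-\Delta u = f$ and using that for $x$ deep inside $\Omega_u(t_0)$ the function $u(x,s)$ is positive on the whole interval, one gets $-\Delta U(x) = (t_0+2) f(x)$ in the interior, while near $\Gamma_u(t_0)$ one must be careful about the time $\tau(x)$ at which $x$ enters $\Omega_u$. The cleanest route is the standard one (as in \cite{EJ} for the classical Hele--Shaw case): show that $U$ solves, in the viscosity/distributional sense, an obstacle-type problem $-\Delta U = g\,\chi_{\{U>0\}}$ with $g$ a nonnegative Hölder function comparable to $f$, together with $U\ge 0$, $\nabla U=0$ on $\Gamma_u(t_0)$ — the last (the gradient vanishing, i.e. $C^{1,1}$-matching across the free boundary) being exactly where the velocity law $u_t=|\nabla u|^2$ enters, since it forces $u(\cdot,s)$ to vanish to first order near a point that has just been uncovered, so the time integral vanishes to second order and $|\nabla U|\to 0$ at $\Gamma_u(t_0)$.

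Next I would record the two inputs from the body of the paper. From Theorem \ref{T.2.1}, under the stated hypotheses $u(\cdot,t_0)$ is cone-monotone in $\calQ_1$, hence $\Gamma_u(t_0)\cap B_1$ is a Lipschitz graph; consequently $\{U>0\}\cap B_1$ is a Lipschitz domain and $U$ is, in particular, a Lipschitz-domain solution of the obstacle problem with Hölder right-hand side. From Proposition \ref{L.3.3} (whose hypotheses hold here because Theorem \ref{T.2.1} gives cone monotonicity with $\theta>\theta_\beta$ and the nondegeneracy-type lower bound $\inf_t u(-e_d,t)>0$ is part of (H-b)), the set $\Omega_u$ expands strictly along streamlines with rate $r_t\ge c\,t^{1/(2-\beta)}$; with $\vec b\equiv 0$ this says that a free boundary point $x_0$ of $\Omega_u(t_0-t)$ is engulfed into the $c\,t^{1/(2-\beta)}$-interior of $\Omega_u(t_0)$, which combined with the growth lower bound of Lemma \ref{L.2.10} (giving $u(\cdot,t_0)\ge c\,d(\cdot,\Gamma_u(t_0))^\beta$ up to scale) yields a genuine lower bound $U(x)\ge c\,d(x,\Gamma_u(t_0))^2$ in a neighborhood of the free boundary — i.e. the quadratic non-degeneracy of the obstacle solution $U$.

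With $U\ge 0$ solving $-\Delta U = g\,\chi_{\{U>0\}}$, $g\in C^{0,\bar\gamma}$ nonnegative and bounded below near $\Gamma_u(t_0)$, with quadratic growth from below and a Lipschitz free boundary, I would then apply the classical regularity theory for the obstacle problem: $U\in C^{1,1}_{\mathrm{loc}}$ near $\Gamma_u(t_0)$ (Caffarelli's $C^{1,1}$ estimate, valid for Hölder data bounded away from zero), and then the Caffarelli--Salsa regularity of the free boundary (\cite{caff89}, \cite{ACS}, \cite{CafSal}) upgrades a Lipschitz free boundary with non-degenerate, $C^{1,1}$ solution and Hölder source into a $C^{1,\gamma}$ free boundary, for some $\gamma=\gamma(d,\bar\gamma)\in(0,1)$; alternatively one quotes the ``Lipschitz $+$ non-degeneracy $\Rightarrow$ $C^{1,\gamma}$'' statement directly. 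Since $t_0\in(-1,1)$ was arbitrary and all constants are uniform in $t_0$ (the hypotheses of Theorem \ref{T.2.1} and Proposition \ref{L.3.3} are uniform in time on $\calQ_1$), this gives $\Gamma_u(t)\in C^{1,\gamma}$ in $B_1$ for every $t\in(-1,1)$. The main obstacle is the first step: verifying rigorously that the time integral $U$ really is a solution of a bona fide obstacle problem — that the right-hand side is $g\,\chi_{\{U>0\}}$ with no extra singular measure on $\Gamma_u(t_0)$, and that $\nabla U$ vanishes continuously across $\Gamma_u(t_0)$ — which is where the free boundary velocity law and the expansion estimate of Proposition \ref{L.3.3} must be combined carefully; the invocation of the classical obstacle-problem regularity afterwards is then routine.
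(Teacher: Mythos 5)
Your overall strategy is the same as the paper's (integrate in time à la Elliott--Janovsk\'y to get an obstacle problem, then quote obstacle-problem regularity), and you correctly identify the two inputs from the body of the paper. But the key step --- the identification of the equation satisfied by $U(x)=\int_{-2}^{t}u(x,s)\,ds$ --- is wrong in a way that breaks the argument. Integrating the weak (divergence-form) formulation $(\chi_{\{u>0\}})_t-\Delta u=f\chi_{\{u>0\}}$ in time, the term $(\chi_{\{u>0\}})_t$ integrates to the \emph{jump} $\chi_{\{u(\cdot,t)>0\}}-\chi_{\{u(\cdot,-2)>0\}}$, which equals $1$ on the newly uncovered region near $\Gamma_u(t)$. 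The resulting equation there is $\Delta U=\bigl[1-(t-T(x))f(x)\bigr]\chi_{\{U>0\}}$ with $T$ the hitting time --- \emph{not} $-\Delta U=g\chi_{\{U>0\}}$ with $g$ comparable to $f$. The ``$1$'' coming from the motion of the free boundary is the entire source of the obstacle structure and of the quadratic non-degeneracy of $U$; with your right-hand side the problem degenerates precisely when $f$ is small (for $f\equiv 0$, the classical Hele--Shaw case, your equation would read $\Delta U=0$ and no free boundary regularity could follow). Note also the sign: near $\Gamma_u(t)$ the function $U$ is \emph{sub}harmonic, even though $u(\cdot,s)$ is superharmonic; your interior computation $-\Delta U=(t_0+2)f$ is only valid on the set already covered at time $-2$, away from the relevant free boundary. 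A related, smaller issue is that passing from the viscosity solution to this distributional identity is not free; the paper invokes the coincidence of viscosity and weak solutions for globally defined solutions with bounded support (citing \cite{kimsingular} and \cite{David_S}), which is why the corollary is stated for global solutions.

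Once the equation is set up correctly, the role of Proposition \ref{L.3.3} is also different from what you propose: it is not needed to prove non-degeneracy of $U$ (that is automatic for an obstacle problem whose coefficient $1-F$ is bounded below by a positive constant near $\Gamma_u(t)$, since $F=(t-T(x))f(x)$ vanishes there). Rather, the expansion rate $r_t\ge c\,t^{1/(2-\beta)}$ is what gives H\"older continuity of the hitting time $T$, hence H\"older continuity of the coefficient $1-F$, which is exactly the regularity of the right-hand side required to apply Blank's theorem (\cite[Theorem 7.1]{blank2001sharp}) at a non-cusp (here Lipschitz, by Theorem \ref{T.2.1}) free boundary point and conclude $C^{1,\gamma}$. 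Your hand-derived bound $U\ge c\,d(\cdot,\Gamma_u(t))^2$ from Lemma \ref{L.2.10} plus the expansion rate does not come out with the claimed exponent (the natural chaining of $u\gtrsim d(\cdot,\Gamma_u(s))^\beta$ with the expansion rate produces powers like $(t-T(x))^{2/(2-\beta)}$ and a lower bound on $t-T(x)$ of order $\delta^{2-\beta}$ at best), and in any case it is not the input the obstacle theory needs.
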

\begin{proof}
Since $\vec{b}\equiv 0$, $\Omega_u$ is non-decreasing in time.  and define   $w(x):=\int_{-2}^{t}u(x,s)ds.$  Since the positive set of $u$ expands over time, we have $\Omega_w(t) = \Omega_u(t)$ for each $t> -2$, so it suffices to show that $\Omega_w(t)$ is $C^{1,\gamma}$ in $B_1$ and for each $t\in (-1,1)$. 

\medskip

Since our solution is coming from a globally defined solution, it follows from \cite[Theorem 1.1]{kimsingular} and \cite{David_S} that the viscosity solution coincides with the weak solution of the divergence form equation
$$
(\chi_{\{u>0\}})_t - \Delta u = f \chi_{\{u>0\}} \hbox{ in } \R^d\times (-2,2).
$$

From this weak formulation one can then check that  $w(\cdot,t)$  solves the obstacle problem:
\[
 [1-F(x,t)]\chi_{\{w>0\}} - \Delta w =0 \hbox { in } \R^d. 
\]
for each $t >-2$, where $F(x,t):= (t-T(x))f(x)$ and the {\it hitting time} $T: \R^d \to [0,\infty]$ is given by 
\[
T(x):=\inf\left\{t\geq -2 \,:\, u(x,t)>0\right\}.
\]

Theorem \ref{T.2.1} and Proposition \ref{L.3.3} yields that $T(x)$ is H\"{o}lder continuous in $B_1$ near $\Gamma_w(t)$, and thus so is $F(x,t)$. Since we already know that the free boundary of $w$ has no cusp singularity, we can conclude from \cite[Theorem 7.1]{blank2001sharp} that 
$\Gamma_w(t) \cap B_1$ is  $C^{1,\gamma}$ for each $t\in (-1,1)$ for some $\gamma$. 
\end{proof}

\begin{remark}
We expect the corollary to hold for local solutions $u$ in $\calQ_2$ in general, but the corresponding proof requires coincidence of the notions used in weak and viscosity sense in bounded domains with fixed boundary data. We do not pursue it here.
\end{remark}

\subsection{Strict expansion along the streamline} 

We finish the section by establishing a uniform, yet sublinear, rate of expansion for the positive set $\Omega_u$ along the streamline (so for general Lipschitz $\vec{b}$).

%\medskip

%\textcolor{}{need polishing, i can do it}  

\begin{definition}\lb{D.3.2}
We say that the set $\Omega_u$ is {\it strictly expanding relatively to the streamlines} in $\calQ_r$, if for all small $t>0$ there exists $r_t>0$ such that for any $(x_0,t_0)\in \Gamma_u\cap\calQ_r$ we have
\[
B_{r_t}(X(t;x_0))\cap\calQ_r\subseteq \Omega_u(t_0+t).
\]
\end{definition}

Note that this property is stronger than the conclusion of Lemma \ref{L.2.9}, but is weaker than non-degeneracy.
 
\medskip

If the free boundary is Lipschitz continuous, we can quantify the amount of expansion of the free boundary relatively to streamlines based on Lemma \ref{L.2.10}.

\begin{proposition}\lb{L.3.3}
%\textcolor{}{Let $1<\beta<2$ with $\theta\in (\theta_{\beta}, \pi/2)$}. 
Suppose that in $\calQ_1$, $u(\cdot,t)$ is non-decreasing with respect to $W_{\theta,-e_d}$ for some $\theta \in (\theta_{\beta}, \frac\pi2)$ and $\beta\in (1,2)$, and 
\beq\lb{6.90}
(0,0) \in \Gamma_u\quad\text{and}\quad \inf_{t\in (-1,1)}u(-e_d,t)>0.
\eeq
Then $\Omega_u$ expands strictly relatively to the streamlines in $\calQ_{1/2}$ with $r_t = ct^{1/(2-\beta)}$ for some $c>0$.

%Moreover, there exists $c>0$ such that for all $t\in (0,T/2)$ we have
%\[
%B_{c \, t^{1/(2-\beta)}}(X(t;x_0))\cap (B_{1/2}\times (-T/2,T/2))\subseteq \Omega_u(t_0+t)
%\]
%for any $(x_0,t_0)\in \Gamma_u$.% with $t_0+t<T$.
\end{proposition}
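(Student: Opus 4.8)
The plan is to turn the polynomial growth bound of Lemma~\ref{L.2.10} into a quantitative rate of expansion by inserting, near each boundary point, an explicit ``corner–rounding'' subsolution of \eqref{1.1} and comparing it against $u$. \textbf{First}, applying Lemma~\ref{C.2.8} at each time slice $t\in(-\tfrac12,\tfrac12)$ and letting the monotonicity scale tend to $0$ (permissible since $u(\cdot,t)$ is \emph{fully} cone–monotone), with the hypothesis $\inf_t u(-e_d,t)>0$ serving as the normalization $\omega(\mu)\ge 1$ and the requirement ``$0\in\Gamma_\omega$'' replaced by the cone inclusion $\Omega_u(t)\supseteq z+W^\circ_{\theta,-e_d}$ valid for every $z\in\Omega_u(t)\cap B_1$, one obtains $c_0=c_0(d,\theta,m)>0$ with
\[
u(x,t)\ \ge\ c_0\, d(x,\Gamma_u(t))^{\beta}\qquad\text{for } x\in B_{1/2}\cap\Omega_u(t),\ \ t\in(-\tfrac12,\tfrac12).
\]
\textbf{Second}, observe the reduction: since $u(\cdot,\tau)$ is cone–monotone with opening $\theta$, once we know a single point $q\in\Omega_u(t_0+t)$ with $X(t;x_0)\in q+W^\circ_{\theta,-e_d}$ and $\mathrm{dist}\big(X(t;x_0),\,\partial(q+W_{\theta,-e_d})\big)\ge r_t$, we get $B_{r_t}(X(t;x_0))\subseteq\Omega_u(t_0+t)$. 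Thus it suffices to show $B_{\rho(t)}\big(X(t;x_0)\big)\subseteq\Omega_u(t_0+t)$ for some $\rho(t)\gtrsim t^{1/(2-\beta)}$, uniformly over $(x_0,t_0)\in\Gamma_u\cap\calQ_{1/2}$ and all small $t>0$.

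\textbf{The barrier.} Fix $(x_0,t_0)\in\Gamma_u\cap\calQ_{1/2}$, write $\xi(\tau):=X(\tau;x_0)$ (the streamline of $-\vec b$ through $x_0$), and pick a thin cone angle $\theta'<\theta_{\beta}$, so that the exponent from Lemma~\ref{L.3.1} obeys $\beta_{\theta'}>\beta$; let $h(y)=c|y|^{\beta_{\theta'}}\varphi'(\vartheta)$ be the associated cone–harmonic profile on $W_{\theta',-e_d}$, with $\varphi'$ the first Dirichlet eigenfunction on $\Sigma_{\theta'}$ extended smoothly (and with sign) to $\bbS^{d-1}$. For a small universal $a>0$ and a nondecreasing $\lambda(\tau)\ge0$, $\lambda(0)=0$, set
\[
\Psi(x,\tau)\ :=\ a\Big(h\big(x-\xi(\tau)\big)+\lambda(\tau)\,|x-\xi(\tau)|^{\beta}\Big)_{+}.
\]
Since $-\Delta(|y|^{\beta})=\beta(\beta+d-2)|y|^{\beta-2}>0$ and $h$ is harmonic, $-\Delta\Psi\le 0\le f$ in $\Omega_\Psi$; and because $\beta<2$ no smooth function can touch $\Psi$ from above at $\xi(\tau)$, so the tip is harmless. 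The positive set $\Omega_\Psi(\tau)$ is the cone $\xi(\tau)+W^\circ_{\theta',-e_d}$ with its tip fattened to contain $B_{\rho_{\lambda(\tau)}}(\xi(\tau))$, where $\rho_{\lambda}\sim\lambda^{1/(\beta_{\theta'}-\beta)}$ (solve $h(y)+\lambda|y|^{\beta}>0$ for all $\vartheta$). At $\tau=0$, $\Omega_\Psi(0)=x_0+W^\circ_{\theta',-e_d}\subseteq x_0+W^\circ_{\theta,-e_d}\subseteq\Omega_u(t_0)$, and $\Psi(\cdot,t_0)\le u(\cdot,t_0)$: $h$ vanishes on the lateral faces of the thin cone, while in its interior $d(x,\Gamma_u(t_0))\gtrsim|x-x_0|$ gives $u\gtrsim|x-x_0|^{\beta}$, and $h\lesssim|x-x_0|^{\beta_{\theta'}}\le(\mathrm{diam})^{\beta_{\theta'}-\beta}|x-x_0|^{\beta}$, so $a$ small universal suffices. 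Finally $\lambda(\tau)$ is chosen to solve the ODE forced by the free boundary inequality $V_\Psi\le|\nabla\Psi|-\vec b\cdot\nu$ along the (drift–perturbed) level set $\{h(y)+\lambda|y|^{\beta}=0\}$: using $|\nabla\Psi|\gtrsim|y|^{\beta-1}$ there, $|\vec b(x)-\vec b(\xi(\tau))|\le\|\vec b\|_{\Lip}|y|$, and Lemma~\ref{l.2.2} for interior derivative control, this yields $\lambda(\tau)\gtrsim\tau^{(\beta_{\theta'}-\beta)/(2-\beta)}$, hence $\rho_{\lambda(\tau)}\gtrsim\tau^{1/(2-\beta)}$.

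\textbf{Comparison and conclusion.} Compare $\Psi$ (subsolution) with $u$ (supersolution) on $B_{\delta}(x_0)\times(t_0,t_0+t)$ for a fixed small $\delta$. On $\{\tau=t_0\}$ the data are strictly separated after the usual infinitesimal shrinking of $\theta'$ and $a$ (exactly as in the proof of Lemma~\ref{L.5.2}). On the lateral boundary, $\overline{\Omega_\Psi(\tau)}\cap\partial B_\delta(x_0)$ lies at distance $\gtrsim\delta$ from $\Gamma_u(t_0)$, and the barrier argument of Lemma~\ref{L.3.11} moves $\Gamma_u$ by at most $C\sqrt{\tau}=o(1)$ while Lemma~\ref{L.2.9} prevents it from receding faster than $\|\vec b\|_\infty\tau$, so this distance stays $\gtrsim\delta$ for $\tau\le t$; the growth bound then gives $u\ge m_1>0$ there, while $\Psi\le a\,\delta^{\beta}<m_1$. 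Lemma~\ref{L.cp} yields $\Psi(\cdot,\tau)\prec u(\cdot,\tau)$ on $[t_0,t_0+t]$, so $B_{\rho_{\lambda(t)}}(X(t;x_0))\subseteq\Omega_\Psi(t_0+t)\subseteq\Omega_u(t_0+t)$; by the Step‑2 reduction this upgrades to $B_{r_t}(X(t;x_0))\subseteq\Omega_u(t_0+t)$ with $r_t=c\,t^{1/(2-\beta)}$, the constant $c$ being uniform in $(x_0,t_0)\in\Gamma_u\cap\calQ_{1/2}$.

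\textbf{Main difficulty.} The crux is the barrier step: producing an explicit subsolution of \eqref{1.1} that rounds a corner at the self–similar scale $\tau^{1/(2-\beta)}$ yet stays below $u$ — which, by the growth bound, forces a profile no flatter than $d^{\beta}$, ruling out the usual harmonic ball/annulus barriers — and verifying the free boundary inequality for it in the presence of the Lipschitz drift; the curved, drift–perturbed level sets $\{h(y)+\lambda(\tau)|y|^{\beta}=0\}$ make the normal–velocity computation and the choice of the $\lambda$‑ODE delicate. A secondary technical point is legitimizing Lemma~\ref{L.2.10} at the slices $t_0+\tau$, where no free–boundary point need sit at the origin; this is handled as indicated above by substituting the cone inclusion coming from monotonicity for the hypothesis ``$0\in\Gamma_\omega$''.
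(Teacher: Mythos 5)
Your first step (the growth bound $u(\cdot,t)\gtrsim d(\cdot,\Gamma_u(t))^{\beta}$ from Lemma \ref{L.2.10}/\ref{C.2.8} and cone monotonicity) and your overall strategy (build an explicit subsolution near $(x_0,t_0)$, pass to streamline coordinates to absorb $\vec b(\xi(\tau))$, and invoke Lemma \ref{L.cp}) match the paper. But the heart of your argument, the corner‑rounding barrier $\Psi=a\bigl(h+\lambda(\tau)|y|^{\beta}\bigr)_+$, does not deliver the claimed rate. On the fattened tip the free boundary sits at $|y|=\rho_\lambda\sim\lambda^{1/(\beta_{\theta'}-\beta)}$, and there \emph{both} $\nabla h$ and $\lambda\nabla|y|^{\beta}$ have magnitude $\sim\rho_\lambda^{\beta_{\theta'}-1}$ (on the axis they in fact nearly cancel: $\nabla\Psi=a\lambda(\beta-\beta_{\theta'})|y|^{\beta-1}\hat y$ there, of size $a(\beta_{\theta'}-\beta)\rho_\lambda^{\beta_{\theta'}-1}$). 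So your assertion ``$|\nabla\Psi|\gtrsim|y|^{\beta-1}$'' on that part of $\Gamma_\Psi$ is false, and the subsolution inequality forces $\dot\rho_\lambda\lesssim a\,\rho_\lambda^{\beta_{\theta'}-1}$, i.e.\ $\rho_\lambda(\tau)\lesssim\tau^{1/(2-\beta_{\theta'})}$. Since you chose $\theta'<\theta_{\beta}$, hence $\beta_{\theta'}>\beta$, this is strictly smaller than $\tau^{1/(2-\beta)}$ for small $\tau$ (your ODE solution $\lambda\gtrsim\tau^{(\beta_{\theta'}-\beta)/(2-\beta)}$ should read $\tau^{(\beta_{\theta'}-\beta)/(2-\beta_{\theta'})}$), so the barrier cannot prove the stated $r_t=ct^{1/(2-\beta)}$. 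Two secondary problems: $h$ is harmonic only inside $W_{\theta',-e_d}$, while the fattened tip — the only part of $\Omega_\Psi$ you actually need — lies outside it, so ``extended smoothly (and with sign)'' must be the eigenfunction continuation of $\varphi'$ past $\partial\Sigma_{\theta'}$ or else $-\Delta\Psi\leq f$ is unverified exactly where it matters; and $-\Delta(|y|^{\beta})=-\beta(\beta+d-2)|y|^{\beta-2}<0$, not $>0$ as written (your conclusion $-\Delta\Psi\le0$ is right, your displayed identity is not).

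The paper sidesteps all of this: no corner rounding is needed. After passing to $\bar u(x,t)=u(x+X(t),t)$, cone monotonicity plus Lemma \ref{L.2.10} give $\bar u(\cdot,t)\geq c r^{\beta}$ on the ball $B_{2r}(-3re_d)$, which lies inside $W_{\theta,-e_d}$ because $\theta\geq\theta_\beta\geq\pi/4$. One then uses the classical expanding radial annulus $\varphi(x,t)=cr^{d-2+\beta}\bigl(|x-P|^{2-d}-R(t)^{2-d}\bigr)$ with $P=-3re_d$ and $R(t)=(c_1r^{d-2+\beta}t+(2r)^{d})^{1/d}$: its gradient on $\partial B_{R(t)}(P)$ is $\sim r^{\beta-1}$, it reaches radius $4r$ — hence engulfs $B_r(0)$ — at time $t_*\sim r^{2-\beta}$, and the drift error $|\vec b_0|\lesssim r$ is lower order against $r^{\beta-1}$. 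That gives exactly $r\sim t_*^{1/(2-\beta)}$ with none of the delicacies of a self‑similar tip. I would redo your barrier step along these lines.
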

\begin{proof}
Let us only prove the lemma for $d\geq 3$ and the proof for $d=1,2$ is similar. Let $(x_0,t_0)\in\Gamma_u$, and after shifting, we assume $(x_0,t_0)=(0,0)$. Next we define $\bar u$ from \eqref{3.0} which solves \eqref{3.1}. Lemma \ref{L.2.9} yields that $0\in\overline{\{\bar{u}(\cdot,t)>0\}}$ for all $t>0$. Thus, the monotonicity assumption yields 
$$
W_{\theta,-e_d}\cap B_{1}\subseteq \{\bar{u}(\cdot,t)>0\} \hbox{ for } t>0.
$$ Since $\theta\geq\theta_{\beta}$, this,  \eqref{6.90} and Lemma \ref{L.2.10} imply that %there exists $\tau_0>0$ such that $\inf_{|t|<\tau_0}u(-e_d,t)>0$. Then by comparing $u$ with a harmonic function supported in $W_{\theta,-e_d}\cap B_{1}$ and using $\theta=\theta_\beta$, Lemma \ref{L.3.1} yields for all $t\in (0,\tau_0)$,
%\[
%\bar{u}(-re_d,t)\geq cr^\beta\quad\text{ for all }r\in(0,1).
%\]
%By (a) and Lemma \ref{l.2.2} again, 
there exists $c>0$ such that
\beq\lb{6.13}
\bar{u}(\cdot,t )\geq cr^\beta\quad\text{ in $B_{2r}(-3re_d)$ for all }r\in(0,1).
\eeq

Now take $P:=-3re_d$ for some $r>0$, and for $c$ from \eqref{6.13} define
\[
\varphi(x,t):=c r^{d-2+\beta}(|x-P|^{2-d}-R(t)^{2-d})\quad\text{ with }R(t):=(c_1 r^{d-2+\beta}t+(2r)^d)^\frac{1}{d}
\]
with $c_1:=2cd(d-2)$. Then for each $t>0$, $\varphi(\cdot,t)$ is a non-negative harmonic function in $B_{R(t)}(P)\backslash B_r(P)$ such that $\varphi(\cdot,t)=0$ on $\partial B_{R(t)}(P)$ and $\varphi(\cdot,t)\leq cr^\beta$ in $B_{R(t)}(P)\backslash B_r(P)$. Thus, in view of \eqref{6.13}, we obtain
\beq\lb{6.5}
\varphi(x,t)\leq \bar{u}(x,t)\quad  \text{ in }( B_{2r}(P)\backslash B_r(P))\times \{0\} \cup \{(x,t)\,:\,t\in(0,1),x\in\partial B_r(P) \}.%\quad \text{ on }( B_{2r}(P)\backslash B_r(P))\times \{0\}\cup \{(x,t)\,:\,t>0,x\in\partial B_r(P) \}.
\eeq

Note that $R(t_*)= 4r$ with
$
t_*:= (4^d-2^d)r^{2-\beta}/c_1\leq 1$ when $r$ is small. 
So by the definition of $\varphi$, if we can show $\varphi\leq \bar{u}$ for all $ t\in [0,t_*]$ and $x\in B_{R(t)}(P)\backslash B_r(P)$, then $B_{r}(0)\subseteq \Omega_{\bar{u}}(t_*)$ which concludes the proof.

\medskip

To do this, in view of \eqref{6.5}, $ f_0\geq 0$ and the comparison principle, it remains to show that $\varphi$ satisfies the appropriate boundary condition on $|x|=R(t)$.
%Therefore to prove the lemma, it suffices to show that $\bar{u}\geq \varphi$ for all $t\in [0,t_*]$ outside $B_{r}(P) $ for all $r>0$ sufficiently small. In view of \eqref{6.5}, \eqref{6.6}, $ f_0\geq 0$ and the comparison principle, it remains to show that $\varphi$ satisfies the appropriate boundary condition.
Indeed, direct computation yields
\[
R'(t)=c_1 r^{d-2+\beta}R(t)^{1-d}/d,\quad |\nabla\varphi(x,t)|=c(d-2)r^{d-2+\beta}|x-P|^{1-d}.
\]
Also, by $|\vec{b}_0(x,t)|\leq \|\nabla\vec{b}\|_\infty|x|$ and the choice of $c_1$, we obtain for $x\in \partial B_{R(t)}(P)$ and $t\in [0,t_*]$ that
\[
R'(t)-|\nabla\varphi(x,t)|-
|\vec{b}_0(x,t)|\leq c(d-2)r^{d-2+\beta}R(t)^{1-d}- Cr\leq C'r^{\beta-1}-Cr
\]
which is non-positive  if $r>0$ is sufficiently small. This shows that $\varphi$ is a subsolution to \eqref{3.1}, with Dirichlet boundary condition on $ \partial B_r(P)\times [0,t_*]$, in $(\bbR^d\backslash B_r(P))\times [0,t_*]$. Now we can conclude.
\end{proof}

\section{Non-degeneracy}\lb{S.7}
The goal of the section is to show non-degeneracy result under additional assumptions. Let us illustrate the outline of the proof, in the setting where there is no drift, namely when $\vec{b}=0$. Due to the cone-monotonicity proven in the previous section, the free boundary of $u$ is a Lipschitz graph with respect to $e_d$ direction, and $u$ is non-decreasing with respect to a cone $W_{\theta,-e_d}$:
\begin{equation}\label{monotone}
 \sup_{|y-x|< r\e} u(y + \e e_d, t )\leq u(x,t+C\e) \hbox{ with } r=\sin\theta\hbox{ and a uniform constant } C.
\end{equation}
Our claim is that, if the above inequality is true in a unit space-time neighborhood of a free boundary point $x_0$ , then by the time $t=t(e_d)$ the free boundary reaches the point $x_0 + e_d$, the constant $r$ in \eqref{monotone} increases to a constant strictly larger than $1$ near the free boundary. In heuristic terms the claim states that the monotonicity of the solution propagates and improves over time in both space and time variable, as the positive set expands out toward $e_d$ direction. Observe that the claim implies that for some $r'>0$ we have
$$
\sup_{|y-x| < r' \e} u(y,t) \leq  u(x, t+C\e) \hbox{ in a small neighborhood of } (x_0+e_d,t(e_d)),
$$
providing uniform linear rate of expansion of the positive set of $u$, which then yields the non-degeneracy of $u$ due to the velocity law $V = |\nabla u|$.  

\medskip

Our claim above is proved in \cite{CJK} for the case $f=\vec{b}=0$. For the proof $u$ was compared with a subsolution of the form $\sup_{|y-x| \leq \varphi(x)\e} u(y,t)$, where $\varphi(x)$ is a chosen radius function first introduced by Caffrelli \cite{caff89}. The radius function $\varphi$ will be small on the boundary of the unit neighborhood but is larger near the point $x_0+e_d$, which yields the desired result. Of course to elaborate this idea the precise subsolution is more involved than stated, to accomodate a sizable perturbation by the radius function. For our problem we employ this idea but with significant modifications due to the presence of both $f$ ad $\vec{b}$, as we will see below (the barrier construction is given in the proof of Theorem~\ref{T.6.1}).

%in the main Theorem \ref{T.6.1} and  Theorem \ref{T.6.8}.

\medskip

Let us now proceed with the assumptions for this section. For any $\beta\in (1,\frac32)$,  %\textcolor{}{why not $3/2$?}
let  $\theta_\beta$ be given in Lemma \ref{L.3.1} so that  \eqref{angle_1} holds.  %\text{teal}{I think that we need to explain the choice of the constants, or what if we move this remark to section 5, for easier viewing? }   
%\textcolor{}{Can we also set $T=1$ here for simplicity?} 
We will assume that $u$ is a solution to \eqref{1.1} in $B_2\times (-1,1)$ with the following properties in $\calQ_1:= B_1\times (-1,1)$:
\begin{itemize}
\item[(H-a')] $u(\cdot,t)$ is non-decreasing with respect to $W_{\theta,-e_d}$ for some $\theta \in (\theta_{\beta}, \frac\pi2)$ and $\beta\in (1,\frac32)$;
\smallskip
\item[(H-b)] $(0,0) \in \Gamma_u$ and $m:=\inf_{t\in (-1,1)}u(-e_d,t)>0$;%By taking $T$ to be small, assume that $B_{1/4}(0)\cap \Gamma_u(t)\neq\emptyset$ for all $t\in (-2T,2T)$.
\smallskip
\item[(H-c)] $u_t\geq \vec{b}\cdot\nabla u-C_0u$ for some $C_0>0$ (in the viscosity sense).
\end{itemize}

\medskip

Note that (H-a') is obtained from the previous sections, in particular from Theorem \ref{T.2.1} .  (H-b) defines $m$ as a parameter, since it is proportional to the rate the positivity set of $u$ expands over time.  The last condition (H-c) states that $u$ almost increases along the streamline. While we showed the monotonicity along the streamline  for the positive set in Lemma \ref{L.2.9}, it remains open whether this property holds for the solution $u$: the difficulty lies in the fact that, if we were to compare $u(x,t)$ with $u(X(t;x),t)$, the corresponding elliptic operator involves higher order derivatives of the drift $\vec{b}$, and thus one cannot directly compare the two functions based on the order of their support, {unless $\vec{b}$ is identically zero, or a constant vector field.} In the global setting, (H-c) can be derived for the initial value problem with smooth $\vec{b}$ and smooth positive $f$ (\cite{chu2022}).

\medskip

\subsection{Some properties of the expansion of positive sets.} 
Recall Definition \ref{D.3.2} about the expansion of $\Omega_u$. We observe that such property propagates backward in time.

\begin{lemma}\lb{L.6.2}
Let $c:=e^{-\|\nabla\vec{b}\|_\infty}$. 
%\begin{enumerate}
If for some $t,r_t\in (0,1)$ sufficiently small,% and for all $(x_0,t_0)\in \Gamma_u\cap \calQ_1$,
\[
B_{r_t}(X(t;x_0))\subseteq \Omega_u(t_0+t)\quad \text{ for all }(x_0,t_0)\in \Gamma_u\cap \calQ_1,
\]
then
\[
B_{cr_t}(X(-t;x_0))\subseteq \Omega_u(t_0-t)^c\quad \text{ for all }(x_0,t_0)\in \Gamma_u\cap \calQ_{1/2}.
\]
%\item If for some $t\in(0,1)$ and $r_t>0$,
%\[
%B_{r_t}(X(-t;x_0))\subseteq \Omega_u(t_0-t)^c\quad \text{ for all }(x_0,t_0)\in \Gamma_u\cap \calQ_1.
%\]
%then
%\[
%B_{cr_t}(X(t;x_0))\subseteq \Omega_u(t_0+t)\quad \text{ for all }(x_0,t_0)\in \Gamma_u\cap\calQ_{1/2},
%\]
%\end{enumerate}
\end{lemma}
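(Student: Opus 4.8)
The plan is to argue by contraposition, exploiting the comparison principle together with the streamline structure. Suppose the conclusion fails: there is $(x_0,t_0)\in\Gamma_u\cap\calQ_{1/2}$ and a point $y_0\in B_{cr_t}(X(-t;x_0))$ with $u(y_0,t_0-t)>0$. The idea is to flow this positivity forward along streamlines and derive a contradiction with the hypothesis, which asserts a definite \emph{gap} (a ball of radius $r_t$ in the complement) just outside $\Omega_u$ at the later time. First I would record the basic Lipschitz dependence of the streamline flow on initial data: since $\partial_t X = -\vec b(X)$ and $\vec b$ is Lipschitz, Gr\"onwall gives $|X(s;p)-X(s;q)|\le e^{|s|\|\nabla\vec b\|_\infty}|p-q|$, so in particular $|X(t;y_0)-X(t;X(-t;x_0))| = |X(t;y_0)-x_0|\le e^{t\|\nabla\vec b\|_\infty}|y_0-X(-t;x_0)|$. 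Because $c=e^{-\|\nabla\vec b\|_\infty}$ and $t<1$, this is strictly less than $r_t$, so $X(t;y_0)\in B_{r_t}(x_0)$.

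The heart of the matter is to show that the positivity at $(y_0,t_0-t)$ propagates along the streamline, i.e. that $u(X(s;y_0),t_0-t+s)>0$ for all $s\in[0,t]$, and moreover that a whole neighborhood does. This is exactly Lemma~\ref{L.2.9} applied after a time shift: $\Omega_u$ is non-decreasing along streamlines, so from $(y_0,t_0-t)\in\Omega_u$ we get $(X(s;y_0),t_0-t+s)\in\Omega_u$ for all $s>0$, and in particular $(X(t;y_0),t_0)\in\Omega_u$, i.e. $X(t;y_0)\in\Omega_u(t_0)$. Now I would like to upgrade this single point to a ball: one should run the proof of Lemma~\ref{L.2.9} not just for the point $y_0$ but for a small ball $B_\rho(y_0)$ around it contained in $\Omega_u(t_0-t)$ (which exists since $\Omega_u(t_0-t)$ is open and contains $y_0$). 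The set $D_t:=\{X(t;x):x\in B_\rho(y_0)\}$ is then contained in $\Omega_u(t_0)$, and by the Lipschitz bound on the flow $D_t\supseteq B_{c\rho}(X(t;y_0))$. Hence $\Omega_u(t_0)$ contains a genuine neighborhood of $X(t;y_0)$, which lies in $B_{r_t}(x_0)$.

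To reach the contradiction, recall that $(x_0,t_0)\in\Gamma_u$, so applying the hypothesis at this free-boundary point with the \emph{trivial} streamline time, or rather: the hypothesis gives, for every free boundary point, a ball of radius $r_t$ about its streamline-translate lying \emph{inside} $\Omega_u$ at the later time. What I actually need is a statement about the complement at time $t_0$ near $x_0$ itself. Here is the cleaner route: since $(x_0,t_0)\in\Gamma_u$, pick a sequence $z_k\to x_0$ with $u(z_k,t_0)=0$; running the streamlines backward, $X(-t;z_k)$ has a neighborhood (of radius $\sim c$ times the gap) on which $u(\cdot,t_0-t)=0$ by monotonicity of $\Omega_u$ along streamlines again (now using that the complement is non-increasing backward, equivalently $\Omega_u$ non-decreasing forward). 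Comparing with the neighborhood of $X(t;y_0)\subseteq B_{r_t}(x_0)$ constructed above, and using that $X(-t;\cdot)$ and $X(t;\cdot)$ are bi-Lipschitz near $x_0$, one finds that $y_0$ (being close to $X(-t;x_0)$ within the margin $cr_t$, with $r_t$ small so everything stays in $\calQ_1$) is forced into the zero set of $u(\cdot,t_0-t)$, contradicting $u(y_0,t_0-t)>0$.

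\textbf{Main obstacle.} The genuinely delicate point is the bookkeeping of radii and the passage between ``a ball in $\Omega_u$ at the later time'' and ``a ball in the complement at the earlier time'': one must track how the flow contracts/expands balls (the factor $e^{\pm t\|\nabla\vec b\|_\infty}$), verify that the constant $c=e^{-\|\nabla\vec b\|_\infty}$ is precisely what makes the radius $r_t$ survive pullback, and ensure that all the auxiliary balls and streamline segments stay inside $\calQ_1$ — this is why the conclusion is only claimed on $\calQ_{1/2}$ and for $t,r_t$ small. The smallness of $t$ and $r_t$ should give enough room that $X(s;\cdot)$ for $|s|\le t$ maps $\calQ_{1/2}$-points and their $O(r_t)$-neighborhoods into $\calQ_1$, so that Lemma~\ref{L.2.9} (which is local to $B_1$) applies throughout.
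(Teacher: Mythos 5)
Your Gr\"onwall estimate and the identification of the constant $c=e^{-\|\nabla\vec b\|_\infty}$ are correct, and placing $x_1:=X(-t;x_0)$ in $\Omega_u(t_0-t)^c$ via Lemma \ref{L.2.9} is the right first step. But the argument does not close. Pushing the positive point $(y_0,t_0-t)$ forward along its streamline only produces a point (or small ball) of positivity inside $B_{r_t}(x_0)$ at time $t_0$ --- which is no contradiction at all, since $x_0\in\Gamma_u(t_0)=\partial\Omega_u(t_0)$ is by definition a limit of points of $\Omega_u(t_0)$. Your attempted repair --- pulling back a sequence $z_k\to x_0$ with $u(z_k,t_0)=0$ --- fails at the decisive step: monotonicity of $\Omega_u$ along streamlines gives only the \emph{pointwise} statement $u(X(-t;z_k),t_0-t)=0$, not a zero set of any definite radius around $X(-t;z_k)$. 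There is no ``gap'' in the complement at time $t_0$ near the free boundary point $x_0$ to propagate backward (the hypothesis of the lemma concerns balls \emph{inside} $\Omega_u$ at \emph{later} times, never balls in the complement), and the zeros $X(-t;z_k)$ merely accumulate at $x_1$, recovering $x_1\in\Omega_u(t_0-t)^c$ and nothing more. So $y_0$ is never forced into the zero set.

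The missing idea is to apply the hypothesis at a free boundary point of the \emph{earlier} time slice. If the conclusion fails, then since $x_1\in\Omega_u(t_0-t)^c$ (a closed set) and some point of $B_{cr_t}(x_1)$ lies in the open set $\Omega_u(t_0-t)$, connectedness of the ball produces $x_2\in B_{cr_t}(x_1)\cap\Gamma_u(t_0-t)$, and $(x_2,t_0-t)\in\calQ_1$ for $t,r_t$ small. The hypothesis applied at $(x_2,t_0-t)$ gives $B_{r_t}(X(t;x_2))\subseteq\Omega_u(t_0)$, while your own Gr\"onwall bound gives $|x_0-X(t;x_2)|=|X(t;x_1)-X(t;x_2)|\le e^{\|\nabla\vec b\|_\infty t}|x_1-x_2|<e^{\|\nabla\vec b\|_\infty}cr_t=r_t$. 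Hence $x_0$ lies in the open set $\Omega_u(t_0)$, contradicting $x_0\in\Gamma_u(t_0)$. This is the paper's argument; note that the only dynamical input it needs from Lemma \ref{L.2.9} is the single inclusion $x_1\in\Omega_u(t_0-t)^c$, with all the quantitative content coming from the hypothesis at $(x_2,t_0-t)$ rather than from forward propagation of the positivity at $y_0$.
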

\begin{proof}
%Let us prove the conclusion for $(x_0,t_0)=(0,0)\in\Gamma_u$. 
Denoting $x_1:=X(-t;x_0)$ with $(x_0,t_0)\in \Gamma_u\cap\calQ_{1/2}$, Lemma \ref{L.2.9} yields that $x_1\in \Omega_u(t_0-t)^c\cap B_1$.
Suppose for contraction that there exists $x_2\in B_{cr_t}(x_1)$ such that $x_2\in\Gamma_u(t_0-t)$. If $t,r_t\in(0,1)$ are sufficiently small, then $(x_2,t_0-t)\in \calQ_1$. By the assumption, we have
\beq\lb{6.12}
B_{r_t}(X(t;x_2))\subseteq \Omega_u(t_0).
\eeq
Next since, for all $s\in (0,t)$,
\[
\frac{d}{ds}|X(s;x_1)-X(s;x_2)|\leq \|\nabla\vec{b}\|_\infty |X(s;x_1)-X(s;x_2)|,%|\vec{b}(X(s;x_1))-\vec{b}(X(s;x_2))|,
\]
Gronwall's inequality yields
\[
|x_0-X(t;x_2)|=|X(t;x_1)-X(t;x_2)|\leq e^{\|\nabla\vec{b}\|_\infty t}|x_1-x_2|< e^{\|\nabla\vec{b}\|_\infty}cr_t= r_t.
\]
However this contradicts with \eqref{6.12} and $x_0\in\Gamma_u(t_0)$.
\end{proof}

Next we introduce a lemma that says characterizing the movement of the free boundary backward in time is the same as characterizing the growth of solutions forward in time.

\begin{lemma}\lb{L.6.3}
Let $r_1,r_2\in (0,1)$. Then the following is true for sufficiently small $\eps>0$:
Suppose  that there is $\tau>0$ such that 
\beq\lb{6.9}
B_{r_1\eps}(X(- \tau\eps;x)-r_2\eps e_d)\subseteq \Omega_u(t- \tau\eps)^c \hbox{ for all } (x,t)\in \Gamma_u \cap \calQ_1.
%\sup_{|z|\leq {r_*\eps}/{5}}u(z+X(- t_*\eps;x)-r_*\eps e_d/2,t- t_*\eps)=0.
\eeq
Then, for some universal $C>0$,
\[
u(X(\tau\eps;x)+ r_2\eps e_d,t+\tau\eps)>0 \hbox{ in } \calQ_{1-C\eps}. 
\]
\end{lemma}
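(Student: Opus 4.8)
The statement is essentially a logical contrapositive dressed in dynamical-systems language: if the positive set at time $t-\tau\eps$ \emph{avoids} the ball $B_{r_1\eps}(X(-\tau\eps;x)-r_2\eps e_d)$ for every free boundary point $(x,t)$, then at time $t+\tau\eps$ the positive set must \emph{contain} the shifted point $X(\tau\eps;x)+r_2\eps e_d$. The plan is to argue by contradiction. Suppose for some $(x_0,t_0)\in\Gamma_u\cap\calQ_{1-C\eps}$ we had $u(X(\tau\eps;x_0)+r_2\eps e_d,\ t_0+\tau\eps)=0$. Then $X(\tau\eps;x_0)+r_2\eps e_d$ lies in the complement of $\Omega_u(t_0+\tau\eps)$, and by the monotonicity of the positive set along streamlines (Lemma~\ref{L.2.9}), applied \emph{backward}, it lies outside $\Omega_u$ at all earlier times along its streamline. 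The point is then to locate, near $(x_0,t_0)$ and at a slightly earlier time, a free boundary point $(x_1,t_1)\in\Gamma_u\cap\calQ_1$ whose forward streamline passes close to a point that hypothesis \eqref{6.9} forces to be in $\Omega_u^c$ but which at the same time must be in $\Omega_u$ — contradiction.

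More concretely, I would proceed as follows. First I would set $(y_0,s_0):=(X(\tau\eps;x_0)+r_2\eps e_d,\ t_0+\tau\eps)$, a zero of $u$; by continuity of $u$ and openness of $\Omega_u$, we can find a nearby genuine free boundary point. The cleanest route: since $(x_0,t_0)\in\Gamma_u$, there are points of $\Omega_u(t_0)$ arbitrarily close to $x_0$; pick such a point $z$ with $u(z,t_0)>0$ and $|z-x_0|$ small compared to $\eps$. Then $z+r_2\eps e_d$ is a reasonable candidate around which to test \eqref{6.9}: I want a free boundary point $(x,t)\in\Gamma_u\cap\calQ_1$ such that $X(-\tau\eps;x)-r_2\eps e_d$ is within $r_1\eps$ of a point known to be positive, violating the claimed exclusion. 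The natural choice is to take the free boundary point \emph{obtained by flowing $z$ backward} and then tracking how the free boundary sits relative to it; here I would use Gronwall's inequality on the streamline ODE \eqref{ode} to control $|X(-\tau\eps;x)-X(-\tau\eps;z)|\le e^{\|\nabla\vec b\|_\infty\tau\eps}|x-z|$, which for $\tau\eps$ small is $(1+o(1))|x-z|$, so the backward flow is bi-Lipschitz with constant close to $1$, and the ball $B_{r_1\eps}(\cdot)$ shifted by a small amount still contains the flowed image of $z$.

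The mechanism that produces the contradiction is: assuming $u(y_0,s_0)=0$ means $y_0\notin\Omega_u(s_0)$; flowing backward along the streamline through $y_0$ to time $t_0-\tau\eps$ keeps us in $\Omega_u^c$ by Lemma~\ref{L.2.9} (positive set only grows forward along streamlines, equivalently complement only grows backward); but $y_0=X(\tau\eps;x_0)+r_2\eps e_d$, so flowing back one gets $X(-\tau\eps;X(\tau\eps;x_0)+r_2\eps e_d)$, which by the flow property and the smallness of $\tau\eps$ is within $C\|\vec b\|_{C^1}\eps^2$ of $x_0+r_2\eps e_d$; meanwhile applying \eqref{6.9} at the genuine free boundary point $(x_0,t_0)\in\Gamma_u\cap\calQ_1$ tells us $B_{r_1\eps}(X(-\tau\eps;x_0)-r_2\eps e_d)\subseteq\Omega_u(t_0-\tau\eps)^c$ — note the \emph{minus} $r_2\eps e_d$ here, whereas we generated a point near $x_0+r_2\eps e_d$ in the complement; so the exclusion \eqref{6.9} with the $-r_2\eps e_d$ offset is \emph{consistent} with the backward-flowed $y_0$ only if $2r_2\eps$-worth of $e_d$-translation still lands inside the relevant excluded ball, which forces $r_1>2r_2$ — no, this is not automatic. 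The correct reading is that \eqref{6.9} is a hypothesis we \emph{use}, not contradict: it says a certain ball below the backward streamline is empty, which by the streamline monotonicity propagated \emph{forward} tells us the corresponding ball above the forward streamline — namely around $X(\tau\eps;x)+r_2\eps e_d$ — need \emph{not} be empty, but we must still show it is actually \emph{nonempty}.

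So the genuine argument is: apply \eqref{6.9} to free boundary points $(x,t)$ with $x$ ranging over a full neighborhood of $x_0$ in $\Gamma_u(t_0)$ (which exists since $\Gamma_u(t_0)$ is nonempty near $x_0$), obtaining that the union $\bigcup_x B_{r_1\eps}(X(-\tau\eps;x)-r_2\eps e_d)$ is contained in $\Omega_u(t_0-\tau\eps)^c$; this union contains a sizable ball, say $B_{r_1\eps/2}(p)$ for $p=X(-\tau\eps;x_0)-r_2\eps e_d$, since the backward flow is a near-isometry. Now $X(-\tau\eps;x_0)$ is within $C\|\vec b\|_\infty\eps^2$ of $x_0$, so $p$ is essentially $x_0-r_2\eps e_d$. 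Since $(x_0,t_0)\in\Gamma_u$, there is a point $q$ with $u(q,t_0)>0$ and $q$ within $O(\eps^2)$ of $x_0$; then $q-r_2\eps e_d\in B_{r_1\eps/2}(p)$ (for $\eps$ small), hence $q-r_2\eps e_d\in\Omega_u(t_0-\tau\eps)^c$, i.e. $u(q-r_2\eps e_d,\ t_0-\tau\eps)=0$. Flowing this zero forward along its streamline by time $2\tau\eps$ and using Lemma~\ref{L.2.9} gives that $X(2\tau\eps;q-r_2\eps e_d)$ \emph{could} be positive but need not be; instead I should flow $q$ itself (which \emph{is} positive) forward: $u(q,t_0)>0$ implies $u(X(s;q),t_0+s)>0$ for all $s>0$ by Lemma~\ref{L.2.9}, so in particular $u(X(\tau\eps;q),t_0+\tau\eps)>0$. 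Since $q$ is within $O(\eps^2)$ of $x_0$, $X(\tau\eps;q)$ is within $O(\eps^2)$ of $X(\tau\eps;x_0)$, hence $X(\tau\eps;q)+r_2\eps e_d$ is within $O(\eps^2)$ of $y_0$. If we also knew $u(\cdot,t_0+\tau\eps)$ positive on a whole $c\eps$-ball around $X(\tau\eps;q)$ we would contradict $u(y_0,s_0)=0$ — but that positivity of a full ball is exactly what the cone-monotonicity hypothesis (H-a') gives: from $u(q,t_0)>0$, cone-monotonicity in $W_{\theta,-e_d}$ propagates positivity, and combined with the streamline transport and the interior-ball structure of the positive set one gets a definite ball of positivity at the later time. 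This is the step I would write carefully.

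\textbf{Main obstacle.} The delicate point is \emph{not} the flow estimates (routine Gronwall) but matching the $\pm r_2\eps e_d$ offsets and the two radii $r_1\eps,r_2\eps$ correctly, and in particular arguing that the $O(\eps^2)$ errors from the drift flow composition are genuinely negligible against the $O(\eps)$ radii — this needs $\tau\eps\ll1$ and is where the ``sufficiently small $\eps$'' hypothesis is consumed. The second, more structural, difficulty is producing a \emph{full ball} of positivity at time $t_0+\tau\eps$ from a single positive point at $t_0$: one cannot just use continuity (that gives a ball whose radius is not quantitatively tied to $\eps$). The fix is to use that $(x_0,t_0)\in\Gamma_u$ together with cone-monotonicity (H-a') to see that $\Omega_u(t_0)$ contains a cone $W_{\theta,-e_d}$-shaped region with vertex near $x_0$, hence a ball $B_{c\eps}(x_0-c'\eps e_d)\subseteq\Omega_u(t_0)$ for universal $c,c'$ (this is the standard ``interior ball from cone monotonicity'' fact, cf. Proposition~11.14 in \cite{CafSal}); then flow this ball forward by Lemma~\ref{L.2.9} and use the near-isometry of the flow to conclude $B_{c\eps/2}(X(\tau\eps;x_0)-c'\eps e_d)\subseteq\Omega_u(t_0+\tau\eps)$, which for the right choice of $r_2$ contains $y_0$ — but $y_0$ was assumed to be a zero of $u$, the desired contradiction. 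Wait — this last chain proves the statement \emph{directly} without even invoking \eqref{6.9}; \eqref{6.9} must be what guarantees the \emph{correct value of} $r_2$ (i.e. that the $-r_2\eps e_d$ offset lands the cone-interior-ball at the right place relative to the streamline). So in the final writeup the logical order is: (i) cone-monotonicity $\Rightarrow$ interior ball $B_{c\eps}(x_0 - s\eps e_d)\subseteq\Omega_u(t_0)$ for a range of $s$; (ii) \eqref{6.9} applied backward pins down which $s$'s avoid $\Omega_u^c$ backward, equivalently which forward shifts $X(\tau\eps;x)+r_2\eps e_d$ are forced into $\Omega_u$; (iii) Lemma~\ref{L.2.9} transports (i) forward; (iv) combine to get $u>0$ at $X(\tau\eps;x)+r_2\eps e_d$ with a uniform shrinkage $C\eps$ of the domain from the flow and boundary effects. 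I expect step (ii)—getting the bookkeeping of the two offsets to close—to be the crux.
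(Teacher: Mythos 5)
Your proposal has the right general ingredients (argue by contradiction, use the streamline monotonicity of Lemma~\ref{L.2.9}, control the flow maps by Gronwall), but it never closes, and the place where it fails to close is exactly the step the paper's proof is built around. The hypothesis \eqref{6.9} is \emph{not} applied at the original point $(x_0,t_0)$ — doing so, as you do, only yields information about the time slice $t_0-\tau\eps$, and you then spend the rest of the argument trying (unsuccessfully) to connect that to the time slice $t_0+\tau\eps$ where the conclusion lives. The paper instead produces a \emph{new} free boundary point at the \emph{later} time: since $(X(\tau\eps;x_0),t_0+\tau\eps)\in\Omega_u$ by the strict expansion result (Proposition~\ref{L.3.3} — this input is absent from your proposal), while $u(X(\tau\eps;x_0)+r_2\eps e_d,\,t_0+\tau\eps)=0$ by the contradiction hypothesis, cone monotonicity gives some $h\in(0,r_2)$ with $\bigl(X(\tau\eps;x_0)+h\eps e_d,\ t_0+\tau\eps\bigr)\in\Gamma_u\cap\calQ_1$. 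Applying \eqref{6.9} at \emph{that} point and flowing backward by $\tau\eps$ (with your Gronwall estimate controlling the $O(\eps^2)$ composition error $|X(-\tau\eps;X(\tau\eps;x_0)+h\eps e_d)-x_0-h\eps e_d|$) lands a ball $B_{r_1\eps/2}\bigl(x_0-(r_2-h)\eps e_d\bigr)$ inside $\Omega_u(t_0)^c$; since $r_2-h\ge 0$ and $u(\cdot,t_0)$ is non-decreasing along $-e_d$, this contradicts $x_0\in\Gamma_u(t_0)$. No matching of the two radii $r_1,r_2$ is needed beyond this.

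Two further concrete errors in your write-up. First, your claim that the ``interior ball from cone monotonicity, flowed forward'' chain proves the statement \emph{without} invoking \eqref{6.9} is false: that chain gives positivity on a ball centered at $X(\tau\eps;x_0)-c'\eps e_d$, i.e.\ \emph{below} the streamline image in the $e_d$-direction, which is the direction where positivity is free by cone monotonicity; the content of the lemma is positivity at $X(\tau\eps;x_0)+r_2\eps e_d$, \emph{above} it, where $u$ is non-increasing, and the constants $c,c'$ coming from the cone opening cannot be made to absorb $r_2$. Second, your final outline (i)--(iv) leaves step (ii) — ``\eqref{6.9} applied backward pins down which forward shifts are forced into $\Omega_u$'' — as an unproved assertion, and as stated it is not a valid deduction: knowing a ball at time $t-\tau\eps$ lies in $\Omega_u^c$ does not by itself force any point at time $t+\tau\eps$ into $\Omega_u$; the forcing only emerges through the contradiction scheme above, with Proposition~\ref{L.3.3} supplying the point of $\Omega_u(t_0+\tau\eps)$ on the streamline.
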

\begin{proof}
Let us fix $(x_0,t_0)\in \Gamma_u\cap \calQ_{1-C\eps}$. Suppose for contradiction that
\[
u(X(\tau\eps;x_0)+ r_2\eps e_d,t_0+\tau\eps)=0.
\]
Then, using (H-a') and $(X(\tau\eps;x_0),t_0+\tau\eps)\in \Omega_u$ by Proposition \ref{L.3.3}, there exists $h\in(0, r_2)$ such that $(X(\tau\eps;x_0)+ h\eps e_d,t_0+\tau\eps)\in\Gamma_u\cap \calQ_1$ if $C$ is large enough. So  \eqref{6.9} with $t=t_0+\tau\eps$ and $x=X(\tau\eps;x_0)+ h\eps e_d$ yields that
\beq\lb{6.10}
B_{{r_1\eps}}(X(-\tau\eps;X(\tau\eps;x_0)+ h\eps e_d)-r_2\eps e_d)\subseteq \Omega_u(t_0)^c.
%\sup_{|z|\leq {r_1\eps}}u(z+X(-\tau\eps;X(\tau\eps;x_0)+ h\eps e_d)-r_2\eps e_d,t_0)=0.
\eeq

For $s\in [-\tau\eps,0]$, set 
\[
Y(s):=X(s;X(\tau\eps;x_0)+ h\eps e_d)-X(s;X(\tau\eps;x_0)).
\]
It is clear that $Y(0)=h\eps e_d$.
Using \eqref{ode} yields for all $s\in [-\tau\eps,0]$,
\begin{align*}
|Y(s)|\leq h\eps+\int_{s}^0\|\nabla\vec{b}\|_\infty |Y(\tau)|d\tau.
\end{align*}
Thus, Gronwall's inequality yields
\[
|Y(s)|\leq h\eps e^{\|\nabla\vec{b}\|_\infty \tau\eps}\leq 2h\eps.
\]
if $\eps$ is sufficiently small. Since $X(-\tau\eps;X(\tau\eps;x_0))=x_0$ and $Y(0)=h\eps e_d$, we get
\begin{align*}
&|X(-\tau\eps;X(\tau\eps;x_0)+h\eps e_d)-x_0- h\eps e_d|=|Y(-\tau\eps)-Y(0)|\\
&\qquad\qquad\leq \int_{-\tau\eps}^0\|\nabla\vec{b}\|_\infty|Y(s)|ds\leq 2h\tau\eps^2 \|\nabla\vec{b}\|_\infty
\end{align*}
which is less than $r_1\eps/2$ if $\eps$ is sufficiently small.
This and \eqref{6.10} imply that
\[
B_{{r_1\eps}/2}(x_0-(r_2-h)\eps e_d)\subseteq \Omega_u(t_0)^c.
\]
However since $u$ is non-decreasing along $-e_d$ direction and $h\leq r_2$, this
contradicts with $( x_0,t_0)\in\Gamma_u$, which leads to the conclusion. %We finished the proof of \eqref{6.21}.
%which implies the conclusion.
%\[
%u(X(\cdot,\cdot\,;c\eps)-r_*\eps{\mu},{\cdot\,+}c\eps)>0\quad \text{ on }  \Gamma\cap Q_1 .
%\]
\end{proof}

\subsection{Uniform rate of expansion and non-degeneracy} 

Now we are ready to show that the support of our solution strictly expands with respect to streamlines. To show this we apply sup-convolutions as in Section \ref{S.5} to construct perturbed subsolutions, but our domain is no longer a thin strip near the free boundary. %\text{}{Question for myself:  I think the domain is unit distance away since we utilize and show the propagation of the interior nondegenery of solutions away from the free boundary. But in this sense in section 5 we also use propagation of interior regularity to the boundary, so what is the difference, again at the non-technical level?}
The construction of the barrier function in a thin strip domain was enough in Section \ref{S.6}, since there we showed the propagation of cone monotonicity over time, which came from the interior of the support, $\eps^\kappa$-away from the boundary.  Here we will show propagation of the interior non-degeneracy, which only holds unit distance away from the boundary. This necessitates our construction of the barrier different from the previous section. 

\medskip

%Recall $\bar{\gamma}$ the H\"{o}lder exponent of $f$, and $\theta_{\beta},\theta'_\beta$ from Lemma \ref{L.3.1}. Let us fix%\textcolor{why not just fix $\beta = 1+\bar{\gamma}/2$? is there something gained from considering diverse $\beta$?} 
%\[
%\beta\in(1,1+{\bar{\gamma}}/2)\quad\text{and}\quad\theta_0\in(\max\{\theta_{\beta},\theta'_\beta\},\pi/2).
%\]

%\textcolor{}{I don't really see how we can get rid of this $r_*$ and $t_*$ business even when $f$ is Lipschitz. To me it seems like a necessity due to the drift term. } 
\begin{theorem}\lb{T.6.1}
%Under the assumption of Theorem \ref{T.2.2}, then
Assume {\rm (H-a')(H-b)(H-c)}. If $f$ is Lipschitz continuous, then
there exists $C_1>0$ such that 
\[
u(X(C_1\eps;x)+\eps e_d,t+C_1\eps)>0\quad\text{ for }(x,t)\in\Gamma_u\cap \calQ_{1/2} \hbox{ for sufficiently small } \e>0.
\]%, the free boundary $\Gamma_u\cap \calQ_1$ is $C^{1,\gamma}$.
\end{theorem}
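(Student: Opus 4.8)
The goal is to show that the positive set of $u$ strictly expands, relative to streamlines, by a full order-$\e$ amount in the $e_d$ direction after a time $C_1\e$. By Lemma~\ref{L.6.3}, it suffices to establish the dual statement backward in time: there exist $\tau>0$ and $r_1>0$ (and we will be able to take the $e_d$-shift to be $\e$ itself, i.e.\ $r_2=1$ in the notation there after a harmless relabeling) such that
\[
B_{r_1\e}\bigl(X(-\tau\e;x)-\e e_d\bigr)\subseteq \Omega_u(t-\tau\e)^c\qquad\text{for all }(x,t)\in\Gamma_u\cap\calQ_1.
\]
So the plan is to prove this backward-in-time emptiness statement by a barrier (comparison) argument. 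We work in the streamline-shifted coordinates $\bar u(x,t)=u(x+X(t),t)$ of \eqref{3.0}--\eqref{3.1'}, so that $\bar u$ solves \eqref{3.1} with a source $f_0$ and a drift $\vec b_0$ that vanishes at $x=0$; this removes the transport along the streamline and reduces the question to showing that $\Omega_{\bar u}$ does not contain a ball of radius $r_1\e$ centered at a point $\e$-below the free boundary point, at the slightly earlier time.

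\textbf{Key steps.} First, fix a free boundary point, normalize it to $(0,0)$. Using (H-a') (cone monotonicity with angle $\theta>\theta_\beta$), (H-b), and Lemma~\ref{L.2.10}/Lemma~\ref{L.3.1}, we have the quantitative lower bound $\bar u(\cdot,t)\geq c\,d(\cdot,\Gamma_{\bar u})^\beta$ in the interior of the positive set, uniformly for $t$ in a neighborhood of $0$; in particular $\bar u(-e_d,t)\geq c$, and $\bar u$ has at least linear growth from the free boundary along $-e_d$ by the cone monotonicity. Second, build the perturbed subsolution: following Caffarelli's radius-function idea as adapted in Section~\ref{S.5}, set
\[
v(x,t):=\sup_{B_{\e\varphi(x,t)}(x)}\bar u(y,t),
\]
where $\varphi$ is a smooth radius function of the type in Lemma~\ref{L.4.1} — but now in a \emph{full} (not thin-strip) neighborhood of the origin — chosen so that $\varphi$ is close to $1$ on the parabolic boundary of a unit cylinder and strictly larger than $1$ (by a fixed amount, say $1+2r_1$) in a smaller neighborhood of the point $-e_d$, with $\varphi\Delta\varphi\geq A_1|\nabla\varphi|^2$ so that Lemma~\ref{L.2.7} applies. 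Then Lemma~\ref{L.2.7} and Lemma~\ref{L.2.8} give $-\Delta v\leq (1+A_1\|\nabla\varphi\|_\infty)f_0\circ y$ in $\Omega_v$ and a free boundary inequality for $v$ with error terms controlled by $\|\nabla\varphi\|_\infty$, $\varphi_t$; since $f_0$ is Lipschitz and $\|\nabla\varphi\|_\infty,|\varphi_t|$ are small (of order $\e^{\gamma_2-\gamma_1}$ in the notation of Lemma~\ref{L.4.1}, or can be kept $O(\e^{c})$), these errors are absorbed exactly as in Lemma~\ref{L.5.2}. Third, as in \eqref{10.15}, correct $v$ by subtracting $\e^{\alpha_2}w_2$ (solving $-\Delta w_2=1+\|f_0\|_\infty$ with zero boundary data) and adding $c_*\e^{\alpha_1}w_1$ (harmonic, carrying the boundary data of $v$ on the bottom), using Corollary~\ref{C.2.7} and the interior bound $\bar u(-e_d,t)\geq c$ to guarantee $w_2\ll w_1$ and that the resulting $\bar v$ keeps at least linear growth $|\nabla\bar v|\gtrsim \e^{\beta-1}$ near $\Gamma_{\bar v}$, whence $\bar v$ is a genuine viscosity subsolution of \eqref{3.1} in the unit cylinder. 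Fourth — and this is where (H-c) enters — we need the comparison $\bar v\leq \bar u$ on the parabolic boundary. On the lateral boundary and on the part of the bottom away from the free boundary, $\varphi\leq 1$ plus the cone-monotone near-boundary growth estimate (Lemma~\ref{L.3.4}, exactly as in \eqref{10.17}) gives $v\leq \bar u$ with room to spare to absorb the $\e^{\alpha_1}w_1$ term, as in the proof of Proposition~\ref{L.2.12}. At the \emph{initial} time slice, however, we must know $\Omega_{\bar v}(t_{\min})\subseteq\Omega_{\bar u}(t_{\min})$, and more importantly we need to compare the pressures, for which we use (H-c): since $u_t\geq \vec b\cdot\nabla u-C_0 u$, after the streamline shift $\bar u_t\geq -C_0\bar u$, so $e^{C_0 t}\bar u$ is nondecreasing in $t$ along the flow; this lets us propagate the order relation from a single time slice both forward and backward with only an $e^{C_0 t}=1+O(\e)$ loss, which is absorbed into the $\e^{\alpha_1}$ margin. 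Fifth, apply the comparison principle (Lemma~\ref{L.cp}; strict separation of supports is arranged by shrinking the cone angle slightly and passing to the limit, as in Proposition~\ref{L.2.12}) to conclude $\bar v\leq \bar u$ in the cylinder. Since $\varphi\geq 1+2r_1$ near $-e_d$, we get $\sup_{B_{(1+2r_1)\e}(x)}\bar u(y,t)\leq \bar u(x-?\,,\cdot)$ near that point; unraveling the sup-convolution and the streamline shift yields precisely the backward-in-time emptiness statement $B_{r_1\e}(X(-\tau\e;x)-\e e_d)\subseteq\Omega_u(t-\tau\e)^c$, for a fixed $\tau$ (of the order of the vertical travel time, which by Lemma~\ref{L.3.3} is comparable to a constant). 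Finally, feed this into Lemma~\ref{L.6.3} to obtain $u(X(C_1\e;x)+\e e_d,t+C_1\e)>0$ in $\calQ_{1/2}$, which is the assertion of the theorem.

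\textbf{Main obstacle.} The principal difficulty is the fourth step: transferring the comparison across time slices without assuming full time-monotonicity of $u$, which is false in general (only the \emph{support} is monotone along streamlines, Lemma~\ref{L.2.9}). The hypothesis (H-c) is exactly the substitute — it provides almost-monotonicity $\bar u_t\geq -C_0\bar u$ after the streamline shift — but one has to be careful that the exponential factor $e^{C_0 t}$ over the time interval of length $O(\e)$ contributes only an $O(\e)$ perturbation, and that this is compatible with the fixed gap $r_1$ in the radius function and with the $\e^{\alpha_1}$-sized corrections $w_1,w_2$. A secondary technical point is adapting Lemma~\ref{L.4.1}'s radius function to a full neighborhood (rather than a thin strip) of the distinguished point $-e_d$ while still meeting the subharmonicity constraint $\varphi\Delta\varphi\geq A_1|\nabla\varphi|^2$ and keeping $\|\nabla\varphi\|_\infty$ and $|\varphi_t|$ small enough that the Lipschitz source $f_0$ and the Lipschitz drift $\vec b_0$ generate only negligible errors in Lemma~\ref{L.2.8}; this is where Lipschitz regularity of $f$ (rather than mere Hölder) is genuinely used, since the error from sup-convolving the source is then $O(\e)$ rather than $O(\e^{\bar\gamma})$, and can be beaten by the linear growth $|\nabla\bar v|\gtrsim \e^{\beta-1}$ with $\beta<3/2$.
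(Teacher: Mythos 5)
Your high-level architecture matches the paper's: streamline coordinates, a Caffarelli-type sup-convolution with a spatially varying radius function in a full (non-strip) neighborhood, a comparison argument, and the reduction to a backward-in-time emptiness statement via Lemma~\ref{L.6.3}, with (H-c) invoked on the lateral boundary and Lipschitz $f$ giving $O(\eps)$ source errors. However, there are two genuine gaps at exactly the points where the paper's proof does something non-obvious. First, you have no workable way to start the comparison at the initial time slice. Your radius function must exceed the cone-monotonicity radius (roughly $\eps\sin\theta$ times the shift) at the target point -- that is the whole point of the construction -- and wherever it does, cone monotonicity alone cannot give $v\leq \bar u$; your proposed fix via (H-c) does not help, because the obstruction is spatial (a sup over a too-large ball at a \emph{fixed} time), not temporal. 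The paper resolves this by first invoking Proposition~\ref{L.3.3} and Lemma~\ref{L.6.2} to produce a time $t_*$ and radius $r_*=c\,t_*^{1/(2-\beta)}$ such that $\bar u(\cdot,0)\equiv 0$ on the ball $B_{2r_*}$ containing the whole comparison domain at the initial time, so the comparison starts trivially there; the relation $t_*\ll r_*$, available only because $\beta<\tfrac32$, is then what makes the parameter $\sigma_2\sim t_*/r_*$ small enough for the rest of the argument to close. You never use the sublinear expansion rate for this purpose, and nothing in your sketch explains where $\beta<\tfrac32$ enters.

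Second, your barrier cannot produce the backward-in-time statement you feed into Lemma~\ref{L.6.3}. As written, $v(x,t)=\sup_{B_{\eps\varphi(x,t)}(x)}\bar u(y,t)$ compares $\bar u$ at time $t$ with itself at time $t$; even if the comparison succeeds, the conclusion is a purely spatial improvement of the monotonicity at a fixed time (the Section~\ref{S.6} type of statement), not the assertion that the free boundary has swept past a ball of radius $c\eps$ during a time interval of length $c\eps$. The ``$?$'' in your final display is symptomatic. The paper builds the $O(\eps)$ time lag into the subsolution by evaluating $\bar u$ at the rescaled time $(1-\sigma_3\eps)t$ inside the sup-convolution, so that $V(0,t_*)\leq U(0,t_*)=0$ directly reads off the vanishing of $\bar u$ at time $t_*-\sigma_3 t_*\eps$; moreover the radius function is taken time-\emph{decreasing} ($\eps(1-\sigma_2 t)\varphi(x)$), and it is precisely the negative $\varphi_t$ that absorbs the $O(r_*\eps)|\nabla\phi|$ velocity errors from the drift and the time rescaling -- not the $\eps^{\alpha_1}w_1$ correction of Lemma~\ref{L.5.2}, which you propose to reuse. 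A time-\emph{increasing} radius gaining a full $O(\eps)$ over a time $O(\eps)$, which is what your sketch implicitly requires, would produce a velocity error $\eps_3\sim 1$ in Lemma~\ref{L.2.8} that cannot be beaten by $|\nabla v|\gtrsim \eps^{\beta-1}$. Relatedly, the paper perturbs the \emph{supersolution} upward to $U=\bar u+L(M_0+2)\eps w^t$ and works with the perturbed problem $(P_\eps)$, rather than correcting the subsolution as in \eqref{10.15}; this is what guarantees $U(0,t_*)=0$ exactly at the target free boundary point so that the comparison yields genuine vanishing of $\bar u$ there.
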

\begin{proof}
{As mentioned earlier in this section, the proof relies on the comparison between $u$ and $v$, a sup-convolution of $u$ with a varying radius function. More precisely we will compare a perturbed version of these functions, $U$ and $V$.
 For the construction of $U$ and $V$, below we will work with $\theta>\theta_{\beta}$ that is slightly smaller than the one given in the assumption. }

%\textcolor{}{explain why. Also the proof is long, and so it deserves some outline in the beginning. picture may be a good idea as well. } 
%This is only needed to have strictly separated on the parabolic boundary of some domain. %Similarly as done in the proof of the previous section, we use sup-convolution to construct a subsolution. However, here the radius function used can be much larger and the domain considered is also different from before. Moreover, here we will modify the original solution to construct a supersolution to a perturbed equation. 

\medskip

We first choose parameters ($t_*,r_*$ and $\sigma_i$ with $i=1,2,3$) to be used in the proof. By Lemma \ref{L.6.2} and Proposition \ref{L.3.3}, there exists $c>0$ such that for $0<t_*<1/3$  we have 
\beq\lb{6.3}
u(\cdot,t_0-t_*)=0\quad \text{ in }B_{2r_*}(X(-t_*;x_0))  \hbox{ where } r_*:=c\,t_*^{1/(2-\beta)}<\frac13,
\eeq 
for any free boundary point $(x_0,t_0) \in\Gamma_u$ in $\calQ_{2/3}$.

\medskip

Let $A_0,A_1\geq 1$ be from Lemma \ref{L.2.7}, $C_0$ from the assumption, and let $M_0\geq 1$ satisfying \eqref{M0} below which only depends on $d,A_0,\theta$.
We call $L:=(1+\|f\|_{C^1}+\|\vec{b}\|_{C^1})^2$, and define
\beq\label{6.4}
\sigma_1:=A_1M_0,\quad \sigma_2:=L(20M_0^2+2M_0((A_1+2)M_0+2)t_*/r_*),\quad \sigma_3:=(A_1+2)M_0+2.
\eeq
Note that $t_*^2<<r_*$ due to $\beta<\frac32$, so we can choose $t_*>0$ to be small enough that 
\beq\lb{6.6_cor}
t_*\leq \min\left\{\frac1{5\sigma_2},\frac{\sigma_1}{C_0\sigma_3},\frac{1}{\sigma_3}\right\}.
\eeq

%
%for some $C>0$ depending only on $\theta,d,\|f\|_{C^1},\|\vec{b}\|_{C_1}$ (to be determined). 

\medskip

Let us fix the reference point $(x_0,t_0)\in \Gamma_u\cap \calQ_{2/3}$. 
After translations, we may assume that $t_0=t_*$ and $X(-t_*;x_0)=0$. Then $X(t):=X(t;0)$ satisfies
\begin{equation}\label{set_up}
(X(t_*),t_*)=(X(t_*;X(-t_*;x_0)),t_*)=(x_0,t_0)\in\Gamma_u.
\end{equation}
Define
$
\bar{u}(x,t):=u(x+X(t),t)
$ which solves \eqref{3.1} with $f_0,\vec{b}_0$ satisfying
\beq\lb{3.1''}
\|f_0\|_{C^1},\,\|\nabla \vec{b}_0\|_\infty,\,\|\partial_t \vec{b}_0\|_\infty\leq L,\quad |\vec{b}_0(x,t)|\leq L|x|.
\eeq
We will work in the cylindrical domain
$$
\Sigma:= (B_{r_*}(x_1)\setminus B_{r_{\delta,\theta}}(x_1)) \times [0, t_*]\quad\text{ where }x_1:=r_*e_d/5 \hbox{ and } r_{\delta,\theta}:= r_*\sin\theta/10.
$$

\medskip

\noindent$\circ$ {\it Construction of $U$ and $V$:} 

% Let $A_0$ be the constant from Lemma \ref{L.2.7}. 
First we perturb $\bar{u}$ to define $U$.
Suppose $w^t$ satisfies $-\Delta w^t=1$ in $B_{r_*}(x_1)\cap \Omega_{\bar u}(t)$ and $w^t=0$ on $ \overline{(B_{r_*}(x_1)\cap\Omega_{\bar u}(t))^c}$.
Note that, from the cone-monotonicity assumption on $u$, it follows that $\Gamma_{\bar u}(t)$ is a Lipschitz graph with Lipschitz constant smaller than  $
\cot\theta_{\beta}$. 
 Corollary \ref{C.2.7}  and (H-b) then yield that 
\[
w^t\leq Cr_*^{2-\beta}\bar{u}(\cdot,t)\quad \text{ in }B_{r_*}(x_1) \hbox{ for some } C=C(m).
\]
Since $\beta<2$, after further taking $t_*$ to be sufficiently small (then $r_*=c\,t_*^{1/(2-\beta)}$ is small) depending only on $c,C,L$ and $M_0$, we have for all $t\in[0,t_*]$,
\beq\lb{6.60}
L(M_0+2)w^t\leq \bar{u}(\cdot,t)\quad\text{ in }B_{r_*}(x_1).
\eeq

We define
\beq\lb{6.14}
U(x,t):=\bar{u}(x,t)+L(M_0+2)\eps w^t(x).
\eeq
We claim that $U$ is a supersolution to
$$
    \left\{\begin{aligned}
        -\Delta U &=f_0(x,t)+L(M_0+2)\eps \quad &&\text{ in }\Sigma\cap\Omega_U,\\
        U_t&=(1-\eps)|\nabla U|^2+\vec{b}_0\cdot\nabla U\quad &&\text{ on } \Sigma\cap\Gamma_U.
    \end{aligned}\right.\leqno(P_\e)
$$
By the construction of $w^t$, it is direct to see the inequality in $\Sigma\cap\Omega_U$.
%\beq\lb{6.71}
%-\Delta U(x,t)\geq f_0(x,t)+L(M_0+2)\eps\quad\text{ in }\Sigma\cap \Omega_U.
%\eeq

Let us check the supersolution property on the free boundary. 
%We claim that
%\beq\lb{6.61}
%U_t\geq (1-\eps)|\nabla U|^2+\vec{b}_0\cdot\nabla U\quad\text{ on }\Sigma\cap\Gamma_U.
%\eeq
Suppose $U-\phi$ for some $\phi\in C^{2,1}_{x,t}$ has a local minimum in $\{t\leq s_0\}$ at some $(y_0,s_0)\in\Gamma_U\cap\Sigma$ and $|\nabla\phi(y_0,s_0)|\neq 0$ and 
\beq\lb{6.71}
-(\Delta\phi+f_0+L(M_0+2)\eps)(y_0,s_0)<0.
\eeq
Because \eqref{6.60} yields $U\leq (1-\eps)^{-1}\bar{u}$ in $\Sigma$, we have that $\bar{u}-(1-\eps)\phi$ obtains a local minimum at $(y_0,s_0)\in\Gamma_{\bar u}$.
Note that \eqref{6.71} and \eqref{3.1''} yield
\[
-(\Delta (1-\eps)\phi+f_0)(y_0,s_0)<0.
\]
So using that $\bar{u}$ is a viscosity solution to \eqref{3.1}, we get
\[
\phi_t\geq (1-\eps)|\nabla\phi|^2+\vec{b}_0\cdot\nabla\phi\quad\text{ at }(y_0,s_0),
\]
which proves that $U$ is a supersolution to $(P_\e)$.

\medskip

We will use the following $\Phi$ to construct the radius function for $V$. Let $\Phi$ be the unique solution to
\begin{equation}\lb{6.1}
    \left\{\begin{aligned}
        & \Delta (\Phi^{-A_0+1})=0 &\text{ in }&B_1\backslash B_{\sin \theta/10}\\
        & \Phi=A_\theta &\text{ on }&\partial  B_{\sin \theta/10}\\
        & \Phi=(\sin{\theta})/2& \text{ on }&\partial B_{1}
    \end{aligned}
    \right.
\end{equation}
where $A_\theta$ is chosen sufficiently large so that
\begin{equation}\label{6.2}
   % \Phi\left(y+\frac{{{\mu}}}{5}\right)\geq 3\quad \text{ for all }y\in B_{{\sin\theta}/{10}}.  
    \Phi\left(-e_d/{5}\right)\geq 3.  
\end{equation}
We have $\Delta \Phi= \frac{A_0|\nabla\Phi|^2}{\Phi}$ in $B_1\backslash B_{\sin\theta/10}$, and there exists $M_0=M_0(d,A_0,\theta)\geq 1$ such that
\beq\lb{M0}
{M_0}^{-1}\leq \Phi\leq M_0,\quad \|\nabla \Phi\|_\infty \leq M_0 \quad \hbox{ in } B_1\backslash B_{\sin \theta/10}.
\eeq

\medskip

Let $\varphi(x):={r_*}\Phi(\frac{x-x_1}{r_*})$ where $x_1= r_* e_d/5$, and define 
\beq\lb{6.62}
V(x,t):=(1-\sigma_1\eps)\sup_{y\in B_{\eps(1-\sigma_2 t)\varphi(x)}(x)}\bar{u}(y+r_*\eps e_d,(1-\sigma_3\eps )t).
\eeq

 We now prove that $V$ is a viscosity subsolution of $(P_\e)$ in $\Sigma$. Recall that $\bar u$ satisfies \eqref{3.1}, and $f_0,\vec{b}_0$ given in \eqref{3.1'} satisfy \eqref{3.1''}.
Thus Lemma \ref{L.2.7} (with $f_0,\vec{b}_0$ in place of $f,\vec{b}$) yields
\[
-\Delta V(x,t)\leq (1-\sigma_1\eps)(1+A_1M_0\eps)f_0(y(x,t)+ r_*\eps e_d,(1- \sigma_3\eps)t)
\]
where $y(\cdot,\cdot)$ satisfies $|y(x,t)-x|\leq \eps(1-\sigma_2 t)\varphi(x)\leq r_* M_0\eps $. 
Using this, \eqref{6.4} and \eqref{3.1''} yields
\begin{align*}
&(1-\sigma_1\eps)(1+A_1M_0\eps)f_0(y(x,t)+ r_*\eps e_d,(1- \sigma_3\eps)t)\\
&\qquad\qquad\leq f_0(x,t)+L\eps ((1+M_0)r_*+\sigma_3t_*).
%\\&\qquad\qquad   f(y_t+ r_*\eps e_d)\leq f(x)+\|\nabla f\|_\infty r_*(1+M_0)\eps\leq (1-\sigma_1\eps/2)^{-1}f(x).
\end{align*}
Due to \eqref{6.14}, $-\Delta w^t=1$, $r_*\leq 1$ and $t_*\leq 1/\sigma_3$, we obtain %$-\Delta ( v(x,t)-\eps^\gamma w(x,t))\leq f_0(x,t)$ in $\Sigma$, which yields
\beq\lb{6.8}
-\Delta V(x,t)\leq f_0(x,t)+L\eps(M_0+2)\quad\text{ in }\Sigma.
\eeq

Next to prove that $V$ satisfies the free boundary condition on $(y_0,s_0)\in \Gamma_{\bar v}\cap\Sigma$, suppose that for a test function $\phi\in C_{x,t}^{2,1}$, $V-\phi$ has a local maximum in $\overline{\Omega_{V}}\cap\{t\leq s_0\}$ at $(y_0,s_0)$. %In view of $\bar{v}(y_0,s_0)=v(y_0,s_0)$ and \eqref{6.61}, we have that $(1-\eps)v-\phi$ obtains a local maximum in the same domain at $(y_0,s_0)$. 
So 
\[
\sup_{y\in B_{\eps(1-\sigma_2 t)\varphi(x)}(x)}\bar{u}(y+r_*\eps e_d,t)-\frac1{(1-\sigma_1\eps)}\phi(x,(1-\sigma_3\eps )^{-1}t)
\]
has a local maximum at $(y_0,(1-\sigma_3\eps )s_0)$ in $\overline{\Omega_{V}}\cap\{t\leq (1-\sigma_3\eps )s_0\}$.

Recall that
\[
M_0^{-1}r_*\leq \varphi\leq M_0r_*,\quad |\nabla\varphi|\leq M_0.
\]
It follows from Lemma \ref{L.2.8} and its remark (with $f_0,\vec{b}_0$ in place of $f,\vec{b}$, and $\eps_1:=M_0r_*\eps, \eps_2:=M_0\eps,\eps_3:=-\sigma_2r_*\eps/M_0$) that at $(y_0,(1-\sigma_3\eps )s_0)$,
\begin{align*}
(1- \sigma_3\eps)^{-1}\phi_t&\leq (1-\sigma_1\eps)^{-1}(1+2\eps_2)^2|\nabla \phi|^2+\vec{b}_0\cdot\nabla \phi\\ %(\vec{b}(x+X((1- \eps) t))-\vec{b}(x))
&\quad+\left(\eps_1\|\nabla\vec{b}_0\|_{L^\infty(\Sigma')}+2(\eps_1+\eps_2)\|\vec{b}_0\|_{L^\infty(\Sigma')}-\eps_3/2\right)|\nabla \phi|.
\end{align*}
Using \eqref{6.4}, \eqref{3.1''} and $(y_0,s_0)\in \Sigma\subseteq B_{2r_*}\times [0,t_*]$ yields for $\eps$ sufficiently small,
\begin{align*}
\phi_t
&\leq (1-\eps)|\nabla \phi|^2+(1- \sigma_3\eps)\vec{b}_0(y_0,(1-\sigma_3\eps )s_0)\cdot\nabla \phi+(1- \sigma_3\eps)\left(9M_0L-\sigma_2/(2M_0)\right)r_*\eps|\nabla \phi|\\
&\leq (1-\eps)|\nabla \phi|^2+\vec{b}_0(y_0,s_0)\cdot\nabla \phi+%(1+ {}\eps)L {}t_*\eps|\nabla v|+ {}r_*\eps|\nabla v|\\
(1-\sigma_3\eps)\left(\sigma_3 L t_*+Lr_* +9M_0Lr_*-\sigma_2r_*/(2M_0)\right)\eps|\nabla \phi|\\
&\leq (1-\eps)|\nabla \phi|^2+\vec{b}_0(y_0,s_0)\cdot\nabla \phi.
\end{align*}

\noindent$\circ$ {\it Comparison of $V$ and $U$:} We are going to show next that
\begin{equation}\label{comparison}
V \leq U \quad\hbox{ in } \Sigma.
\end{equation}
By the comparison principle applied to $(P)_\e$, it is enough to show that $V \prec U$ on the parabolic boundary of the domain. Below we always consider $(x,t)\in  B_{r_*}(x_1)\times [0,t_*]=:\Sigma'$ unless otherwise stated.

\medskip

We claim that  $V \prec \bar{u}$ on the parabolic boundary of $\Sigma$, which will suffice due to the fact that $\bar{u} \leq U$ by definition.  From \eqref{6.3} that $\bar{u}(\cdot,0)=u(\cdot,0)=0$ in $B_{2r_*}(0)\supseteq B_{9r_*/5}(x_1)$. Because 
\[
(1-\sigma_2t)\varphi(x)\eps+r_*\eps\leq (1+M_0)r_*\eps\leq 4r_*/5
\]
in $\Sigma'$ when $\eps$ is small, we obtain
\[
V(x,0)=0=\bar{u}(x,0)\quad\hbox{ in } B_{r_*}\left( x_1\right).
\]
Moreover, the same holds for small $t>0$, and so $U$ and $V$ cannot cross on the initial boundary of $\Sigma$.

Next we consider the inner lateral boundary of $\Sigma$. Due to $\bar u(0,t_*)=u(X(t_*),t_*)=0$ and the monotonicity of support along streamlines (Lemma \ref{L.2.9}),
\[
\bar{u}(0,t)=u(X({t}),t)=0\quad \text{ for }t\in [0,{t_*}].
\]  
Then since $\bar{u}$ is non-decreasing along all directions of $W_{\theta,-e_d}$, we get $\bar{u}=0$ in $  B_{r_*\sin\theta/5}\left(x_1+ r_*e_d\eps\right)\times [0,{t_*}]$. Thus by taking $\eps>0$ to be small enough such that
\[
(1-\sigma_2t)\varphi(x)\eps\leq M_0r_*\eps\leq  r_*\sin\theta/10=r_{\delta,\theta},
\]
we get
\[%\beq\lb{6.15}
V(\cdot,\cdot)\leq \sup_{B_{r_{\delta,\theta}}}\bar{u}(\cdot+ r_*\eps e_d,(1- \sigma_3\eps)\cdot)=0\,(=\bar{u}) \quad\text{ in }  B_{r_{\delta,\theta}}(x_1)\times [0,{t_*}].
\]%\eeq

Now it remains to show that $V\prec  \bar{u}\, (\leq U)$ on the outer lateral boundary $ \partial B_{r_{\delta}}(x_1) \times [0,t_{\delta}]$. To do this, we use both the assumptions (H-a') and (H-c).
Indeed, it is not hard to derive from the latter that
$e^{C_0t}u(X(t;x),t)$ is non-creasing in $t$. 
In particular, writing $x_t:=x+X(t)$ for $(x,t)\in\Sigma'$, we get
\[
\bar{u}(x,t)=u(x_t,t)\geq e^{-C_0\sigma_3 \eps t}u(X(- \sigma_3\eps t;x_t),t- \sigma_3\eps t).
\]
This and the cone-monotonicity then yield
\beq\lb{6.7}
\bar{u}(x,t)\geq e^{-C_0\sigma_3 \eps t}\sup_{y\in B_{r_*\eps \sin\theta}}u(y+r_*\eps e_d+X(- \sigma_3\eps t;x_t),t- \sigma_3\eps t).
\eeq

Note that $X(0;x_t)=x_t=X(0;X(t))+x$.
Therefore
\beq\lb{6.22}
\begin{aligned}
   & \left|X(- \sigma_3\eps t;x_t)-X(- \sigma_3\eps t;X(t))-x\right|\\
   &\qquad\qquad=\left|(X(- \sigma_3\eps t;x_t)-X(0;x_t)-(X(-\sigma_3 \eps t;X(t))-X(0;X(t)))\right|
    \\
    &\qquad\qquad\leq \int_{- \sigma_3\eps t}^0|\vec{b}(X(-s;x_t))-\vec{b}(X(-s;X(t)))|ds.%\\
    %&\qquad\qquad\leq \int_{- \eps t}^0 \|\nabla \vec{b}\|_\infty(r_*+2\|\vec{b}\|_\infty t_*)ds\\
    %&\qquad\qquad\leq  {}\eps \|\nabla \vec{b}\|_\infty(r_* t_*+2\|\vec{b}\|_\infty t_*^2)\leq r_*\eps\sin\theta/2.
\end{aligned}
\eeq
By direct computations, for $s\in [- \sigma_3\eps t,0]$,
\begin{align*}
&|\vec{b}(X(-s;x_t))-\vec{b}(X(-s;X(t)))|\leq \|\nabla\vec{b}\|_\infty(|X(-s;x_t)-X(-s;X(t))|)\\
&\qquad\qquad \leq\|\nabla\vec{b}\|_\infty\|\vec{b}\|_\infty |2s|\leq 2\sigma_3\|\nabla\vec{b}\|_\infty\|\vec{b}\|_\infty  t_*\eps.
\end{align*}
Then, if $\eps$ is small enough, \eqref{6.22} yields 
\[
\left|X(- \eps t;x_t)-X(- \eps t;X(t))-x\right|\leq  2\|\nabla\vec{b}\|_\infty\|\vec{b}\|_\infty  t_*^2\eps^2\leq r_*\eps\sin\theta/2.
\]
Combining this with \eqref{6.7} implies
\beq\lb{6.23}
\bar{u}(x,t)\geq (1-\sigma_1\eps)\sup_{y\in B_{r_*\eps\sin\theta/2}(x)} u(y+ r_*\eps e_d+X(t- \sigma_3\eps t),t- \sigma_3\eps t).
\eeq
Here we also used $e^{-C_0\sigma_3\eps t}
\geq 1-\sigma_1\eps$ for $t\in [0,t_*]$  by \eqref{6.6_cor}, and $X(- \sigma_3\eps t;X(t))=X(t-\sigma_3 \eps t)$.

Take $(x,t)$ on the outer lateral boundary of $\Sigma$ (then $(x,t)\in \partial B_{r_*}(x_1)\times [0,t_*]$). Since $\varphi(x)=r_*(\sin\theta) /2$,  \eqref{6.6_cor} yields
\[
\eps(1-\sigma_2 t)\varphi(x)=\eps(1-\sigma_2 t)r_*\sin\theta/2\leq  r_*\eps\sin\theta/2.
\]
Thus \eqref{6.23} yields that $V\leq \bar{u}$ on $\partial B_{r_*}(x_1)\times [0,t_*]$. If $V(x,t)>0$ for some $(x,t)\in\partial B_{r_*}(x_1)\times [0,t_*]$, it is easy to get $V<\bar{u}$ at $(x,t)$ from the above proof. In addition, the separation of supports follows from the fact that $u$ is monotone with respect to $W_{\theta_0,-e_d}$ for $\theta<\theta_0$.
In summary, we conclude that 
$$V\prec \bar{u}\quad\hbox{ on } \partial B_{r_*}(x_1)\times [0,t_*].
$$

\medskip

{Now we will use \eqref{comparison} to conclude the theorem.}

\smallskip

\noindent$\circ$ {\it Proof of the Theorem:} Note that \eqref{6.2}  yields
\[
\varphi(0)=r_*\Phi\left(-e_d/{5}\right)\geq 3{r_*}.
\]
Hence we have
 \begin{align*}
     B_{r_*\eps/5}(-{r_*}\eps e_d)\subseteq B_{ {12}{r_*}\eps/5 }(0)+ {r_*}\eps  e_d\subseteq B_{\eps\varphi(0) (1-\sigma_2 t_*)}+ {r_*}\eps e_d.
 \end{align*}
%where in the last inclusion we also used $\sigma_2 t_*\leq 1/5$ by \eqref{6.4}.
With this, by \eqref{6.6_cor} and \eqref{comparison}, we get
\beq\lb{6.15}
\begin{aligned}
\bar u(0,t_*)+L(M_0+2)\eps w^{t_*}(0)&=U(0,t_*)\geq V(0,t_*)\\
&\geq \sup_{|z|\leq {r_*\eps}/{5}}(1-\sigma_1\eps )\bar{u}(z-{r_*}\eps  e_d,t_*- \sigma_3t_*\eps ).
%\\
%&\geq (1-\sigma_1\eps (1+t))^{-1}v(x,t_*)\geq v(x,t_*).
\end{aligned}
\eeq
Due to $(X(t_*),t_*)\in\Gamma_u$ by \eqref{set_up}, and the definition of $w^{t_*}$, we get $\bar u(0,t_*)= w^{t_*}(0)=0$. Thus \eqref{6.15} yields
\[
u(z+X(-\sigma_3t_*\eps ;X(t_*))-r_*\eps e_d,t_*-\sigma_3t_*\eps )=0\quad\text{ for all }z\in B_{r_*\eps/5}.
\]

In summary, after translations, we proved for all $(x,t)\in\Gamma_u\cap \calQ_{2/3}$,
\[
B_{{r_*\eps}/{5}}(X(-\sigma_3 t_*\eps;x)-r_*\eps e_d/2)\subseteq \Omega_u(t-\sigma_3 t_*\eps)^c.
%\sup_{|z|\leq {r_*\eps}/{5}}u(z+X(- t_*\eps;x)-r_*\eps e_d/2,t- t_*\eps)=0.
\]
The proof is now completed by invoking Lemma \ref{L.6.3}.
\end{proof}

\subsection*{Proof of Theorem B}

\medskip

We now show the non-degeneracy result, Theorem B. The proof is a consequence of Theorem \ref{T.6.1}, closely following the arguments given in \cite{CJK}. {We will prove that $u$ grows at least linearly near the free boundary (Theorem~\ref{T.6.8}), which readily delivers the desired result. }

\medskip

Heuristically speaking, the strict expansion of the positive set $\Omega_u$ along the streamline, along with the velocity law $V = |\nabla u| - \vec{b}\cdot\nu$, should provide a lower bound for $|\nabla u|$ on the free boundary. One needs to ensure however that $u$ does not change too much over time, to be able to relate the rate of expansion of the positive set with the size of the pressure variable. This is where we need a Carleson-type estimate (see also Lemma 2.5-2.6 in \cite{CJK}), and its proof is parallel to that of Corollary 2.2 in \cite{CJK}. 
\medskip

{Let us denote $(HS)$ by the particular case of \eqref{1.1} with $f, \vec{b} \equiv 0$. }

\begin{lemma}\lb{C.7.2}
Suppose that $v$ is a subsolution to $(HS)$ in $\calQ_2$ satisfying $C^{-1}\leq\frac{v(-e_d,t)}{v(-e_d,0)}\leq C$ for some $C\geq 1$. Also suppose that $\Gamma_v(t) \cap B_1(0)$ can be represented by $x_n=g_t(x')$ where $g_t:\bbR^{d-1}\to\bbR$ is Lipschitz with $\|g\|_{\Lip} \leq c_d$ for some dimensional constant $c_d>0$. %\text{teal} {i think it is confusing here: in the original version it said that $c_g$ needs to be a dimensional constant, which means it cannot be arbitrarily large. Is the correct result you want to state?} 
Then there is $\delta_d>0$ such that the following holds for $0<\delta < \delta_d$:  for any $x_0\in\Gamma_v(0)\cap B_1$, $x_1\in \Omega_v(0)$ and $x_2\in \Omega_v(0)^c$ such that
\[
\frac{\delta}{2}\leq |x_1-x_0|,\quad |x_2-x_0|,\quad d(x_1,\Gamma_v(0)),\quad d(x_2,\Gamma_v(0))\leq \delta,
\]
we have for some $M$ depending on $C$ that
\beq\lb{7.2}
\dfrac{\delta^2}{v(x_1,0)}\leq M\, T(x_2) ,\quad \hbox{ where }T(x):= \sup\{t\geq0\,:\, v(x,t)=0\}.
\eeq
\end{lemma}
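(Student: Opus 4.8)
\textbf{Proof proposal for Lemma \ref{C.7.2}.}

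The plan is to reduce the estimate \eqref{7.2} to a comparison between $v$ and an explicit Hele-Shaw supersolution, exploiting the fact that $(HS)$ has no source or drift so that the geometry is clean. First I would normalize: after a translation we may assume $x_0 = 0$ and $0 \in \Gamma_v(0)$, and since $\Gamma_v(0)$ is a Lipschitz graph with small constant $c_d$, the positive set $\Omega_v(0)$ contains an interior cone at $0$ and $\Omega_v(0)^c$ contains an exterior cone. By the interior-cone geometry and the Carleson-type estimate for subsolutions of $(HS)$ (the analogue of Corollary 2.2 in \cite{CJK}, which uses only the Lipschitz bound on $g_t$ and the normalization $C^{-1}\leq v(-e_d,t)/v(-e_d,0)\leq C$), one has, for $|x_1-x_0|\approx d(x_1,\Gamma_v(0))\approx\delta$, a two-sided bound of the form $v(x_1,0)\approx \delta\,|Dv|(P_\delta,0)$ where $P_\delta$ is a corkscrew point at scale $\delta$; more precisely $v(x_1,0)$ is comparable to the value $v$ takes at distance $\delta$ into the positive set along the normal direction. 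The key quantitative input is that $v$ has at most linear growth away from the free boundary on the interior-cone side, which follows from the Lipschitz graph hypothesis and the Harnack/Dahlberg machinery (Lemma \ref{L.dah}) already recalled in the paper.

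Next I would construct the barrier that controls $T(x_2)$ from below. Since $v$ is a \emph{subsolution} of $(HS)$ and $v(x_2,0)=0$ (as $x_2\in\Omega_v(0)^c$), the set $\{v>0\}$ can only reach $x_2$ by moving with normal velocity $|Dv|$. Using the comparison principle (Lemma \ref{L.cp}) against a radial Hele-Shaw supersolution — an annulus-type profile $w(x,t)$ harmonic in an expanding annulus $B_{R(t)}(z)\setminus B_{\rho}(z)$ with a suitably chosen center $z$ and inner radius $\rho\approx\delta$ placed just inside $\Omega_v(0)$ near $x_1$ — one shows that the free boundary of $v$ cannot pass through $x_2$ before a time $t_0$ with $t_0 \gtrsim \rho\,\delta / (\sup |Dw|)$. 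The radial profile gives $\sup|Dw| \approx w(\text{boundary value})/\rho \approx v(x_1,0)/\delta$, using that $w$ is calibrated to lie below $v$ on $\partial B_\rho(z)$ at $t=0$ via the Carleson estimate from the previous step. Tracking constants, the obstruction time satisfies $t_0 \gtrsim \delta^2/v(x_1,0)$, which is exactly $T(x_2)\gtrsim \delta^2/(M v(x_1,0))$ after rearranging, since $v(x_2,t)=0$ for $t<t_0$ forces $T(x_2)\geq t_0$.

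The main obstacle I anticipate is the bookkeeping that ties the lower barrier's boundary data at $\partial B_\rho(z)$ to $v(x_1,0)$ with a constant $M$ depending only on $C$ (and the dimension), \emph{uniformly} over the admissible positions of $x_1,x_2$ in the annular region $\delta/2\leq|x_i-x_0|,\,d(x_i,\Gamma_v)\leq\delta$. This requires the Carleson estimate in the form: any two interior points of $\Omega_v(0)$ at comparable distance $\approx\delta$ from a common boundary point have comparable $v$-values, with the comparison constant controlled by $C$ and $c_d$; combined with Dahlberg's lemma (Lemma \ref{L.dah}) to pass from $v$ itself to the harmonic competitor $w$, this should go through, but the geometry of where one is allowed to center the annulus $B_\rho(z)$ so that $\partial B_\rho(z)$ stays inside $\Omega_v(0)$ yet still "sees" both $x_1$ and $x_2$ is delicate and is where the smallness threshold $\delta < \delta_d$ is consumed. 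A secondary technical point is justifying the comparison principle application despite $v$ being only a subsolution: one compares $v$ with the \emph{supersolution} $w$, which is the correct direction, so Lemma \ref{L.cp} applies once the strict separation of initial supports is arranged by shrinking $\rho$ slightly, exactly as in the cited arguments of \cite{CJK}.
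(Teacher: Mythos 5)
The paper itself does not write out a proof of Lemma \ref{C.7.2}; it only records that the argument is parallel to Corollary 2.2 (and Lemmas 2.5--2.6) of \cite{CJK}. Your overall strategy is the intended one: a Carleson/Dahlberg upper bound $v(\cdot,t)\leq M_1 v(x_1,0)$ on a $\delta$-neighborhood of $x_0$ over the relevant time interval (this is where the hypothesis $C^{-1}\leq v(-e_d,t)/v(-e_d,0)\leq C$ and the small Lipschitz constant enter), followed by a barrier that converts this sup bound into a speed bound $\lesssim v(x_1,0)/\delta$ for the advance of $\Omega_v$, hence a waiting time $\gtrsim\delta^2/v(x_1,0)$ at $x_2$. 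So the skeleton is right.

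The genuine gap is in the barrier: as described it is geometrically inverted and the comparison would not close. You center the annulus at a point $z$ \emph{inside} $\Omega_v(0)$ near $x_1$, let the annulus expand, and calibrate $w$ to lie \emph{below} $v$ on the inner sphere. That is the configuration of a \emph{subsolution} pushed underneath $v$ (exactly the construction of Proposition \ref{L.3.3}), and it can only show that $\Omega_v$ reaches $x_2$ \emph{quickly}, i.e., an upper bound on $T(x_2)$ --- the opposite of \eqref{7.2}. Moreover $v\leq w$ cannot hold at $t=0$ with that geometry, since $w$ vanishes on $B_\rho(z)\subseteq\Omega_v(0)$ where $v>0$. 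The correct configuration mirrors the Claim in the proof of Lemma \ref{L.3.11}: center the annulus at a point $z\in\Omega_v(0)^c$ with $B_{\delta/4}(z)\subseteq\Omega_v(0)^c$ and $x_2$ close to $z$; take $w(\cdot,t)$ positive on $B_{K\delta}(z)\setminus B_{r_t}(z)$ with a \emph{shrinking} inner zero-ball $r_t=r_0-\lambda t$, equal to $M_1v(x_1,0)$ on the outer sphere (so that $v\leq w$ there by the Carleson bound) and to $0$ on $\partial B_{r_t}(z)$; the supersolution condition forces $\lambda\approx M_1 v(x_1,0)/\delta$, and the inner ball survives for time $\approx r_0/\lambda\approx\delta^2/(M_1v(x_1,0))$, during which $v(x_2,\cdot)=0$. (Note in passing that your formula $t_0\gtrsim\rho\delta/\sup|Dw|$ has an extra factor of $\delta$; the time is distance over speed, $\rho/\sup|Dw|$.) You also need $v(\cdot,0)\leq w(\cdot,0)$ on the initial annulus, which requires the one-sided linear growth bound $v(y,0)\lesssim \frac{M_1v(x_1,0)}{\delta}\,d(y,\Omega_v(0)^c)$ from the exterior-cone/Dahlberg estimate; your claimed \emph{two-sided} bound $v(x_1,0)\approx\delta|Dv|$ is not available for a mere subsolution (non-degeneracy is precisely what Section \ref{S.7} is trying to prove) and is not needed --- only the upper bound is.
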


Now we are ready to prove the non-degeneracy result. Note that Theorem \ref{T.2.2} follows directly from Theorem \ref{T.6.8} and Lemma \ref{L.2.88}, since Theorem \ref{T.2.1} yields (H-a') in $\calQ_1$.

\begin{theorem}\lb{T.6.8}
Assume the conditions of Theorem \ref{T.6.1}. 
%Suppose (H-a')(H-b)(H-c) hold, 
Moreover, suppose that $\theta\geq\arccot c_d$ (with $c_d$ from Lemma \ref{C.7.2}) and $C^{-1}\leq \frac{u(-e_d,t)}{u(-e_d,0)}\leq C$ for all $t\in (-1,1)$ and for some $C\geq 1$. Then there exist $\delta_0,c_0>0$ such that for all $\delta\in (0,\delta_0)$, 
\begin{equation*}
     u(x-\delta e_d,t)\geq c_0\delta \quad \hbox{ for all } (x,t)\in \Gamma_u\cap \calQ_{1/2}.
\end{equation*} 
\end{theorem}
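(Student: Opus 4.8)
The plan is to combine the strict expansion result from Theorem~\ref{T.6.1} with the Carleson-type estimate of Lemma~\ref{C.7.2}, using the velocity law to convert expansion speed into a lower bound for $u$. First I would fix a free boundary point $(x_0,t_0)\in\Gamma_u\cap\calQ_{1/2}$; after translating, assume $t_0=0$ and $x_0=0$. For small $\e>0$, Theorem~\ref{T.6.1} gives a universal $C_1>0$ with $u(X(C_1\e;0)+\e e_d,C_1\e)>0$. Since $\vec b$ is Lipschitz, $X(C_1\e;0)=O(\e)$, so in fact $u(y,C_1\e)>0$ for all $y$ in a ball of radius $\sim\e$ around $\e e_d$ — or more precisely, iterating the expansion statement over a geometric sequence of scales (as in the iteration at the end of Section~\ref{S.6}), one obtains that for each $k$ the positive set at time $\sim 2^{-k}$ has advanced a distance $\sim 2^{-k}$ in the $e_d$ direction near $x_0$, i.e. $B_{c2^{-k}}(-c'2^{-k}e_d)\subseteq\Omega_u(-C_12^{-k}\e\cdot\text{(something)})^c$ backward and correspondingly forward. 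The upshot I want is: there is $c>0$ and $\delta_0>0$ such that for all $\delta\in(0,\delta_0)$, the point $-\tfrac\delta2 e_d$ lies outside $\Omega_u(-c\delta)$ but inside $\Omega_u(c\delta)$; equivalently the hitting time function $T$ of the (translated) solution satisfies $T(-\tfrac\delta2 e_d)\leq c\delta$ relative to time $t=-c\delta$.

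Next I would pass to the homogeneous flow to apply Lemma~\ref{C.7.2}. The point is that $u$, restricted to a small space-time cylinder of size $\delta$ around $(x_0,0)$ and parabolically rescaled by $u_\delta(x,t):=\delta^{-1}u(x_0+\delta x, t_0+\delta t)$, converges (because $f$ rescales to $0$ and $\vec b$ to a constant vector field, which can then be removed by the change of variables $x\mapsto x-t\vec b(x_0)$) to a subsolution — and in fact a solution — of the homogeneous Hele-Shaw problem $(HS)$. Since $\theta\geq\arccot c_d$, the free boundary $\Gamma_{u_\delta}(t)\cap B_1$ is a $c_d$-Lipschitz graph by (H-a'), so the structural hypotheses of Lemma~\ref{C.7.2} hold for $u_\delta$ at unit scale; the hypothesis $C^{-1}\leq u(-e_d,t)/u(-e_d,0)\leq C$ is exactly what is assumed, and it survives the rescaling (after using (H-a') and a Harnack/Dahlberg argument to transfer the control at $-e_d$ to control at the rescaled base point $-c_d^{-1}e_d$ or similar). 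Then Lemma~\ref{C.7.2}, applied with $x_1:=-\tfrac\delta2 e_d$ (interior point, at distance $\sim\delta$ from the free boundary by the cone monotonicity and the non-degeneracy of the advance) and $x_2:=-\tfrac\delta2 e_d$ viewed from the earlier time (exterior point), yields
\[
\frac{\delta^2}{u(-\tfrac\delta2 e_d,0)}\leq M\,T(x_2)\leq M c\delta,
\]
where $T$ is the hitting time for the appropriately time-shifted solution, and the second inequality is precisely the strict-expansion consequence of Theorem~\ref{T.6.1} recorded above. Rearranging gives $u(-\tfrac\delta2 e_d,0)\geq c_0\delta$, and relabeling $\tfrac\delta2\rightsquigarrow\delta$ gives the claimed $u(x-\delta e_d,t)\geq c_0\delta$ for $(x,t)\in\Gamma_u\cap\calQ_{1/2}$.

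I expect the main obstacle to be the rigorous passage between the inhomogeneous flow \eqref{1.1} and the homogeneous flow $(HS)$ to which Lemma~\ref{C.7.2} literally applies: one must either re-prove the Carleson estimate directly for \eqref{1.1} with Lipschitz $f$ and $\vec b$ (tracking the lower-order errors, which are $O(\delta^2)$ at scale $\delta$ and hence harmless against the $O(\delta^2)$ main term — but this requires care, exactly the kind of competition between the source/drift and the elliptic effect that pervades the paper), or else justify a compactness/stability argument showing that a subsolution of \eqref{1.1} at scale $\delta$ is, up to a $(1+O(\delta))$ multiplicative error and the drift change of variables, a subsolution of $(HS)$, so that Lemma~\ref{C.7.2} transfers with $M$ replaced by $M(1+O(\delta))$. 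A secondary technical point is verifying that the interior point $-\tfrac\delta2 e_d$ genuinely sits at distance comparable to $\delta$ from $\Gamma_u(0)$ and that $u(-\tfrac\delta2 e_d,0)$ is comparable to $u(-e_d,0)\cdot(\text{geometric factor})$ via Dahlberg's lemma (Lemma~\ref{L.dah}) in the Lipschitz domain $\Omega_u(0)$ — this uses (H-a') and the boundedness of $u$ from the hypotheses of Theorem~\ref{T.2.1}. Once these reductions are in place, the conclusion of Theorem~\ref{T.2.2} follows from Theorem~\ref{T.6.8} together with Lemma~\ref{L.2.88}, which upgrades the linear growth $u(x-\delta e_d,t)\geq c_0\delta$ to the uniform positivity of $|Du|$ up to the free boundary.
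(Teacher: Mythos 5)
Your overall architecture is the same as the paper's: bound the hitting time of a point just ahead of the free boundary by $O(\delta)$ using Theorem~\ref{T.6.1}, feed that into the Carleson-type estimate of Lemma~\ref{C.7.2}, and rearrange $\delta^2/u(-\delta e_d,0)\leq M\cdot O(\delta)$ to get linear growth. But the step you yourself flag as ``the main obstacle'' --- legitimately applying Lemma~\ref{C.7.2}, which is stated for subsolutions of the homogeneous problem $(HS)$, to the solution of the inhomogeneous problem \eqref{1.1} --- is the actual content of the proof, and neither of the two strategies you sketch (re-proving the Carleson estimate with error terms, or a compactness/stability argument) is carried out. As written this is a genuine gap: a soft rescaling limit tells you about the blow-up, not about $u$ at the fixed scale $\delta$ with quantitative constants; and your claim that the source contributes errors ``$O(\delta^2)$ \ldots hence harmless against the $O(\delta^2)$ main term'' is not an argument (same-order errors are exactly the ones that are not automatically harmless; the correct statement, from Corollary~\ref{C.2.7}, is that the source correction is $O(\delta^{2-\beta})$ \emph{relative to the solution}, which is $\gtrsim\delta^\beta$ near the boundary, not $\delta^2$).

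The paper closes this gap with two exact devices rather than an approximation. First, instead of rescaling, it works with $\bar u(x,t):=u(x+X(t)+c_1te_d,t)$, i.e.\ the streamline change of variables \emph{plus} an extra translation $c_1te_d$ with $c_1=(2C_1)^{-1}$; the resulting drift $\vec b'=\vec b(\cdot+X(t)+c_1te_d)-\vec b(X(t))+c_1e_d$ then makes angle at most $\pi/4$ with $e_d$ in a small cylinder $\calQ_r$, while cone monotonicity forces any test function gradient at a free boundary contact point to make angle at most $\pi/2-\theta$ with $-e_d$; hence $\vec b'\cdot\nabla\phi\leq 0$ and the drift term can simply be dropped from the free boundary condition. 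Second, the source is removed by replacing $\bar u$ with its harmonic replacement $w_1(\cdot,t)$ in $\Omega_{\bar u}(t)\cap B_1$ and using Corollary~\ref{C.2.7} to get the two-sided bound $w_1\leq\bar u\leq C_2w_1$; one then checks directly (via Lemma~\ref{L.2.66}) that $C_2w_1$ is an honest viscosity subsolution of $(HS)$, so Lemma~\ref{C.7.2} applies verbatim and the conclusion transfers back to $u$ through the comparability. Finally, a single application of Theorem~\ref{T.6.1} with $\eps=2\delta$ (no geometric iteration over scales is needed) bounds the hitting time by $2C_1\delta$, because the shift $c_1te_d$ was calibrated so that $2C_1\delta\cdot c_1=\delta$. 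If you incorporate these two constructions --- in particular the shifted streamline coordinates that give the drift term a sign --- your outline becomes the paper's proof.
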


\begin{proof}
 We will only show the conclusion for $(x,t)=(0,0)$, which is on $\Gamma_u$ by our setting. 
Let $C_1$ from Theorem \ref{T.6.1}, %\text{teal}{if you want to use $C$ here then you need some other symbol for $C$ in the theorem. I think this $C$ may be better to be called as $C_0$, to emphasize that it is a specific constant from another theorem} , I put C_1 since C_0 is used 
and choose $c_1:= (2C_1)^{-1}$ . Then $\bar{u}(x,t):=u(x+X(t)+c_1t e_d,t )$  satisfies 
\begin{equation}\lb{7.6}
    \left\{\begin{aligned}
        -\Delta \bar u &=f'(x,t)\quad  &&\text{ in }\{\bar u>0\},\\
        \bar u_t&=|\nabla \bar u|^2+\vec{b}'(x,t)\cdot\nabla \bar u\quad  &&\text{ on }\partial\{\bar u>0\},
    \end{aligned}\right.
\end{equation}
where 
\beq\lb{7.13}
f'(x,t):=f(x+X(t)+c_1t e_d) \hbox{ and } \vec{b}'(x,t):=\vec{b}(x+X(t)+c_1t e_d)-\vec{b}(X(t)) +c_1 e_d.
\eeq
%We also denote 
%\[
%\bar{\Omega}:=\Omega_{\bar{u}}\cap \calQ_1,\quad
%\bar{\Omega}(t):=\Omega_{\bar{u}}(t)\cap B_1\quad\text{and}\quad \bar{\Gamma}(t)=\bar{\Gamma}_{\bar{u}}(t)\cap B_1.
%\]

\medskip

 %\text{teal}{where is this range used?}. 
For each $t\in (-1/2,1/2)$, let $w_1(\cdot,t)$ be the unique non-negative harmonic function in ${\Omega}_{\bar u}(t)\cap B_1$ such that $w_1(\cdot,t)=0$ on $\bar{\Gamma}(t)$, and $w_1(\cdot,t)=\bar{u}(\cdot,t)$ on $ \Gamma_{\bar u}(t)\cap \partial B_1$. 
It follows from Lemma 11.12 \cite{CafSal} that any harmonic function is monotone along the monotonicity direction of its Lipschitz domain, if sufficiently close to its domain boundary where it assumes zero boundary data. In particular, we have $\nabla_{-x_d} w_1(\cdot,t)\geq 0$ in $B_r$ for some $r\in(0,1)$.
Let us fix one such $r$ that also satisfies 
\[
r < \min\left\{1, \frac{c_1}{(1+c_1)}(\|\nabla\vec{b}\|_\infty)^{-1}\right\}.
\]

Next, for $w_2:=\bar{u}-w_1$, it follows from Corollary \ref{C.2.7} that there exists $C_2>1$ such that $w_2\leq (C_2-1)w_1$. So we get
\beq\lb{7.3}
w_1\leq \bar{u}\leq C_2w_1.
\eeq

We claim that % if $r$ is small enough \text{isn't the range of $r$ good enough?} , then 
$C_2{w}_1$ is a subsolution to $(HS)$ in $\calQ_r $. Since $w_1$ is harmonic in its support, it suffices to verify the free boundary condition. Suppose there is a smooth function  $\phi\in C^{2,1}_{x,t}$  such that $C_2w_1-\phi$ has a local maximum zero in $\overline{\Omega_{w_1}}\cap\{t\leq t_0\}$ at $(x_0,t_0) \in \Gamma_{w_1}$. % and $-\Delta\phi(x_0,t_0)>0$. 
By \eqref{7.3}, $\bar{u}-\phi$ also obtains a local maximum in $\overline{\Omega_{\bar u}}\cap\{t\leq t_0\}$ at $(x_0,t_0)$, and therefore \eqref{7.6} and Lemma \ref{L.2.66} yield
\beq\lb{7.5}
\phi_t(x_0,t_0)\leq |\nabla\phi(x_0,t_0)|^2+\vec{b}'(x_0,t_0)\cdot\nabla\phi(x_0,t_0)
\eeq
when $|\nabla\phi(x_0,t_0)|\neq 0$. While when $\nabla\phi(x_0,t_0)= 0$, Lemma \ref{L.2.9} yields \eqref{7.5} again.
Hence, to conclude, it is enough to show that 
\begin{equation}\label{inequality_00}
\vec{b}'(x_0,t_0)\cdot\nabla\phi(x_0,t_0) \leq 0.
\end{equation}
By the assumption on $r$, we have for all $(x,t)\in \calQ_r$,
\[
|\vec{b}(x+X(t)+c_1t e_d)-\vec{b}(X(t))| \leq r(1+c_1)\|\nabla\vec{b}\|_\infty\leq c_1. 
\]
So $
\langle \vec{b}'(x,t),e_d\rangle\leq \frac\pi4$, where the notation \eqref{2.0} is used. %\text{do you define $<a,b>$ to be the angle between vectors somewhere? if so remind.otherwise just explain by words.} 
By the $W_{\theta,-e_d}$-monotonicity of $\bar{u}$, we get $\phi(\cdot,t_0)\geq\phi(x_0,t_0)$ in $x_0+W_{\theta,-e_d}$ which implies $\langle\nabla\phi(x_0,t_0),-e_d\rangle\leq  {\pi}/{2}-\theta$. Consequently, also using $\theta\geq \frac{\pi}{4}$ and $\langle \vec{b}'(x,t),e_d\rangle\leq \frac\pi4$, we verified \eqref{inequality_00}. This concludes that  $w_1$ is a subsolution to $(HS)$ in $\calQ_r$.

%Next, since $\phi(t_0,x_0)= \bar u(t_0,x_0)=0$, it follows from \eqref{7.4} and $2\bar u-\phi\leq 0$ that $\phi(t_0,x_0+\eps\mu)\geq c\eps/2$ for all $\eps>0$ sufficiently small. Thus $|\nabla\phi(t_0,x_0)|\geq c/2$. Then by picking $r$ to be small enough,  we get from \eqref{7.13} that
%\[
%2|\vec b'(t,x)|\leq 2r(2\|\nabla\vec{b}\|_\infty +C_1)\leq c/2\leq |\nabla \phi(t_0,x_0)|
%\]
%in $(-1/C_1,1/C_1)\times B_{r}$. Thus \eqref{7.5} shows
%\[
%\phi_t(t_0,x_0)\leq |\nabla \phi(t_0,x_0)|^2
%\]
%which implies that $2C_2w_1$ is a subsolution to HS.

\medskip

Lemma \ref{C.7.2} now yields that for all $\delta>0$ sufficiently small,
\[
\delta^2/w_1(-\delta e_d,0)\leq M\sup\{t\geq 0\,:\, w_1(\delta e_d,t)=0\}.
\]
Thus, \eqref{7.3} along with the definition of $\bar{u}$ yields
\beq\lb{7.10}
\delta^2/{u}(-\delta e_d,0)\leq M\sup\{t\geq 0\,:\, {u}(\delta e_d+X(t)+ c_1t e_d,t)=0\}.
\eeq
Lastly we apply Theorem \ref{T.6.1} with $\eps:=2\delta$. It follows that
if $\delta$ is sufficiently small,
\[
{u}(\delta e_d+X(2C_1\delta)+ 2C_1\delta c_1 e_d,2C_1\delta)=u(X(C_1\eps)+\eps e_d,C_1\eps)>0.
\]
Therefore 
\[
\sup\{t\geq 0\,:\, {u}(\delta e_d+X(t)+ c_1t e_d,t)=0\}\leq 2C_1\delta.
\]
From this and \eqref{7.10},
we obtain
$
{u}(-\delta e_d,0)\geq \delta/(2 C_1M)
$, %\text{teal}{also $C_d$ doesn't seem to be a good notion choice...} 
which finishes the proof.
\end{proof}

%{\bf Proof of of Theorem \ref{T.2.2}:}

%\subsection{Smoothness in time}

%\appendix

%\section{}
%\medskip

%%%%%%%%%%%%%%%%%%%%%%%%%%%%%%%%%%%%%%%%%%%%%%%%%%%%%%%%%%%%%%%%%%%

%\bibliographystyle{alpha}
%\bibliography{fb}

\begin{thebibliography}{MRCS14}

\bibitem[ACS96]{athanasopoulos1996regularity}
I.~Athanasopoulos, L. A.~Caffarelli, and S. Salsa.
\newblock Regularity of the free boundary in parabolic phase-transition
  problems.
\newblock {\em Acta Mathematica}, 176(2):245--282, 1996.

\bibitem[ACS98]{ACS}
I.~Athanasopoulos, L. A.~Caffarelli, and S.~Salsa.
\newblock Phase transition problems of parabolic type: Flat free boundaries are
  smooth.
\newblock {\em Communications on Pure and Applied Mathematics: A Journal Issued
  by the Courant Institute of Mathematical Sciences}, 51(1):77--112, 1998.

\bibitem[Anc12]{anc}
A. Ancona.
\newblock On positive harmonic functions in cones and cylinders.
\newblock {\em Revista Matem{\'a}tica Iberoamericana}, 28(1):201--230, 2012.

\bibitem[BCG83]{betz1983bounds}
C.~Betz, G. A.~C{\'a}mera, and H.~Gzyl.
\newblock Bounds for the first eigenvalue of a spherical cap.
\newblock {\em Applied Mathematics and Optimization}, 10(1):193--202, 1983.

\bibitem[Bla01]{blank2001sharp}
I. Blank.
\newblock Sharp results for the regularity and stability of the free boundary
  in the obstacle problem.
\newblock {\em Indiana University Mathematics Journal}, pages 1077--1112, 2001.

\bibitem[Caf87]{Caf87}
L.~A. Caffarelli.
\newblock A harnack inequality approach to the regularity of free boundaries.
  part i: Lipschitz free boundaries are $C^{1,\alpha}$.
\newblock {\em Revista Matem{\'a}tica Iberoamericana}, 3(2):139--162, 1987.

\bibitem[Caf89]{caff89}
L.~A. Caffarelli.
\newblock A harnack inequality approach to the regularity of free boundaries
  part ii: Flat free boundaries are lipschitz.
\newblock {\em Communications on pure and applied mathematics}, 42(1):55--78,
  1989.

\bibitem[Chu22]{chu2022}
R. Chu.
\newblock A hele-shaw limit with a variable upper bound and drift.
\newblock {\em SIAM Journal on Mathematical Analysis}.

\bibitem[CJK07]{CJK}
S. Choi, D. Jerison, and I. Kim.
\newblock Regularity for the one-phase hele-shaw problem from a lipschitz
  initial surface.
\newblock {\em American journal of mathematics}, 129(2):527--582, 2007.

\bibitem[CJK09]{CJK2}
S. Choi, D. Jerison, and I. Kim.
\newblock Local regularization of the one-phase hele-shaw flow.
\newblock {\em Indiana University mathematics journal}, pages 2765--2804, 2009.

\bibitem[CKY18]{CKY}
K. Craig, I. Kim, and Y. Yao.
\newblock Congested aggregation via newtonian interaction.
\newblock {\em Archive for Rational Mechanics and Analysis}, 227(1):1--67,
  2018.

\bibitem[CS05]{CafSal}
L.~A. Caffarelli and S. Salsa.
\newblock {\em A geometric approach to free boundary problems}, volume~68.
\newblock American Mathematical Soc., 2005.

\bibitem[Dah79]{dah}
B.~Dahlberg.
\newblock Harmonic functions in lipschitz domains.
\newblock {\em Harmonic analysis in Euclidean spaces}, pages 313--322, 1979.

\bibitem[DS21]{David_S}
N. David and M. Schmidtchen.
\newblock On the incompressible limit for a tumor growth model incorporating
  convective effects.
\newblock {Comm. Pure. Appl. Math.} 2022.

\bibitem[DSFS21]{savin2}
D. D.~Silva, N. Forcillo, and O. Savin.
\newblock Perturbative estimates for the one-phase stefan problem.
\newblock {\em Calculus of Variations and Partial Differential Equations},
  60(6):1--38, 2021.

\bibitem[APW23]{Wu23}
S. Agrawal, N. Patel and S. Wu.
\newblock Rigidity of acute angled corners for one-phase Muskat interfaces.
\newblock{Advances in Mathematics.}, 412:108-801, 2023.


\bibitem[DGN21]{Hongjie}
H. Dong, F. Gancedo and H.~Q. Nguyen.
\newblock Global well-posedness for the one-phase Muskat problem.
\newblock {Comm. Pure Appl. Math.}, 2021.

\bibitem[EJ81]{EJ}
C.~M. Elliott and V. Janovsk{\`y}.
\newblock A variational inequality approach to hele-shaw flow with a moving
  boundary.
\newblock {\em Proceedings of the Royal Society of Edinburgh Section A:
  Mathematics}, 88(1-2):93--107, 1981.

\bibitem[ES97]{escher1997classical}
J. Escher and G. Simonett.
\newblock Classical solutions of multidimensional hele--shaw models.
\newblock {\em SIAM Journal on Mathematical Analysis}, 28(5):1028--1047, 1997.

\bibitem[Eva10]{evans}
L.~C. Evans.
\newblock {\em Partial differential equations}, volume~19.
\newblock American Mathematical Soc., 2010.

\bibitem[FROS20]{figalli20}
A. Figalli, X. Ros-Oton, and J. Serra.
\newblock Generic regularity of free boundaries for the obstacle problem.
\newblock {\em Publications math{\'e}matiques de l'IH{\'E}S}, 132(1):181--292,
  2020.

\bibitem[JK82]{JK}
D. Jerison and C.~E. Kenig.
\newblock Boundary behavior of harmonic functions in non-tangentially
  accessible domains.
\newblock {\em Advances in Mathematics}, 46(1):80--147, 1982.

\bibitem[JK05]{JK1}
D. Jerison and I. Kim.
\newblock The one-phase hele-shaw problem with singularities.
\newblock {\em The Journal of Geometric Analysis}, 15(4):641--667, 2005.

\bibitem[JKT22]{jacobs2022tumor}
M. Jacobs, I. Kim, and J. Tong.
\newblock Tumor growth with nutrients: Regularity and stability.
\newblock {\em Comm. Amer. Math. Soc},
{3}: {166--208}, 2023.

\bibitem[Kim03]{kim2003}
I. Kim.
\newblock Uniqueness and existence results on the hele-shaw and the stefan
  problems.
\newblock {\em Archive for Rational Mechanics \& Analysis}, 168(4), 2003.

\bibitem[Kim06]{kim2}
I. Kim.
\newblock Regularity of the free boundary for the one phase hele--shaw problem.
\newblock {\em Journal of Differential Equations}, 223(1):161--184, 2006.

\bibitem[Kit97]{kitsunezaki1997interface}
S.~Kitsunezaki.
\newblock Interface dynamics for bacterial colony formation.
\newblock {\em Journal of the Physical Society of Japan}, 66(5):1544--1550,
  1997.
  
  \bibitem[KLV]{klv}
  J. King, A. Lacey, and J.~L. Vazquez
   \newblock Persistence of corners in free boundaries in Hele-Shaw flow.
   \newblock{\em European Journal of Applied Mathematics}, (1995) 66(5): 455-490.
   
\bibitem[KPW19]{kimsingular}
I. Kim, N. Po{\v{z}}{\'a}r, and B. Woodhouse.
\newblock Singular limit of the porous medium equation with a drift.
\newblock {\em Advances in Mathematics}, 349:682--732, 2019.

\bibitem[KZ21]{kimzhang}
I. Kim and Y.~P. Zhang.
\newblock Porous medium equation with a drift: Free boundary regularity.
\newblock {\em Archive for Rational Mechanics and Analysis}, 242(2):1177--1228,
  2021.

\bibitem[MRCS10]{maury2010}
B. Maury, A. Roudneff-Chupin, and F. Santambrogio.
\newblock A macroscopic crowd motion model of gradient flow type.
\newblock {\em Mathematical Models and Methods in Applied Sciences},
  20(10):1787--1821, 2010.

\bibitem[MRCS14]{maury14}
B. Maury, A. Roudneff-Chupin, and F. Santambrogio.
\newblock Congestion-driven dendritic growth.
\newblock {\em Discrete \& Continuous Dynamical Systems}, 34(4):1575, 2014.

\bibitem[PQV14]{PQV}
B. Perthame, F. Quir{\'o}s, and J.~L. V{\'a}zquez.
\newblock The hele--shaw asymptotics for mechanical models of tumor growth.
\newblock {\em Archive for Rational Mechanics and Analysis}, 212(1):93--127,
  2014.

\bibitem[PTV14]{PTV}
B. Perthame, M. Tang, and N. Vauchelet.
\newblock Traveling wave solution of the hele--shaw model of tumor growth with
  nutrient.
\newblock {\em Mathematical Models and Methods in Applied Sciences},
  24(13):2601--2626, 2014.

\bibitem[Ric72]{richardson1972hele}
S. Richardson.
\newblock Hele shaw flows with a free boundary produced by the injection of
  fluid into a narrow channel.
\newblock {\em Journal of Fluid Mechanics}, 56(4):609--618, 1972.

\bibitem[Sav09]{savin1}
O. Savin.
\newblock Regularity of flat level sets in phase transitions.
\newblock {\em Annals of Mathematics}, pages 41--78, 2009.

\bibitem[Sha98]{HS1898}
H. S.~Hele-Shaw.
\newblock Investigation of the nature of surface resistance of water and of
  stream-line motion under certain experimental conditions.
\newblock In {\em Inst. NA.}, 1898.

\end{thebibliography}

\end{document}